\newtheorem{theorem}{Theorem}[chapter]
\newtheorem{proposition}[theorem]{Proposition}
\newtheorem{lemma}[theorem]{Lemma}
\newtheorem{corollary}[theorem]{Corollary}
\theoremstyle{definition}
\newtheorem{remark}[theorem]{Remark}
\newtheorem{definition}[theorem]{Definition}
\renewcommand{\bf}{\textbf}
\newcommand{\R}{\mathbb{R}}
\newcommand{\N}{\mathbb{N}}
\newcommand{\eps}{\varepsilon}
\newcommand{\del}{\delta}
\newcommand{\g}{\mathfrak{g}}
\newcommand{\ra}{\rightarrow}
\newcommand{\lra}{\longrightarrow}
\newcommand{\xra}{\xrightarrow} 
\newcommand{\Sec}{\operatorname{Sec}}
\newcommand{\id}{\operatorname{id}}
\newcommand{\Image}{\operatorname{Im}}
\newcommand{\dd}{\mathrm{d}} 
\newcommand{\sT}{\textrm{T}}
\newcommand{\T}{\mathrm{T}} 
\newcommand{\pa}{\partial} 
\newcommand{\p}{\operatorname{p}}
\newcommand{\spann}{\operatorname{span}}
\newcommand{\cl}{\operatorname{cl}}
\newcommand{\pt}{\operatorname{pt}}
\newcommand{\modulo}{\operatorname{mod}}
\newcommand{\wh}{\widehat}
\newcommand{\wt}{\widetilde}
\newcommand{\conv}{\operatorname{conv}}
\newcommand{\X}{\mathcal{X}} 
\newcommand{\K}{\bm{\mathcal{K}}} 
\newcommand{\eS}{\mathcal{S}}
\newcommand{\cH}{\mathcal{ H}}
\newcommand{\B}{\bm}
\newcommand{\m}{\mathfrak{m}}
\newcommand{\BB}{\mathfrak{v}}
\newcommand{\so}{\mathfrak{so}(3)}
\newcommand{\SO}{\mathrm{SO}(3)}
\newcommand{\A}{\mathfrak{w}} 
\newcommand{\Uadm}{\mathcal{U}_{adm}}
\newcommand{\BM}{\mathcal{BM}}
\newcommand{\GG}{\mathcal{G}}
\newcommand{\HH}{\mathcal{H}}
\newcommand{\ol}{\overline}
\newcommand{\ul}{\underline}
\newcommand{\PP}{\operatorname{P}}
\newcommand{\AG}{\mathcal{A}(\mathcal{G})}
\newcommand{\XX}{\mathfrak{X}}
\newcommand{\LL}{\mathcal{L}}
\newcommand{\FF}{\mathcal{F}}
\def\<#1>{\big\langle #1\big\rangle}
\begin{document}
\frontmatter
\author{Michał Jóźwikowski}
\title{Optimal
control theory\\ 
on almost Lie algebroids}
\date{Warsaw, June 2011}


\maketitle

\newpage

\chapter{Acknowledgements}

This work would not be completed without the help of my supervisor---professor Janusz Grabowski. I would like to thank him for many inspiring discussions, useful suggestions (including the choice of this topic) and, especially, for his patience and tolerance.

I would also like to express my gratitude to the whole scientific and non-scientific stuff of the Institute of Mathematics where I spent the last four years.

Finally, I thank my Parents and Olga for their constant support.

\newpage
\tableofcontents
\mainmatter
\numberwithin{equation}{chapter}
\chapter{Introduction}\label{ch:intro}

\subsection{An overview}
This work is rooted in two important areas of mathematics. One of them is the optimal control theory with its central theorem, the celebrated Pontryagin maximum principle (PMP). The second is the theory of Lie algebroids and, in particular, its applications to geometric mechanics. 

Note that the PMP may be regarded as an extension of the calculus of variations to a much bigger class of problems. On the other hand, the language of Lie algebroid theory has proved to be a very fruitful tool in mechanics and variational calculus allowing, for example, to treat standard systems and systems reduced by inner symmetries in a unified way, and to have a deeper insight into the nature of the Lagrange and Hamilton formalisms. Therefore, we may think of the PMP and the algebroidal formulation of the Lagrange formalism as two different extension of the standard calculus of variations---the first by generalising the class of problems, the second by generalising the geometric context. 

Consequently, it is a natural idea to unify these two generalisations and formulate the PMP in the language of algebroids. Some attempts in this direction has already been made (\cite{cortes_martinez, martinez_red_opt_ctr,GG_var_calc}), yet so far there is no satisfactory solution. In this work we tried to give a full solution of the problem in a possibly general context both geometrical and technical. Namely, we formulate our extension of the PMP for optimal control problems (OCPs) on almost Lie algebroids (objects generalising Lie algebroids), we work with bounded measurable controls and absolutely continuous base trajectories, and we consider quite general boundary conditions. Note that, according to \cite{agrachev_pmp_50years}, the PMP was never a subject of any substantial generalisation, apart from the technical ones discussed in the next subsection.  

\subsection{Optimal control theory}

The PMP, proved in 1956 by L. Pontryagin and his collaborators \cite{pontryagin}, was an answer to the problem of finding  solutions of optimal problems of new type which could not be treated with the standard variational methods. Roughly speaking, we are interested in minimising the standard action functional of the calculus of variation (on a manifold $M$), but we restrict our attention to trajectories $x(t)\in M$ whose velocities can be controlled, i.e., they belong to a given subset of $\T M$. The PMP (Theorem \ref{thm:pmp_class}) expresses the necessary conditions for optimality in the language of the canonical symplectic structure of the cotangent bundle $\T^\ast M$. A short account of the result and the historical discussion of its development can be found in \cite{agrachev_pmp_50years}. 

From its appearance, the PMP became an object of intensive studies both on theoretical and applied level. As a technique it is used in a wide range of disciplines which include engineering, aerospace, robotics, medicine, economics, and other (see the references in \cite{barbero_pmp} for more details), and as such is used for solving concrete practical problems.  

The theoretical development concentrated in several directions. 
One of them, initiated by Clarke in the 70s, was devoted to relaxing the assumptions under which the result holds. This research, undertaken mostly by Clarke, Ioffe, Loewen, Mordukhovich, Rockafellar, and Vinter, used the tools provided by the non-smooth analysis and led to generalisations of the PMP among which the most important is \cite{clarke_min_hypotheses}. The monograph \cite{clarke_necessary} discusses this topic in details (see also \cite{clarke_brief} for a brief account of the most important results).    

In the 60s there  was a search for a simple proof of the PMP. The original argument of Boltyanskii (\cite{pontryagin}) is long, and some people believed that a shorter reasoning based on variational methods can be found. Such a proof does not exists so far, even though some simpler versions of the PMP can be proven quite elementary. Essentially, there are two kinds of proofs of the PMP. The first is the original one which uses needle variations---a tool developed by Boltyanskii especially for this purpose. The other argument was given by \cite{gamkrelidze_78} and is based on the concept of generalised controls. The research in this area concentrated mostly on translating the proof expressed originally in the language of differential equations to the language of differential geometry. Recent results \cite{barbero_pmp,agrachev} show a deeper understanding of the geometric origins on the PMP and connection between optimality and accessibility.

The last topic is closely related to the problem of abnormal extremals. These are the solutions of the OCPs which do not depend on the cost function but on the geometry of the considered system only. For a long time, until the discovery  of counterexamples in sub-Riemannian geometry  \cite{montgomery_abnormal}, people believed that such curves cannot be optimal. Since then this area became a subject of a growing interest \cite{agrachev1996abnormal,agrachev1998abnormal, bonnard_abnormal,langerock_phd,langerock2003}.

\subsection{Algebroids and their application to mechanics}

Lie algebroids were introduced by Pradines in the mid 60s as infinitesimal objects associated with Lie groupoids, per analogy to Lie algebras and Lie groups. In a series of short articles \cite{pradines1966,pradines1967,pradines1967a, pradines1968} he announced a very general program of developing the Lie theory for Lie groupoids.  The progress was not very fast until the 80s, when Weinstein introduced the notion of a symplectic groupoid to Poisson geometry \cite{weinstein1987symplectic}. Since then Lie algebroids and Lie groupoids has become objects of great significance in this field (see e.g. \cite{coste_dazord_weinstein,weinstein_coisotropic, courant, xu_weinstein, xu_symplectic, cattaneo_felder, crainic_fernandes_poisson}). The main reason of this is the fact that with every Poisson manifold $P$ one  can naturally associate a Lie algebroid structure on the cotangent bundle $\T^\ast P\lra P$ (on the other hand Lie algebroids are objects dual to linear Poisson structures). Therefore many problems of Poisson geometry can be translated into the language of Lie algebroid theory. Other applications of Lie algebroids appeared in the theory of foliations (e.g. \cite{pradines1966, winkelnkemper,crainic_moerdijk, moerdij_mrcun}) and, for locally trivial Lie algebroids, in the theory of connections (cf. \cite{mackenzie_1987}). 

In all these applications the problem of integrability of Lie algebroids posted already by Pradines plays a central role. For example the existence of a symplectic realisation of a Poisson manifold $P$ is equivalent to the integrability of the associated Lie algebroid $\T^\ast P\lra P$. The integrability problem was attacked by many authors and partial solutions for some special classes of Lie algebroids has been obtained (see \cite{almeria,almeida_kumpera,almeida_molino,mackenzie_1987,cattaneo_felder}) until it was completely solved by \cite{crainic_fernandes}. A detailed discussion of this topic is given in Appendix \ref{sapp:groupoids}.   

We are interested mostly in applications of the algebroid theory to mechanics which was also a Weinstein's idea \cite{weinstein} (see also \cite{libermann}). 
Since then the topic was studied in different contexts
by many authors (\cite{cortes_leon_marrero,cortes_martinez,leon_marrero_martinez, martinez_geom_form,martinez_lagr_mech,martinez_cft,martinez_lie_classs_mech,martinez_var_calc}). It was observed a little bit later, following the approach to
analytical mechanics proposed by Tulczyjew \cite{tulczyjew_ham_lagr,tulczyjew_urbanski_slow}, that
geometrical mechanics, together with the Euler-Lagrange and the
Hamilton equations, constrained dynamics, etc., can be developed based on more
general objects than Lie algebroids (\cite{GGU_geom_mech,GG_var_calc}). They were introduced in \cite{GU_algebroids} under the name \emph{(general)
algebroids}. This generalisation turns out to be of practical use,
as systems of mechanical type with nonholonomic constrains
allow a nice geometrical description in terms of
\emph{skew-algebroids} \cite{grabowski_nonholonomic} which do not have to satisfy the Jacobi identity in general.

\subsection{Reduction in optimal control theory}
As a motivation, before formulating our main results, let us discuss a reduction by inner symmetries of a control or mechanical system.  It is a well-known phenomena in analytical mechanics and control
theory that symmetries of a system lead to reductions of its
degrees of freedom. It is also well-understood that such a
reduction procedure is not purely computational but is associated
with a reduction of the geometrical structures hidden behind.

A typical situation considered in control theory is a control
system $F:P\times U\lra\T P$ on a manifold $P$ (with $U$ being
the set of control parameters) which is equivariant w.r.t. the
action of a Lie group $G$ on $P$ and the induced action on $\T
P$. If this action is free and proper,  we deal in fact with a
$G$-invariant control system on a principal bundle $P\ra P/G$.
Introducing a $G$-invariant cost function $L:P\times U\lra\R$, one
ends up with a $G$-invariant optimal control problem on a
principal bundle $P\ra P/G$.

There are basically two ways of obtaining optimality necessary
conditions for such a problem. In the first, one takes the
PMP for the unreduced system on $P$
and performs the Poisson reduction of the associated Hamiltonian
equations. For the simple case of an invariant system on a Lie
group $P=G$ (see eg. \cite{jurdjevic}) one obtains a system on the Lie
algebra $\g=\T G/G$, and the reduced Hamilton equations are
the Hamilton equations obtained by means of the Lie--Poisson
structure on $\g^\ast$. The best known example of this type is
probably the reduction for the rigid body in analytical mechanics:
from the cotangent bundle $\T^\ast\,\SO$ of the group $\SO$
playing the role of the configuration space to the linear Poisson
structure on $\so^\ast$---the dual of the Lie algebra $\so$.
Similar situation appears for homogeneous spaces \cite{jurdjevic} and
general principal bundles \cite{martinez_red_opt_ctr, martinez_lie_classs_mech}. The reduced system
lives on the bundle $\T P/G$ which is canonically a Lie
algebroid, called the {\em Atiyah algebroid of $P$}, and the
reduced Hamilton equations are associated with the linear
Poisson structure on $\T^\ast P/G$ (equivalent to the presence of
a Lie algebroid structure on $\T P/G$). In this approach one
obtains a version of the PMP, yet the Hamiltonian reduction seems
to be purely computational and a big part of the geometry of the
problem remains hidden.

The second approach, called the \emph{Lagrangian reduction}, was
introduced by Marsden and his collaborators (see for example
\cite{ cendra_holm_marsden}) in the context of analytical mechanics. Here, one
uses the reduced data $f:P/G\times U\lra \T P/G$ and $l:P/G\times
U\lra\R$, and the reduced variations (homotopies) to obtain a
reduced version of the Euler-Lagrange equations. In this approach
it becomes clear that the reduction of the variational principle
is not only the reduction of the data and geometrical structure, but
also a reduction of variations (homotopies)---this is most
clearly stated in \cite{cendra_holm_marsden} for the case of an invariant system
on a Lie group. By means of the Lagrangian reduction one can
obtain various results such as Euler-Poincar{\'e} equations and
Hammel equations. Despite of this advantages, the Lagrangian
reduction seems to be useful rather in mechanics than in control
theory, as one requires the geometry of the set of controls
$U$ and controls itself being very regular ($U$ should be at least an affine subspace of $\R^n$, and controls  differentiable), so accepts no discontinuity, switch-on-switch-off controls, etc.

\subsection{The main result}
The aim of our work is to extend the fundamental theorem of
optimal control---the PMP---to
the setting of almost Lie (AL) algebroids---geometrical objects
generalising Lie algebroids. 

Since Lie algebroids are infinitesimal (reduced) objects of
(local) Lie groupoids (like Lie algebras are for Lie groups), we
are motivated mostly by the Lie groupoid $\GG$---Lie algebroid
$A(\GG)$ reduction. Obviously, a reduction of an invariant control
system on a Lie groupoid should lead to a system on the associated
Lie algebroid. An example of such a situation was discussed in the previous subsection, where an invariant control system on a principal bundle lead to a system on the associated Atiyah algebroid.  

What is more, similarly to the scheme of the Lagrangian reduction, we should also reduce the variations (homotopies) from $\GG$ to $A(\GG)$. This will motivate the abstract definition of the homotopy of admissible paths on an AL
algebroid (algebroid homotopy). Finally, reducing an invariant OCP
on the Lie groupoid $\GG$ would not be complete without reducing
the boundary conditions as well. The idea is to substitute
fixed-end-points boundary conditions on $\GG$ by fixed-homotopy-class
conditions. These two are closely related (see Chapter \ref{ch:OCP} for a detailed discussion) and equivalent if $\GG$ is $\alpha$-simply connected (the homotopy class of a curve on a simply connected manifold is uniquely determined by its end-points).
Now, since homotopies in $\GG$ correspond to algebroid homotopies in $\AG$,
we can express the reduced boundary conditions in $A(\GG)$ as
fixing the algebroid homotopy class of the trajectory of the
reduced control system. A similar construction can be made also for more general boundary conditions.

At the end, for a general AL algebroid
$E$, we can formulate an analog of the OCP which, in the case of
an integrable algebroid $E=A(\GG)$, turns out to be an  invariant
OCP reduced from $\GG$. Let us note that our  understanding of
algebroid homotopies and homotopy classes is closely related to
that of Crainic and Fernandes \cite{crainic_fernandes}, where similar techniques
were used to generalise the Third Theorem of Lie and integrate Lie
algebroids. However, our framework is much more general, as we no
longer remain in the smooth category.

To explain briefly the result, let us note that an AL algebroid is
a vector bundle $\tau:E\lra M$ together with a vector bundle map
$\rho:E\lra\T M$ (\emph{anchor}) and a skew-symmetric bilinear
bracket $[\cdot,\cdot]$ on the space of sections of $E$ which
satisfy certain compatibility conditions. The algebroid structure
on $E$ is equivalent to the presence of a certain linear bi-vector
field $\Pi$ on the dual bundle $E^\ast$. Note that the bivector field $\Pi$ defines the \emph{Hamiltonian vector field} $\X_H$ associated with any $C^1$-function $H$ on
$E^\ast$, defined in the standard way as the contraction
$\X_H=\iota_{\dd H}\Pi$.

Standard examples of
AL (in fact Lie) algebroids are: the tangent bundle $E=\T M\lra M$
with $\rho=\id_{\T M}$ and the Lie bracket of vector fields, and
a finite-dimensional real Lie algebra $E=\g$ with the trivial anchor
map ($M$ is a single point in this case) and the Lie bracket on
$\g$. In the first case, $\Pi$ is the canonical Poisson tensor on
$\T^\ast M$, whereas in the second---the Lie--Poisson structure
on $\g^\ast$. An example of an AL algebroid which is not a Lie
algebroid is given by any real vector bundle with a smooth family
of skew-symmetric bilinear (but not Lie) operations on its fibers.

On the bundle $E$ we can consider \emph{admissible paths}; i.e.,
bounded measurable  maps $a:[t_0,t_1]\lra E$ such that the
projection $x(t)=\tau(a(t))$ of $a(t)$ onto $M$ is absolutely
continuous (AC) and $\dot x(t)=\rho(a(t))$ a.e. On admissible
paths we have an equivalence relation $a\sim b$ interpreted as a
reduction of homotopy equivalence (with fixed end-points). Note
that equivalent paths need not to be defined on the same time
interval. For an admissible path $\sigma$, we denote with
$[\sigma]$ the equivalence class of $\sigma$.

A \emph{control system} is defined by a continuous map $f:M\times
U\lra E$, where $U$ is a topological space of control parameters
such that, for each $u\in U$, the function $f(\cdot,u)$ is a section
of class $C^1$ of the bundle $E$. Every \emph{admissible control},
i.e., a bounded measurable path $u(t)$ in $U$, gives rise to an
absolutely continuous path in $M$ defined by the differential
equation
\begin{equation*}
\dot x(t)=\rho\left(f(x(t),u(t))\right),
\end{equation*}
and to an admissible path $a(t)=f(x(t),u(t))$ covering $x(t)$. We
will call $a(t)$ the \emph{trajectory} of the control system and
the pair $(x(t),u(t))$---the \emph{controlled pair}. An
\emph{optimal control problem} for this control system is
associated with a fixed homotopy class $[\sigma]$ of an admissible
path $\sigma$ and a \emph{cost function} $L:M\times U\ra\R$. The
problem is to find a controlled pair $(x(t),u(t))$ with
$t\in[t_0,t_1]$ (the time interval is to be found as well) such
that
\begin{equation}\tag{P}\label{eqn:PPP} \begin{split}\text{the integral $\int_{t_0}^{t_1}L\big(x(t),u(t)\big)\dd t$ is
minimal among all controlled pairs $(x,u)$ for}\\\text{ which the
$E$-homotopy class of the trajectory $f(x(t),u(t))$ equals
$[\sigma]$.}\end{split}
\end{equation}

Our main result is the following.

\begin{theorem}\label{thm:PPP}
Let $(x(t),u(t))$, with $t\in[t_0,t_1]$, be a controlled pair
solving the optimal control problem \eqref{eqn:PPP}. Then there exists a
curve $\xi:[t_0,t_1]\lra E^\ast$ covering $x(t)$ and a constant
$\ul\xi_0\leq 0$ such that
\begin{itemize}
    \item the curve $\xi(t)$ is a trajectory of the time-dependent family of Hamiltonian vector fields
    $\X_{H_t}$,
    $H_t(x,\xi):=H(x,\xi,u(t))$, where
    $$H(x,\xi,u)=\< f\left(x,u\right), \xi>+\ul\xi_0 L\left(x,u\right);$$
    \item the control $u$ satisfies the ``maximum principle''
    $$H(x(t),\xi(t),u(t))=\sup_{v\in U}H(x(t),\xi(t),v)$$
and $H(x(t),\xi(t),u(t))=0$ at every regular point $t$ of $u$;
    \item if $\ul\xi_0=0$, then the covector $\xi(t)$ is nowhere-vanishing.
\end{itemize}
\end{theorem}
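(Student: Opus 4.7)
My strategy is to adapt Boltyanskii's needle-variation argument to the AL algebroid setting. Three substantive modifications are required: the variations must live naturally in $E$ (not just in $\T M$); the transversality condition at the terminal time is imposed by the algebroid-homotopy class rather than a fixed endpoint; and the adjoint equation must be identified with the Hamiltonian flow on $E^\ast$ generated by the linear bi-vector $\Pi$ dual to the AL structure on $E$.

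The first step is to develop the infinitesimal theory of $E$-homotopies. Since an $E$-homotopy between admissible paths is itself an $E$-path of $E$-paths (in an appropriate sense), its infinitesimal counterpart should be a section along $a(t)$ satisfying a linearised anchor/bracket compatibility condition. I would verify that such sections parametrise exactly the first-order deformations of an admissible $a(t)$ that preserve the class $[\sigma]$, and that they span a definite subspace of variations at the terminal fibre. Dually, this produces the transversality condition that the adjoint covector $\xi(t_1)\in E^\ast_{x(t_1)}$ must satisfy.

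Next I would construct needle variations: at each Lebesgue point $\tau$ and each $v\in U$, replace $u$ on a short interval of length $\eps$ preceding $\tau$ by $v$, integrate the perturbed system, and expand to first order in $\eps$. Transporting the perturbation along $x(t)$ by the linearised drift and lifting to $E$ via $f$ produces a reachable cone $\K$ in $E_{x(t_1)}$; augmenting it by the cost variation gives a cone in $E_{x(t_1)}\oplus\R$. Optimality together with the homotopy constraint force this cone to miss the open strictly cost-decreasing half-line transverse to the homotopy-preserving subspace, and a Hahn--Banach separation yields a nontrivial pair $(\xi(t_1),\ul\xi_0)$ with $\ul\xi_0\le 0$. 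Propagating $\xi(t_1)$ backwards by the dual of the linearised flow, I would check that the resulting curve $\xi(t)\in E^\ast$ is an integral curve of $\X_{H_t}=\iota_{\dd H_t}\Pi$ with $H(x,\xi,u)=\<f(x,u),\xi>+\ul\xi_0 L(x,u)$. The pointwise maximum condition then follows by comparing the values of $H$ for arbitrary needle insertions $v\in U$ using the separation inequality; the identity $H\equiv 0$ at regular points of $u$ comes from time-shift variations (the terminal time being free); and the non-triviality of $\xi$ when $\ul\xi_0=0$ is inherited from that of the separating covector.

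The main obstacle, I expect, is the first step: giving the infinitesimal theory of $E$-homotopies enough flexibility to accommodate bounded measurable controls and absolutely continuous base trajectories, and extracting from it a clean transversality condition at $t=t_1$. The second delicate point is verifying that the adjoint dynamics on $E^\ast$ really is Hamiltonian for $\Pi$; classically $\Pi$ is the linear Poisson structure coming from a Lie bracket, but for a general AL algebroid $\Pi$ need not satisfy the Jacobi identity, so the Hamiltonian identification must be derived directly from the definitions of the anchor and bracket rather than relying on any Poisson property.
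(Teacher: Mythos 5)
Your overall architecture---needle variations at Lebesgue points, a reachable cone in $E_{x(t_1)}\oplus\R$ augmented by the cost coordinate, separation from the cost-decreasing ray, backward transport of the separating covector by the dual of the linearised flow, and a direct coordinate verification that the adjoint dynamics is $\X_{H_t}=\iota_{\dd H_t}\Pi$ without invoking any Poisson property of $\Pi$---coincides with the paper's. Your second ``delicate point'' is correctly identified and is resolved exactly as you suggest. But there is a genuine gap at your first step, and it is precisely the step on which the paper spends its hardest chapter. You treat the constraint ``the $E$-homotopy class of the trajectory equals $[\sigma]$'' as if it cut out a finite-dimensional ``homotopy-preserving subspace'' of the terminal fibre, dual to a transversality condition on $\xi(t_1)$, so that an infinitesimal separation argument would suffice. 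For a non-integrable AL algebroid this is false: an $E$-homotopy class is an equivalence class in the infinite-dimensional space of bounded measurable admissible curves, not a point of any finite-dimensional manifold, and there is no linear condition at $t_1$ characterising membership in $[\sigma]$. Consequently, knowing that the cone of infinitesimal variations cannot be separated from the cost-decreasing ray does not by itself produce a genuine competitor trajectory \emph{in the same class} with strictly smaller cost: the first-order data $\bm d^{\A}(0)$ of the final-point homotopy does not determine the class $[\bm d^{\A}(s)]_{s\in[0,\eps]}$.

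The paper closes this gap with Lemma \ref{lem:htp_of_a_r}: any uniformly regular family of $E$-paths $a_{\vec r}$ with $a_{\vec r}(0)=\vec r$, $\vec r$ ranging over a ball, contains for every small $\eps$ a member that is null-$E$-homotopic on $[0,\eps]$. Its proof is genuinely infinite-dimensional: one constructs a local homeomorphism $\Phi:\R^m\times \BM_0(I,\R^m)\supset W_0\lra\BM(I,\R^m)$ whose $\R^m$-factor semi-parametrises the local homotopy classes (the key property being $\big[\Phi(\vec r,d)\big]=\big[c_{\vec r}\big]$), inverts it by a Lipschitz/Banach fixed-point estimate, and then applies a no-retraction argument to the finite-dimensional composite $\vec r\mapsto\operatorname{pr}_1\circ\Phi^{-1}(\wt a_{\vec r})$. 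Two further points: your proposed infinitesimal theory of $E$-homotopies must rest on the almost Lie axiom $\rho([X,Y])=[\rho(X),\rho(Y)]_{\T M}$, since it is exactly this identity (Lemma \ref{lem:gen_E_htp}) that guarantees a one-parameter family of admissible paths generates an $E$-homotopy at all; and in the fixed-class version of the theorem there is in fact no transversality condition at $t_1$---such conditions appear only in the relative version with boundary morphisms $\Phi_0,\Phi_1$. Without the realisation lemma your separation step is unsupported.
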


We have also developed a version of this result for general boundary conditions. These can be expressed by means of two smooth algebroid morphisms $\Phi_0:\T S_0\lra E$ and $\Phi_1:\T S_1\lra E$. In the integrable case $E=\AG$ it is convenient to think of $\Phi_0$ and $\Phi_1$ as of two smooth maps $\wt\Phi_0:S_0\lra \GG$ and $\wt\Phi_1:S_1\lra\GG$ reduced to $\AG$. Now we can formulate the \emph{relative OCP} by substituting in the problem \eqref{eqn:PPP} the algebroid homotopy class $[\sigma]$ by the relative algebroid homotopy class $[\sigma]\modulo(\Phi_0,\Phi_1)$. Here the relative class can be understood as a reduction of a homotopy in $\GG$, with end-points in the images $\Image\wt\Phi_0$ and $\Image\wt\Phi_1$, to the algebroid $\AG$.

For a solution of the problem described above we can repeat Theorem \ref{thm:PPP} with additional transversality conditions, namely, that the covectors $\xi(t_0)$ and $\xi(t_1)$ annihilate the images $\Image\Phi_0$ and $\Image\Phi_1$, respectively.

\subsection{Discussion of the main result}

The above result looks quite similar to the standard PMP. Indeed,
in the case $E=\T M$ we obtain the PMP. The only
difference is that the fixed-end-point boundary condition are
substituted by the fixed-homotopy-class condition. However,  this
makes no essential difference, as is discussed in detail in
Chapter \ref{ch:OCP}. For the case of an integrable Lie algebroid
$E=A(\GG)$ our version of the PMP can be understood as a general
reduction scheme for invariant OCPs on Lie groupoids. In
particular, the theorem covers the known results on Hamiltonian
reduction of Jurdjevic \cite{jurdjevic} and Martinez \cite{martinez_red_opt_ctr,martinez_lie_classs_mech},
and Lagrangian reduction \cite{cendra_holm_marsden} (see Chapter
\ref{ch:examples} for details). It is, however, worth mentioning
that in our approach the reduced and the unreduced PMPs are parts of
the same universal formalism. Roughly speaking, we have
generalised the geometrical context in which the PMP can be used.
The technical setting remains quite general---we work with bounded measurable controls and AC base trajectories.
Moreover, since AL algebroids do not come, in general, from
reductions, our result admits a wider spectrum of possible
applications. An attempt in this direction can be found in  the
last example of Chapter \ref{ch:examples}. Finally, note that a version of Theorem \ref{thm:PPP} for general boundary conditions admits arbitrary algebroid morphisms $\Phi_0:\T S_0\lra E$ and $\Phi_1:\T S_1\lra E$. In the integrable case $E=\AG$ these correspond to arbitrary smooth maps $\wt\Phi_0:S_0\lra \GG$ and $\wt\Phi_1:S_1\lra \GG$. On the other hand, in literature, when speaking about general boundary conditions one usually restricts attention to immersions only.

The original contributions of the author includes:
\begin{itemize}
\item  a detailed study of the notion of an algebroid homotopy and algebroid homotopy classes in Chapter \ref{ch:E_htp}:
\begin{itemize}
\item  The definition of the Lie algebroid homotopy appeared in \cite{crainic_fernandes}. It was given it terms of time-dependent algebroid sections and connections and though was not very intuitive. We extended the notion of the algebroid homotopy to almost Lie algebroids, extended it to measurable class, and reformulated the definition to emphasise the similarities with the standard notion of homotopy,

\item We introduced the notion of a relative algebroid homotopy class.
\item We gave a new interpretation of algebroid homotopies in terms of a Stokes-like formula and also extended the well-known interpretation of algebroid homotopies as reduced homotopies of a groupoid to the measurable class (Theorem \ref{thm:int_htp}).
\item We ask a question about existence and uniqueness of algebroid homotopies. A uniqueness result (Lemma \ref{lem:htp_unique}) is a simple consequence of certain results from the theory of differential equations. On the other hand, the existence is strongly connected with the axioms of AL algebroid. We prove Lemma \ref{lem:gen_E_htp} which states that only for AL algebroids every sufficiently regular one-parameter family of admissible paths generates an algebroid homotopy (for a given initial-point algebroid homotopy). This result, which in its infinitesimal and smooth version appeared earlier in \cite{GG_var_calc}, distinguishes AL algebroids from more general objects of similar nature (skew-algebroids or general algebroids). 

\item We study further properties of algebroid homotopies in Section \ref{sec:E_htp_prop}. These include Lemma \ref{lem:reparam}, which shows the behaviour of algebroid homotopy classes under reparametrisation, and Lemma \ref{lem:htp}, which compares algebroid homotopies with and without fixed end-points.
\end{itemize}
\item formulating the OCPs in the language of AL algebroids in Chapter \ref{ch:OCP}:
\begin{itemize}
\item We proposed to express boundary conditions of the OCPs in terms of algebroid homotopies.
\item We gave an interpretation of these new OCPs and, in particular, studied in detail their relation to standard OCPs. 
\item We proposed to express general boundary conditions in the OCP in terms of algebroid morphisms rater than submanifolds.
\end{itemize}
\item formulating a version of the PMP in the language of AL algebroids for fixed end-points and general boundary conditions (Theorems \ref{thm:pmp} and \ref{thm:pmp_rel});
\item proving these theorems in Chapters \ref{ch:needle}--\ref{ch:proof}:
\begin{itemize}
\item The proof, in principle, imitates the argument of Boltyanskii \cite{pontryagin}. There are, however, technical difficulties connected with using the language of AL algebroids. These appeared mostly in two places. In the proof of Theorem \ref{thm:1st_main} we used reparametrisation and composition of algebroid homotopies to study the impact of needle variations on the trajectories of a control system. In Lemma \ref{lem:htp_of_a_r} to prove the existence of an admissible path realising a certain algebroid homotopy class we had to pass through infinite-dimensional Banach spaces. The reason for that is the following: AL algebroids are, in general, not integrable, and hence homotopy classes cannot be represented by points on a finite dimensional manifold (they are just cosets in a big space of curves). Our idea was to semi-parametrise these classes by a finite-dimensional space and reduce the reasoning to a finite dimensional topological problem.   
\item Moreover, the standard proof \cite{pontryagin} like most of the other proofs in literature (perhaps apart from \cite{barbero_pmp}) contains smaller or greater gaps. We put much effort to explain all the details and make the reasoning self-contained. 
\end{itemize}
\end{itemize}
A significant part of this work is based on \cite{grabowski_jozwikowski_pmp}. In this article we concentrated only on OCPs with fixed-end-point boundary conditions. Therefore all parts concerning general boundary conditions, in particular Definition \ref{def:htp_class_rel}, Theorem \ref{thm:pmp_rel} and its proof including Lemma \ref{lem:htp_of_a_r1} and Section \ref{sec:proof_pmp_rel}, and parts concerning an interpretation of relative algebroid homotopies  and an interpretation of the OCP \eqref{eqn:P_rel} (in Chapter \ref{ch:OCP}), has not been published before. Also broad parts of \cite{grabowski_jozwikowski_pmp} were reformulated to make the argument more understandable.

\subsection{Organisation of the manuscript}

The first major part of this work is intended to give all important definitions and motivations which allow to define OCPs on an AL algebroid in Chapter \ref{ch:OCP} and finally state our main results in Chapter \ref{ch:pmp}. We start with a brief introduction of AL algebroids in Chapter \ref{ch:algebroids}. In Chapter \ref{ch:E_htp} we concentrate on algebroid homotopies which are crucial in our work. Much effort was made to give a satisfactory definition in both smooth and measurable setting, and later to motivate this definition, mainly by Lie groupoid---Lie algebroid reduction arguments. We also derive all properties of algebroid homotopies which will be used later in the proof of our main results. Finally, in Chapter \ref{ch:OCP}, we define and motivate algebroid OCPs. Much attention is payed to algebroid homotopies naturally associated with a control system on an AL algebroid. This leads to the notion of a parallel transport.

In Chapter \ref{ch:examples} we derive some known results on reduction in optimal control theory and variational calculus by means of our result. In particular we formulate the version of the PMP for invariant OCPs on principal bundles and use it to study the example of the falling cat problem of \cite{montgomery_isohol}. Some attention is payed to the problems of the calculus of variations on principal bundles. We obtain the results on Lagrangian reduction, Hammel equations and Euler--Poincar{\'e} equations as a special case. We also derive the generalised Euler-Lagrange equations on a general AL algebroid.

The second mayor part, consisting of Chapters \ref{ch:needle}--\ref{ch:proof}, contains the proof of Theorem \ref{thm:PPP}. In Chapter \ref{ch:needle} we define needle variations and a cone $\bm K_\tau^u$ of infinitesimal variations of the trajectory of the control system. The geometry of this cone is studied in detail in Chapter \ref{ch:proof}, using technical results proved in Chapter \ref{ch:main}. Then we can follow \cite{pontryagin} to derive the necessary conditions for optimality from the geometric properties of $\bm K_\tau^u$ along the optimal trajectory. 

Parallel to the proof of Theorem \ref{thm:PPP} we prove its version with general boundary conditions. Usually this requires just a minor modification of the arguments used. We decided to give two proofs in spite of the fact that  Theorem \ref{thm:PPP} is just a special case of the version with general boundary conditions. We believe that in this way the already complicated reasoning is easier to follow. Moreover, this is the typical way the proof of the PMP is presented in literature.

In the main part of this work we assume that the reader is familiar with basics of control theory, geometry of convex sets, topology, theory of ODEs in the sense of Carath{\'e}odory, and basics of the theory of Lie groupoids. However, the reader who is not  confident with these topics can find necessary information in Appendixes \ref{app:dif_geom}--\ref{app:geom_top} (we give the references when necessary). We believe that our presentation is self-contained. The Appendixes contain also some minor technical results which are used in the argument, yet their derivation in the main text would make the presentation less clear.

\chapter{Almost Lie algebroids}\label{ch:algebroids}

This chapter is concerned with some basic definitions and constructions from the theory of algebroids. We begin with the definition of a skew-algebroid and an almost Lie algebroid as a special case. Later we introduce the notion of a Hamiltonian vector field and the complete lift of an algebroid section. The characterisation of skew-algebroids in terms of exterior differential operators is used to define a morphism of algebroids. This, in turn, leads to the notion of an admissible path. We end this chapter with the construction of the product of two algebroids.

Let us note that many aspects of the theory of algebroids are not present in this introductory chapter. The interested reader should confront \cite{mackenzie,mackenzie_1987,GU_algebroids,weinstein_silva}.

\subsection{Differentiable manifolds and vector bundles}

In this work we use the following notation and conventions of differential geometry. By $M$ we denote a smooth $n$-dimensional manifold, by $\tau_M: \T M\lra M$ the tangent vector bundle, and by $\pi_M:
\T^\ast M\lra M$ the cotangent vector bundle of $M$. When passing to a local description we will use a coordinate system $(x^a)$, $a=1,\dots,n$ in $M$. We have the induced (adapted) coordinate systems $(x^a, {\dot x}^b)$ in $\T
M$ and $(x^a, p_b)$ in $\T^\ast M$.\index{local coordinates}
        
More generally, let $\tau: E \lra M$ be a vector bundle, and let $\pi: E^\ast \ra M$ be the dual bundle. Choose $(e_1,\dots,e_m)$ --- a basis of local sections of $\tau: E\ra M$, and let $(e^{1}_*,\dots, e^{m}_*)$ be the dual
basis of local sections of $\pi: E^\ast\lra M$. We have the
induced coordinate systems: $(x^a, y^i),  y^i=\iota(e^{i}_*)$ in $E$, and $(x^a, \xi_i), \xi_i = \iota(e_i)$ in $E^\ast$, where the linear functions  $\iota(e)$ are given by the canonical pairing $\iota(e)(v_x)=\< e(x),v_x>$.
The null section of $\tau:E\lra M$ will be denoted by $\theta$, and $\theta_x$ will stand for the null vector at point $x\in M$.

In this work the summation convention is assumed.

\subsection{Almost Lie algebroids}

\begin{definition} Let $M$ be a manifold and $\tau:E\lra M$ a vector bundle over $M$. A \emph{skew-algebroid structure}\index{skew-algebroid} on $E$ is a vector bundle morphism $\rho:E\lra \T M$ over $M$, called the \emph{anchor map}, and a skew-symmetric bilinear bracket $[\cdot,\cdot]:\Sec(E)\times_M\Sec(E)\lra\Sec(E)$\index{algebroid bracekt} on (local) sections of $\tau$, which satisfies the Leibniz rule\index{Leibniz rule}
\begin{equation}\label{eqn:lieb_rule}
[X,f\cdot Y]=f[X,Y]+\rho(X)(f)Y
\end{equation}
for every $X,Y\in\Sec(E)$ and $f\in C^\infty(M)$.

If, additionally, the anchor map is an algebroid morphism, i.e.,
\begin{equation}\label{eqn:ala}
\rho\left([X,Y]\right)=[\rho(X),\rho(Y)]_{\T M},
\end{equation}
we will speak of an \emph{almost Lie algebroid}\index{almost Lie algebroid} (\emph{AL algebroid}\index{AL algebroid|see{almost Lie algebroid}} briefly).

If, in addition to \eqref{eqn:lieb_rule} and \eqref{eqn:ala}, the
bracket  satisfies the Jacobi identity (in other words, the pair
$(\Sec(E)$,$[\cdot,\cdot])$ is a Lie algebra), we speak of a
\emph{Lie algebroid}\index{Lie algebroid}.
\end{definition}

In local coordinates $(x^a,y^i)$, introduced at the beginning of
this chapter the structure of an algebroid on $E$\index{skew-algebroid!local description} can be
described in terms of local function $\rho^a_i(x)$ and $c^i_{jk}(x)$ on
$M$ given by
$$\rho(e_i)=\rho^a_i(x)\pa_{x^a}\quad \text{and} \quad [e_i,e_j]=c^k_{ij}(x)e_k.$$
The skew-symmetry of the algebroid bracket results in the
skew-symmetry of $c^i_{jk}$ in lower indices, whereas condition
\eqref{eqn:ala} reads as
$$\left(\frac{\pa}{\pa
x^b}\rho^a_k(x)\right)\rho^b_j(x)-\left(\frac{\pa}{\pa
x^b}\rho^a_j(x)\right)\rho^b_k(x)=\rho^a_i(x)c^i_{jk}(x).$$

In the context of mechanics it is convenient to think about an
algebroid as a generalisation of the tangent bundle.
An element $a\in E$ has the interpretation of a generalized
velocity with actual velocity $v\in\sT M$ obtained by applying the
anchor map $v=\rho(a)$. The kernel of the anchor map represents
inner degrees of freedom.

A basic example of a skew-algebroid structure is the tangent bundle $\T
M$ of a manifold $M$ with the standard Lie bracket and $\rho=\id_{\T M}$. We will refer to this structure as to a \emph{tangent algebroid}\index{tangent algebroid}. Another natural example is a finite-dimensional real Lie algebra $\g$ considered as a vector bundle over a single point with its Lie bracket and the trivial anchor. 

Natural examples of skew-algebroids are associated with systems with symmetries. For instance, the Lie algebra $\g$ of a Lie group $G$ can be understood as a
reduction of the tangent bundle $\T G$ by the left (or right)
action of $G$. Similarly, for a principal bundle $G\lra
P\lra M$, the reduced bundle $\T P/G\lra M$ has the structure of
an \emph{Atiyah algebroid}\index{Atiyah algebroid}. The Atiyah algebroid is a common generalisation of $\T M$ and $\g$. This example is discussed in more details in Appendix \ref{sapp:atiyah}. More generally, every Lie groupoid $\GG$ has an associated Lie algebroid $\mathcal{A}(\GG)$\index{Lie algebroid!of a Lie groupoid} which can be interpreted as a reduction of a subbundle of the tangent algebroid $\T \GG$ by the right (or left) action of $\GG$. This example is discussed in Appendix \ref{sapp:groupoids}. 

All the above are examples of Lie algebroids. Natural examples of skew-algebroids which are not Lie can be associated with nonholonomically constrained mechanical systems \cite{grabowski_nonholonomic}. 

\subsection{Hamiltonian vector fields and tangent lifts} Let us now describe some geometric constructions
associated with the structure of a skew-algebroid .

It can be shown (cf. \cite{GU_algebroids,GU_poiss_nijn}) that the presence of the
structure of a skew-algebroid on $E$ is equivalent to the
existence of a linear bivector field $\Pi_{E^\ast}$ on $E^*$\index{skew-algebroid!as a linear bi-vector}. In local
coordinates, $(x^a,\xi_i)$ on $E^\ast$, it is given by
\begin{equation}\label{eqn:poisson}
 \Pi_{E^\ast} =c^k_{ij}(x)\xi_k
\partial _{\xi_i}\wedge \partial _{\xi_j} + \rho^b_i(x) \partial _{\xi_i}
\wedge \partial _{x^b}.
\end{equation}
The linearity of $\Pi_{E^\ast}$ means that the corresponding mapping
$\wt\Pi:\sT^\ast E^\ast\lra\sT E^\ast$ is a morphism of double
vector bundles (cf. \cite{KU_dvb, GR_higher}). The tensor $\Pi_{E^\ast}$ is well
recognised in the standard situations: for the tangent algebroid
structure on $\T M$, it is the canonical Poisson structure on
$\T^\ast M$ dual to the canonical symplectic structure,
whereas for a Lie algebra $\g$, it is the
Lie--Poisson structure $\Pi_{\g^\ast}$ on $\g^\ast$. Actually, $E$ is a Lie
algebroid if and only if $\Pi_{E^\ast}$ is a Poisson tensor.

Now we can introduce the notion of a Hamiltonian vector field on
$E^\ast$. Let, namely, $h:E^\ast\lra\R$ be any $C^1$-function. We
define the \emph{Hamiltonian vector field}\index{Hamiltonian vector field} $\X_h$ in an obvious
way: $\X_h=\iota_{\dd h}\Pi_{E^\ast}$. In local coordinates,
\begin{equation}\label{eqn:ham_vf}
\X_h(x,\xi)=\rho^a_i(x) \frac{\pa h}{\pa
{\xi_i}}(x,\xi)\pa_{x^a}+\left(c^k_{ji}(x)\xi_k\frac{\pa h}
{\partial {\xi_j}}(x,\xi)- \rho^a_i(x)\frac{\pa h}{\pa
{x^a}}(x,\xi)\right) \pa_{\xi_i}.
\end{equation}

Another geometrical construction in the skew-algebroid setting is
the \emph{complete lift of an algebroid section}\index{complete lift} (cf.
\cite{GU_algebroids,GU_poiss_nijn}). For every $C^1$-section $X=f^i(x)e_i\in\Sec(E)$
we can construct canonically a vector field $\dd_\T(X)\in\Sec(\T
E)$ which, in local coordinates, reads as
\begin{equation}\label{eqn:tan_lift}
\dd_\T(X)(x,y) = f^i(x)\rho^a_i(x)\pa_{x^a} + \left( y^i
\rho^a_i(x) \frac{\pa f^k}{\pa x^a}(x) + c^k_{ij}(x)y^if^j(x)
\right) \pa _{y^k}.
\end{equation}
The vector field $\dd_\T(X)$ is linear w.r.t. the vector bundle structure $\T\tau:\T E\ra\T M$ (the above equation is linear w.r.t. $y^i$).

Consider the Hamiltonian vector field $\X_{\iota(X)}$ associated
with a linear function $\iota(X)(\cdot)=\<X,\cdot>_\tau$ on
$E^*$. It turns out that fields $\X_{\iota(X)}$ and $\dd_\sT(X)$
are related by
\begin{equation}\label{eqn:hvf_tgl}
\<\dd_\sT(X),\X_{\iota(X)}>_{\sT\tau}=0,
\end{equation}
where $\<\cdot,\cdot>_{\sT\tau}:\sT E\times_{\sT M}\sT
E^\ast\lra\R$ is the canonical pairing, being the tangent map of
$\<\cdot,\cdot>_\tau:E\times_M E^\ast\lra\R$ (in local coordinates,
$\<(x,y,\dot x,\dot y),(x,\xi,\dot x,\dot\xi)>_{\sT\tau}=\dot
y^j\xi_j+y^j\dot \xi_j$).

\subsection{Cartan Calculus}
The existence of a skew-algebroid structure on $E$ is equivalent
to the existence of exterior differential (de Rham) operators\index{exterior differential}
$$d_E:\Sec(\Lambda^kE^*)\lra\Sec(\Lambda^{k+1}E^*),\quad k=0,1,\dots\,,$$
defined by a straightforward generalisation of the Cartan formula
\begin{equation*}
\begin{split}d_E\omega (a_0,a_1,\hdots,a_k)=\sum_{i=0}^k(-1)^i\rho(a_i)\omega(a_0,\hdots,\check{a}_i,\hdots,a_k)
\\ +\sum_{1\leq i<j\leq
k}(-1)^{i+j}\omega\left([a_i,a_j],a_0,\hdots,\check{a}_i,\hdots,\check{a}_j,\hdots,a_k\right),
\end{split}
\end{equation*}
for $\omega\in\Sec(\Lambda^kE^*)$ and
$a_0,a_1,\hdots,a_k\in\Sec(E)$.

These operators, in general, needs not be cohomological. In fact,
$E$ is a Lie algebroid if and only if $d_E^2=0$. AL algebroids, in
turn, can be characterized by the condition that $d_E^2f=0$ for
every $f\in C^\infty(M)=\Sec(\Lambda^0E^*)$.

\subsection{Morphisms}
The above concept of the de Rham derivative allows one to give
a simple definition of a morphism of skew-algebroids. Namely,
given skew-algebroids $\wt\tau:\wt E\lra\wt M$ and $\tau:E\lra M$,
a bundle map $\Phi:\wt E\lra E$ over $\varphi:\wt M\lra M$ is a
\emph{skew-algebroid morphism}\index{morphism of skew-algebroids} if it is compatible with the exterior derivative:
\begin{equation}\label{eqn:E_morph_forms}
\Phi^*d_E\theta=d_{\wt E}\Phi^*\theta,\quad \text{for every
$\theta\in\Sec(\Lambda^kE^*)$}.
\end{equation}
Note that a vector bundle map $\Phi$ does not, in general, induce
any map on sections of $\wt E$, while the pull-back $\Phi^*$ of
sections of $E^*$ is always well defined.

Introduce local coordinates $(\wt x^\alpha,\wt y^\iota)$ and
$(x^a,y^i)$ and structure functions $\wt\rho^\alpha_\iota(\wt x)$, $\wt
c^\iota_{\kappa\mu}(\wt x)$ and $\rho^a_i(x)$, $c^i_{km}(x)$ on $\wt E$ and
$E$, respectively. The condition that
$\Phi\sim(\Phi^i_\iota,\varphi^a)$ is an algebroid morphism reads as
\begin{equation}\label{eqn:alg_morph}
\begin{split}&\Phi^i_\kappa(\wt x)\rho^a_i(\varphi( x))=\wt \rho^\alpha_\kappa(\wt x)\frac{\pa\varphi^a(\wt x)}{\pa \wt x^\alpha},\\
&\wt\rho^\alpha_\kappa(\wt x)\frac{\pa\Phi^i_\lambda(\wt x)}{\pa \wt
x^\alpha}-\wt\rho^\alpha_\lambda(\wt x)\frac{\pa\Phi^i_\kappa(\wt
x)}{\pa \wt x^\alpha}=c^i_{jk}\left(\varphi(\wt
x)\right)\Phi^j_\kappa(\wt x)\Phi^k_\lambda(\wt x).
\end{split}
\end{equation}

\subsection{Admissible paths} Consider an algebroid morphism $\sT\R|_I\lra E$, where $I=[t_0,t_1]\subset\R$ is an interval.
Every such map is uniquely determined by the image of the
canonical section $(t,\pa_t)$ of $\sT\R$ being a smooth curve
$a(t)$ in $E$ over the base path $x(t)$ in $M$. Condition
\eqref{eqn:E_morph_forms} reads as
\begin{equation}\label{eqn:adm}
\rho\left(a(t)\right)=\dot{x}(t) \quad \text{for every $t\in I$}.
\end{equation}
This means that the anchor map coincides with the tangent
prolongation of the projection $x(t)=\tau\left(a(t)\right)$. The
curves which satisfy \eqref{eqn:adm} will be called
\emph{admissible}\index{admissible path}\index{E-path|see{admissible path}}. In fact, \eqref{eqn:adm} also makes sense for non-smooth maps. From now on, by an \emph{admissible path on $E$} (or briefly \emph{$E$-path}) we shall mean a bounded measurable map $a:I\lra E$ over an absolutely continuous (AC) base
path $x=\tau\circ a:I\lra M$ such that \eqref{eqn:adm} is
satisfied a.e. in $I$. In such a case we will speak
of \emph{measurable $E$-paths}. For more information on measurable
functions see Section \ref{sec:meas}. Observe that from \eqref{eqn:alg_morph} it follows that a morphism of algebroids maps admissible paths into admissible paths.

To explain the meaning of admissible curves, observe that in the case
of the tangent algebroid $\T M$  admissible curves are precisely
the tangent lifts of base curves. We will show later (cf. Theorem
\ref{thm:int_htp}) that if an algebroid $E$ is integrable,
admissible curves come from a reduction of real curves in a Lie
groupoid integrating $E$.

Finally, we can introduce the concept of \emph{composition of
measurable $E$-paths}\index{composition of admissible paths}. Let $a:[t_0,t_1]\lra E$ and $\ol
a:[t_1,t_2]\lra E$ be two measurable $E$-paths with base paths
$x=\tau\circ a$ and $\ol x=\tau\circ\ol a$, respectively. Assume
that $x(t_1)=\ol x(t_1)$ (such paths will be called
\emph{composable}\index{composable admissible paths}). Clearly, the map $\wt a:[t_0,t_2]\lra E$
defined by
$$\wt a(t):=\begin{cases}
a(t) &\text{for $t\leq t_1$},\\
\ol a(t) &\text{for $t>t_1$}
\end{cases}$$
is another measurable $E$-path covering the AC curve
$$\wt x(t):=\begin{cases}
x(t) &\text{for $t\leq t_1$},\\
\ol x(t) &\text{for $t>t_1$}.
\end{cases}$$
This new $E$-path will be called the \emph{composition} of $a$ and
$\ol a$ and will be denoted by $\wt a=a\circ \ol a$.

\subsection{The product of skew-algebroids}
\index{product of skew-algebroids}
Given two skew-algebroids $(\tau_1:E_1\lra
M_1,\rho_1,[\cdot,\cdot]_1)$ and $(\tau_1:E_2\lra
M_2,\rho_2,[\cdot,\cdot]_2)$ we can define a skew-algebroid
structure on the product bundle $\tau=\tau_1\times\tau_2:E_1\times
E_2\lra M_1\times M_2$. The anchor will simply be
$\rho=\rho_1\times\rho_2:E_1\times E_2\lra\T M_1\times\T
M_2\approx \T(M_1\times M_2)$. The bracket can be defined by
equalities
\begin{align*}
\left[\p_1^\ast X_1,\p_1^\ast Y_1\right]&=\p_1^\ast [X_1,Y_1]_1,\\
\left[\p_2^\ast X_2,\p_2^\ast Y_2\right]&=\p_2^\ast [X_2,Y_2]_2,\\
\left[\p_1^\ast X_1,\p_2^\ast Y_2\right]&=\theta,
\end{align*}
where $X_1,Y_1\in\Sec(E_1)$ and $X_2,Y_2\in\Sec(E_2)$ are sections,  $\p_1:E_1\times E_2\lra E_1$ and $\p_2:E_1\times E_2\lra E_2$ are canonical vector bundle projections, and $\theta$ is a null section of $\tau$. The above equalities can be extended to arbitrary
sections by linearity and the Leibniz rule \eqref{eqn:lieb_rule}.
Clearly, the canonical projections $E_1\times E_2\lra E_i$, with
$i=1,2$, are algebroid morphisms, and if $E_1$ and $E_2$ are almost
Lie, then so is their product.

The local coordinate description of the product $E_1\times E_2$ is
very simple. If $(x^a,y^i)$ and $(\wt x^\alpha,\wt y^\iota)$ are
local coordinates on $E_1$ and $E_2$, respectively, we can
introduce natural coordinates $(X^A,Y^I)=(x^a,\wt x^\alpha,
y^i,\wt y^\iota)$ on $E_1\times E_2$. The structure functions
$C^I_{JK}(X)$ and $R^A_I(X)$ in these coordinates are trivial on
mixed-type terms ($R^\alpha_i=C^\iota_{j\kappa}=0$, etc.) and the
same as the structure functions of $E_1$ and $E_2$ on simple-type
terms ($C^i_{jk}(x,\wt x)=c^i_{jk}(x)$, $R^\alpha_\iota(x,\wt
x)=\rho^\alpha_\iota(\wt x)$, etc.).

\chapter{Homotopies of admissible paths}\label{ch:E_htp}

The notion of the homotopy of $E$-paths ($E$-homotopy) is crucial in this work. It will be our main tool to define the boundary conditions for optimal control problem on AL algebroids in Chapter \ref{ch:OCP}. In this chapter we give a definition of $E$-homotopy and study its basic properties. 

In the first section much attention is put on interpreting $E$-homotopy. In particular, if $E= \mathcal{A}(\GG)$ is a Lie algebroid of a Lie groupoid $\GG$, we show an equivalence of $E$-homotopies and true homotopies in $\GG$ (Theorem \ref{thm:int_htp}). We also interpret $E$-homotopies by a Stokes-like formula. Finally, we introduce an important notion of $E$-homotopy classes and $E$-homotopy classes relative to a pair of algebroid morphisms.

In the second section we study properties of $E$-homotopies. We prove an important Lemma \ref{lem:gen_E_htp} which states that AL algebroids are characterised by the property that a one-parameter family of $E$-paths establishes an $E$-homotopy. This result explains why AL algebroids are objects of our primary interest rather than a smaller class of Lie algebroids or a more general class of skew-algebroids. Later we prove Lemma \ref{lem:htp} which compares $E$-homotopies with and without fixed end-points. Finally, the behaviour of $E$-homotopy classes under reparametrisation (Lemma \ref{lem:reparam}) is studied. 

\section{The \texorpdfstring{$E$}{E}-homotopy and its interpretation}

The definition of $E$-homotopy will be given in
two steps. First, we will describe the smooth case and later generalise the
concept to measurable $E$-paths, more suitable in control theory. The rest of this section is concerned with giving a convincing motivation and interpretation of the $E$-homotopy. We give an interpretation in therms of a Stokes-like formula, and later show that $E$-homotopies on an integrable algebroid $\AG$ correspond to true homotopies in the groupoid $\GG$ integrating $\AG$. The last interpretation will be crucial in Chapter \ref{ch:OCP} to motivate the definition of an optimal control problem on an AL algebroid.

\begin{definition} Let $a_0,a_1:I=[t_0,t_1]\lra E$ be two smooth admissible paths.
An \emph{algebroid homotopy in $E$}\index{algebroid homotopy} (or \emph{$E$-homotopy} briefly) between $a_0$ and $a_1$ is a
pair of maps $a,b:[t_0,t_1]\times[0,1]\lra E$, over the same base map
$x:[t_0,t_1]\times[0,1]\lra M$, with $a_0(\cdot)=a(\cdot,0)$ and
$a_1(\cdot)=a(\cdot,1)$, such that
\begin{subequations}
\begin{align}
&&\label{eqn:a_adm} t\mapsto a(t,s) &\quad\text{is admissible for every $s\in[0,1]$}\,, \\
&&\label{eqn:b_adm} s\mapsto b(t,s) &\quad\text{is admissible for
every $t\in [t_0,t_1]$,}
\end{align}
\end{subequations}
and, moreover, $a$ and $b$ satisfy a system of
differential equations given in local coordinates $(x^a,y^i)$ in
$E$ by
\begin{equation}\label{eqn:htp_smooth}
\pa_tb^i(t,s)-\pa_sa^i(t,s)=c^i_{jk}(x(t,s))b^j(t,s)a^k(t,s).
\end{equation}
The $E$-paths $b_0(s):=b(t_0,s)$ and $b_1(s):=b(t_1,s)$ will be
called \emph{initial-point} and \emph{final-point $E$-homotopies}\index{initial-point E-homotopy}\index{final-point E-homotopy},
respectively. We will say that $E$-homotopy $(a,b)$ has \emph{fixed
end-points}\index{algebroid homotopy!with fixed end-points} if $b_0\equiv \theta_{x(t_0)}$ and $b_1\equiv
\theta_{x(t_1)}$.
\end{definition}

Having in mind that admissible paths are in a 1-1 correspondence with algebroid morphisms $A:\T\R|_I\lra E$ (an admissible path is the $A$-image of the canonical section $(t,\pa_t)$ of $\T\R$ --- compare Chapter \ref{ch:algebroids}), we may define $E$-homotopy in an equivalent way. An $E$-homotopy between two smooth alegebroid morphisms $A_0,A_1:\sT\R|_I\lra E$ (corresponding to $E$-paths $a_0$ and $a_1$) is an algebroid morphism 
$$H:\sT\R|_I\times\sT\R|_{[0,1]}\lra E,$$
such that $A_0(\cdot)=H(\cdot,\theta_0)$ and
$A_1(\cdot)=H(\cdot,\theta_1)$, where $\theta_0\in\sT_0\R$ and
$\theta_1\in\sT_1\R$ are null vectors. The equivalence with the previous definition can be seen as follows. The map $H$ is determined by the images of two canonical sections $(\pa_t,\theta_s)$ and $(\theta_t,\pa_s)$. We can define $a(t,s):=H(\pa_t,\theta_s)$ and $b(t,s):=H(\theta_t,\pa_s)$. Now conditions \eqref{eqn:alg_morph} for $H$ to be an algebroid morphism, translated to the language of $a$ and $b$, are precisely \eqref{eqn:a_adm}, \eqref{eqn:b_adm} and \eqref{eqn:htp_smooth}. 

\noindent Note that this alternative formulation agrees with the notion of the
homotopy of Lie algebroid morphisms as introduced by 
\cite{Kubarski}.

The notion of an $E$-homotopy can be also extended to
measurable setting.

\begin{definition}\label{def:htp_meas}
Consider two bounded measurable admissible paths $a_0,a_1:[t_0,t_1]\lra E$. 
An \emph{algebroid homotopy in $E$}\index{algebroid homotopy|main}\index{E-homotopy|see{algebroid homotopy}} (or \emph{$E$-homotopy} briefly) between $a_0$ and $a_1$ is a
pair of bounded measurable maps (w.r.t. both variables
separately) $a,b:[t_0,t_1]\times[0,1]\lra E$, over the same ACB base map
$x:[t_0,t_1]\times[0,1]\lra M$, such that
\begin{itemize}
\item  $a_0(\cdot)=a(\cdot,0)$ and
$a_1(\cdot)=a(\cdot,1)$ are well-defined trace values,
\item \eqref{eqn:a_adm} and \eqref{eqn:b_adm} hold in a measurable sense,
\item the pair $(a,b)$ is a weak solution of \eqref{eqn:htp_smooth} with a well-defined trace (see Appendix \ref{sapp:pde}), that is 
\begin{align}\label{eqn:htp_weak}
\begin{split}
&\iint_{I\times[0,1]}\Big[b^i(t,s)\pa_t\psi_i(t,s)- a^i(t,s)\pa_s\psi_i(t,s)+c^i_{jk}(x(t,s))b^j(t,s)a^k(t,s)\psi_i(t,s)\Big]\dd t\dd s\\
&=\int_{[0,1]}\Big[b^i(0,s)\psi_i(0,s)-b^i(1,s)\psi_i(1,s)\Big]\dd s- \int_I\Big[a^i(t,0)\psi_i(t,0)-a^i(t,1)\psi_i(t,1)\Big]\dd t
\end{split}
\end{align}
holds for every family of functions $\psi_i\in
C^\infty(I\times[0,1];\R)$.
\end{itemize}
Note that considering only W-solutions
of \eqref{eqn:htp_smooth} would not be enough, since otherwise the
boundary paths $a_0(t)$, $a_1(t)$, $b_0(s)$, and $b_1(s)$ would not
be well defined. For more information on W- and WT-solutions see Appendix \ref{sapp:pde}.  The notion of the \emph{initial-point} and the
\emph{final-point $E$-homotopy}, as well as the \emph{$E$-homotopy with fixed
end-points}, also remains valid in this new setting.
\end{definition}

From now on, by an $E$-homotopy we will mean a pair of maps $(a,b)$ in the sense of Definition \ref{def:htp_meas}.

Observe that any two measurable maps
$a,b:I\times[0,1]\lra E$ over the same AC base map
$x:I\times[0,1]\lra M$ define a measurable bundle map
$H:\sT\R|_I\times\sT\R|_{[0,1]}\lra E$ (that is, a measurable map
linear on fibers), where $H(\pa_t,\theta_s)=a(t,s)$ and
$H(\theta_t,\pa_s)=b(t,s)$.


Like in the case of $E$-paths, also 
$E$-homotopies allow a natural notion of \emph{composition}\index{composition of algebroid homotopies}. Let, namely, $a,b:I\times[0,1]\lra E$ over $x$, and
$\ol a,\ol{b}:J\times[0,1]\lra E$ over $\ol{x}$ (where
$I=[t_0,t_1]$ and $J=[t_1,t_2]$) be two $E$-homotopies. Assume
that the final-point $E$-homotopy of the first and the
initial-point $E$-homotopy of the second coincide; i.e.,
$b(t_1,s)=\ol b(t_1,s)$ a.e. (hence $x(t_1,s)=\ol x(t_1,s)$, so
$a(\cdot,s)$ and $\ol{a}(\cdot,s)$ are composable for every
$s\in[0,1]$). The maps $\wt a,\wt b:I\cup J\times[0,1]\lra E$
defined as 
$$\wt a(t,s)=\begin{cases}
a(t,s)& \text{for $t\leq t_1$},\\
\ol a(t,s)& \text{for $t>t_1$}
\end{cases}$$
and
$$\wt b(t,s)=\begin{cases}
b(t,s)& \text{for $t\leq t_1$},\\
\ol b(t,s)& \text{for $t>t_1$}
\end{cases}$$
clearly form an $E$-homotopy joining $a(\cdot,0)\circ \ol
a(\cdot,0)$ and $a(\cdot,1)\circ \ol a(\cdot,1)$. The
initial-point $E$-homotopy is $b(t_0,\cdot)$, while the
final-point $E$-homotopy is $\ol b(t_2,\cdot)$.

\subsection{Uniqueness of \texorpdfstring{$E$}{E}-homotopies}
As a direct consequence of the definition of an
$E$-homotopy and Lemma \ref{lem:wt_uniq} we get the following
result.

\begin{lemma}[uniqueness of $E$-homotopies]\index{algebroid homotopy!uniqueness}\label{lem:htp_unique} Let $a:I\times[0,1]\lra E$ be a bounded
measurable map covering $x:I\times[0,1]\lra M$ such that $t\mapsto
a(t,s)$ is admissible for every $s$. Then there exists at most
one bounded measurable map $b:I\times[0,1]\lra E$ covering $x$
such that $(a,b)$ is an $E$-homotopy with a given initial-point
$E$-homotopy $b(t_0,s)=b_0(s)$.
\end{lemma}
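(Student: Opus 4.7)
The plan is to reduce the statement to a uniqueness result for linear Carathéodory ODEs. Suppose $(a,b)$ and $(a,b')$ are two $E$-homotopies over the same base map $x$ with the same initial-point homotopy $b_0$. Setting $d := b - b'$ and subtracting the weak equation \eqref{eqn:htp_weak} written for $(a,b')$ from the one written for $(a,b)$, the linearity of the integrand in the unknown (the term $c^i_{jk}(x) b^j a^k$ is linear in $b$ while $a$ and $x$ are fixed) yields that $d$ is a bounded measurable weak solution of
$$\partial_t d^i(t,s) = c^i_{jk}(x(t,s))\, a^k(t,s)\, d^j(t,s)$$
with vanishing trace on the initial edge, $d(t_0,\cdot) \equiv 0$, while the boundary terms at $s=0$ and $s=1$ in \eqref{eqn:htp_weak} cancel because the two homotopies share the same $a_0$ and $a_1$.

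Next, I would disintegrate this 2D weak statement into a family of 1D weak ODEs indexed by $s$. Plugging into \eqref{eqn:htp_weak} test functions of separable form $\psi_i(t,s) = \phi_i(t)\eta(s)$, with $\phi_i\in C^\infty([t_0,t_1])$ and $\eta\in C^\infty_c((0,1))$ arbitrary, a Fubini-type argument shows that for almost every $s\in[0,1]$ the slice $t\mapsto d(t,s)$ is a bounded measurable weak (WT) solution, with trace $d(t_0,s)=0$, of the linear Carathéodory ODE
$$\dot d^i(t) = c^i_{jk}(x(t,s))\, a^k(t,s)\, d^j(t).$$
The coefficients of this ODE are bounded measurable in $t$: indeed, $x$ is AC hence continuous and bounded on the compact domain, the structure functions $c^i_{jk}$ are smooth, and $a(\cdot,s)$ is bounded measurable. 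Lemma \ref{lem:wt_uniq} then applies slicewise and forces $d(\cdot,s)\equiv 0$ for a.e.\ $s$, giving $b=b'$ as measurable maps, which is what we need.

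The main obstacle will be carrying out the disintegration step carefully. Going from the 2D weak formulation with full test functions $\psi_i(t,s)$ to a genuine 1D Carathéodory equation for a.e.\ slice requires both a correct choice of separable test functions and an argument (via dominated convergence or a Lebesgue-density argument on $s$) ensuring that the exceptional null set of $s$ in the 1D statement can be isolated and the trace condition $d(t_0,s)=0$ interpreted pointwise a.e.\ in $s$. Once this reduction is done cleanly, the remainder of the argument is the standard Carathéodory uniqueness already recorded in Lemma \ref{lem:wt_uniq}.
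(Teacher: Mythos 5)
Your proposal is correct and follows essentially the same route as the paper, which obtains Lemma \ref{lem:htp_unique} as an immediate consequence of the definition of an $E$-homotopy together with Lemma \ref{lem:wt_uniq}. Note, however, that Lemma \ref{lem:wt_uniq} as stated is already the \emph{two-dimensional} uniqueness result for WT-solutions of \eqref{eqn:basic} on the whole rectangle $K=I\times[0,1]$ with prescribed initial trace $b(t_0,\cdot)=b_0$, so it applies verbatim to the $E$-homotopy equation \eqref{eqn:htp_smooth}; your subtraction of the two weak equations and the slicewise disintegration via separable test functions essentially unfold the internal proof of that lemma (which already reduces matters to a linear Carath\'eodory ODE and the Gronwall Inequality), so the extra care about exceptional null sets in $s$ is not needed if you simply cite it.
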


\subsection{The \texorpdfstring{$E$}{E}-homotopy via Stokes theorem} 

We shall now give another, more geometrical, description of an
$E$-homotopy by means of a Stokes-like formula. First, we
will introduce the notion of an integral of an $E$-$k$-form, i.e.,
an element $\omega\in\Sec(\Lambda^kE^\ast)$, over a bundle
morphism $\Phi:\sT N\lra E$. We define
$$\int_{\Phi(N)}\omega:=\int_N\Phi^\ast\omega\,,$$
where the last integral is the standard integration of the
differential $k$-form $\Phi^\ast\omega$ on the manifold $N$. Now,
if $N$ is a manifold with boundary $\pa N$, we define
$$\int_{\pa\Phi(N)}\omega:=\int_{\pa N}\Phi^\ast\omega.$$
Observe that in case $\Phi:\sT N\lra\sT M$ is the tangent lift of
a diffeomorphism $\varphi:N\lra M$, the above definitions coincide
with the standard concept of differential form integration. The
morphism $\Phi$ need not be differentiable. Since, given local
coordinates $N\supset V\overset \psi\lra
V^{'}\subset\R^n\ni(y^1,\hdots,y^n)$ on $N$,
$$\int_V\Phi^*\omega=\int_{V^{'}\subset\R^n}\omega\left(\Phi(\pa_{y^1}),\hdots,\Phi(\pa_{y^n})\right)\dd y^1\cdots\dd y^n,$$
we shall require only that $\Phi$ maps smooth sections of $\sT N$
into bounded measurable sections of $E$.

Now assume that $\Phi:\sT\R|_I\times\sT\R|_{[0,1]}\lra E$ over
$\varphi:\R|_I\times\R|_{[0,1]}\lra M$ is a bundle map defined by
means of measurable maps $a(t,s)$ and $b(t,s)$ as in the
definition of an $E$-homotopy. Assume, moreover, that conditions
\eqref{eqn:a_adm} and \eqref{eqn:b_adm} are satisfied. Take any
$E$-1-form $\alpha\in\Sec(E^\ast)$; in local coordinates,
$\alpha\sim(x^a,\alpha_i(x))$. Now
\begin{align*}
\int_\Phi\dd_E\alpha&=\iint_{I\times[0,1]}\dd_E\alpha\left(\Phi(\pa_t),\Phi(\pa_s)\right)\dd
t\dd s=
\iint_{I\times[0,1]}\dd_E\alpha\left(a(t,s),b(t,s)\right)\dd t\dd s=\\
&=\iint_{I\times[0,1]}\left(\rho^a_i(x)a^i\frac{\pa\alpha_j}{\pa
x^a}b_j-\rho^a_i(x)b^i\frac{\pa\alpha_j}{\pa
x^a}a_j-\alpha_ic^i_{jk}(x)a^jb^k\right)\dd t\dd s.
\end{align*}
Having in mind that $\rho^a_i(x(t,s))a^i=\pa_tx^a(t,s)$ and
$\rho^a_i(x(t,s))b^i=\pa_sx^a(t,s)$, and defining $\wt
\alpha_i(t,s):=\alpha_i(x(t,s))$, we get
$$\int_\Phi\dd_E\alpha=\iint_{I\times[0,1]}\left(b^j\pa_t\wt\alpha_j-a^j\pa_s\wt\alpha_j-\wt\alpha_ic^i_{jk}a^jb^k\right)\dd t\dd s.$$
Similarly,
\begin{eqnarray*}\int_{\pa\Phi}\alpha &=&\int_I\left(\wt\alpha_i(t,0)a^i(t,0)-\wt\alpha_i(t,1)a^i(t,1)\right)\dd t\\&&-
\int_{[0,1]}\left(\wt\alpha_i(0,s)b^i(0,s)-\wt\alpha_i(1,s)a^i(1,s)\right)\dd s.
\end{eqnarray*}
As we see, \eqref{eqn:htp_weak} holds for all $\wt\alpha_i$
if and only if
\begin{equation}\label{eqn:stokes}
\int_\Phi\dd_E\alpha=\int_{\pa\Phi}\alpha,
\end{equation}
which can be understood as a generalized Stokes formula\index{Stokes theorem}.

\begin{remark}
In fact, \eqref{eqn:htp_weak} is more general than
\eqref{eqn:stokes} since $\wt\alpha_i$ being the pull-back of
$\alpha$ via the map $\Phi$ cannot be an arbitrary function of $t$
and $s$. We can, however, easily overcome this drawback by using
the graph of $\Phi$ in $\sT\R|_I\times\sT\R|_{[0,1]}\times E$ and
$(\sT\R|_I\times\sT\R|_{[0,1]}\times E)$-1-forms instead of $\Phi$
and $E$-1-forms.
\end{remark}\medskip

\begin{remark}
The condition \eqref{eqn:a_adm} for $a$ (and analogously
\eqref{eqn:b_adm} for $b$) can be expressed in the Stokes-like way
as well. Consider, namely, the map
$\Phi_s(\cdot):=\Phi(\cdot,\theta_s):\sT\R|_I\lra E$. The
admissibility of $a$ reads as
$$\int_{\Phi_s}\dd_Ef=\int_{\pa\Phi_s}f$$
for every $f\in C^\infty(M)$ and $s\in[0,1]$.
\end{remark}\medskip

\subsection{$E$-homotopies on an integrable algebroid}

Now we show that admissible paths and algebroid homotopies on an integrable algebroid $\AG$ are true paths and true homotopies on an integrating groupoid $\GG$ reduced to $\AG$ by means of the reduction map \eqref{eqn:reduction}. We are using the language of Lie groupoids, so the reader unfamiliar with this topic should consult Appendix \ref{sapp:groupoids}.

\begin{theorem}[integration]\label{thm:int_htp}\index{admissible path!integration}\index{algebroid homotopy!integration}
Let $A(\GG)\ra M$ be a Lie algebroid of a Lie groupoid
$\GG$. Fix $x_0,y_0\in M$ and an element $g_0\in\alpha^{-1}(y_0)\cap\beta^{-1}(x_0)$.

There is a 1-1 correspondence between:
\begin{itemize}
\item bounded measurable
admissible paths $a:[t_0,t_1]\lra \AG$ over an ACB path $x:[t_0,t_1]\lra M$ such that $x(t_0)=x_0$, and
\item ACB paths $g:[t_0,t_1]\lra \GG_{y_0}$ such that $g(t_0)=g_0$ and  $x(t)=\beta(g(t))$. 
\end{itemize} The correspondence is given by means of the reduction map \eqref{eqn:reduction}; i.e., $a(t)=\mathcal{R}(\pa_t g(t))=\T R_{g(t)^{-1}}(\pa_t g(t))$. 

Similarly, there is a 1-1 correspondence between:
\begin{itemize}
\item bounded
measurable algebroid homotopies $a,b:[t_0,t_1]\times[0,1]\lra \AG$ over an ACB map $x:[t_0,t_1]\times[0,1]\lra M$ such that $x(t_0,0)=x_0$, and
\item ACB homotopies $h:[t_0,t_1]\times [0,1]\lra \GG_{y_0}$ (i.e., $h$ is ACB w.r.t. both variables) such that $h(t_0,0)=g_0$ and  $x(t,s)=\beta(h(t,s))$. 
\end{itemize} 
Again, the correspondence is given by means of the reduction map \eqref{eqn:reduction}; i.e., $a(t,s)=\mathcal{R}(\pa_th(t,s))=\T R_{h(t,s)^{-1}}(\pa_t h(t,s))$ and $b(t,s)=\mathcal{R}(\pa_sh(t,s))=\T R_{h(t,s)^{-1}}(\pa_s h(t,s))$.  
\end{theorem}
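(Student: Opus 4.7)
The plan is to treat the two correspondences separately, the homotopy case being built on the path case. First I would handle the path correspondence in the forward direction. Given an ACB path $g:[t_0,t_1]\ra \GG_{y_0}$ with $g(t_0)=g_0$, the image lies in the $\alpha$-fibre over $y_0$, so $\pa_t g(t)\in\ker \T\alpha$ a.e. Applying the reduction map produces $a(t):=\mathcal{R}(\pa_t g(t))=\T R_{g(t)^{-1}}(\pa_t g(t))\in A(\GG)_{x(t)}$ where $x(t)=\beta(g(t))$. Admissibility follows from $\rho=\T\beta|_{A(\GG)}$ and the fact that $\beta\circ R_{g^{-1}}=\beta$ on its natural domain, which gives $\rho(a(t))=\T\beta(\pa_t g(t))=\dot{x}(t)$ a.e. Boundedness and measurability of $a$ are inherited from the ACB regularity of $g$. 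In the reverse direction, given an admissible $a$ over $x$, I would construct $g$ as the unique ACB solution to the time-dependent right-invariant ODE $\dot g(t)=\T R_{g(t)}(a(t))$ with $g(t_0)=g_0$, using the Carath\'eodory existence/uniqueness theory recalled in the appendices. Since the generator belongs to $\ker \T\alpha$, the flow preserves $\alpha$-fibres and $g(t)$ stays in $\GG_{y_0}$; projecting by $\beta$ shows $\beta\circ g=x$ by uniqueness of the downstairs Cauchy problem. These assignments are manifestly mutually inverse.

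For the homotopy correspondence, the forward map $h\mapsto(a,b)$ with $a(t,s):=\mathcal{R}(\pa_t h(t,s))$ and $b(t,s):=\mathcal{R}(\pa_s h(t,s))$ uses the path case slice-wise to obtain conditions \eqref{eqn:a_adm} and \eqref{eqn:b_adm}. The algebroid homotopy equation \eqref{eqn:htp_smooth} is, at the level of the groupoid, the Maurer--Cartan identity produced by the equality of the mixed partial derivatives $\pa_s\pa_t h=\pa_t\pa_s h$ after translating both to the identity section via right-invariance; written in the trivialisation $(x^a,y^i)$ of $A(\GG)$ it reproduces exactly \eqref{eqn:htp_smooth} with the structure constants $c^i_{jk}$ of $A(\GG)$. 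In the smooth category this is a direct computation, and the measurable version is then obtained by testing against smooth $\psi_i$, integrating by parts on $I\times[0,1]$ and collecting the boundary traces, which yields precisely the weak formulation \eqref{eqn:htp_weak}.

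The main obstacle is the reverse direction for homotopies. Given a measurable $E$-homotopy $(a,b)$ over $x$, I would first produce the initial-point lift $s\mapsto h(t_0,s)\in\GG_{y_0}$ by applying the path correspondence to the admissible path $b(t_0,\cdot)$ starting at $g_0$, and then for each fixed $s$ lift $t\mapsto a(t,s)$ through the path correspondence starting at $h(t_0,s)$; this defines $h(t,s)$. By the path part already proved, $\mathcal{R}(\pa_t h)=a$ almost everywhere and $\beta\circ h=x$. The delicate point is that the transversal reduction $\wt b(t,s):=\mathcal{R}(\pa_s h(t,s))$ coincides with the prescribed $b$. Here I would invoke the forward direction once more: $(a,\wt b)$ is itself an algebroid homotopy (by the Maurer--Cartan argument applied to the $h$ just constructed) with the same initial-point trace $\wt b(t_0,\cdot)=b(t_0,\cdot)$, so Lemma~\ref{lem:htp_unique} forces $\wt b=b$ a.e. Finally, the two assignments at the homotopy level invert each other because they do so slice-wise from the path-level bijection. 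Two technical points require care and will carry the bulk of the work: (i) justifying Carath\'eodory integration of the time-dependent right-invariant vector fields on $\GG$, including the joint measurability in $(t,s)$ needed so that $h$ is ACB in both variables separately; and (ii) the passage between the weak PDE \eqref{eqn:htp_weak} and the pointwise commutation of partial derivatives of $h$, which is handled by standard trace and approximation results from the appendix on weak solutions.
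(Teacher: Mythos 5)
Your overall strategy coincides with the paper's: reduce the homotopy case to the path case, obtain the homotopy equation from the commutation of mixed partials of $h$ translated to the identity section by right-invariance, and settle the reverse homotopy direction by lifting $b(t_0,\cdot)$ first, then each $a(\cdot,s)$, and invoking the forward direction together with the uniqueness Lemma \ref{lem:htp_unique}.

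There is, however, one concrete gap in the reverse path direction. The equation $\dot g(t)=\T R_{g(t)}\left(a(t)\right)$ is not a Carath\'eodory ODE in the sense of Theorem \ref{thm:exist}: since $a(t)\in\AG_{x(t)}$ and $\T R_g$ is defined on $\AG_{\beta(g)}$, the right-hand side only makes sense for those $g\in\GG_{y_0}$ with $\beta(g)=x(t)$, i.e.\ on a moving proper submanifold of $\GG_{y_0}$, so the existence and uniqueness theory you cite does not apply as stated. Your remark that ``$\beta\circ g=x$ by uniqueness of the downstairs Cauchy problem'' cannot repair this, because $\beta(g(t))=x(t)$ is a precondition for the equation to be defined at time $t$, not a consequence of solving it. The paper resolves exactly this point by extending the generator off the fibre: in a chart where $\T R_{g^{-1}}$ reads $(z^i,\dot z^j)\mapsto(\beta^a(z),F^i_j(z)\dot z^j)$ one solves $\dot z^i=f^i_j(z)a^j(t)$, a genuine Carath\'eodory equation on the whole chart, and only afterwards checks that $\beta(z(t))$ satisfies the same base ODE as $x(t)$ with the same initial value, hence equals $x(t)$, so that the extended solution does satisfy the intended right-invariant equation. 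You need this (or an equivalent extension of the vector field) for the path lift to exist at all; the same device also supplies, via Theorem \ref{thm:param}, the joint ACB regularity of $h(t,s)$ that your homotopy argument presupposes. Once this is inserted, the rest of your argument goes through essentially as in the paper.
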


\begin{proof}
In the smooth case the result can be easily derived from Theorem \ref{thm:Lie1} and Corollary \ref{cor:int}. We know that (cf. considerations on page \pageref{cor:int}) smooth admissible paths and smooth algebroid homotopies in $\AG$ correspond to algebroid morphisms   $\T\R|_{[t_0,t_1]}\lra\AG$ and $\T\R|_{[t_0,t_1]}\times\T\R|_{[0,1]}\lra\AG$. The underlying manifolds are simply connected, hence the morphisms can be lifted to smooth maps $g:[t_0,t_1]\lra\GG_{y_0}$ and $h:[t_0,t_1]\times[0,1]\lra\GG_{y_0}$, which are unique up to the choice of the initial points (cf. Corollary \ref{cor:int}). 

In the measurable case, however, the argument needs a little more attention. 
We will work in local coordinates $(z^i)$ on $\GG_{y_0}$, $(x^a)$
on $M$, and linear coordinates $(x^a,y^i)$ on
$\AG$. We have induced coordinates $(z^i,\dot z^j)$ on $\T\GG_{y_0}$ and
$(x^a,\dot x^b)$ on $\T M$. 

For $g\in\GG_{y_0}$, $\T R_{g^{-1}}$ maps $\T_g\GG_{y_0}=\T^\alpha_g\GG$ isomorphically into $\AG_{\beta(g)}$. In coordinates, $\T R_{g^{-1}}:(z^i,\dot z^j)\mapsto(x^a,y^i)$ can be expressed as 
\begin{align*}
&x^a=\beta^a(z),\\
&y^i=F^i_j(z)\dot z^j,
\end{align*}
where $\beta^a(z)$ and  $F^i_j(z)$ are smooth and $F^i_j(z)$ is invertible. By $f^j_i(z)$ we will denote the inverse matrix of $F^i_j(z)$. The structure functions of the algebroid $\AG$ in these coordinates satisfy
\begin{align*}
&\rho^a_i(\beta(z))F^i_j(z)\dot z^j=\frac{\pa \beta^a(z)}{\pa z^j}\dot z^j,\\
&c^i_{jk}(\beta(z))F^j_m(z)F^k_n(z)\dot z^m\dot
z^n=\left(\frac{\pa F^i_n(z)}{\pa z_m}-\frac{\pa F^i_m(z)}{\pa
z_n}\right)\dot z^m\dot z^n,
\end{align*}
since $\rho$ is the reduced $\T\beta$, and the $\AG$-bracket is the reduced Lie bracket on $\GG$. From the above we get
\begin{align*}
&\rho^a_i(\beta(z))=\frac{\pa \beta^a(z)}{\pa z^j}f^j_i(z),\\
&c^i_{jk}(\beta(z))=\left(\frac{\pa F^i_n(z)}{\pa z_m}-\frac{\pa
F^i_m(z)}{\pa z_n}\right)f^m_j(z)f^n_k(z).
\end{align*}

To prove the first part of the assertion, observe that, if $g:[t_0,t_1]\lra\GG_{y_0}$ is an ACB path over an ACB path $x:[t_0,t_1]\lra M$, then the derivative $\pa_t g(t)\in\T_{g(t)}\GG_{y_0}$ is a bounded measurable
path, and so is  $a(t)=\mathcal{R}(\pa_t g(t))=\T R_{g(t)^{-1}}(\pa_t g(t))$, since $\mathcal{R}$ is smooth. Clearly,  $\dot x(t)=\T\beta(\pa_t g(t))=\rho(a(t))$ (cf. diagram \eqref{eqn:reduction}), so $a(t)$ is
a bounded measurable $\AG$-path.

Conversely, consider a bounded measurable admissible path $a:[t_0,t_1]\lra \AG$
over an ACB path $x:[t_0,t_1]\lra M$. For every $t\in[t_0,t_1]$ and all $g$ satisfying $\beta(g)=x(t)$ we may lift $a(t)\in \AG_{x(t)}$ to a vector $A(t,g):=\T R_g(a(t))\in\T_g\GG_{y_0}$. We would like to define $g(t)$ as a solution of the differential equation in $\GG_{y_0}$
$$\pa_t g(t)=A(g,t)$$
with the initial condition $g(t_0)=g_0$. Then, clearly, $\T R_{g(t)^{-1}}(\pa_t g(t))=\T R_{g(t)^{-1}}A(g(t),t)=a(t)$ as in the assertion. The problem is that, since $A(g,t)$ is defined only on a subset of $\GG_{y_0}$ it is not clear that the solution exists, nor that it is unique.  To overcome this difficulty consider a differential equation on $\GG_{y_0}$ given in local coordinates by
\begin{equation}\label{eqn:int_a}
\dot z^i=f^i_j(z)a^j(t).
\end{equation}
It satisfies the assumptions of Theorem \ref{thm:exist}
for measurable ODEs, so it has an ACB solution $z(t)$, unique up
to the choice of the initial point. In particular, let $z(t)$ be the solution  with $z(t_0)=g_0\in\GG_{y_0}$. The base trajectory $\wt x(t)=\beta(z(t))$ satisfies 
\begin{align*}
\pa_t\wt x^a(t)&=\frac{\pa \beta^a(z(t))}{\pa z^j}\dot z^j(t)=\frac{\pa \beta^a(z(t))}{\pa z^j}f^j_i(z(t))a^i(t)=\rho^a_i(\wt x(t))a^i(t),\\
\wt x(t_0)&=\beta(z(t_0))=\beta(g_0)=x_0.
\end{align*}
On the other hand, by admissibility of $a(t)$, we have $\pa_t x^a(t)=\rho^a_i(x(t))a^i(t)$ and $x(t_0)=x_0$,
hence; clearly, $\wt x(t)=x(t)$. This, in turn, implies that $\dot z^i(t)=f^i_j(t) a^j(t)=A^i(z(t),t)$, i.e., $g(t)=z(t)$ as above is well defined and unique. 
 
Now consider a homotopy $h:[t_0,t_1]\times[0,1]=:K\lra \GG_{y_0}$ over $x:[t_0,t_1]\times[0,1]\lra M$, which is ACB w.r.t. both variables. In local coordinates it is given by $z^i(s,t)$. Repeating the argument from the previous part, we can prove that the maps $t\mapsto a(t,s):=\T R_{h(t,s)^{-1}}\left(\pa_th(t,s)\right)$ and $s\mapsto b(t,s):=\T R_{h(t,s)^{-1}}(\pa_sh(t,s))$ are bounded measurable admissible paths over $t\mapsto x(t,s)$ and $s\mapsto x(t,s)$, respectively. In local coordinates,
\begin{align*}
&a^i(t,s)=F^i_j(z(t,s))A^j(t,s),\\
&b^i(t,s)=F^i_j(z(t,s))B^j(t,s),
\end{align*}
where we denoted $A^i(t,s):=\pa_tz^i(t,s)$ and
$B^i(t,s)=\pa_sz^i(t,s)$.

Since $h$ is a homotopy, we have
$$\iint_K z^i(t,s)\pa_t\pa_s\phi_i(t,s)\dd t\dd s=\iint_Kz^i(t,s)\pa_s\pa_t\phi_i(t,s)\dd t\dd s,$$ 
 for every $\phi_i\in C^\infty(K)$. Integrating the above equality several times by parts, we get that $A^i(t,s)$ and $B^i(t,s)$ satisfy the differential equation
$$\pa_sA^i(t,s)\overset{\text{WT}}=\pa_tB^i(t,s).$$

Now calculating the WT-derivatives of $a^i(t,s)$ and $b^i(t,s)$ we get
\begin{eqnarray*} &\pa_t b^i(t,s)-\pa_s a^i(t,s)\overset {\text{WT}}= \pa_t\left(F^i_j(z(t,s))B^j(t,s)\right)-
\pa_s\left(F^i_j(z(t,s))A^j(t,s)\right)\\
&\overset {\text{WT}}=\left(\frac{\pa F^i_n}{\pa
z^m}(z(t,s))-\frac{\pa F^i_n}{\pa z^m}(z(t,s))
\right)B^n(t,s)A^m(t,s)\\&+F^i_j(z(t,s))\left(\pa_tB^j(t,s)-\pa_sA^j(t,s)\right)\\
&=c^i_{jk}(\beta(z(t,s)))b^j(t,s)a^k(t,s)+0
=c^i_{jk}(x(t,s))b^j(t,s) a^k(t,s).
\end{eqnarray*}
We see that $(a,b)$ is an $\AG$-homotopy.

Conversely, let $a,b:[t_0,t_1]\times[0,1]\lra \AG$ over $x:[t_0,t_1]\times[0,1]\lra
M$ be an algebroid homotopy. By the first part of the assertion we can uniquely integrate the admissible path $s\mapsto b_0(t_0,s)$ to an ACB path $g_0(s)\in\GG_{y_0}$ with $g_0(0)=g_0$. Next we can uniquely integrate each admissible path $t\mapsto a(t,s)$ to an ACB path $g(t,s)\in\GG_{y_0}$ such that $g(t_0,s)=g_0(s)$. In local coordinates $g(t,s)$
is a solution of the differential equation (cf. the previous part of this proof)
\begin{align*} 
\pa_t z^i(t,s)&=f^i_j(z(t,s))a^j(t,s),\\
 z^i(t_0,s)&=z_0^i(s),
\end{align*}
where $z_0(s)=g_0(s)$ is ACB. By Theorem \ref{thm:param}, $g(t,s)$ is ACB w.r.t. both variables. Now $g(t,s)$ is an ACB homotopy in $\GG_{y_0}$ hence, as has already been proved, it reduces to an algebroid homotopy $\wt a,\wt b:[t_0,t_1]\lra\AG$. By construction, $\wt a(t,s)=a(t,s)$ and $\wt b(t_0,s)=b(t_0,s)$. We see that $(a,b)$ and $(a,\wt b)$ are two WT-solutions of \eqref{eqn:htp_smooth} with the same initial-point $\AG$-homotopy $b(t_0,s)$. By Lemma \ref{lem:htp_unique} $b(t,s)= \wt b(t,s)$.
\end{proof}

\begin{remark}
The above theorem is closely related to the ideas of \cite{crainic_fernandes}. The correspondence between algebroid homotopies and homotopies in an integrating groupoid may be used to address the question about integrability of Lie algebroids (see remark on page \pageref{CF}). 

In fact, we can also use it to prove Theorem \ref{thm:Lie1} of Mackenzie and Xu. To sketch the idea, let us concentrate on the case when $\HH=S\times S$ is a pair groupoid, with $S$ simply connected. Consider a morphism of Lie algebroids $\Phi:\T S\lra\AG$ over $f:S\lra M$. Fixing points $x_0\in S$ and $g_0\in\GG_{f(x_0)}\cap\beta^{-1}(f(x_0))$, we can attach to each sufficiently regular curve $\gamma:[0,1]\lra S$, originated at $\gamma(0)=x_0$, a curve $g:[0,1]\ra \GG_{f(x_0)}$, with $g(0)=g_0$, being the lift of an admissible curve $\Phi(\dot\gamma(\cdot)):[0,1]\lra\AG$. Now, if $\gamma_0$ and $\gamma_1$ are two curves such that $\gamma_0(0)=\gamma_1(0)=x_0$ and $\gamma_0(1)=\gamma_1(1)$, then, since $S$ is simply connected, there exists a homotopy $\gamma(t,s)$ in $S$ (with fixed end-points) joining $\gamma(\cdot,0)=\gamma_0(\cdot)$ and $\gamma(\cdot, 1)=\gamma_1(\cdot)$. The lift of the $\AG$-homotopy $\left(\Phi(\pa_t\gamma(t,s)),\Phi(\pa_s\gamma(t,s))\right)$ is a homotopy $g(t,s)$ in $\GG_{f(x_0)}$ (with fixed end-points) joining the lifts of $\gamma_0(\cdot)$ and $\gamma_1(\cdot)$. Consequently, the map $\wt\Phi:\gamma(1)\mapsto g(1)$, $S\lra\GG_{f(x_0)}$ is well defined. One can prove that $\mathcal{R}\circ\T\wt\Phi=\Phi$. The presence of such a map is equivalent to the integrability of $\Phi$ (see Corollary \ref{cor:int}). A similar argument can be used to prove Theorem \ref{thm:Lie1} in full generality.
\end{remark}

\begin{corollary}\label{cor:htp_P}
Theorem \ref{thm:int_htp} establishes the equivalence between
$A(\GG)$-paths/homotopies and standard paths/homotopies in a
single $\alpha$-fibre in the groupoid $\GG$. For the
groupoid $\GG_P=P\times P/G$ and the associated Atiyah algebroid
$\T P/G$, these fibres are canonically isomorphic to $P$, so
the $E$-homotopies are just standard homotopies in $P$ reduced to
$\T P/G$. The two are equivalent up to the choice of the initial point.
\end{corollary}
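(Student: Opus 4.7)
The plan is to invoke Theorem \ref{thm:int_htp} with $\GG=\GG_P=P\times P/G$ and $A(\GG)=\T P/G$, and then to exhibit the canonical identification of an $\alpha$-fibre of $\GG_P$ with $P$ itself. Once this identification is in place, the corollary will follow by a direct translation of the correspondence of the theorem.

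First I would spell out the groupoid structure of $\GG_P$. For a class $[p,q]\in P\times P/G$ one has, with the standard convention, $\alpha([p,q])=[q]$ and $\beta([p,q])=[p]$ in $M:=P/G$. Given $y_0\in M$, any choice of a representative $q_0\in P$ with $[q_0]=y_0$ produces a map
$$\Psi_{q_0}:P\lra \alpha^{-1}(y_0)=\GG_{y_0},\qquad p\longmapsto [p,q_0],$$
which is a bijection because the $G$-action on $P$ is free: every element of $\alpha^{-1}(y_0)$ admits a unique representative whose second entry equals $q_0$. Standard principal bundle arguments show that $\Psi_{q_0}$ is a diffeomorphism and that it intertwines $\beta|_{\GG_{y_0}}:\GG_{y_0}\lra M$ with the projection $P\lra M$. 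Moreover, replacing $q_0$ by $q_0\cdot g$ merely precomposes $\Psi_{q_0}$ with a right $G$-translation on $P$, so the identification is canonical up to this $G$-action; this ambiguity is precisely the ``choice of initial point $g_0$'' appearing in Theorem \ref{thm:int_htp}.

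To conclude I would observe that, under $\Psi_{q_0}$, an ACB path (respectively, ACB homotopy) in $\GG_{y_0}$ starting at $g_0=[q_0,q_0]$ is the same as an ACB path (respectively, ACB homotopy) in $P$ starting at $q_0$, and that the reduction map $\mathcal{R}:\T^\alpha\GG_P\lra\T P/G$ restricted along $\Psi_{q_0}$ coincides with the canonical projection $\T P\lra\T P/G$. Both facts are immediate from the construction of $\GG_P$ and of $\T P/G$ as quotients by the same free and proper $G$-action. Pairing these observations with Theorem \ref{thm:int_htp} yields the claimed 1-1 correspondence between $(\T P/G)$-paths, respectively $(\T P/G)$-homotopies, over a given ACB base map in $M$, and ACB paths, respectively ACB homotopies, in $P$, unique once a lift $q_0\in P$ of $x_0\in M$ has been fixed. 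There is no substantial obstacle here: the entire content of the corollary lies in combining the previous theorem with the elementary fibre identification $\Psi_{q_0}$.
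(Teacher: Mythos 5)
Your proposal is correct and follows exactly the route the paper intends: the corollary is stated without proof as an immediate consequence of Theorem \ref{thm:int_htp} together with the standard identification $\Psi_{q_0}:p\mapsto[p,q_0]$ of an $\alpha$-fibre of the gauge groupoid with $P$ (which the paper recalls in the appendix on Lie groupoids and uses again in the discussion of $E$-homotopy classes). Your additional checks, that $\Psi_{q_0}$ intertwines $\beta$ with the bundle projection and that the reduction map becomes the canonical quotient $\T P\lra\T P/G$, are precisely the details left implicit there.
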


\subsection{\texorpdfstring{$E$}{E}-homotopy classes} 
\begin{definition}\label{def:E_htp_class}
Two measurable $E$-paths $a_0,a_1:[t_0,t_1]\lra E$ are
\emph{$E$-homotopic}\index{algebroid homotopic paths} iff there exists an
$E$-homotopy $(a,b)$ \underline{with fixed end-points} (i.e., $b(t_0,\cdot)\equiv 0 \equiv b(t_1,\cdot)$) between $a_0$ and
$a_1$. Being $E$-homotopic is an equivalence relation. 

An equivalence class of an element $a$ will be denoted by
$[a]$ (or sometimes $[a(t)]_{t\in[t_0,t_1]}$) and called an
\emph{$E$-homotopy class}\index{algebroid homotopy!class}.
\end{definition}

So far, the above definition does not allow us to compare the $E$-homotopy classes of $E$-paths defined on different time intervals. Therefore, we will add   a natural condition that a composition with a null path not change the equivalence class:  $[a\circ
\theta_{x(t_1)}]=[a]=[\theta_{x(t_0)}\circ a]$.

Observe that, since two $E$-homotopies with fixed end-points are composable (iff the final base point of the first coincides with the inital base point of the second), the composition of $E$-homotopies defines a multiplication of $E$-homotopy classes by a natural formula
$$[a]\cdot[\ol a]:=[a\circ \ol a]\quad \text{ when $a$ and $\ol a$ are composable.}$$

Consider now an algebroid $E$ and two smooth algebroid morphisms $\Phi_0:\T S_0\lra E$ and $\Phi_1:\T S_1\lra E$ over $\phi_0: S_0\lra M$ and $\phi_1:S_1\lra M$, respectively. 

\begin{definition}\label{def:htp_class_rel} We say that measurable $E$-paths $a_0$ and $a_1$ are \emph{$E$-homotopic relative to the morphisms $\Phi_0$ and $\Phi_1$}\index{algebroid homotopic paths!relative to a pair of morphisms} iff there 
exists an
$E$-homotopy $(a,b)$  between $a_0$ and
$a_1$, and AC paths $z:[0,1]\ra S_0$ and $w:[0,1]\ra S_1$, such that $b(t_0,s)=\Phi_0(\pa_s z(s))$ and $b(t_1,s)=\Phi_1(\pa_s w(s))$. In other words,  initial-point and final-point $E$-homotopies lie in $\Image\Phi_0$ and $\Image\Phi_1$, respectively. Note that $b(t_0,\cdot)$ and $b(t_1,\cdot)$ are admissible as images of admissible paths under an algebroid morphism. 

The relation of being relatively $E$-homotopic is again an equivalence relation and we may again speak of the equivalence classes (\emph{relative $E$-homotopy classes})\index{algebroid homotopy! class!relative}. A class of an element $a$ will be denoted by $[a]\modulo(\Phi_0,\Phi_1)$.
\end{definition}

\begin{remark} $E$-paths $a_0$ and $a_1$ are $E$-homotopic iff they are $E$-homotopic relative to a morphism $\iota_{x_0}$, which maps $S_0=\{\pt\}\approx\T S_0$ to a null vector $\theta_{x_0}$, where $x_0=\tau\circ a_0(t_0)=\tau\circ a_1(t_0)$, and a morphism $\iota_{x_1}$ defined analogously for $x_1=\tau\circ a_0(t_1)=\tau\circ a_1(t_1)$.
\end{remark}

\subsection{The interpretation of $E$-homotopy classes}
Tn light of Theorem \ref{thm:int_htp}, an interpretation of the notion of $E$-homotopy classes is clear. If $E=\AG$ is an integrable Lie algebroid and $(a,b):[t_0,t_1]\times[0,1]\lra E$ is an algebroid homotopy with fixed end points, we can lift it to the true homotopy $g(t,s)$ in a single $\alpha$-fibre $\GG_{y_0}$ of $\GG$. The $E$-paths $a(\cdot,0)$ and $a(\cdot,1)$ correspond to $g(\cdot,0)$ and $g(\cdot,1)$, respectively, and null paths $b(t_0,\cdot)$ and $b(t_1,\cdot)$ to constant paths $g(t_0,\cdot)$ and $g(t_1,\cdot)$, respectively. In other words, $g(t,s)$ is a homotopy in $\GG_{y_0}$ between $g(\cdot,0)$ and $g(\cdot,1)$ with fixed end-points. Consequently, we can interpret $E$-homotopy classes as reduced homotopy classes from an $\alpha$-fibre $\GG_{y_0}$ of a Lie groupoid $\GG$ to the associated Lie algebroid $E=\AG$. In particular, if $\GG_P$ is a gauge groupoid of a principal $G$-bundle $G\ra P\ra M$, all $\alpha$-fibres are isomorphic to $P$ (cf. Appendix \ref{sapp:groupoids}), hence algebroid homotopy classes in the Atyiah algebroid $\T P/G$ are the standard homotopy classes in $P$ reduced to $\T P/G$ by the $G$-action. 

For relative $E$-homotopy classes things are a little more complicated.  Assume that $E=\AG$ is an integrable Lie algebroid  and  $a,b:[t_0,t_1]\times[0,1]\lra \AG$ is an algebroid homotopy relative to $(\Phi_0,\Phi_1)$. Let $z(\cdot)\subset S_0$ and $w(\cdot)\subset S_1$ be as in the Definition \ref{def:htp_class_rel}. Assume, in addition, that $\Phi_0$ and $\Phi_1$ are integrable. By Theorem \ref{thm:int_htp} we can lift $(a,b)$ to the homotopy $g(t,s)\in\GG_{y_0}\subset \GG$. By Corollary \ref{cor:int} we can lift $\Phi_0$ to a smooth map $\wt\Phi_0:S_0\lra\GG_{y_0}$ such that $\wt\Phi_0(z(0))=g(t_0,0)$, and we can lift $\Phi_1$ to a smooth map $\wt\Phi_1:S_1\lra\GG_{y_0}$ such that $\wt\Phi_1(w(0))=g(t_1,0)$. Paths $\wt\Phi_0(z(\cdot))$ and $g(t_0,\cdot)$ correspond to the same $\AG$-path $b(t_0,\cdot)$ and have the same initial point, hence are equal. Similarly, $\wt\Phi_1(w(\cdot))=g(t_1,\cdot)$. In other words, $g(t,s)$ is a homotopy in $\GG_{y_0}$ joining $g(\cdot,0)$ and $g(\cdot,1)$ with end-points in the images of $\wt\Phi_0$ and $\wt\Phi_1$.

To sum up, $(a,b)$ is a reduction of a homotopy in $\GG_{y_0}$ with end-points contained in the images of $\wt\Phi_0$ and $\wt\Phi_1$ integrating $\Phi_0$ and $\Phi_1$. 

Note that algebroid morphisms $\Phi_0$ and $\Phi_1$ need not to be integrable.
In such a case the interpretation given above is still valid, but locally. Let namely $U_0\subset S_0$ be an open simply connected neighbourhood of $z(0)$ and let $U_1\subset S_1$ be an open simply connected neighbourhood of $w(0)$. Now, by Corollary \ref{cor:int}, we can lift $\Phi_0|_{U_0}:\T U_0\lra\AG\ $ and $\ \Phi_1|_{U_1}:\T U_0\lra\AG\ $ to $\ \wt{\Phi_0}|_{U_0}:U_0\lra \GG_{y_0}\ $ and $\ \wt{\Phi_1}|_{U_1}:U_1\lra \GG_{y_0}$, respectively. Hence, if $b(t_0,\cdot)\subset \Image\Phi_0|_{U_0}$ and $b(t_1,\cdot)\subset \Image\Phi_1|_{U_1}$ we can still interpret the relative $\AG$-homotopy $(a,b)$ as the homotopy in $\GG_{y_0}$ with end-points in the images of $\wt\Phi_0|_{U_0}$ and $\wt\Phi_1|_{U_1}$, reduced to the algebroid $\AG$.
 
Another, more universal approach is the following. Consider the universal covers
$\pi_0:\left(\wt S_0,\wt z(0)\right)\lra \left(S_0, z(0)\right)$ and $\pi_1:\left(\wt S_1,\wt w(0)\right)\lra \left(S_1, w(0)\right)$. Now take Lie algebroid morphisms  $\Phi_i^{'}:=\Phi_i\circ\pi_i:\T \wt S_i\lra \AG$, for $i=0,1$. They are clearly integrable to $\wt\Phi_i^{'}:\wt S_i\lra\GG_{y_0}$, since $\wt S_0$ and $\wt S_1$ are simply connected. As $b(t_0,\cdot)\subset\Image\Phi^{'}_0$ and $b(t_1,\cdot)\subset\Image\Phi^{'}_1$, we can interpret $(a,b)$ as a reduced homotopy in $\GG_{y_0}$ with end-points in the images of $\wt\Phi_0^{'}$ and $\wt\Phi_1^{'}$.  

 In particular, if $\GG=\GG_P$ is a gauge groupoid of a principal $G$-bundle $G\ra P\ra M$, then $\GG_{y_0}\approx P$. Relative $\T P/G$-homotopies are homotopies  in $P$ with end-points in the images of maps $\wt\Phi^{'}_0:\wt S_0\lra P$ and $\wt\Phi^{'}_1:\wt S_1\lra P$, reduced by the $G$-action.

\section{Fundamental properties of \texorpdfstring{$E$}{E}-homotopies}\label{sec:E_htp_prop}

\subsection{\texorpdfstring{$E$}{E}-homotopies as families of \texorpdfstring{$E$}{E}-paths} 
The following lemma emphasis the role of AL algebroids. Roughly
speaking, it turns out that for AL algebroids one-parameter
families of $E$-paths are $E$-homotopies.

\begin{lemma}[generating $E$-homotopies]\label{lem:gen_E_htp}\index{almost Lie algebroid!characterisation}
Let $E$ be an AL algebroid, and let $a:I\times[0,1]\lra E$ be a
one-parameter family of bounded measurable $E$-paths (that is,
$t\mapsto a(t,s)$ is admissible for every $s$) covering
$x:I\times[0,1]\lra M$. Assume that $a(t,s)$ is ACB w.r.t. $s$; that is, $\pa_sa(t,s)$ is defined a.e. and is bounded
measurable w.r.t. both variables. Let $b_0(s)$ be an
arbitrary bounded measurable $E$-path covering $x(t_0,s)$.

Then there exists an unique $E$-homotopy $a,b:I\times[0,1]\lra E$
such that $b(t_0,s)=b_0(s)$. Moreover, $b(t,s)$ is ACB w.r.t. $t$
(that is, $\pa_tb(t,s)$ is defined a.e. and is bounded measurable
w.r.t. both variables).
\end{lemma}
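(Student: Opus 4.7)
The plan is to construct $b(t,s)$ as the unique solution of the defining equation \eqref{eqn:htp_smooth}, regarded as a $t$-ODE with $s$ as a parameter, and then to verify that such a $b$ is automatically admissible in $s$ --- this last step being exactly where the almost Lie axiom \eqref{eqn:ala} will come in.

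Concretely, rewrite \eqref{eqn:htp_smooth} in local coordinates as
\begin{equation*}
\pa_t b^i(t,s)=\pa_s a^i(t,s)+c^i_{jk}(x(t,s))\,b^j(t,s)\,a^k(t,s),\qquad b(t_0,s)=b_0(s).
\end{equation*}
The right-hand side is linear in $b$ with coefficients that are bounded measurable in $(t,s)$ (by the hypotheses on $a$, $\pa_s a$, $b_0$, and the smoothness of $c^i_{jk}$ and of $x$). Thus, for each fixed $s$, the Carathéodory existence/uniqueness theorem for measurable ODEs (Theorem \ref{thm:exist}) produces a unique ACB solution $t\mapsto b(t,s)$; its joint measurability and boundedness, as well as the ACB dependence in $t$, follow from the parametric version (Theorem \ref{thm:param}). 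Uniqueness in the full $E$-homotopy sense is then Lemma \ref{lem:htp_unique}.

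What remains is to prove condition \eqref{eqn:b_adm}, i.e.\ that $s\mapsto b(t,s)$ is admissible for every $t$. Set
\begin{equation*}
\phi^a(t,s):=\pa_s x^a(t,s)-\rho^a_i(x(t,s))\,b^i(t,s).
\end{equation*}
Since $b_0$ covers $x(t_0,\cdot)$ admissibly, $\phi^a(t_0,s)\equiv 0$. I would compute $\pa_t\phi^a$ using $\pa_t x^a=\rho^a_k(x)a^k$, the equation for $b$, and the (almost) commutation $\pa_t\pa_s x^a=\pa_s\pa_t x^a$ of mixed derivatives of the ACB map $x$. After substituting $\pa_s x^b=\phi^b+\rho^b_j(x)b^j$ and collecting, the $\pa_s a$-term cancels and one is left with
\begin{equation*}
\pa_t\phi^a=\bigl(\pa_{x^b}\rho^a_k\bigr)(x)\,\phi^b a^k+\Bigl[\bigl(\pa_{x^b}\rho^a_k\bigr)\rho^b_j-\bigl(\pa_{x^b}\rho^a_j\bigr)\rho^b_k-\rho^a_i c^i_{jk}\Bigr](x)\,b^j a^k.
\end{equation*}
By the local form of the almost Lie condition \eqref{eqn:ala}, the bracketed expression vanishes identically. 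Hence $\phi$ satisfies the linear homogeneous ODE $\pa_t\phi^a=(\pa_{x^b}\rho^a_k)(x)\,a^k\,\phi^b$ with $\phi(t_0,s)=0$, so $\phi\equiv 0$ by Gronwall / uniqueness, proving admissibility of $s\mapsto b(t,s)$.

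The main obstacle is step three: checking that the cross-derivative identity one gets for $\phi$ really collapses under \eqref{eqn:ala}. This is what singles out AL algebroids in the statement --- in the merely skew-algebroid case the bracketed term above does not vanish, so $\phi$ picks up an inhomogeneous source and $b$ generically fails to be admissible in $s$. Once that computation is settled, everything else is ODE theory plus Lemma \ref{lem:htp_unique}.
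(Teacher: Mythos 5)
Your proposal is correct and follows essentially the same route as the paper: solve \eqref{eqn:gen_htp} as a parametric Carath\'eodory ODE in $t$ via Theorem \ref{thm:param}, then show $\chi^a=\pa_sx^a-\rho^a_i(x)b^i$ satisfies a linear homogeneous equation whose inhomogeneous source is killed precisely by the local form of \eqref{eqn:ala}, and conclude $\chi\equiv 0$ by a Gronwall-type uniqueness argument. The only detail you leave implicit is the check that the constructed pair is a WT-solution in the sense of Definition \ref{def:htp_meas} (the paper dispatches this with one sentence via Theorem \ref{thm:w_wt}), so no substantive gap remains.
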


\begin{proof}
By the definition of an $E$-homotopy, $b(t,s)$ should be a map covering
$x(t,s)$ such that $s\mapsto b(t,s)$ is admissible and 
\eqref{eqn:htp_weak} is WT-satisfied. Observe that,
since $\pa_s a(t,s)$ is well defined a.e., the system of equations
\begin{equation}\label{eqn:gen_htp}
\pa_tb^i(t,s)=\pa_s a^i(t,s)+c^i_{jk}(x(t,s))b^j(t,s)a^k(t,s)
\end{equation}
for $b(t,s)$ satisfies the assumptions of Theorem \ref{thm:param}. Consequently,
 it has a unique Carath{\'e}odory solution $b^i(t,s)$ for a given
initial condition $b^i(t_0,s)= b_0^i(s)$. The solution $b(t,s)$ is
ACB w.r.t. $t$ and, since the parameter-$s$-dependence of both right-hand side
of \eqref{eqn:gen_htp} and the initial condition is bounded
measurable, so is the $s$-dependence of the solution $b(t,s)$.
Consequently, the right-hand side of \eqref{eqn:gen_htp} is bounded and
measurable w.r.t. both $t$ and $s$, and hence so is $\pa_tb(t,s)$---
the left-hand side of \eqref{eqn:gen_htp}. Clearly, $a$ and
$b$ are regular enough to satisfy the assumptions of Theorem
\ref{thm:w_wt}, so the integral condition \eqref{eqn:htp_weak}
holds.

To prove that thus constructed $(a,b)$ is indeed an
$E$-homotopy, it is enough to show that $s\mapsto b(t,s)$ is
admissible for every fixed $t$. Consider a map
$$\chi^a(t,s):=\pa_s x^a(t,s)-\rho^a_i\big(x(t,s)\big)b^i(t,s).$$
We shall show that $\chi^a= 0$ a.e. Observe that, since $a(t,s)$
is a family of admissible paths, we have
\begin{equation}\label{eqn:gen_htp_2}
\pa_tx^a(t,s)=\rho^a_k\left(x(t,s)\right)a^k(t,s) \text{ a.e.}
\end{equation}
The right-hand side of this equation is differentiable with
respect to $s$, and hence so is the left-hand side, and
$$\pa_s\pa_tx^a(t,s)=\frac{\pa\rho^a_k}{\pa x^b}\left(x(t,s)\right)\pa_s x^b(t,s)a^k(t,s)+\rho^a_k\left(x(t,s)\right)\pa_s a^k(t,s).$$

Consequently, as $\pa_t\pa_s
x^a(t,s)\overset{\text{WT}}{=}\pa_s\pa_tx^a(t,s)$ (since $x(t,s)$ is a
true homotopy in $M$),
\begin{eqnarray*}\pa_t \chi^a&=&\pa_t\pa_s
x^a-\pa_t\left(\rho^a_k b^k\right)\overset {\text{WT}}=\pa_s\pa_t
x^a-\pa_t\left(\rho^a_k b^k\right)\\&=&\frac{\pa \rho^a_k}{\pa x^b}\pa_s
x^b a^k+\rho^a_k\pa_s a^k-\frac{\pa \rho^a_i}{\pa x^b}\pa_t x^b
b^i- \rho^a_i\pa_t b^i\,,
\end{eqnarray*}
which, in view of \eqref{eqn:gen_htp_2}
and \eqref{eqn:gen_htp}, equals
\begin{align*}
&\frac{\pa\rho^a_k}{\pa x^b}\pa_s x^b a^k+\rho^a_k\pa_s
a^k-\frac{\pa \rho^a_i}{\pa x^b}\rho^b_k a^k b^i-
\rho^a_i\left(\pa_s a^i+c^i_{jk}b^ja^k\right)=\\
&=\left(\frac{\pa}{\pa
x^b}\rho^a_k\right)a^k\chi^b+\left[\left(\frac{\pa}{\pa
x^b}\rho^a_k\right)\rho^b_j-\left(\frac{\pa}{\pa
x^b}\rho^a_j\right)\rho^b_k-\rho^a_ic^i_{jk}\right]b^ja^k.
\end{align*}
Since $E$ is an AL algebroid, the last term vanishes and we have
$$\pa_t\chi^a\overset{\text{WT}}=\left(\frac{\pa}{\pa x^b}\rho^a_k\right)a^k\chi^b.$$
Thus $\chi^a$ is a WT-solution of a linear differential equation
with measurable r.h.s. and the initial condition $\chi^a(t_0,s)=0$
(since $b_0(s)$ is admissible). Repeating the argument from the proof of Lemma
\ref{lem:wt_uniq} we conclude that $\chi^a=0$ a.e.
\end{proof}

It turned out that, when a skew-algebroid is almost Lie, 
$E$-homotopies are the true homotopies in the space of $E$-paths
(i.e. one-parameter families of $E$-paths). This has already been
observed in \cite[Thm. 3]{GG_var_calc} in a slightly different form.

\subsection{Relation between \texorpdfstring{$E$}{E}-homotopies with and without fixed end-points, reparametrisation}

\begin{lemma}\label{lem:reparam1}\index{admissible path!reparametrisation}
Let $a:[t_0,t_1]\lra E$ be a bounded measurable $E$-path over $x(t)$, and let $h:[0,1]\ra[t_0,t_1]$ be an invertible $C^1$-function. Define
\begin{equation}\label{eqn:reparam}
\begin{split}
a(t,s)&:=\frac{h(s)-t_0}{t_1-t_0} a\left(t_0+\frac{t-t_0}{t_1-t_0}(h(s)-t_0)\right),\\
b(t,s)&:=\frac{t-t_0}{t_1-t_0}\dot h(s) a\left(t_0+\frac{t-t_0}{t_1-t_0}(h(s)-t_0)\right).
\end{split}
\end{equation}
Then the pair $(a,b)$ is an $E$-homotopy over $x(t,s):=x\left(t_0+\frac{t-t_0}{t_1-t_0}(h(s)-t_0)\right)$.
\end{lemma}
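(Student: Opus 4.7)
The plan is to realise the pair $(a,b)$ as the composition of the measurable bundle map $A:\T\R|_{[t_0,t_1]}\lra E$ (determined by $a$) with the tangent lift of the smooth reparametrisation $\tau:I\times[0,1]\lra[t_0,t_1]$, $\tau(t,s):=t_0+\frac{(t-t_0)(h(s)-t_0)}{t_1-t_0}$. A glance at \eqref{eqn:reparam} confirms this: writing $F:=\tau-t_0$, we have $a(t,s)=(\pa_t F)\cdot a(\tau(t,s))$ and $b(t,s)=(\pa_s F)\cdot a(\tau(t,s))$, so both are scalar multiples of the single vector $a(\tau(t,s))$. In the smooth category the claim would then follow from the fact that composition of algebroid morphisms is an algebroid morphism; we only need to verify the conditions of Definition \ref{def:htp_meas} directly in the measurable regime.

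Bounded measurability of $(a,b)$ and ACB regularity of the base $x\circ\tau$ are immediate. The admissibility conditions \eqref{eqn:a_adm} and \eqref{eqn:b_adm} follow by the chain rule applied to $x\circ\tau$ using admissibility of $a$: $\pa_t(x\circ\tau)=\rho(a(\tau))\pa_t\tau=\rho(a(t,s))$, and similarly $\pa_s(x\circ\tau)=\rho(b(t,s))$. Concerning the weak equation \eqref{eqn:htp_weak}, the structure-constant term $c^i_{jk}(x)b^j a^k$ vanishes identically, because $b^j a^k$ is symmetric in $(j,k)$ (both factors being proportional to $a^\bullet(\tau)$) while $c^i_{jk}$ is skew-symmetric.

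The remaining task is to match the bilinear integral $\iint\big[b^i\pa_t\psi_i-a^i\pa_s\psi_i\big]\dd t\dd s$ with the boundary data. Substituting \eqref{eqn:reparam} rewrites the integrand as $a^i(\tau)\big[(\pa_s F)\pa_t\psi_i-(\pa_t F)\pa_s\psi_i\big]$. Change variables via $(t,s)\mapsto(u,v)=(\tau(t,s),s)$, whose Jacobian $\pa_t\tau=(h(v)-t_0)/(t_1-t_0)$ is positive on the interior. A short chain-rule computation converts the bracket into $-\frac{h(v)-t_0}{t_1-t_0}\pa_v\tilde\psi_i(u,v)$ with $\tilde\psi_i(u,v):=\psi_i(t(u,v),v)$, and the Jacobian is exactly cancelled by $\dd t\,\dd s=\frac{t_1-t_0}{h(v)-t_0}\dd u\,\dd v$, leaving $-\iint_D a^i(u)\pa_v\tilde\psi_i(u,v)\dd u\dd v$ over the image $D$ of $I\times[0,1]$ (in the $h$-increasing case, $D=\{(u,v):u\in[t_0,t_1],\ v\in[h^{-1}(u),1]\}$; the decreasing case is analogous). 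Fubini and integration by parts in $v$ — no derivative ever falls on the bounded measurable $a^i$ — yield boundary contributions at $v=1$ and $v=h^{-1}(u)$; evaluating these via $\tilde\psi_i(u,1)=\psi_i(u,1)$ and $\tilde\psi_i(u,h^{-1}(u))=\psi_i(t_1,h^{-1}(u))$, and substituting $u=h(s)$ in the second term, reproduces exactly the boundary side of \eqref{eqn:htp_weak} computed from \eqref{eqn:reparam}: $b^i(t_0,s)=0=a^i(t,0)$, $b^i(t_1,s)=\dot h(s)a^i(h(s))$, $a^i(t,1)=a^i(t)$.

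The main obstacle is the low regularity of $a$: the naive verification of $\pa_t b-\pa_s a=c^i_{jk}b^j a^k$ would require differentiating $a\circ\tau$ in directions along which $a$ itself need not be differentiable. The change-of-variables manoeuvre is designed precisely to push all derivatives onto the smooth test functions $\psi_i$, reducing everything to Fubini and ordinary integration by parts on smooth kernels — operations that behave perfectly well against bounded measurable $a$.
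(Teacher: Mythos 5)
Your proof is correct, but it takes a genuinely different route from the one in the text. The paper verifies the equation $\pa_tb^i-\pa_sa^i=c^i_{jk}b^ja^k$ in the distributional sense by computing both sides explicitly --- each equals $\dot h(s)a^i(th(s))+th(s)\dot h(s)A^i(th(s))$, where $A^i$ is the distributional derivative of $a^i$ --- and then, in a separate and rather laborious step, checks the four trace-regularity conditions \eqref{eqn:traces}--\eqref{eqn:traces1} by $\eps$-estimates resting on the Luzin-type Lemma \ref{lem:reg}, finally invoking Theorem \ref{thm:w_wt} to upgrade the W-solution to a WT-solution. You instead exploit the fact that $(a,b)$ is the pullback of the single $E$-path $a$ along the tangent lift of the smooth surjection $\tau(t,s)$, and verify the full weak-with-trace identity in one stroke by the change of variables $(u,v)=(\tau(t,s),s)$, which pushes every derivative onto the smooth test functions; this avoids distributional derivatives of $a$, the trace conditions, Lemma \ref{lem:reg} and Theorem \ref{thm:w_wt} altogether, and is arguably the more conceptual argument (it is literally the statement that composing the morphism $\T\R|_I\to E$ with the smooth algebroid morphism $\T(I\times[0,1])\to\T\R|_I$ yields a morphism, checked weakly). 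Two small points to tidy up: first, the Fubini and integration-by-parts step needs the one-line observation that $\pa_v\wt\psi_i$ remains integrable on $D$ even though $\pa_v t(u,v)$ blows up like $(h(v)-t_0)^{-1}$ near the collapsed corner (the factor $u-t_0\le h(v)-t_0$ on $D$ saves you); second, your boundary terms come out as $\int b^i(t_1,s)\psi_i(t_1,s)\,\dd s-\int a^i(t,1)\psi_i(t,1)\,\dd t$, which is the sign dictated by integrating the strong equation \eqref{eqn:htp_smooth} by parts but is the global negative of the right-hand side of \eqref{eqn:htp_weak} as printed --- that formula appears to carry a sign typo relative to \eqref{eqn:htp_smooth}, so your bookkeeping is consistent with the intended definition, but you should state which convention you are matching.
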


\begin{proof}
For notation simplicity assume that $[t_0,t_1]=[0,1]$. Then
\begin{align*}
a(t,s)&=h(s)a(t h(s))\quad \text{and}\\
b(t,s)&=t\dot h(s) a(t h(s)).
\end{align*}
First, note that $t\mapsto a(t,s)$ and $s\mapsto b(t,s)$
are admissible. Indeed, from $\dot{x}(t)=\rho\left(a(t)\right)$ we deduce that 
$$\pa_tx(t, s)=h(s)\dot{x}(th(s))=h(s)\rho\left(a(th(s))\right)=\rho\left(
a(t,s)\right).$$
Similarly, $\pa_sx(t,s)=\rho\left(b(t,s)\right)$. 
Now we will check that
$$\pa_t[t\dot h(s)a^i(th(s))]\overset {\text{W}}=\pa_s[h(s)a^i(th(s))]+c^i_{jk}(x(t,s))[t\dot h(s)a^j(th(s))]\cdot[h(s)sa^k(th(s))].$$
By the skew-symmetry of $c^i_{jk}$, the last term vanishes, so we
have to check if
$$\pa_t[t\dot h(s)a^i(th(s))]\overset {\text{W}}=\pa_s[h(s)a^i(th(s))].$$
The latter is certainly true, as both sides are equal 
$\dot h(s)a^i(th(s))+th(s)\dot h(s) A^i(th(s))$, where $A^i(t)$ is the distributive derivative of $a^i(t)$. 

To finish the proof we shall show that
$a(t,s)$ and $ b(t,s)$ satisfy the regularity conditions \eqref{eqn:traces} and \eqref{eqn:traces1}.
 
Assume that $h(0)=t_0=0$ and $h(1)=t_1=1$ (the case $h(1)=t_0=0$ and $h(0)=t_1=1$ is completely analogous). Now
\begin{align*}\int_0^1\int_0^\eps| a(t,s)- a(t,0)|\frac 1\eps\dd s\dd t=\frac 1\eps\int_0^\eps\int_0^1|h(s) a(th(s))| \dd t \dd s
\leq \frac 1\eps\int_0^\eps h(s)\|a\|\dd s.
\end{align*}
The later converges to 0 as $\eps\to 0$, since $h(s)\overset{s\to 0}\lra h(0)=0$.
Next, 
\begin{align*}\int_0^1\int_{1-\eps}^1| a(t,s)- a(t,1)|\frac 1\eps \dd s\dd t=\frac 1\eps\int_{1-\eps}^1\int_0^1|h(s) a(th(s))-h(1)a(th(1))|\dd t\dd s\\
\leq\frac 1\eps\int_{1-\eps}^1 h(s)\int_0^1|a(th(s))-a(th(1))|\dd t\dd s+\frac 1\eps\int_{1-\eps}^1|h(s)-h(1)|\int_0^1 |a(t h(s))|\dd t \dd s.
\end{align*}
The second factor converges to 0 as $\eps\to 0$ because $h(s)$ is continuous at $s=1$ and $a$ is bounded. By Lemma \ref{lem:reg} the measurable function $g(s):=\int_0^1|a(th(s))-a(th(1))|\dd t$ is regular at $s=0$ and, moreover, $g(s)=0$. We conclude that
$$\frac 1\eps\int_{1-\eps}^1h(s)g(s)\dd s\to 0 \quad\text{as}\quad \eps\to 0.$$
Consequently, $\int_0^1\int_{1-\eps}^1| a(t,s)- a(t,1)|\frac 1\eps \dd s\dd t\overset{\eps\to 0}\lra 0$ and conditions \eqref{eqn:traces} are fulfilled.

Now check \eqref{eqn:traces1}. The first of the two conditions is a matter of a simple estimation:
\begin{align*}
&\int_0^1\int_0^\eps|b(t,s)-b(0,s)|\frac 1\eps \dd t\dd s=\frac 1\eps\int_0^\eps\int_0^1|t\dot h(s) a(th(s))|\dd s\dd t\leq \frac 1\eps\int_0^\eps t\|\dot h\|\cdot\|a\|\dd t\overset{\eps\to 0}\lra 0.
\intertext{For the second we can estimate:}
&\int_0^1\int_{1-\eps}^1|b(t,s)-b(1,s)|\frac 1\eps\dd t\dd s=\frac 1\eps\int_{1-\eps}^1\int_0^1|t\dot h(s)a(t h(s))-\dot h(s)a(h(s))|\dd s\dd t\\
&\leq\frac 1\eps\int_{1-\eps}^1 t\|\dot h\|\int_0^1|a(t h(s))-a(h(s))|\dd s\dd t+\frac 1\eps \int_{1-\eps}^1|1-t|\cdot\|\dot h\|\int_0^1|a(h(s))|\dd s\dd t. 
\end{align*}
The last factor clearly converges to 0 as $\eps\to 0$. Using Lemma \ref{lem:reg} we show that the measurable function $k(t):=\int_0^1|a(t h(s))-a(h(s))|\dd s$ is regular at $t=1$ and, moreover, $k(1)=0$. We conclude that
$$\frac 1\eps\int_{1-\eps}^1t\|\dot h\|k(t)\dd t\overset{\eps\to 0}\lra 0,$$ 
which proves that conditions \eqref{eqn:traces1} are satisfied. By Theorem \ref{thm:w_wt} the pair $(a,b)$ is a WT-solution of \eqref{eqn:htp_smooth}, and hence $E$-homotopy. 
 \end{proof}

\noindent As a corollary we obtain the following fact.
\begin{lemma}[shrinking an $E$-path]\label{lem:van_curv}
Let $a:[0,1]\lra E$ be a measurable $E$-path over $x(t)$. Define $a(t,s):=s a(ts)$ and $b(t,s):=t
a(ts)$ for $t,s\in [0,1]$. The pair $(a,b)$ is
an $E$-homotopy over $ x(t,s)=x(ts)$. Its initial-point
$E$-homotopy is $\theta_{x(0)}$, and the final-point $E$-homotopy
is $a$.

Similarly, consider $\wt{a}(t,s):=(1-s)a(1-(1-t)(1-s))$ and
$\wt{b}(t,s):=(1-t)a(1-(1-t)(1-s))$ where $t,s\in[0,1]$. The pair $(\wt{a},\wt{b})$ is an $E$-homotopy over $\wt x(t,s):=x(1-(1-t)(1-s))$. Its
initial-point $E$-homotopy is $a$, and the final-point $E$-homotopy
is $\theta_{x(1)}$.
\end{lemma}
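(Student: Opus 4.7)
The plan is to deduce both parts from Lemma \ref{lem:reparam1} by choosing appropriate reparametrisation functions and, where necessary, repeating the structure of its proof.

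For the first part, I would specialise Lemma \ref{lem:reparam1} to $[t_0,t_1]=[0,1]$ and take $h:[0,1]\lra[0,1]$, $h(s)=s$, so that $\dot h\equiv 1$. The formulas \eqref{eqn:reparam} then reduce to exactly $a(t,s) = s\,a(ts)$ and $b(t,s) = t\,a(ts)$ over the base map $x(t,s) = x(ts)$, and Lemma \ref{lem:reparam1} immediately gives that this pair is an $E$-homotopy. Evaluating at the boundary yields $a(t,0) = \theta_{x(0)}$, $a(t,1) = a(t)$, $b(0,s) = \theta_{x(0)}$, and $b(1,s) = a(s)$, identifying the initial- and final-point $E$-homotopies as stated.

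For the second part, the inner argument $T(t,s) := 1-(1-t)(1-s) = s + t(1-s)$ does not fit the pattern $t\cdot h(s)$ used in Lemma \ref{lem:reparam1}, so I would not invoke that lemma directly but rather repeat the structure of its proof with $T(t,s)$ replacing $t\,h(s)$. The admissibility of $t\mapsto \wt a(t,s)$ and $s\mapsto \wt b(t,s)$ follows from $\pa_t T = 1-s$ and $\pa_s T = 1-t$ together with $\dot x = \rho(a)$. The bracket term $c^i_{jk}(\wt x)\wt b^j \wt a^k$ vanishes because $\wt a$ and $\wt b$ are scalar multiples of the same element $a(T)$, so the weak form of \eqref{eqn:htp_smooth} reduces to the elementary identity $\pa_t \wt b^i \overset{\text{W}}= \pa_s \wt a^i$, checked by direct differentiation together with the use of the distributive derivative of $a$.

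The main technical obstacle, as in the proof of Lemma \ref{lem:reparam1}, will be the verification of the trace conditions \eqref{eqn:traces} and \eqref{eqn:traces1} at the four boundary pieces $t,s\in\{0,1\}$. This requires the same style of $\eps\to 0$ estimates using boundedness of $a$ combined with Lemma \ref{lem:reg} applied to auxiliary functions of the form $\int_0^1|a(T(t,s))-a(T(t,s_0))|\,\dd t$ to extract regularity at the trace values. Once the trace conditions hold, Theorem \ref{thm:w_wt} upgrades the weak identity to the WT-form needed by Definition \ref{def:htp_meas}, producing the $E$-homotopy with initial-point $E$-homotopy $a$ and final-point $E$-homotopy $\theta_{x(1)}$, as claimed.
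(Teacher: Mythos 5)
Your first part is exactly the paper's argument: Lemma \ref{lem:reparam1} with $[t_0,t_1]=[0,1]$ and $h(s)=s$, and the boundary identifications you give are correct. For the second part you diverge: you claim $T(t,s)=1-(1-t)(1-s)$ does not fit the lemma and therefore re-run its proof from scratch. That re-derivation is sound --- admissibility from $\pa_tT=1-s$, $\pa_sT=1-t$, vanishing of the bracket term by skew-symmetry, the weak identity $\pa_t\wt b\overset{\text{W}}=\pa_s\wt a$, and the trace estimates via Lemma \ref{lem:reg} applied to $\check a(u):=a(1-u)$ with $g(t)=1-t$, $h(s)=1-s$ (the only nontrivial traces being at $s=0$ and $t=0$, where $h\neq 0$) --- so your proof is correct. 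But the extra work is avoidable: the pattern $t\cdot h(s)$ is only the normalised form of the lemma for $[t_0,t_1]=[0,1]$; in its general form the inner argument is $t_0+\frac{t-t_0}{t_1-t_0}(h(s)-t_0)$, and taking the reversed interval $t_0=1$, $t_1=0$ with $h(s)=s$ gives precisely $1-(1-t)(1-s)$, with the prefactors $\frac{h(s)-t_0}{t_1-t_0}=1-s$ and $\frac{t-t_0}{t_1-t_0}\dot h(s)=1-t$ reproducing $\wt a$ and $\wt b$. This is what the paper does, reducing the second part to a one-line citation; your route buys nothing beyond independence from the orientation convention, at the cost of repeating all the trace estimates.
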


\begin{proof}  The assertion follows from Lemma \ref{lem:reparam1}. For $(a,b)$ we simply take $a(t)$ and $h(s)=s$. 

\noindent For $(\wt a,\wt b)$ we use Lemma \ref{lem:reparam1} with $\wt a(t):=a(1-t)$ defined on an interval $[t_0=1,t_1=0]$ and $h(s)=s$. 
\end{proof}

\noindent We can now state the following important result.
\begin{lemma}\label{lem:htp}\index{algebroid homotopy!class}
Let $a,b:[t_0,t_1]\times[0,1]\lra E$ be an $E$-homotopy covering
$x:[t_0,t_1]\times[0,1]\lra M$. Then we have the following equality of $E$-homotopy classes:
$$[a(t,0)]_{t\in[t_0,t_1]} [b(t_1,s)]_{s\in[0,1]}=[b(t_0,s)]_{s\in[0,1]} [a(t,1)]_{t\in[t_0,t_1]}.$$
\end{lemma}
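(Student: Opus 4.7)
The strategy is a square-filling argument: we shall interpolate the two boundary-traversals of the square $[t_0,t_1]\times[0,1]$ through a one-parameter family of L-shaped paths, and verify that this family constitutes an $E$-homotopy with fixed end-points between the two compositions.

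For each $\sigma\in[0,1]$, consider the L-shaped path $\gamma_\sigma$ in the square which goes from $(t_0,0)$ vertically to $(t_0,\sigma)$, then horizontally to $(t_1,\sigma)$, and finally vertically to $(t_1,1)$. At $\sigma=0$ this path lies along the bottom and right edges, while at $\sigma=1$ it lies along the left and top edges. The plan is to define a piecewise-$C^1$ sweeping map $\Psi:[0,3]\times[0,1]\lra[t_0,t_1]\times[0,1]$ such that for each fixed $\sigma$ the curve $\tau\mapsto\Psi(\tau,\sigma)$ parametrises $\gamma_\sigma$ linearly on each of the subintervals $[0,1]$, $[1,2]$, $[2,3]$. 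I would then construct $(\wt a,\wt b)$ on $[0,3]\times[0,1]$ as the ``pull-back'' of $(a,b)$ along $\Psi$, defined piecewise: on $[0,1]\times[0,1]$ put $\wt a(\tau,\sigma):=\sigma\, b(t_0,\sigma\tau)$ and $\wt b(\tau,\sigma):=b(t_0,\sigma\tau)$; on $[1,2]\times[0,1]$ put $\wt a(\tau,\sigma):=(t_1-t_0)\,a\big(t_0+(\tau-1)(t_1-t_0),\sigma\big)$ and $\wt b$ the corresponding horizontal-derivative piece of $b$; on $[2,3]\times[0,1]$ put $\wt a(\tau,\sigma):=(1-\sigma)\,b\big(t_1,\sigma+(\tau-2)(1-\sigma)\big)$ together with the analogous $\wt b$.

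The key verification is that $(\wt a,\wt b)$ satisfies the weak condition \eqref{eqn:htp_weak}. On each of the three substrips, the pair $(\wt a,\wt b)$ is either a reparametrisation of an $E$-homotopy of the form already treated in Lemma \ref{lem:reparam1} (for the horizontal strip $[1,2]\times[0,1]$, use $a(\cdot,\sigma)$ reparametrised in $\tau$ with $b$ swept in $\sigma$), or a reparametrisation of a ``shrinking'' $E$-homotopy as in Lemma \ref{lem:van_curv} (for the two vertical strips, where only the $b$-direction is traversed and the skew-symmetry of $c^i_{jk}$ kills the cross-bracket term). Verifying that the three pieces match across the interfaces $\tau=1$ and $\tau=2$ amounts to checking the boundary traces, which is built into the definition of $\Psi$ and into the trace conditions of Appendix \ref{sapp:pde}. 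I would read off: $\wt b(0,\sigma)\equiv\theta_{x(t_0,0)}$ and $\wt b(3,\sigma)\equiv\theta_{x(t_1,1)}$ (since $\Psi(0,\cdot)$ and $\Psi(3,\cdot)$ are constant), so $(\wt a,\wt b)$ is an $E$-homotopy \emph{with fixed end-points}; while $\wt a(\cdot,0)$ equals (after reparametrisation and up to a prepended null path) the composition $a(\cdot,0)\circ b(t_1,\cdot)$, and $\wt a(\cdot,1)$ equals $b(t_0,\cdot)\circ a(\cdot,1)$ (up to an appended null path). The convention that composition with null paths preserves the $E$-homotopy class then yields the claimed equality.

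The main obstacle I anticipate is precisely the behaviour of $(\wt a,\wt b)$ at the two ``corners'' $\tau=1$ and $\tau=2$: the sweeping map $\Psi$ is only Lipschitz there, so $\wt a$ has jumps in $\tau$ and $\wt b$ has jumps in $\sigma$, although both remain bounded measurable. Because the weak formulation \eqref{eqn:htp_weak} tolerates precisely such discontinuities (in contrast to the pointwise PDE \eqref{eqn:htp_smooth}), the three-strip computation can be assembled into a single global weak identity by an integration-by-parts argument along the interface lines, the interface terms cancelling in pairs. As an alternative route, one could avoid the piecewise construction altogether and invoke Lemma \ref{lem:gen_E_htp} applied to a smoothened version of the L-family, then pass to the limit; however, the direct piecewise verification seems cleaner and exhibits the geometric content of the statement most transparently.
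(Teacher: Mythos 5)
Your proposal is in substance the paper's own proof: the paper composes, in the $t$-direction, the two ``shrinking'' $E$-homotopies of Lemma \ref{lem:van_curv} (applied to the initial-point homotopy $b(t_0,\cdot)$ and the final-point homotopy $b(t_1,\cdot)$) with the given homotopy $(a,b)$ itself, and this three-piece composition is exactly your L-shaped sweep on $[0,3]\times[0,1]$; the interface matching you worry about is handled there by the already-defined composition of $E$-homotopies, so no separate integration-by-parts along the seams is needed. One slip worth correcting: on the first vertical strip the second component must carry the factor $\tau$, i.e. $\wt b(\tau,\sigma)=\tau\, b(t_0,\sigma\tau)$ as in Lemma \ref{lem:van_curv}, not $\wt b(\tau,\sigma)=b(t_0,\sigma\tau)$ --- with your formula the pair fails equation \eqref{eqn:htp_smooth} and the initial-point homotopy is $b(t_0,0)$ rather than the null path $\theta_{x(t_0,0)}$, which would destroy the fixed-end-points property you need.
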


\begin{proof} The first part of Lemma \ref{lem:van_curv}, applied to the curve $s\mapsto b(t_0,s)$,
gives us the existence of $E$-homotopy $c,d:[0,1]\times[0,1]\lra
E$ such that $c(t,0)=\theta_{x(t_0)}$, $c(s,1)=b(t_0,s)$,
$d(0,s)=\theta_{x(t_0)}$, and $d(1,s)=b(t_0,s)$. Similarly, using
the second part of Lemma \ref{lem:van_curv} for $s\mapsto
b(t_1,s)$, we obtain $E$-homotopy $e,f:[0,1]\times[0,1]\lra E$
such that $e(s,0)=b(t_1,s)$, $e(t,1)=\theta_{x(t_1)}$,
$f(0,s)=b(t_1,s)$, and $f(1,s)=\theta_{x(t_1)}$.

Clearly, $E$-homotopies $(c,d)$, $(a,b)$, and $(e,f)$ are
composable and their composition is an $E$-homotopy with fixed
end-points which establishes an equivalence of $E$-paths
$\theta_{x(t_0)}\circ a(\cdot,0)\circ b(t_1,\cdot)$ and
$b(t_0,\cdot)\circ a(\cdot,1)\circ\theta_{x(t_1)}$.
\end{proof}

\begin{remark} \label{rem:lem_htp}
The above lemma is very important, as it shows the relation
between $E$-homotopies with and without fixed end-points. If
$a,b:I\times[0,1]\lra E$ is an $E$-homotopy joining $a_0$ and
$a_1$, then the composition of $a_0$ with the final-point
$E$-homotopy $b(t_1,\cdot)$ is equivalent to the composition of
the initial-point $E$-homotopy $b(t_0,\cdot)$ with $a_1$. Thus, if
the initial-point $E$-homotopy $b(t_0,\cdot)$ vanishes, in order
to check whether $[a_0]=[a_1]$, it is enough to check whether
$[b(t_1,\cdot)]=0$. Thus the problem of equivalence of $a_0$ and
$a_1$ can be solved by investigating the final-point $E$-homotopy.
Similarly, we can address the problem of relative $E$-homotopy equivalence $[a_0]=[a_1]\modulo(\Phi_0,\Phi_1)$ by studying the classes $[b(t_0,\cdot)]$ and $[b(t_1,\cdot)]$.   
\end{remark}\medskip

\noindent Finally, as a corollary from Lemmas \ref{lem:reparam1} and \ref{lem:htp} we obtain a  result about reparametrisation of
$E$-paths.

\begin{lemma}[reparametrization]\label{lem:reparam}\index{admissible path!reparametrisation}
Let $a:[t_0,t_1]\lra E$ be a measurable $E$-path, and let $h:[0,1]\ra[t_0,t_1]$ be an invertible $C^1$-function. Define $\wt a(t):=\dot h(t)a(h(t))$ for $t\in[0,1]$. Then
\begin{align}
&[\wt a(t)]_{t\in[0,1]}=[a(t)]_{t\in[t_0,t_1]} \quad\text{if $h(0)=t_0$ and $h(1)=t_1$,}\label{eqn:reparam1}\\
&[a(t)]_{t\in[t_0,t_1]}\cdot[\wt a(t)]_{t\in[0,1]}=0 \quad\text{if $h(0)=t_1$ and $h(1)=t_0$.}\label{eqn:reparam2} 
\end{align}
\end{lemma}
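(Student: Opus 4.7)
The plan is to deduce both equalities directly from the $E$-homotopy produced by Lemma \ref{lem:reparam1}, by reading off its four boundary paths and then applying Lemma \ref{lem:htp}. Concretely, given $a:[t_0,t_1]\lra E$ and $h:[0,1]\lra[t_0,t_1]$, Lemma \ref{lem:reparam1} yields an $E$-homotopy
$$
a_\bullet(t,s)=\tfrac{h(s)-t_0}{t_1-t_0}\,a\!\left(t_0+\tfrac{t-t_0}{t_1-t_0}(h(s)-t_0)\right),\quad
b_\bullet(t,s)=\tfrac{t-t_0}{t_1-t_0}\,\dot h(s)\,a\!\left(t_0+\tfrac{t-t_0}{t_1-t_0}(h(s)-t_0)\right),
$$
over the base $x(t,s)=x\!\left(t_0+\tfrac{t-t_0}{t_1-t_0}(h(s)-t_0)\right)$. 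This is the one object I need to manipulate in both cases.

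The first step is pure bookkeeping at the four edges of $[t_0,t_1]\times[0,1]$. Regardless of the case one checks from the prefactors that $b_\bullet(t_0,\cdot)\equiv\theta_{x(t_0)}$ and $b_\bullet(t_1,s)=\dot h(s)\,a(h(s))=\wt a(s)$. In Case $(1)$, where $h(0)=t_0$ and $h(1)=t_1$, the prefactor $\tfrac{h(s)-t_0}{t_1-t_0}$ vanishes at $s=0$ and equals $1$ at $s=1$, so $a_\bullet(\cdot,0)\equiv\theta_{x(t_0)}$ and $a_\bullet(\cdot,1)=a$. In Case $(2)$, where $h(0)=t_1$ and $h(1)=t_0$, the same prefactor is $1$ at $s=0$ and $0$ at $s=1$, so $a_\bullet(\cdot,0)=a$ and $a_\bullet(\cdot,1)\equiv\theta_{x(t_0)}$.

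The second step is to plug these boundary data into the identity from Lemma \ref{lem:htp},
$$
[a_\bullet(\cdot,0)]\cdot[b_\bullet(t_1,\cdot)]=[b_\bullet(t_0,\cdot)]\cdot[a_\bullet(\cdot,1)].
$$
In Case $(1)$ this reads $[\theta_{x(t_0)}]\cdot[\wt a]=[\theta_{x(t_0)}]\cdot[a]$, and the convention that a composition with a null path leaves the $E$-homotopy class unchanged (stated right after Definition \ref{def:E_htp_class}) gives \eqref{eqn:reparam1}. In Case $(2)$ the identity reads $[a]\cdot[\wt a]=[\theta_{x(t_0)}]\cdot[\theta_{x(t_0)}]=0$, which is exactly \eqref{eqn:reparam2}. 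Before invoking the composition law one should verify composability of the two pairs, which amounts to checking that the base points of $a$ and $\wt a$ match at the joining instant; this is immediate from $\wt a(0)=\dot h(0)\,a(h(0))$ together with the definition of the base paths.

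The only conceivable obstacle is that $h$ in Case $(2)$ is orientation-reversing, so $\dot h<0$. This causes no difficulty in Lemma \ref{lem:reparam1}, which makes no sign assumption on $\dot h$, and similarly Lemma \ref{lem:htp} is indifferent to orientations; hence both cases fit into the same argument without any extra work.
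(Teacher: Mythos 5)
Your proof is correct and follows essentially the same route as the paper: take the $E$-homotopy supplied by Lemma \ref{lem:reparam1}, read off the four boundary paths ($a_\bullet(\cdot,0)$, $a_\bullet(\cdot,1)$, $b_\bullet(t_0,\cdot)\equiv\theta$, $b_\bullet(t_1,\cdot)=\wt a$), and feed them into the identity of Lemma \ref{lem:htp}. The additional remarks on composability and on the orientation-reversing case are harmless elaborations of what the paper leaves implicit.
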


\begin{proof} Consider a homotopy \eqref{eqn:reparam} from Lemma \ref{lem:reparam1}. If $h(0)=t_0$ and $h(1)=t_1$ we have $a(t,0)=0$, a(t,1)=a(t)$, b(t_0,s)=0$, and $b(t_1,s)=\dot h(s)a(h(s))=\wt a(s)$. By Lemma \ref{lem:htp}, 
$$[\theta_{x(t_0)}]\cdot[\wt a(s)]_{s\in[0,1]}=[\theta_{x(t_0)}]\cdot[a(t)]_{t\in [t_0,t_1]}.$$

Analogously, for $h(0)=t_1$ and $h(1)=t_0$ we have  $a(t,0)=a(t)$, a(t,1)=0$, b(t_0,s)=0$, and $b(t_1,s)=\dot h(s)a(h(s))=\wt a(s)$. From Lemma \ref{lem:htp} we deduce that 
$$[a(t)]_{t\in[t_0,t_1]}\cdot[\wt a(s)]_{s\in[0,1]}=[\theta_{x(t_0)}]\cdot[\theta_{x(t_0)}]=0.$$
\end{proof}

\chapter[Optimal control problems]{Optimal control problems on AL algebroids}\label{ch:OCP}


In this chapter we introduce the notion of a control system and an optimal control problem on algebroids. Much attention is payed to motivate these definitions. We show that equivariant control systems and optimal control problems on a Lie groupoid $\GG$ lead naturally to system and problems on the associated Lie algebroid $\AG$. What is more, our definitions coincide with the standard ones for special cases of  tangent algebroid and Atiyah algebroid. At the end, we define the natural notion of algebroid homotopy associated with a control system.
   
From this chapter on, our attention is restricted to AL algebroids
only. This choice is justified by the properties of algebroid homotopies
on AL algebroids discussed in Lemma  \ref{lem:gen_E_htp}.

\subsection{Control systems on AL algebroids} 
\begin{definition}\label{def:con_sys}
A \emph{control system}\index{control system on AL algebroid} on an AL algebroid $E$ is a continuous map
\begin{equation}\label{eqn:def_con_sys}
f:M\times U\lra E
\end{equation}
such that, for every $u\in U$, the map $f(\cdot,u):M\lra E$ is a
$C^1$-section of $E$.
We will assume that $U$ is a subset of some Euclidean space $\R^r$. Moreover, we demand that the maps $f:M\times U\lra E$ and $\T_xf:\T M\times U\lra\T E$ are continuous. In local coordinates, if $f\sim (f^i(x,u), x^a)$, this means that $f^i(x,u)$ is continuous w.r.t. $x$ and $u$, differentiable w.r.t. $x$, and that $\frac{\pa f^i}{\pa x_a}(x,u)$ is continuous w.r.t. $x$ and $u$. 
\end{definition}

Observe that, for the tangent algebroid $E=\T M\ra M$, the above definition coincides with the classical one (cf. Definition \ref{def:cs_class} in Appendix \ref{app:ctr_theory}).
On the other hand, one easily sees (cf. Theorem \ref{thm:int_htp})
that a right-invariant control system on a Lie groupoid $\GG$
reduces to a system of the above form on the associated Lie
algebroid $A(\GG)$. For example, a right-invariant control system
on a gauge groupoid $\GG_P=P\times P/G$ of a principal bundle $G\ra
P\ra M$ is determined by its values on a single leaf of
$\GG^\alpha_P$ canonically isomorphic to $P$. Consequently, it is
equivalent to a $G$-invariant control system on $P$ and reduces to
a control system of the form \eqref{eqn:def_con_sys} on the Atiyah
algebroid $E=\sT P/G$. In particular, for a right-invariant system
on a Lie group $G$, Definition \ref{def:con_sys} coincides
with the reduced control system on its Lie algebra $\g$ as
described in \cite[Ch. 12]{jurdjevic}.

Now, for a given function $u:I\lra U$ (\emph{control}\index{control}), the map
\eqref{eqn:def_con_sys} defines a first-order ODE on $M$,
\begin{equation}\label{eqn:con_sys}
\dot{x}(t)=\rho\left(f(x(t),u(t))\right).
\end{equation}
We will restrict our attention only to functions $u$ of a certain
class (called \emph{admissible controls}\index{admissible controls|main}). In this paper these
are controls which are bounded and measurable, but one can
think of smaller classes: piecewise continuous or piecewise
constant functions. The set of all admissible controls will be
denoted by $\Uadm$.

Clearly, if $u(\cdot)$ is admissible, the map
$g(x,t)=\rho\left(f(x,u(t))\right)$ is differentiable w.r.t. $x$
and measurable w.r.t. $t$, so the assumptions of Theorem
\ref{thm:exist} hold. Consequently, we have the results of local
existence and uniqueness for the solutions of \eqref{eqn:con_sys}.
Observe that if $x(\cdot)$ is a solution of \eqref{eqn:con_sys}
for $u(\cdot)\in\Uadm$, then the path $f(x(\cdot),u(\cdot)):I\lra
E$ is a measurable $E$-path over $x(\cdot)$. This path will be
called a \emph{trajectory}\index{trajectory of a control system|main} of \eqref{eqn:con_sys}, whereas for the
pair $(x(\cdot), u(\cdot))$ we will use the term \emph{controlled
pair}.\index{controlled pair|main}

Observe that if $E=\T M$ is a tangent algebroid, then the  trajectory of the system \eqref{eqn:con_sys} is the tangent lift of the trajectory of a corresponding system \eqref{eqn:cs_class} on $M$. The notions of controlled pairs coincide in both cases.

\subsection{Optimal control problems on AL algebroids}
We introduce now a \emph{cost function}\index{cost function} $L:M\times U\lra \R$. We will assume the same regularity conditions for $L$ as in the case of
$f$, namely, that $L$ is a continuous function on $M\times U$,
which is of class $C^1$ w.r.t. the first variable and that the derivative $\dd_x L:\T M\times U\lra\R$ is continuous. If now
$(x(t), u(t))$, with $t\in I$, is a controlled pair for
\eqref{eqn:con_sys}, we define the \emph{total cost}\index{total cost|main} of this pair
to be $\int_{t_0}^{t_1}L\big(x(t),u(t)\big)\dd t.$ Note that, since
$L$ is continuous, $u(t)$ is bounded measurable, and the interval
$[t_0,t_1]$ is compact, the above integral is finite whenever the
solution $x(t)$ exists. Now we can define
optimal control problems for the data introduced above. These
definitions may seem unnatural at first sight, yet we will
motivate them in the next subsection.

\begin{definition}\label{def:ocp} For a control system \eqref{eqn:def_con_sys} and a cost function $L$ we define
an \emph{optimal control problem}\index{optimal control problem|main}\index{OCP|see{optimal control problem}} (\emph{OCP})  as follows:
\begin{equation}\tag{P}\label{eqn:P}
\begin{split}
\text{minimise} \int_{t_0}^{t_1}L\big(x(t),u(t)\big)\dd t
\text{ over all controlled pairs $(x,u)$ of \eqref{eqn:con_sys} such that}\\
\text{ the $E$-homotopy class of the trajectory $f(x(t),u(t))$
equals $[\sigma]$\,,}
\end{split}
\end{equation}
where $\sigma$ is a fixed $E$-path. The interval $[t_0,t_1]$ is to
be determined as well.

Given two smooth algebroid morphisms $\Phi_0:\T S_0\ra E$ and $\Phi_1:\T S_1\ra E$, we can also define an \emph{optimal control problem relative to $(\Phi_0,\Phi_1)$} as follows:\index{optimal control problem!relative to the pair of morphisms}
\begin{equation}\tag{P\ rel}\label{eqn:P_rel}
\begin{split}
\text{minimise} \int_{t_0}^{t_1}L\big(x(t),u(t)\big)\dd t
\text{ over all controlled pairs $(x,u)$ of \eqref{eqn:con_sys} such that}\\
\text{ the relative $E$-homotopy class of $f(x(t),u(t))$
equals $[\sigma]\modulo (\Phi_0,\Phi_1)$,}
\end{split}
\end{equation}
where $\sigma$ and $[t_0,t_1]$ are as above.
\end{definition}

\subsection{Interpretation of the algebroid OCPs}
Now we shall relate the OCPs \eqref{eqn:P} and \eqref{eqn:P_rel} to the standard OCPs considered in control theory. 
Briefly speaking, the $E$-homotopy restrictions in the OCPs on AL algebroids play a role of boundary condition in standard problems. 

Let us concentrate first on \eqref{eqn:P}. Recall from Chapter \ref{ch:E_htp} that, for an integrable algebroid $\AG$, we have interpreted $\AG$-homotopy classes as the standard homotopy classes reduced from a single $\alpha$-fibre $\GG_{y_0}$ of an integrating groupoid $\GG$ to the associated algebroid $\AG$. Moreover, at the beginning of this chapter we have interpreted  control system \eqref{eqn:def_con_sys} on an integrable algebroid $E=\AG$ as a reduction of a right-invariant control system on the groupoid $\GG$ (or on a single $\alpha$-fibre $\GG_{y_0}$). Consider now a control system on a manifold $N$,
\begin{equation}\label{eqn:cs_test}
\FF:N\times U\lra \T N,
\end{equation}
with the cost function $\LL:N\times U\lra\R$, and let us compare the following two OCPs on $N$: 
\begin{equation}\label{eqn:P1}\tag{$\textrm{P}_1$}
\begin{split}
\text{minimise } \int_{t_0}^{t_1}\LL\big(x(t),u(t)\big)\dd t \text{ over all controlled pairs $(x,u)$}\\
\text{satisfying\ } x(t_0)=x_0,\ x(t_1)=x_1,
\end{split}
\end{equation}
and
\begin{equation}\label{eqn:P2}\tag{$\textrm{P}_2$}
\begin{split}
\text{minimise } \int_{t_0}^{t_1}\LL\big(x(t),u(t)\big)\dd t \text{ over all controlled pairs $(x,u)$}\\
\text{for which the homotopy class of $x(t)$ equals $[\sigma]$},
\end{split}
\end{equation}
where $\sigma$ is a fixed path in $N$ joining $x_0$ and $x_1$ and the time interval $[t_0,t_1]$ is not determined.

Problem \eqref{eqn:P1} is a standard OCP on the manifold $N$. On the other hand, \eqref{eqn:P2} is equivalent to the OCP \eqref{eqn:P} on a tangent algebroid $\T N$. 

We may also think of $N$ as of an $\alpha$-fibre $\GG_{y_0}$ of a groupoid $\GG$ with the control system \eqref{eqn:cs_test} and the cost $\LL$ being $\GG$-equivariant, and such that they reduce to the control system \eqref{eqn:def_con_sys} and the cost function $L:U\times M\lra\R$ on the associated algebroid $E=\AG$. Clearly, in this situation, problem \eqref{eqn:P} on $E=\AG$ is equivalent to problem \eqref{eqn:P2} on $N$.

Now let us compare problems \eqref{eqn:P1} and \eqref{eqn:P2}. First, note that every solution of \eqref{eqn:P1} gives a solution of \eqref{eqn:P2} for some $[\sigma]\in \Pi_1(N,x_0,x_1)$. On the other hand, if we know the solutions of \eqref{eqn:P2} for all possible classes $[\sigma]\in\Pi_1(N,x_0,x_1)$ then one (or more) of these solutions which has a minimal total cost is a solution of \eqref{eqn:P1}. To sum up, problem \eqref{eqn:P2} is more refined  than \eqref{eqn:P1}. 

Observe that candidates for the solutions of \eqref{eqn:P1} are usually indicated by the PMP. In the proof one compares the optimal trajectory with  nearby (and hence homotopic) ones. Consequently, the PMP gives only conditions for local optimality and as such will also indicate all candidates for the solutions of \eqref{eqn:P2} for all possible classes $[\sigma]\in\Pi_1(N,x_0,x_1)$ (if such candidates exist). Then, to solve \eqref{eqn:P1} or \eqref{eqn:P2}, one has to investigate closer these candidates to check whether they are really optimal.

Finally, note that problems \eqref{eqn:P1} and \eqref{eqn:P2} are equivalent if $N$ is simply connected. In fact, we can always lift the control system $\FF:N\times U\lra\T N$ and the const function $\LL:N\times U\lra\R$ to $\wt\FF:\wt N\times U\lra \T\wt N$ and $\wt\LL:\wt N\times U\lra\R$ defined on the universal cover $\wt N$ of $N$. 

The discussion for the OCP \eqref{eqn:P_rel} is quite similar. In the same setting as before consider two smooth maps $\wt\Phi_0:S_0\lra N$ and $\wt\Phi_1:S_1\lra N$. Now compare the following two OCPs on $N$:
\begin{equation}\label{eqn:P3}\tag{$\textrm{P}_3$}
\begin{split}
\text{minimise } \int_{t_0}^{t_1}\LL\big(x(t),u(t)\big)\dd t \text{ over all controlled pairs $(x,u)$}\\
\text{satisfying\ } x(t_0)\in\Image\wt\Phi_0,\ x(t_1)\in\Image\wt\Phi_1,
\end{split}
\end{equation}
and
\begin{equation}\label{eqn:P4}\tag{$\textrm{P}_4$}
\begin{split}
\text{minimise } \int_{t_0}^{t_1}\LL\big(x(t),u(t)\big)\dd t \text{ over all controlled pairs $(x,u)$ for which }\\
\text{the homotopy class of $x(t)$ equals $[\sigma]$ relatively to the images $\Image\wt\Phi_0$ and $\Image\wt\Phi_1$}.
\end{split}
\end{equation}
Here $\sigma$ is a fixed path in $N$, and we say that two paths $\sigma_0$ and $\sigma_1$ are homotopic relatively to the images $\Image\wt\Phi_0$ and $\Image\wt\Phi_1$ if there exists a homotopy joining $\sigma_0$ and $\sigma_1$ with the end-points in  $\Image\wt\Phi_0$ and $\Image\wt\Phi_1$. As before the time interval $[t_0,t_1]$ is not fixed. 

Problem \eqref{eqn:P3} has a form of the standard OCP on the manifold $N$ (one usually assumes that $\wt\Phi_0$ and $\wt\Phi_1$ are immersions). Problem \eqref{eqn:P4}, in turn, is equivalent to the OCP \eqref{eqn:P_rel} for a control system on the tangent algebroid $\T N$ for algebroid morphisms $\Phi_0=\T\wt\Phi_0:\T S_0\lra\T N$ and $\Phi_1=\T\wt\Phi_1:\T S_1\lra\T N$.

Analogously as before, we can also think of $N$ as of an $\alpha$-fibre $\GG_{y_0}$ of a groupoid $\GG$, with the control system \eqref{eqn:cs_test} and the cost function being $\GG$-equivariant and reducing to \eqref{eqn:con_sys} and $L$. If now $\wt\Phi_0$ and $\wt\Phi_1$ are algebroid morphisms $\Phi_0:\T S_0\lra\AG$ and $\Phi_1:\T S_1\lra\AG$ lifted to $\GG_{y_0}=N$ (cf. Corollary \ref{cor:int}), then \eqref{eqn:P4} on $N$ is equivalent to \eqref{eqn:P_rel} on $E=\AG$. 

Relation between \eqref{eqn:P3} and \eqref{eqn:P4} is analogous to the relation of \eqref{eqn:P1} and \eqref{eqn:P2}:
\begin{itemize}
\item Every solution of \eqref{eqn:P3} is a solution of \eqref{eqn:P4} for some class $[\sigma]\modulo(\wt\Phi_0,\wt\Phi_1)$.
\item The solution of \eqref{eqn:P3} is this solution of \eqref{eqn:P4} which has a minimal total cost of all solutions of \eqref{eqn:P4} for all possible classes $[\sigma]\modulo(\wt\Phi_0,\wt\Phi_1)$.
\item The solutions of \eqref{eqn:P3} and \eqref{eqn:P4} are not distinguishable by the PMP.
\end{itemize} 

Problems \eqref{eqn:P3} and \eqref{eqn:P4} are equivalent if $N$ is simply connected. This fact may not be obvious at first. It can be deduced from the following lemma.
\begin{lemma} Let $N$ be a simply connected manifold, and let $S_0,S_1\subset N$ be two path-connected subsets. Choose paths $\sigma_0,\sigma_1:[0,1]\lra N$ such that $\sigma_0(0),\sigma_1(0)\in S_0$ and $\sigma_0(1),\sigma_1(1)\in S_1$. Then there exists a homotopy in $N$ joining $\sigma_0$ with $\sigma_1$ which has its end-points in $S_0$ and $S_1$. 
\end{lemma}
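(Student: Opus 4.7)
The plan is to build the desired homotopy in two stages: first, move the endpoints of $\sigma_0$ through $S_0$ and $S_1$ along chosen connecting paths to the endpoints of $\sigma_1$, obtaining some auxiliary path $\tilde\sigma$ from $\sigma_1(0)$ to $\sigma_1(1)$; second, use simple-connectedness of $N$ to deform $\tilde\sigma$ to $\sigma_1$ with fixed endpoints.

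First, using path-connectedness of $S_0$ and $S_1$, I would pick continuous paths $\alpha:[0,1]\lra S_0\subset N$ with $\alpha(0)=\sigma_0(0)$ and $\alpha(1)=\sigma_1(0)$, and $\beta:[0,1]\lra S_1\subset N$ with $\beta(0)=\sigma_0(1)$ and $\beta(1)=\sigma_1(1)$. Define $\tilde\sigma$ to be (a reparametrization of) the concatenation $\alpha^{-1}\cdot\sigma_0\cdot\beta$, a path from $\sigma_1(0)$ to $\sigma_1(1)$.

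Second, I would construct an explicit homotopy $H_1:[0,1]\times[0,1]\lra N$ from $\sigma_0$ to $\tilde\sigma$ such that $H_1(0,s)=\alpha(s)\in S_0$ and $H_1(1,s)=\beta(s)\in S_1$. A convenient recipe is to partition the $t$-interval at each level $s$ into $[0,s/3]\cup[s/3,1-s/3]\cup[1-s/3,1]$ and set $H_1(t,s)$ to be reparametrized copies of $\alpha$ (run backwards from $\alpha(s)$), of $\sigma_0$, and of $\beta$ (from $\beta(0)$ to $\beta(s)$) on the three pieces, respectively. Continuity is immediate from the matching of values at the partition points; at $s=0$ the path $H_1(\cdot,0)$ is a reparametrization of $\sigma_0$ (with constant stubs at the ends), at $s=1$ it is exactly $\tilde\sigma$, and the traces of the endpoints lie in $S_0$ and $S_1$ as required. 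Any discrepancy between $H_1(\cdot,0)$ and $\sigma_0$ itself is resolved by prepending a standard reparametrization homotopy of $\sigma_0$, which keeps the endpoints fixed and hence inside $S_0,S_1$.

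Third, since $\tilde\sigma$ and $\sigma_1$ are two paths in $N$ with the same endpoints $\sigma_1(0)$ and $\sigma_1(1)$, and $N$ is simply connected, there exists a homotopy $H_2:[0,1]\times[0,1]\lra N$ with fixed endpoints between them; its endpoint traces are the constants $\sigma_1(0)\in S_0$ and $\sigma_1(1)\in S_1$. Finally, I would concatenate $H_1$ and $H_2$ in the $s$-variable (reparametrizing $s\in[0,1/2]$ for $H_1$ and $s\in[1/2,1]$ for $H_2$) to obtain the required homotopy from $\sigma_0$ to $\sigma_1$ in $N$, whose $t=0$ trace lies entirely in $S_0$ and whose $t=1$ trace lies entirely in $S_1$. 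There is no substantive obstacle: all real content is carried by path-connectedness of $S_0,S_1$ and $\pi_1(N)=0$, and the only care needed is the bookkeeping for the piecewise definition of $H_1$ and its reparametrization.
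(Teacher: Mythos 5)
Your proof is correct and takes essentially the same route as the paper's: both arguments reduce the problem to a fixed-endpoint homotopy (supplied by simple connectedness of $N$) between one of the given paths and the other path with connecting arcs in $S_0$ and $S_1$ attached, using path-connectedness only to produce those arcs. The sole difference is bookkeeping --- you concatenate the endpoint-sliding homotopy and the fixed-endpoint homotopy in the $s$-direction, whereas the paper glues the sliding pieces onto the fixed-endpoint homotopy in the $t$-direction and then removes the resulting constant and back-and-forth stubs at the end.
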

\begin{proof} By path-connectedness of $S_0$ and $S_1$, there exists a path $b_0:[0,1]\lra S_0$ joining $\sigma_0(0)$ and $\sigma_1(0)$, and a path $b_1:[0,1]\lra S_1$ joining $\sigma_0(1)$ and $\sigma_1(1)$. Denote by $\wt b_0(t):=b_0(1-t)$ and $\wt b_1(t):=b_1(1-t)$ the inverse paths of $b_0$ and $b_1$.  

Now path $\sigma_0$ and the composition $b_0\ast\sigma_1\ast \wt b_1$ are homotopic with fixed-end-points in $N$, since they have the same end-points and $N$ is simply connected (by $\ast$ we denote the concatenation of paths). Let $H:[0,1]\times[0,1]\lra N$ be the appropriate homotopy. Consider homotopies $H_0:[0,1]\times[0,1]\lra S_0$ and $H_1;[0,1]\times[0,1]\lra S_1$ defined by the formulae $H_0(t,s):=b_0((1-t)s)$ and $H_1(t,s)=b_1(ts)$. It is straightforward to verify that the composition of homotopies $\wt H:=H_0\ast H\ast H_1$ makes sense, and it is a homotopy joining $\wh\sigma_0:=c_{\sigma_0(0)}\ast\sigma_0\ast c_{\sigma_0(1)}$ with $\wh\sigma_1:=\wt b_0\ast b_0\ast\sigma_1\ast\wt b_1\ast b_1$ (here $c_x$ stands for a constant path equal $x\in N$). Moreover, the initial-point homotopy of $\wt H$ is $H_0(0,s)=b_0(s)\in S_0$ and the final-point homotopy is $H_1(1,s)=b_1(s)\in S_1$, i.e., the paths $\wh\sigma_0$ and $\wh\sigma_1$ are homotopic relative to $S_0$ and $S_1$. 

To finish the proof observe that $\wh\sigma_0$ is homotopic (with fixed end-points) to $\sigma_0$ and $\wh\sigma_1$ to $\sigma_1$. \end{proof}

\subsection{OCPs in terms of the product algebroid \texorpdfstring{$E\times\T\R$}{TEXT}}
For a control system on an AL algebroid $E$, similar to the classical
situation of the tangent algebroid $E=\T M\ra M$, there is an elegant formulation of the OCPs \eqref{eqn:P} and \eqref{eqn:P_rel} in terms of the product algebroid
$E\times\T\R$. The idea is to incorporate the cost function into
the control system \eqref{eqn:con_sys}.

Denote by $\bm{A}$ the product algebroid structure on
$\bm\tau=(\tau_E,\tau_{\T\R}): E\times\T\R\lra M\times\R$ (see Chapter \ref{ch:algebroids}). We
will consequently use bold letters to emphasise objects associated
with $\bm{A}$, whereas objects associated with the $\T\R$-component of
$\bm A$ will be distinguished by underlining. For example,
$\bm{x}=(x,\ul x)\in M\times\R$ and $\bm{a}=(a,\ul a)\in
E\times\T\R=\bm A$.

Introduce now a new variable $\ul x\in\R$ and, for a given admissible control
$u\in\Uadm$, consider the following extension of the differential
equation \eqref{eqn:con_sys}:
\begin{equation}\label{eqn:con_sys_A}
\left\{ \begin{aligned}
\dot{x}(t)&=\rho\left(f(x(t),u(t))\right),\\
\dot{\ul x}(t)&=L\left(x(t),u(t)\right).
\end{aligned}\right.
\end{equation}
Clearly, $\ul x(t_1)-\ul x(t_0)=\int_{t_0}^{t_1}L(x(t),u(t))\dd t$
is the total cost of the controlled pair $(x(t),u(t))$ of
\eqref{eqn:con_sys}. Equation \eqref{eqn:con_sys_A} is a
differential equation associated with the following control system
on $\bm A$:
\begin{equation}
\label{def:con_sys_A} \bm{f}=(\wt f,\ul f):(M\times\R)\times U\lra
E\times\T\R=\bm{A},
\end{equation}
where $\ul f\left((x,\ul x),u\right):=\left(\ul
x,L(x,u)\right)\in\R\times\R\approx\T\R$ and $\wt{f}\left((x,\ul
x),u\right):=f(x,u)\in E$. For a given $u\in\Uadm$ the base
trajectory $\bm x(t)=(x(t),\ul x(t))$ of \eqref{eqn:con_sys_A}
contains information on both the base trajectory $x(t)$ of
\eqref{eqn:con_sys} (for the same control $u$) and the total cost
of the controlled pair $(x(t),u(t))$. Observe that the trajectory
$\bm f(\bm x(t),u(t))$ of \eqref{eqn:con_sys_A} projects onto the
trajectory $f(x(t),u(t))$ of \eqref{eqn:con_sys} under the
canonical algebroid projection $p_E:\bm A=E\times\sT\R\lra E$.
Now the OCP \eqref{eqn:P} can be reformulated in terms of control
system \eqref{def:con_sys_A} as follows:
\begin{align}\label{eqn:P_A}\tag{\textbf{P}}
\begin{split}
&\text{minimise } \ul x(t_1)
\text{ over controlled pairs }\ (\bm x(t),u(t))=\left((x(t),\ul x(t)),u(t)\right)\\
& \text{ of \eqref{eqn:con_sys_A} satisfying the following:}\\
&1.\text{ the $E$-projection $f(x(t),u(t))$ of the trajectory $\bm f(\bm x(t),u(t))$}\\
 &\text{belongs to a fixed $E$-homotopy class $[\sigma]$;}\\
&2.\text{ }\ul x(t_0)=0.
\end{split}
\end{align}

\noindent Similarly, the OCP \eqref{eqn:P_rel} can be expressed in the following way:
\begin{align}\label{eqn:P_A_rel}\tag{\textbf{P rel}}
\begin{split}
&\text{minimise } \ul x(t_1)
\text{ over controlled pairs }\ (\bm x(t),u(t))=\left((x(t),\ul x(t)),u(t)\right)\\
& \text{ of \eqref{eqn:con_sys_A} satisfying the following:}\\
&1.\text{ the $E$-projection $f(x(t),u(t))$ of the trajectory $\bm f(\bm x(t),u(t))$}\\
 &\text{belongs to a fixed relative $E$-homotopy class $[\sigma]\modulo(\Phi_0,\Phi_1)$;}\\
&2.\text{ }\ul x(t_0)=0.
\end{split}
\end{align}

The advantage of these new formulations of the OCPs \eqref{eqn:P} and \eqref{eqn:P_rel} presented here may seem unclear. The main reason is that the unified treatment of cost and controls simplifies some aspects of the proof of the PMP.

\subsection{The algebroid homotopy associated with a control system}

As has been observed in \cite{crainic_fernandes}, algebroid homotopies can be generated
by time-dependent algebroid sections. Since the control system
\eqref{eqn:con_sys} is a family of $E$-sections $f(\cdot,u)$,
fixing an admissible control $u(t)$ gives a time-dependent section
$f(\cdot,u(t))$.  The associated $E$-homotopy can be well
understood in terms of Lemma \ref{lem:gen_E_htp}.

Solving \eqref{eqn:con_sys} for a one-parameter family of
initial conditions $x(t_0,s)=x_0(s)$ produces a one-parameter
family of base paths $x(t,s)$. It follows from Theorem \ref{thm:exist} that, if the solution $x(t,0)$ is defined on $I=[t_0,t_1]$, then so is $x(t,s)$
at least for $x_0(s)$'s close enough to $x_0(0)$. With $x(t,s)$
we can associate a one-parameter family of trajectories
$$ a(t,s):=f\big(x(t,s),u(t)\big).$$
One easily sees that for \eqref{eqn:con_sys} the
assumptions of Theorem \ref{thm:param_dif} are satisfied.
Consequently, the base trajectories $x(t,x_0)$ are continuous
differentiable w.r.t. the initial condition $x_0$ (and ACB in
$t$). As $x(t,s)=x(t,x_0(s))$ if $s\mapsto x_0(s)$ is an ACB map,
we deduce that $x(t,s)$ is ACB w.r.t. the second variable; that
is, $\pa_s x(t,s)$ is a well-defined measurable function of both
variables. Consequently, the derivative $\pa_sa^i(t,s)=\frac{\pa
f^i}{\pa x^a}(x(t,s),u(t))\pa_sx^a(t,s)$ satisfies the assumptions
of Lemma \ref{lem:gen_E_htp}. Thus, the conclusions of Lemma
\ref{lem:gen_E_htp} hold; namely, for a given bounded measurable
$E$-path $b_0(s)$ covering $x_0(s)$, there exists a measurable map
(AC w.r.t. the first variable) $b:I\times[0,1]\lra E$ with
$b(t_0,s)=b_0(s)$ such that $(a,b)$ is an $E$-homotopy. The
$t$-evolution of $b(t,s)$ is given by \eqref{eqn:gen_htp}. Observe that, since
$\pa_sx(t,s)=\rho\left(b(t,s)\right)$, we have
$\pa_sa^i(t,s)=\frac{\pa f^i}{\pa
x^a}\left(x(t,s),u(t)\right)\rho^a_k\left(x(t,s)\right)b^k(t,s)$.
Consequently, $b(t,s)\sim\left(x^a(t,s),b^i(t,s)\right)$ is a
solution of the following differential equation
\begin{equation}\label{eqn:par_trans}
\left\{
\begin{aligned} \pa_tb^i(t,s)&=\frac{\pa
f^i}{\pa x^a}\Big(x(t,s),u(t)\Big)
\rho^a_k\big(x(t,s)\big)b^k(t,s)\\&+c^i_{jk}\big(x(t,s)\big)b^j(t,s)f^k\big(x(t,s),u(t)\big),\\
\pa_tx^a(t,s)&=\rho^a_i\big(x(t,s)\big)f^i\big(x(t,s),u(t)\big),
\end{aligned}\right.\end{equation}
with the initial conditions $b^i(t_0,s)=b_0^i(s)$ and
$x^a(t_0,s)=x^a_0(s)$.

 The above differential equation is well understood  in terms of the tools introduced in Chapter \ref{ch:algebroids}.
 For every $u\in U$, the section $f_u(\cdot):=f(\cdot,u):M\lra E$ gives rise to a linear vector field $\dd_\sT(f_u)$ on $E$.
 Evaluating it on $u(t)$ gives a time-dependent family of vector fields $\dd_\sT(f_{u(t)})$. Equation \eqref{eqn:par_trans}
 is simply the evolution along this family, $\pa_tb(t,s)=\dd_\sT\left(f_{u(t)}\right)(b(t,s))$.
On the other hand, with a time-dependent family of section
$f_u(t)$ we may associate the family of linear functions
$h_t(x,\xi):=\< f(x,u(t)),\xi>_{\tau}$ on $E^\ast$, and the
corresponding family of Hamiltonian vector fields $\X_{h_t}$. In
local coordinates,
$$\X_{h_t}(x,\xi)=\rho^b_j(x)f^j(x,u(t))\pa_{x^b}+\left(c^k_{ij}(x)f^i(x,u(t))\xi_k-
\rho^a_j(x)\frac{\pa f^i}{\pa
x^a}(x,u(t))\xi_i\right)\pa_{\xi_j}.$$ As we have seen in Chapter
\ref{ch:algebroids} (equations \eqref{eqn:ham_vf}--\eqref{eqn:hvf_tgl}),
the fields $\dd_\sT(f_{u(t)})$ and $\X_{h_t}$ give the same base
evolution (given by \eqref{eqn:con_sys}), and are related by
$\<\dd_\sT(f_{u(t)}),\X_{h_t}>_{\sT\tau}=0$.

\begin{definition}\label{def:op_par_tr}
The flows of the fields  $\dd_\sT(f_{u(t)})$ and $\X_{h_t}$ (for a
given $u\in\Uadm$) will be called \emph{operators of parallel
transport}\index{parallel transport} (in $E$ and $E^\ast$ respectively) along the solution $x(t)$ of the system \eqref{eqn:def_con_sys}. We will denote them
with $B_{tt_0}$ and $B^\ast_{tt_0}$, respectively. Analogously we
define operators  $\bm{B}_{tt_0}$ and ${\bm{B^\ast}}_{tt_0}$ for
the control system \eqref{def:con_sys_A}. Note that, by
construction, $B_{tt^{'}}\circ B_{t^{'}t_0}=B_{tt_0}$ and
$B^\ast_{tt^{'}}\circ B^\ast_{t^{'}t_0}=B^\ast_{tt_0}$.
\end{definition}

\begin{remark} \label{rem:B_htp}
Let us see that, by construction, the map
$b(t,s)=B_{tt_0}\left(b_0(s)\right)$ together with $a(t,s)$ forms
an $E$-homotopy. Moreover, $B_{tt_0}(b_0)$ is continuous w.r.t. $b_0$, $t$, and $t_0$. Indeed, $b(t)=B_{tt_0}(b_0)$ is
the solution of \eqref{eqn:par_trans} for $s=0$. The right-hand side is
measurable in $t$ and locally Lipschitz (linear) in $b$, so, by
Theorem \ref{thm:param}, $b(t)$ is AC w.r.t. $t$ and
continuous w.r.t. the initial condition $b_0$.
\end{remark}


\begin{remark} \label{rem:B_paring}
Note also that the operators $B$ and $B^\ast$ have the property of
preserving the parring $\<\cdot,\cdot>_{\tau}$; that is, for every
$a\in E_{x(t^{'})}$ and $\xi\in E^\ast_{x(t^{'})}$ over the same
base point ${x(t^{'})}\in M$,
$$\< B_{tt^{'}}(a),B^\ast_{tt^{'}}(\xi)>_{\tau}=\< a,\xi>_{\tau}\quad \text{ for every $t\in I$}.$$
Indeed, since by definition the pairing
$\<\cdot,\cdot>_{\sT\tau}:\sT E\times_{\sT M}\sT E^\ast\lra\R$ is
the tangent map of $\<\cdot,\cdot>_\tau:E\times_M E^\ast\lra\R$,
we have
\begin{align*}
&\pa_t\<B_{tt^{'}}(a),B^\ast_{tt^{'}}(\xi)>_{\tau}=\<\pa_tB_{tt^{'}}(a),\pa_tB^\ast_{tt^{'}}(\xi)>_{\sT\tau}\\&=
\<\dd_\sT(f_{u(t)})\left(B_{tt^{'}}(a)\right),\X_{h_t}\left(B_{tt^{'}}^\ast(\xi)\right)>_{\sT\tau}=0.
\end{align*}
\end{remark}\medskip

Finally, observe that the evolution of $\bm\xi(t)=\bm
{B^\ast}_{tt_0}(\bm\xi_0)$ for the control system
\eqref{def:con_sys_A} is trivial on the $\sT^\ast\R$-component.
Indeed, the associated linear Hamiltonian
$$\bm H_t(\bm x,\bm\xi)=\<\bm f(\bm x,u(t)),\bm\xi>_{\bm\tau}=\<f(x,u(t)),\xi>_\tau+\ul\xi L(x,u(t))$$
does not depend on the $\R$-component of $\bm x=(x,\ul x)\in
M\times\R$; hence $\bm\xi(t)=\left(\xi(t),\ul\xi(t)\right)\in\bm
{A^\ast}= E^\ast\times\T^\ast\R$ (in local coordinates,
$(\xi,\ul\xi)\sim \left((x^a,\xi_i),(\ul x,\ul\xi)\right)$)
evolves due to equations
\begin{equation}
\label{eqn:par_trans_*A} \left\{ \begin{aligned}
\pa_t\xi_k(t)=&-\rho^a_k\left(x\right)\left(\frac{\pa f^i}{\pa
x^a}\left(x,u(t)\right)\xi_i(t)+\frac{\pa L}{\pa
x^a}\left(x,u(t)\right)\ul\xi(t)\right)\\ &+c^i_{jk}\left(x\right)f^j\left(x,u(t)\right)\xi_i(t),\\
\pa_t{\ul\xi}(t)=&0,\\
\dot{x}(t)=&\rho\left(f(x(t),u(t))\right),\\
\dot{\ul x}(t)=&L\left(x(t),u(t)\right).
\end{aligned}\right.\end{equation}
In other words, $\ul\xi(t)\equiv\ul\xi_0$ is a constant and
$\xi(t)\in E^\ast$ evolves due to a time-dependent family of
Hamiltonian vector fields $\X_{H_t}$ on $E^\ast$, where
$H_t(x,\xi)=\<f(x,u(t)),\xi>_\tau+\ul\xi_0L(x,u(t))$.

\chapter{The Pontryagin Maximum Principle}\label{ch:pmp}

In the previous chapter we have introduced OCPs \eqref{eqn:P} and  \eqref{eqn:P_rel} in the AL algebroid setting. The main difference in comparison
with the classical formulation are, apart from using
algebroid-valued velocities, the fixed-homotopy boundary
conditions. This new formulation of the OCPs was motivated in the previous chapter for problems on an integrable algebroid $E=\AG$. In light of these considerations we may thing of \eqref{eqn:P} and \eqref{eqn:P_rel} as of a general framework which includes the standard OCPs as well as OCPs reduced by general (groupoid) symmetries. It contains also abstract problems on general AL algebroids.
Now we will formulate a generalisation of the Pontryagin maximum principle for the problems \eqref{eqn:P} and \eqref{eqn:P_rel}.

\subsection{Formulation on an AL algebroid \texorpdfstring{$E$}{E}}

\begin{theorem}\label{thm:pmp}\index{Pontryagin maximum principle|main}
Let $(x(t),u(t))$, with $t\in[t_0,t_1]$, be a controlled pair of
\eqref{eqn:con_sys} solving the optimal control problem
\eqref{eqn:P}. Then there exists a curve $\xi:[t_0,t_1]\lra
E^\ast$ covering $x(t)$ and a constant $\ul\xi_0\leq 0$ such that the following holds:
\begin{itemize}
    \item the curve $\xi(t)$ is a trajectory of the time-dependent family of Hamiltonian vector fields
    $\X_{H_t}$ associated with Hamiltonians $H_t(x,\xi):=H(x,\xi,u(t))$, where
    $$H(x,\xi,u)=\< f\left(x,u\right), \xi>_\tau+\ul\xi_0 L\left(x,u\right);$$
    \item the control $u$ satisfies the ``maximum principle''
    $$H(x(t),\xi(t),u(t))=\sup_{v\in U}H(x(t),\xi(t),v)$$
and $H(x(t),\xi(t),u(t))=0$ at every regular point $t$ of $u$;
    \item if $\ul\xi_0=0$, the covector $\xi(t)$ is nowhere-vanishing.
\end{itemize}
\end{theorem}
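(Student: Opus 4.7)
The plan is to adapt Boltyanskii's classical proof of the PMP to the AL algebroid setting, replacing endpoint conditions by $E$-homotopy conditions and using parallel transport in $E$ and $E^\ast$ in place of the cotangent lift of the flow. First I would pass to the product algebroid $\bm A = E\times\T\R$ and reformulate \eqref{eqn:P} as \eqref{eqn:P_A}: minimise the $\R$-coordinate $\ul x(t_1)$ of the endpoint among controlled pairs of \eqref{def:con_sys_A} whose $E$-trajectory $f(x(t),u(t))$ has the prescribed class $[\sigma]$. This puts cost and dynamics on equal footing and, by \eqref{eqn:par_trans_*A}, reduces the problem to finding a nonzero covector $\bm\xi(t_1)=(\xi(t_1),\ul\xi_0)\in\bm A^\ast_{\bm x(t_1)}$ with $\ul\xi_0\le 0$ whose backward $\bm B^\ast$-transport realises the Hamiltonian curve demanded in the theorem.

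Second I would construct the cone $\bm K^u_{t_1}\subset\bm A_{\bm x(t_1)}$ of infinitesimal variations at the endpoint. For every regular time $\tau$ of $u$ and every $v\in U$, the elementary needle variation inserting $v$ on $[\tau-\eps,\tau]$ produces, to first order in $\eps$, the perturbation $\bm f(\bm x(\tau),v)-\bm f(\bm x(\tau),u(\tau))\in\bm A_{\bm x(\tau)}$, which I transport to $t_1$ by $\bm B_{t_1\tau}$; the convex cone generated by all such vectors is $\bm K^u_{t_1}$. The crucial point, more delicate than in the classical case, is that any convex combination of needle variations must be shown to arise from an actual nearby controlled pair whose trajectory still lies in the class $[\sigma]$. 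To handle this I would use composition and reparametrisation of $E$-homotopies (Lemmas \ref{lem:htp} and \ref{lem:reparam}) to glue the infinitesimal perturbations into a genuine homotopy whose initial-point and final-point pieces are trivial modulo $[\sigma]$, while parameter-dependence of ODE solutions (via Lemma \ref{lem:gen_E_htp}) guarantees the associated $E$-homotopies exist with the required regularity.

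The main obstacle, and the step I expect to be the hardest, is the surjectivity statement behind the cone construction: for each interior direction of $\bm K^u_{t_1}$ one must exhibit an honest admissible control whose trajectory belongs to $[\sigma]$ and whose endpoint realises that direction. In the classical setting this is done by Brouwer-style topological arguments in a finite-dimensional slice transverse to the reachable set, but an AL algebroid is typically non-integrable, so homotopy classes cannot be encoded as points on a finite-dimensional manifold. My strategy is to semi-parametrise the classes neighbouring $[\sigma]$ by finitely many parameters (via a local choice of transverse admissible deformations) and carry out the Brouwer/degree argument on the resulting finite-dimensional image, while the infinite-dimensional kernel is absorbed into a fibration of control space; concretely this requires working in a Banach space of controls and a continuity argument for the endpoint map.

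Once the cone is in hand, optimality of $(x,u)$ forces $\bm K^u_{t_1}$ to be disjoint from the open ray $\{\ul v<0,\ v=0\}\subset\bm A_{\bm x(t_1)}$: otherwise one could construct a nearby admissible controlled pair in $[\sigma]$ with strictly smaller $\ul x(t_1)$. A standard Hahn--Banach separation in the finite-dimensional fibre $\bm A_{\bm x(t_1)}$ then produces a nonzero covector $\bm\xi(t_1)=(\xi(t_1),\ul\xi_0)$ with $\ul\xi_0\le 0$ satisfying $\bm\xi(t_1)(\bm K^u_{t_1})\le 0$. Setting $\bm\xi(t):=\bm B^\ast_{tt_1}\bm\xi(t_1)$ gives by construction a solution of \eqref{eqn:par_trans_*A}, i.e.\ $\xi(t)$ flows under the time-dependent Hamiltonian vector field $\X_{H_t}$ with $H_t=H(\cdot,\cdot,u(t))$, and the duality of $\bm B$ and $\bm B^\ast$ (Remark \ref{rem:B_paring}) converts the cone inequality into the pointwise maximum principle $H(x(t),\xi(t),u(t))\ge H(x(t),\xi(t),v)$ for every $v\in U$. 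The vanishing $H_t\equiv 0$ at regular points follows by applying the same construction to time-reparametrisations of $u$ (Lemma \ref{lem:reparam}), which contribute an additional variational direction along $\X_{H_t}$ itself. Finally, if $\ul\xi_0=0$ then $\xi(t_1)\ne 0$ by the nontriviality of $\bm\xi(t_1)$, and since $\bm B^\ast$ is an isomorphism on fibres the covector $\xi(t)$ is nowhere vanishing.
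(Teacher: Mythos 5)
Your overall architecture coincides with the paper's: pass to the product algebroid $\bm A=E\times\T\R$ and the problem \eqref{eqn:P_A}, build the convex cone of needle variations transported to $t_1$ by $\bm B_{t_1\tau}$, semi-parametrise the $E$-homotopy classes by a finite-dimensional space inside a Banach space of controls and run a Brouwer-type argument to realise directions by honest trajectories (this is exactly the paper's Lemma \ref{lem:htp_of_a_r}), separate, transport the covector backwards by $\bm B^\ast$, and use the pairing invariance of Remark \ref{rem:B_paring} for the maximum condition. All of this matches Chapters \ref{ch:needle}--\ref{ch:proof}.

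The one genuine gap is in the separation step. You claim that optimality forces $\bm K^u_{t_1}$ to be \emph{disjoint} from the open ray spanned by $-\pa_t$, ``otherwise one could construct a nearby admissible controlled pair in $[\sigma]$ with strictly smaller cost.'' This implication fails: your own realizability machinery produces an actual trajectory in the class $[\sigma]$ only for directions surrounded by a full ball of attainable directions, since the Brouwer argument needs to correct the trajectory back into $[\sigma]$ using perturbations in \emph{every} $E$-direction. A single infinitesimal direction $-\pa_t$ lying on the boundary of the cone, or in a cone of deficient dimension, cannot be so corrected, so no cheaper trajectory results and disjointness cannot be concluded; indeed the ray can meet the cone at an optimal pair (this is precisely the abnormal case $\ul\xi_0=0$, where the separating covector annihilates the ray). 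What optimality actually yields is \emph{separability}, and the paper proves it by contradiction: non-separability is first characterised (Lemma \ref{lem:separation}) as the existence of $\bm\lambda$ in the cone together with a spanning family $e_1,\dots,e_m$ with $\bm\lambda\pm e_i$ also in the cone, and only this two-sided, full-dimensional configuration supplies the ball of symbols $\A(\vec r)$ over which Lemma \ref{lem:htp_of_a_r} applies to manufacture the contradicting trajectory (Theorem \ref{thm:separation_K_Lambda}). With that correction your argument closes; note also that the paper still needs the Lipschitz--Rademacher argument of Lemma \ref{lem:M=0} to propagate $H=0$ from the single time $\tau$ to all regular $t$, and the limiting construction of $\bm K^u_{t_1}$ from the cones $\bm K^u_\tau$ (Lemmas \ref{lem:K_increase} and \ref{lem:K_at_t1}), both of which your sketch elides.
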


The above result clearly reduces to the standard PMP (Theorem \ref{thm:pmp_class}) for the case of the tangent algebroid $E=\T M\lra M$. It also covers the known results for system with symmetry on Lie groups and, more generally, principal bundles. A more detailed discussion and examples will be given in Chapter \ref{ch:examples}.

Consider smooth algebroid morphisms $\Phi_0:\T S_0\lra E$ over $\phi_0:S_0\lra M$ and $\Phi_1:\T S_1\lra E$ over $\phi_1:S_1\lra M$. For a relative OCP \eqref{eqn:P_rel} we have the following version of the PMP.

\begin{theorem} \label{thm:pmp_rel} \index{Pontryagin maximum principle! for general boundary conditions|main}
Let $(x(t),u(t))$, with $t\in[t_0,t_1]$, $x(t_0)=\phi_0(z_0)$, and $x(t_1)=\phi_1(w_0)$, be a controlled pair of
\eqref{eqn:con_sys} solving the optimal control problem
\eqref{eqn:P_rel}. Then there exists a curve $\xi:[t_0,t_1]\lra
E^\ast$ covering $x(t)$ and a constant $\ul\xi_0\leq 0$ which satisfy the assertion of Theorem \ref{thm:pmp} and, additionaly
$\xi(t_0)$ annihilates  $\Phi_0\left(\T_{z_0}S_0\right)$ and
$\xi(t_1)$ annihilates  $\Phi_1\left(\T_{w_0}S_1\right)$.
\end{theorem}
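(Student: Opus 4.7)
The plan is to mimic the proof of Theorem \ref{thm:pmp} but to enlarge the cone $\bm K^u_\tau$ of infinitesimal variations by directions arising from smooth deformations of the two endpoints inside $\Image\Phi_0$ and $\Image\Phi_1$. For each smooth curve $z(s)$ in $S_0$ with $z(0)=z_0$, the image $\Phi_0(\pa_s z(s))$ is an admissible path ending at $x(t_0)$, and prepending a short piece of it (reparametrised via Lemma \ref{lem:reparam} and shrunk as in Lemma \ref{lem:van_curv}) to the reference trajectory produces a competing controlled pair whose relative $E$-homotopy class remains $[\sigma]\modulo(\Phi_0,\Phi_1)$ by Definition \ref{def:htp_class_rel} together with Lemma \ref{lem:htp}. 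An analogous construction, with a curve $w(s)$ in $S_1$ appended at the final endpoint, gives a second family of admissible competitors. First-order analysis of the two families yields infinitesimal variations of the endpoint of the reference trajectory which span $\Phi_1(\T_{w_0}S_1)$ directly at $t_1$ and $\bm B_{t_1 t_0}\bigl(\Phi_0(\T_{z_0}S_0)\bigr)$ via the parallel transport operator of Definition \ref{def:op_par_tr}.

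Adding these two linear subspaces to the convex cone built from needle variations produces an enlarged cone $\wt{\bm K}^u_{t_1}\subset\bm A_{\bm x(t_1)}$, where $\bm A=E\times\T\R$. Optimality of $(x(t),u(t))$ for \eqref{eqn:P_A_rel} translates, exactly as in the proof of Theorem \ref{thm:pmp}, into the geometric statement that $\wt{\bm K}^u_{t_1}$ cannot contain the ray pointing in the direction of decreasing cost. A standard separation argument then produces a nonzero covector $\bm\xi(t_1)\in\bm A^\ast_{\bm x(t_1)}$ with $\ul\xi_0\le 0$, nonnegative on $\wt{\bm K}^u_{t_1}$ and vanishing on its linear subspaces. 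Setting $\bm\xi(t):=\bm B^\ast_{t\,t_1}(\bm\xi(t_1))$ and projecting out the trivial $\T^\ast\R$ component, as in \eqref{eqn:par_trans_*A}, recovers the Hamiltonian trajectory, the maximum condition and the normalisation $H=0$ at regular points precisely as in Theorem \ref{thm:pmp}. The vanishing of $\bm\xi(t_1)$ on $\Phi_1(\T_{w_0}S_1)$ gives the second transversality condition; the vanishing on $\bm B_{t_1 t_0}\Phi_0(\T_{z_0}S_0)$, combined with the pairing-preservation property of parallel transport recorded in Remark \ref{rem:B_paring}, translates into $\xi(t_0)$ annihilating $\Phi_0(\T_{z_0}S_0)$.

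The main obstacle is justifying that the formal endpoint perturbations just described genuinely produce admissible competitors in the required relative $E$-homotopy class. One cannot merely concatenate: for the initial-endpoint variation, the perturbation must be propagated through the whole interval by the parallel transport $\bm B_{t\,t_0}$, and one has to construct an admissible path realising the resulting target class. This is the role of the analogue of Lemma \ref{lem:htp_of_a_r} for general boundary conditions (Lemma \ref{lem:htp_of_a_r1}): because $E$ need not be integrable, relative $E$-homotopy classes are not parametrised by points of a finite-dimensional manifold, so one semi-parametrises them through a finite-dimensional chart inside an infinite-dimensional Banach space of $E$-paths and closes the construction by a topological fixed-point argument. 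Once this existence result is in place, the remaining needle-variation calculations and the final separation step are straightforward adaptations of their counterparts in the proof of Theorem \ref{thm:pmp}.
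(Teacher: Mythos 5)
Your proposal is correct and follows essentially the same route as the paper: the paper forms $\K_\tau^u=\conv\{\bm B_{t_1t_0}(\eS_0\oplus\theta_{\ul x(t_0)}),\bm K_\tau^u\}$, separates it from $\eS_1\oplus\Lambda_{\ul x(t_1)}$ (Theorem \ref{thm:separation_K_S}), realises the competing trajectories via the exponential-map families in $\Image\Phi_0$, $\Image\Phi_1$ together with Lemma \ref{lem:htp_of_a_r1}, and reads off the transversality conditions from the vanishing of the separating covector on the two linear subspaces, exactly as you describe. Your only deviation is bookkeeping: you place $\eS_1$ inside the enlarged cone and separate from the bare ray $\bm\Lambda_{\bm x(t_1)}$, whereas the paper keeps $\eS_1$ on the target side; since a covector separating two convex cones must vanish on any linear subspace contained in either one, the two formulations admit exactly the same separating covectors.
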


It is clear that Theorem \ref{thm:pmp_rel} agrees with the standard PMP for problems with general boundary conditions (Theorem \ref{thm:pmp_class_rel}) for the special case of the tangent algebroid $E=\T M$. We can regard this result as an extension of the standard PMP to systems with general (groupoid) symmetries. In particular, it covers the known results for symmetric systems on Lie groups and principal bundles (see Section \ref{sec:pmp_gpm}). 

Obviously, Theorem \ref{thm:pmp} is a special case of Theorem \ref{thm:pmp_rel} obtained for $\Phi_0$ and $\Phi_1$ trivial.

\subsection{Alternative formulation}
Theorems \ref{thm:pmp} and \ref{thm:pmp_rel} have equivalent formulations in terms of the product algebroid $\bm A=E\times\T\R$.

\begin{theorem}\label{thm:pmp_A} Let $(\bm x(t),u(t))$, with $t\in[t_0,t_1]$, be a controlled pair of\index{Pontryagin maximum principle}
\eqref{eqn:con_sys_A} solving the optimal control problem
\eqref{eqn:P_A}. There exists a nowhere-vanishing curve $\bm
\xi=(\xi,\ul\xi):[t_0,t_1]\lra\bm A^\ast= E^\ast\times\sT^\ast\R$
covering $\bm x(t)$, with $\ul\xi(t_1)\leq 0$, such that the following hold:
\begin{itemize}
    \item the curve $\bm\xi(t)$ is a trajectory of the time-dependent family of Hamiltonian vector fields
    $\X_{\bm H_t}$, for
    $\bm H_t(\bm x,\bm\xi):=\bm H(\bm x,\bm\xi,u(t))$, where
    $$\bm H(\bm x,\bm\xi,u)=\<\bm f\left(\bm x,u\right), \bm \xi>_{\bm\tau};$$
    \item the control $u$ satisfies the ``maximum principle''
    $$\bm H(\bm x(t),\bm \xi(t),u(t))=\sup_{v\in U}\bm H(\bm x(t),\bm\xi(t),v)=0$$
at every regular point $t$ of $u$.
\end{itemize}
\end{theorem}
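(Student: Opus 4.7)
The plan is to derive Theorem \ref{thm:pmp_A} as an equivalent reformulation of Theorem \ref{thm:pmp}, exploiting the observation (already developed at the end of Chapter \ref{ch:OCP}) that the OCP \eqref{eqn:P_A} on the product algebroid $\bm A = E\times\T\R$ is equivalent by construction to the OCP \eqref{eqn:P} on $E$. First I would note that a controlled pair $(\bm x(t), u(t)) = ((x(t), \ul x(t)), u(t))$ solves \eqref{eqn:P_A} if and only if $(x(t), u(t))$ solves \eqref{eqn:P}, with $\ul x(t) - \ul x(t_0)$ recording the running cost. Thus I can apply Theorem \ref{thm:pmp} to $(x(t), u(t))$ to obtain a curve $\xi:[t_0,t_1]\lra E^\ast$ covering $x(t)$ and a constant $\ul\xi_0 \leq 0$ satisfying the assertions of that theorem.

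Next I would lift these data to $\bm A^\ast$ by setting $\ul\xi(t) := \ul\xi_0$ (a constant) and $\bm\xi(t) := (\xi(t), \ul\xi(t)) \in E^\ast \times \T^\ast\R \approx \bm A^\ast$. The Hamiltonian on $\bm A^\ast$ unpacks, using the product algebroid pairing, as
$$\bm H(\bm x, \bm\xi, u) = \<\bm f(\bm x, u), \bm\xi>_{\bm\tau} = \< f(x,u), \xi>_\tau + \ul\xi\, L(x,u),$$
so along $\bm\xi(t)$ we have $\bm H(\bm x(t), \bm\xi(t), u) = H(x(t), \xi(t), u)$, the Hamiltonian of Theorem \ref{thm:pmp}. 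Since the product algebroid's structure functions are block-diagonal and $\bm H_t$ has no dependence on $\ul x$, the Hamiltonian vector field $\X_{\bm H_t}$ reproduces exactly the system \eqref{eqn:par_trans_*A}: the $\ul\xi$-component is preserved, the $\xi$-component evolves along $\X_{H_t}$, the $x$-component along the anchored control system \eqref{eqn:con_sys}, and the $\ul x$-component accumulates the cost $L(x, u(t))$. Consequently $\bm\xi(t)$ is a trajectory of $\X_{\bm H_t}$.

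The maximum principle part transfers immediately: since $\bm H(\bm x(t), \bm\xi(t), v) = H(x(t), \xi(t), v)$ for every $v \in U$, the supremum is attained at $u(t)$ and equals $H(x(t), \xi(t), u(t)) = 0$ at every regular point of $u$, by Theorem \ref{thm:pmp}. For nowhere-vanishing, I split into cases: if $\ul\xi_0 < 0$ then $\ul\xi(t) \equiv \ul\xi_0 \neq 0$, so $\bm\xi(t)$ never vanishes; if $\ul\xi_0 = 0$, then Theorem \ref{thm:pmp} guarantees $\xi(t)$ nowhere vanishes, so again $\bm\xi(t) \neq 0$. Finally $\ul\xi(t_1) = \ul\xi_0 \leq 0$ as required. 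The converse implication, Theorem \ref{thm:pmp} from Theorem \ref{thm:pmp_A}, proceeds by projecting $\bm\xi(t)$ onto its $E^\ast$-component and identifying the constant $\ul\xi_0 := \ul\xi(t_1)$; the nowhere-vanishing condition on $\xi(t)$ when $\ul\xi_0 = 0$ follows since $\bm\xi(t)$ is nowhere-vanishing and its $\T^\ast\R$-component is then zero.

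The main obstacle is not the derivation itself, which is essentially bookkeeping once the product algebroid structure is in place, but rather verifying rigorously that the formal computation of $\X_{\bm H_t}$ out of the bivector $\Pi_{\bm A^\ast}$ on the product really does produce \eqref{eqn:par_trans_*A}; this requires checking that the structure constants $C^I_{JK}$ and $R^A_I$ of the product, as described in Chapter \ref{ch:algebroids}, vanish on mixed-type terms so that the $E$- and $\T\R$-components decouple in the expected way. This verification is routine but must be made explicit to justify the preservation of $\ul\xi$ along the flow.
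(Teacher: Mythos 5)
Your reduction of Theorem \ref{thm:pmp_A} to Theorem \ref{thm:pmp} is, as a piece of bookkeeping, correct: the lift $\bm\xi(t):=(\xi(t),\ul\xi_0)$, the identity $\bm H(\bm x,\bm\xi,u)=\< f(x,u),\xi>_\tau+\ul\xi\, L(x,u)$, the vanishing of the product structure functions on mixed-type terms (so that $\X_{\bm H_t}$ reproduces \eqref{eqn:par_trans_*A} and $\ul\xi$ is conserved along the flow), and the case split for the nowhere-vanishing claim are exactly the computations the paper performs when it declares the two theorems equivalent at the end of Chapter \ref{ch:pmp}. The problem is the direction in which you run the equivalence. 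In this work Theorem \ref{thm:pmp} has no proof independent of Theorem \ref{thm:pmp_A}: the entire analytic content is established for the product-algebroid formulation in Chapters \ref{ch:needle}--\ref{ch:proof} (needle variations, the convex cone $\bm K^u_\tau$ of infinitesimal variations, the separation Theorem \ref{thm:separation_K_Lambda} of $\bm K^u_\tau$ from the ray $\bm\Lambda_{\bm x(t_1)}$, the construction $\bm\xi(t)=\bm B^\ast_{tt_1}(\bm\xi(t_1))$ from a separating covector, and Lemmas \ref{lem:HM}--\ref{lem:K_at_t1}), and Theorem \ref{thm:pmp} is then obtained from Theorem \ref{thm:pmp_A} by precisely the decomposition you invert. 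Your argument is therefore circular as a proof of the statement: it shows only that the two formulations stand or fall together, which the paper already records as ``obvious in light of our previous considerations.''

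What is missing is the substantive proof. The reason the paper works on $\bm A=E\times\T\R$ rather than on $E$ is that absorbing the cost into an extra $\T\R$-coordinate turns optimality of $(\bm x(t),u(t))$ into a geometric statement in the single fibre $\bm A_{\bm x(t_1)}$, namely that the cost-decreasing ray $\bm\Lambda_{\bm x(t_1)}$ can be separated from the cone of directions reachable by needle variations; the covector $\bm\xi(t)$ is then manufactured from a separating functional and transported back by $\bm B^\ast_{tt_1}$. None of that can be recovered from the equivalence alone. To prove Theorem \ref{thm:pmp_A} you must either reproduce that argument or supply an independent proof of Theorem \ref{thm:pmp}, which neither your proposal nor the paper contains.
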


The equivalence of Theorems \ref{thm:pmp} and \ref{thm:pmp_A} is obvious in light of our previous considerations. Indeed, the covector $\bm\xi(t)$ can
be decomposed as $\left(\xi(t),\ul\xi(t)\right)$, and its evolution
along $\X_{\bm H_t}$ is given by \eqref{eqn:par_trans_*A}.
Consequently, as we have observed at the very end of Chapter
\ref{ch:OCP}, covector $\ul\xi(t)=\ul\xi_0$ is constant and the
evolution of $\xi(t)$ is given by $\X_{H_t}$.  Since
$H(x,\xi,u)+\ul{\xi_0}L(x,u)=\bm H(\bm x,\bm\xi,u)$ for $\bm\xi=(\xi,\ul\xi_0)$, and
$\bm x=(x,\ul x)$, the corresponding statements in Theorems
\ref{thm:pmp} and \ref{thm:pmp_A} are equivalent.

Define now $\eS_0:=\Phi_0(\T_{z_0}S_0)\subset E_{x(t_0)}$ and $\eS_1:=\Phi_1(\T_{w_0}S_1)\subset E_{x(t_1)}$, where $x(t_0)=\phi_0(z_0)$ and $x(t_1)=\phi_1(w_0)$. We have the following reformulation of Theorem \ref{thm:pmp_rel}.

\begin{theorem} \label{thm:pmp_A_rel}\index{Pontryagin maximum principle! for general boundary conditions}
Let $(\bm x(t),u(t))$, with $t\in[t_0,t_1]$  be a controlled pair of
\eqref{eqn:con_sys_A} solving the optimal control problem
\eqref{eqn:P_A_rel}. Then there exists a nowhere-vanishing  curve $\bm\xi:[t_0,t_1]\ra
\bm A^\ast$ covering $\bm x(t)$, which satisfies the assertion of Theorem \ref{thm:pmp_A} and, additionally, $\bm \xi(t_0)$ annihilates  $\eS_0\oplus\theta_{\ul x(t_0)}$ and
$\bm \xi(t_1)$ annihilates $\eS_1\oplus\theta_{\ul x(t_1)}$.
\end{theorem}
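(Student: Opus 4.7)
The plan is to reduce Theorem \ref{thm:pmp_A_rel} to Theorem \ref{thm:pmp_rel} by translating between the data on $E$ and on the product algebroid $\bm A=E\times\T\R$, in exact analogy with the equivalence of Theorems \ref{thm:pmp} and \ref{thm:pmp_A} sketched in the paragraph just after Theorem \ref{thm:pmp_A}. Since Theorem \ref{thm:pmp_rel} is assumed proved by this point (it is the main result whose proof occupies Chapters \ref{ch:needle}--\ref{ch:proof}), Theorem \ref{thm:pmp_A_rel} comes down to a purely formal unpacking.

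First I would verify that the projection $p_E:\bm A\ra E$ establishes a bijection between solutions of \eqref{eqn:P_A_rel} and \eqref{eqn:P_rel}. For a controlled pair $(\bm x(t),u(t))$ of \eqref{eqn:con_sys_A} with $\ul x(t_0)=0$, the second equation of \eqref{eqn:con_sys_A} forces $\ul x(t_1)=\int_{t_0}^{t_1}L(x(t),u(t))\,\dd t$, so minimising $\ul x(t_1)$ coincides with minimising the total cost; the relative $E$-homotopy constraint on $f(x(t),u(t))$ is literally the same in both problems. Hence solving \eqref{eqn:P_A_rel} yields, by projection, a solution $(x(t),u(t))$ of \eqref{eqn:P_rel}.

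Next, given the $\xi:[t_0,t_1]\lra E^\ast$ and $\ul\xi_0\le 0$ produced by Theorem \ref{thm:pmp_rel}, I would set $\bm\xi(t):=(\xi(t),\ul\xi_0)\in E^\ast\times\T^\ast\R=\bm A^\ast$ and check all bullets of Theorem \ref{thm:pmp_A_rel}. The evolution equations \eqref{eqn:par_trans_*A}, derived at the end of Chapter \ref{ch:OCP}, show that along $\X_{\bm H_t}$ the $\T^\ast\R$-component stays constant while the $E^\ast$-component evolves exactly by $\X_{H_t}$, which is precisely what the first bullet of Theorem \ref{thm:pmp_rel} gives. The identity
$$\bm H(\bm x,\bm\xi,u)=\<f(x,u),\xi>_\tau+\ul\xi_0 L(x,u)=H(x,\xi,u)$$
(valid for $\bm\xi=(\xi,\ul\xi_0)$ and $\bm x=(x,\ul x)$) immediately turns the maximum principle for $H$ into the maximum principle for $\bm H$ and the vanishing of $H$ at regular points into the vanishing of $\bm H$. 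Non-vanishing of $\bm\xi(t)$ is automatic when $\ul\xi_0\ne 0$ and, when $\ul\xi_0=0$, follows from the third bullet of Theorem \ref{thm:pmp_rel}. The inequality $\ul\xi(t_1)=\ul\xi_0\le 0$ is immediate.

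Finally, the transversality conditions: since $\theta_{\ul x(t_i)}$ denotes the zero vector of $\T_{\ul x(t_i)}\R$, for any $e\in\eS_i$ one has
$$\<\bm\xi(t_i),(e,\theta_{\ul x(t_i)})>_{\bm\tau}=\<\xi(t_i),e>_\tau+\ul\xi_0\cdot 0=\<\xi(t_i),e>_\tau,$$
so $\bm\xi(t_i)$ annihilates $\eS_i\oplus\theta_{\ul x(t_i)}$ if and only if $\xi(t_i)$ annihilates $\eS_i$, which is exactly the transversality supplied by Theorem \ref{thm:pmp_rel}. I expect no substantive obstacle---the entire argument is a direct reading of the decomposition $\bm A^\ast=E^\ast\times\T^\ast\R$; the only point requiring slight care is checking that the splitting $\bm\xi=(\xi,\ul\xi_0)$ is genuinely preserved by $\X_{\bm H_t}$, which is precisely the content of \eqref{eqn:par_trans_*A}.
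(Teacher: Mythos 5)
Your formal translation between the two pictures is accurate in every detail: the splitting $\bm\xi=(\xi,\ul\xi_0)$, its preservation under $\X_{\bm H_t}$ via \eqref{eqn:par_trans_*A}, the identification of the Hamiltonians, and the equivalence of the transversality conditions under the pairing with $\eS_i\oplus\theta_{\ul x(t_i)}$ are all exactly the content of the remark following the statement of Theorem \ref{thm:pmp_A_rel}. The problem is the direction of the reduction. In this work the substantive proof runs the other way: Chapters \ref{ch:needle}--\ref{ch:proof} prove Theorem \ref{thm:pmp_A_rel} \emph{directly} on the product algebroid $\bm A$ (Section \ref{sec:proof_pmp_rel}), and Theorem \ref{thm:pmp_rel} is then \emph{obtained from it} by the very equivalence you invoke. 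Your premise that ``Theorem \ref{thm:pmp_rel} is assumed proved by this point'' is therefore false within the paper's architecture: its only proof passes through the theorem you are trying to establish, so your argument is circular and supplies no independent proof.

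What is actually needed, and what the paper does, is a genuine argument on $\bm A$: one forms the enlarged cone $\K_\tau^u=\conv\left\{\bm B_{t_1t_0}(\eS_0\oplus\theta_{\ul x(t_0)}),\bm K_\tau^u\right\}$, which records both the needle variations and the admissible variations of the initial point inside $\Image\Phi_0$, and proves (Theorem \ref{thm:separation_K_S}, using Lemma \ref{lem:htp_of_a_r1} to pass from the infinitesimal picture to actual relatively $E$-homotopic trajectories) that optimality forces $\K_\tau^u$ to be separable from $\eS_1\oplus\Lambda_{\ul x(t_1)}$. The covector $\bm\xi(t_1)$ is then chosen to separate the limit cone $\K^u_{t_1}$ from $\eS_1\oplus\Lambda_{\ul x(t_1)}$; since both cones contain the linear subspaces $\bm B_{t_1t_0}(\eS_0\oplus\theta_{\ul x(t_0)})$ and $\eS_1\oplus\theta_{\ul x(t_1)}$ respectively, the transversality conditions fall out of Remark \ref{rem:separation}, and the remaining assertions follow because $\bm\xi(t_1)$ also separates $\bm K^u_{t_1}$ from $\bm\Lambda_{\bm x(t_1)}$. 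None of this work is replaced by your reduction.
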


\noindent The equivalence of Theorems \ref{thm:pmp_A_rel} and \ref{thm:pmp_A} is clear.

\begin{remark}\label{rem:pmp_versions}
There are many different versions of the PMP --- for autonomous and
non-autonomous systems, with mowing or fixed end-points, with free
or fixed time interval $[t_0,t_1]$, etc. Theorem \ref{thm:pmp_A_rel}, in fact, covers all these cases (under certain regularity conditions). For details see Section \ref{sec:pmp_versions}.
\end{remark}

\chapter{Discussion of the main result}\label{ch:examples}

This part is devoted to the discussion of our main results formulated in the previous chapter. We begin with formulating and proving a version of the PMP on AL algebroids for non-autonomous systems. In section \ref{sec:pmp_gpm} we formulate a version of the PMP on for invariant OCPs on Lie groups and principal bundles. We derive Montgomery's falling cat problem as an illustration. Later we use the PMP to derive the known results form the calculus of variation on Lagrangian reduction, Hammel equations and Euler--Poincar{\'e} equations. We also formulate an algebroid analog of Euler-Lagrange equations. Finally, in section \ref{sec:examples_other} we give a few concrete examples of the usage of our results.

\section{Non-autonomous versions of the PMP}\label{sec:pmp_versions}

In \cite{pontryagin} analogs of the PMP for other versions of the classical OCP \eqref{eqn:P_class} (including fixing the time interval, or changing the setting to the non-autonomous systems) were obtained. These extensions were proved by a clever reformulation of a problem given in order to make it a special case of the already known solution. Now we perform similar derivations for the extensions of the problems \eqref{eqn:P} and \eqref{eqn:P_rel}. 

Consider a non-autonomous version of the control system\index{control system on AL algebroid!non-autonomous} \eqref{eqn:con_sys} on an AL algebroid $E$
\begin{equation}\label{eqn:cs_na}
\dot x(t)=\rho\left(f(x(t),t,u(t))\right),
\end{equation}
where $f:M\times\R\times U\lra E$ is a time-dependent family of $C^1$-sections of $E$. Moreover, we assume that $f$ is continuous w.r.t. all variables, differentiable w.r.t. $x$ and $t$, and that the derivative $\T_{(x,t)}f$ is also continuous w.r.t. all variables. 

Let $L:M\times\R\times U\lra\R$ be a non-autonomous \emph{cost function}\index{cost function!non-autonomous} satisfying the same regularity assumptions as $f$. Per analogy to definitions introduced in Chapter \ref{ch:OCP} we will speak of \emph{trajectories}\index{trajectory of a control system!non-autonomous} $f(x(t),t,u(t))$ and \emph{extended controlled pairs}\index{controlled pair!extended} $(x(t),t,x(t))$ of \eqref{eqn:cs_na}.

 Consider now the following generalisation of the OCP \eqref{eqn:P}\index{optimal control problem!non-autonomous} :
\begin{equation}\tag{$\mathrm{P_{na}}$}\label{eqn:P_na}
\begin{split}
&\text{minimise} \int_{t_0}^{t_1}L\big(x(t),t,u(t)\big)\dd t
\text{ over all extended controlled pairs $(x,t,u)$}\\
&\text{of \eqref{eqn:cs_na} s.t. the $E$-homotopy class of the trajectory $f(x(t),t,u(t))$
equals $[\sigma]$.}
\end{split}
\end{equation}
Here $[\sigma]$ is a fixed $E$-homotopy class. We allow the time interval $[t_0,t_1]$ either to be fixed or to be unspecified. 

We can define also a relative version of the above problem by substituting the fixed-$E$-homotopy class $[\sigma]$ by a fixed relative-$E$-homotopy class $[\sigma]\modulo(\Phi_0,\Phi_1)$, for a pair of smooth algebroid morphisms $\Phi_0:\T S_0\lra E$ and $\Phi_1:\T S_1\lra E$. 

\noindent For the above non-autonomous OCPs we have the following extension of Theorem \ref{thm:pmp}.
\begin{theorem}\label{thm:pmp_na}\index{Pontryagin maximum principle!non-autonomous}
Let $(x(t),t,u(t))$, with $t\in[t_0,t_1]$, be an extended controlled pair of
\eqref{eqn:cs_na} solving the optimal control problem
\eqref{eqn:P_na}. Then there exists a curve $\xi:[t_0,t_1]\lra
E^\ast$ covering $x(t)$ and a constant $\ul\xi_0\leq 0$ such that the following holds:
\begin{itemize}
    \item the curve $\xi(t)$ is a trajectory of the time-dependent family of Hamiltonian vector fields
    $\X^E_{H_t}$ associated with Hamiltonians $H_t(x,\xi):=H(x,t,\xi,u(t))$, where
    $$H(x,t,\xi,u)=\< f\left(x,t,u\right), \xi>_\tau+\ul\xi_0 L\left(x,t,u\right);$$
    \item the control $u$ satisfies the ``maximum principle''
    $$H(x(t),t,\xi(t),u(t))=\sup_{v\in U}H(x(t),t,\xi(t),v)$$
and $H(x(t),t,\xi(t),u(t))-\int_{t_0}^t\frac{\pa H}{\pa s}(x(s),s,\xi(s),u(s))\dd s=\mathrm{const}$ at every regular point $t$ of $u$. If the time interval $[t_0,t_1]$ is unspecified then this constant is 0;
    \item if $\ul\xi_0=0$, the covector $\xi(t)$ is nowhere-vanishing.
\end{itemize}
\end{theorem}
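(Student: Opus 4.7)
The natural strategy is the classical one: reduce the non-autonomous problem to an autonomous one by promoting time to a state variable, then invoke the autonomous PMP already proved. I form the product AL algebroid $E' := E \times \T\R$ over $M' := M \times \R$ (Chapter \ref{ch:algebroids}) and lift the data by setting
\begin{equation*}
\tilde f((x,s),u) := \bigl(f(x,s,u),\, \pa_s\bigr) \in E \times \T_s\R\,, \qquad \tilde L((x,s),u) := L(x,s,u)\,.
\end{equation*}
A controlled triple $(x(t),t,u(t))$ of \eqref{eqn:cs_na} corresponds bijectively to a controlled pair $(\tilde x(t),u(t))$ of $\tilde f$ with $\tilde x(t) = (x(t),t)$, and the two total costs coincide.

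Next I translate the boundary data. The fixed $E$-homotopy class $[\sigma]$ together with the time constraints induces a (possibly relative) $E'$-homotopy class for the lifted trajectory: fixing the $E'$-homotopy class pins down both the original $E$-class and the endpoints of the $\T\R$-factor (the latter characterised by the endpoints in $\R$, by simple-connectedness of $\R$). When $[t_0,t_1]$ is prescribed, I use the absolute formulation \eqref{eqn:P} on $E'$. When the time interval is unspecified, I let the $s$-endpoints slide by taking the boundary morphisms $\Phi_0',\Phi_1'$ in the relative formulation \eqref{eqn:P_rel} on $E'$ to include the identity on the $\T\R$-factor.

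Applying Theorem \ref{thm:pmp} in the fixed-time case, or Theorem \ref{thm:pmp_rel} in the unspecified-time case, yields a costate $\tilde \xi(t) = (\xi(t), \ul\eta(t)) \in (E')^* = E^*\times\T^*\R$ and a constant $\ul\xi_0 \leq 0$. By the product-algebroid structure the extended Hamiltonian splits as
\begin{equation*}
\tilde H(x,s,\xi,\ul\eta,u) = \< f(x,s,u),\xi>_{\tau} + \ul\eta + \ul\xi_0\, L(x,s,u) = H(x,s,\xi,u) + \ul\eta\,,
\end{equation*}
and Hamilton's equations on the $\T^*\R$-factor give $\dot{\ul\eta}(t) = -\partial H/\partial s$, hence $\ul\eta(t) = \ul\eta(t_0) - \int_{t_0}^t \partial_s H\,\dd\tau$. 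Since $\ul\eta$ is independent of the control, the maximum condition on $\tilde H$ reduces to the stated maximum condition on $H$. The autonomous condition $\tilde H = 0$ at regular points of $u$ becomes $H(t) + \ul\eta(t) = 0$, so $H(t) - \int_{t_0}^t \partial_s H\,\dd\tau = -\ul\eta(t_0)$, which is constant in $t$. In the unspecified-time case the transversality conditions from $\Phi_0'$ force $\ul\eta(t_0) = 0$, pinning this constant to $0$.

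The main technical obstacle I anticipate is setting up the right boundary data in the extended algebroid so that transversality in Theorem \ref{thm:pmp_rel} translates to the correct conditions on $\ul\eta$ (in particular $\ul\eta(t_0)=0$ in the unspecified-time case). Once this bookkeeping is verified, the remainder of the argument reduces to a direct application of the autonomous PMP together with the explicit splitting of the product-algebroid Hamiltonian.
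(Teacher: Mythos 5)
Your proposal follows essentially the same route as the paper's proof: both pass to the product algebroid $E\times\T\R$ with time promoted to a state variable governed by $\dot z=1$, identify the fixed-time case with the absolute problem \eqref{eqn:P} and the free-time case with the relative problem \eqref{eqn:P_rel} (boundary morphisms acting as the identity on the $\T\R$-factor), split the extended Hamiltonian as $H+\alpha$, read off $\dot\alpha=-\pa_z H$, and use transversality to pin the constant to $0$ when the interval is unspecified. The only point you leave unaddressed is the last bullet of the statement: the autonomous theorem only guarantees that the \emph{full} covector $(\xi,\alpha)$ is nowhere-vanishing when $\ul\xi_0=0$, and one still needs the short extra observation that $\xi\equiv 0$ would force $H\equiv 0$ and hence $\alpha\equiv 0$ almost everywhere, a contradiction.
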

\begin{proof}
The proof requires a simple reformulation of the given data. Consider, namely, the control system $$\wt f=(f,1):M\times\R\times U\lra E\times\T R$$ on the product algebroid $E\times\T R$ with the associated base dynamics 
\begin{align*}
\dot x(t)&=\rho\left(f(x(t),z(t),u(t))\right),\\
\dot z(t)&=1.
\end{align*}
Here $(x,z)\in M\times\R$. The variable $z$ plays a role of time. Indeed, regardless of the chosen admissible control $u(t)$, the solution of the equation $\dot z(t)=1$ with the initial condition $z(t_0)=t_0$ gives $z(t)=t$. 

Now, if the time interval $[t_0,t_1]$ is fixed, OCP \eqref{eqn:P_na} is equivalent to the OCP \eqref{eqn:P} for the control system $\wt f$ with unspecified time interval and the $E\times\T\R$-homotopy class defined by $[\sigma]$ on $E$ and $[t_0,t_1]$ on $\T\R$. Consequently, we can use Theorem \ref{thm:pmp} to obtain the necessary conditions for optimality.

For $(\xi,\alpha)\in E_x^\ast\times\T_z^\ast\R$ we define the Hamiltonian 
\begin{align*}\wt H(x,z,\xi,\alpha,u):&=\<\wt f(x,z,u),(\xi,\alpha)>+\xi_0L(x,z,u)=\\
&=\<f(x,z,u),\xi>_\tau+\xi_0L(x,z,u)+1\cdot\alpha=:H(\xi,z,u)+\alpha.
\end{align*}
Now the evolution of the Pontryagin covector $(\xi(t),\alpha(t))$ reads as
\begin{align*}
&\dot \xi(t)=\X^E_{H(\cdot,z,u(t))}(\xi(t))\\
&\dot \alpha(t)=-\frac{\pa}{\pa z} H(\xi(t),z,u(t)).
\end{align*}
Since $z(t)=t$ and $\wt H(x(t),z(t),\xi(t),\alpha(t),u(t))=0$ at regular $t$, we get
\begin{align*}
&\alpha(t)=c-\int_{t_0}^t\frac{\pa}{\pa s} H(\xi(s),s,u(s))\dd 
\intertext{and} 
&H(\xi(t),t,u(t))+\alpha(t)=0\quad\text{at $t$ regular}.
\end{align*}
The maximum principle for $H$ follows directly from the maximum principle for $\wt H$. Finally, if $\xi_0=0$ and $\xi(t)=0$, we would have $H(\xi(t),t,u(t))=0$, and hence also $\alpha(t)=0$ a.e., which is impossible. 
This proves the assertion. 

The proof for the case of unspecified time-interval $[t_0,t_1]$ is analogous, yet instead of fixed-homotopy class boundary conditions we have to general boundary conditions associated with algebroid morphisms $\Phi_0=(\theta_{x_0},\id):\T\R\lra E\times\T\R$ and $\Phi_1=(\theta_{x_1},\id):\T\R\lra E\times\T\R$. The additional condition $c=0$ now follows from the transversality conditions of Theorem \ref{thm:pmp_rel} for $\alpha(t_0)$ and $\alpha(t_1)$. 
\end{proof}

\section{The known results}\label{sec:pmp_gpm}

\subsection{The PMP on Lie groups and principal bundles}
The already proven results on the Lie groupoid---Lie algebroid
reduction of a control system and homotopy (cf. Theorem
\ref{thm:int_htp}, Corollary \ref{cor:htp_P}, and Chapter
\ref{ch:OCP}) allow us to formulate the following result which can
be understood as a general reduction scheme of the PMP on a
principal $G$-bundle. Consider a principal $G$-bundle $G\ra
P\overset \pi\ra M$.

\begin{theorem}\label{thm:OCP_P}
Let $F:P\times U\lra \sT P$ be a $G$-invariant control system on
$P$ and let $L:P\times U\lra\R$ be a $G$-invariant cost function. Choose a path $\Sigma:[t_0,t_1]\lra P$ joining two fixed points $p_0,p_1\in P$.

Then the OCP \eqref{eqn:P2} on $P$ for a fixed homotopy class $[\Sigma]$ in $P$ is equivalent to the OCP \eqref{eqn:P} for the system
$f:M\times U \lra E=\sT P/G$ where $f(\pi(p),u):=[F(p,u)]$, with
the cost function $l:M\times U\lra\R$ defined by
$l(\pi(p),u):=L(p,u)$, and the $E$-homotopy class $[\sigma]$ being
the reduction  of $[\Sigma]$.\end{theorem}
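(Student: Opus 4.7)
My plan is to exhibit a cost-preserving bijection between controlled pairs of \eqref{eqn:P2} on $P$ and controlled pairs of \eqref{eqn:P} on $M = P/G$ which respects the boundary conditions. The key tools will be Theorem \ref{thm:int_htp} together with Corollary \ref{cor:htp_P}, which identify admissible paths and algebroid homotopies in the Atiyah algebroid $\sT P/G$ with ordinary ACB paths and ACB homotopies in an $\alpha$-fibre of the gauge groupoid $\GG_P$, canonically isomorphic to $P$.

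First I would verify that $f$ and $l$ are well-defined: $G$-equivariance of $F$ ensures that composition with the reduction map $\mathcal{R}:\sT P \to \sT P/G$ descends along $\pi$ to a family of sections $f(\cdot,u)$ of $\sT P/G \to M$, and $G$-invariance of $L$ similarly produces $l$. For the bijection of controlled pairs: given $(X,u)$ on $P$ the projection $x:=\pi\circ X$ satisfies $\dot x = \T\pi\dot X = \T\pi F(X,u) = \rho(f(x,u))$, so $(x,u)$ is a controlled pair on $M$; conversely, for $(x,u)$ on $M$ and any $p_0\in\pi^{-1}(x(t_0))$, Carath\'eodory's existence theorem gives a unique lift $X(t)$ solving $\dot X = F(X,u)$ with $X(t_0)=p_0$, and by uniqueness of the base trajectory $\pi\circ X = x$. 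Cost preservation $\int L(X,u)\,\dd t = \int l(x,u)\,\dd t$ is then immediate from $G$-invariance of $L$.

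The decisive step is matching the boundary conditions. Setting $\sigma:=\mathcal{R}(\dot\Sigma)$, the direction ($\Rightarrow$) proceeds as follows: a fixed-endpoint homotopy $H:[t_0,t_1]\times[0,1]\to P$ from $\Sigma$ to $X$ reduces by Theorem \ref{thm:int_htp} to an algebroid homotopy $(a,b)=(\mathcal{R}\pa_tH,\mathcal{R}\pa_sH)$ joining $\sigma$ and $f(x,u)$; since $H(t_i,\cdot)\equiv p_i$ forces $\pa_sH(t_i,\cdot)=0$, the initial- and final-point $E$-homotopies $b(t_i,\cdot)$ vanish, so $(a,b)$ has fixed endpoints and $[f(x,u)]=[\sigma]$. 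Conversely, a fixed-endpoint $E$-homotopy between $f(x,u)$ and $\sigma$ lifts through Theorem \ref{thm:int_htp}, starting from $p_0$, to an ACB homotopy in $P$; the vanishing of $b$ at $t_0$ and $t_1$ makes this lifted homotopy stationary at the endpoints, yielding a fixed-endpoint homotopy in $P$ between $X$ and $\Sigma$, and hence $[X]=[\Sigma]$.

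The hardest part will be bookkeeping around the free time interval: both OCPs leave $[t_0,t_1]$ unspecified, and Definition \ref{def:E_htp_class} extends the $E$-homotopy relation across different intervals by the convention that composition with null $E$-paths does not change the class. One must check that extending or reparametrising a $P$-trajectory by constant paths at its endpoints reduces exactly to composition with null $E$-paths, which falls out of Lemmas \ref{lem:van_curv} and \ref{lem:reparam}. Once this bookkeeping is done, the bijection above restricts to a cost-preserving correspondence between the admissible sets of \eqref{eqn:P2} and \eqref{eqn:P}, yielding the asserted equivalence.
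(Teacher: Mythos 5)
Your proposal is correct and follows essentially the same route as the paper: the paper gives no written-out proof of Theorem \ref{thm:OCP_P}, asserting instead that it follows from Theorem \ref{thm:int_htp}, Corollary \ref{cor:htp_P}, and the reduction discussion in Chapter \ref{ch:OCP}, which are exactly the ingredients you assemble. Your version simply makes explicit the details the paper leaves implicit --- well-definedness of $f$ and $l$, the lift/projection bijection of controlled pairs, the vanishing of the initial- and final-point $E$-homotopies under reduction of a fixed-endpoint homotopy, and the reparametrisation bookkeeping via Lemmas \ref{lem:van_curv} and \ref{lem:reparam} --- all of which are handled correctly.
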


Now applying Theorem \ref{thm:pmp} to the OCP described above we obtain a general result for equivariant OCP on principal bundles described in therms of the linear Poisson  structure $\Pi_{E^\ast}$ on $E^\ast=\T^\ast P/G$ (cf. Theorem \ref{thm:ham_E1}).

\begin{theorem}\label{thm:pmp_}\index{Pontryagin maximum principle!for principal bundles}
Let $(p(t),u(t))$, with $t\in[t_0,t_1]$, be a controlled pair of
$F$ solving the OCP described above. Denote by $x(t)$ the base projection of $p(t)$. Then there exists a curve $\xi:[t_0,t_1]\lra
\T^\ast P/G$ covering $x(t)$ and a constant $\ul\xi_0\leq 0$ such that the following holds:
\begin{itemize}
    \item the curve $\xi(t)$ is a trajectory of the time-dependent family of Hamiltonian vector fields
    $\X_{h_t}$ associated with the linear Poisson structure $\Pi_{E^\ast}$ on $\T^\ast P/G$ and Hamiltonians $h_t(x,\xi):=h(x,\xi,u(t))$, where
    $$h(x,\xi,u)=\< f\left(x,u\right), \xi>_\tau+\ul\xi_0 l\left(x,u\right);$$
    \item the control $u$ satisfies the ``maximum principle''
    $$h(x(t),\xi(t),u(t))=\sup_{v\in U}h(x(t),\xi(t),v)$$
and $h(x(t),\xi(t),u(t))=0$ at every regular point $t$ of $u$;
    \item if $\ul\xi_0=0$, the covector $\xi(t)$ is nowhere-vanishing.
\end{itemize}
\end{theorem}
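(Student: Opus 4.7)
The plan is to apply our main result, Theorem \ref{thm:pmp}, to the reduced OCP on the Atiyah algebroid $E=\T P/G$, since all the conceptual work has already been done.

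First, I would invoke Theorem \ref{thm:OCP_P} to translate the $G$-invariant OCP for the data $(F,L,[\Sigma])$ on $P$ into the equivalent OCP \eqref{eqn:P} for the reduced data $(f,l,[\sigma])$ on $E=\T P/G$. In particular, the controlled pair $(p(t),u(t))$ on $P$ solving the OCP on $P$ projects to the controlled pair $(x(t),u(t))$ on $M$ (with $x(t)=\pi(p(t))$) solving the reduced OCP, and the base trajectory $x(t)$ in $M$ coincides with the base projection of $p(t)$.

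Then I would apply Theorem \ref{thm:pmp} directly to this reduced OCP. This yields a curve $\xi:[t_0,t_1]\to E^\ast$ covering $x(t)$ and a constant $\underline{\xi}_0\leq 0$ such that $\xi(t)$ is a trajectory of the time-dependent Hamiltonian vector field $\X_{H_t}$ with $H_t(x,\xi)=\langle f(x,u(t)),\xi\rangle_\tau+\underline{\xi}_0\, l(x,u(t))$, satisfies the maximum principle, and is nowhere-vanishing when $\underline{\xi}_0=0$. All three assertions in Theorem \ref{thm:pmp_} are then immediate translations, provided we identify the geometric structures on $E^\ast$.

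The last step is therefore to recognise that $E^\ast=\T^\ast P/G$ and that, since the Atiyah algebroid is a genuine Lie algebroid, the linear bivector $\Pi_{E^\ast}$ constructed in Chapter \ref{ch:algebroids} is a Poisson tensor, which is precisely the linear Poisson structure on $\T^\ast P/G$ obtained by reducing the canonical symplectic Poisson structure on $\T^\ast P$ by the $G$-action (this is the content referenced as Theorem \ref{thm:ham_E1}). The Hamiltonian vector field $\X_{h_t}=\iota_{\dd h_t}\Pi_{E^\ast}$ used in the statement of Theorem \ref{thm:pmp_} is thus literally the one appearing in Theorem \ref{thm:pmp}. No obstacle is expected beyond this bookkeeping: the genuine mathematical content, both the reduction (Theorem \ref{thm:OCP_P}, relying on Theorem \ref{thm:int_htp} and Corollary \ref{cor:htp_P}) and the PMP itself (Theorem \ref{thm:pmp}), has already been established.
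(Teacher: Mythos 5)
Your proposal is correct and follows exactly the route the paper takes: Theorem \ref{thm:OCP_P} reduces the $G$-invariant problem on $P$ to the OCP \eqref{eqn:P} on the Atiyah algebroid $E=\T P/G$, Theorem \ref{thm:pmp} is then applied verbatim, and the Hamiltonian evolution is identified with the linear Poisson structure $\Pi_{E^\ast}$ on $\T^\ast P/G$ via Theorem \ref{thm:ham_E1}. The paper offers no additional argument beyond this bookkeeping, so nothing is missing.
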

\noindent An analogous result (with additional transversality conditions) is valid for system with general boundary conditions.
 
Note that for the case of a Lie group ($P=G$, $M=\{\ast\}$, $\Pi_{E^\ast}=\Pi_{\g^\ast}$) we recover the results of Jurdjevic \cite[Ch.12, Thms 5,6]{jurdjevic}.

\subsection{An application---the falling cat problem}
Now we will reconsider the well-known results of Montgomery
\cite{montgomery_isohol} (see also \cite[ch. 7.1]{bloch} and \cite{cendra_holm_marsden}) on
the isoholonomic problem by means of the PMP in the Atiyah
algebroid setting.

Let $G\ra P\ra M$ be a principal $G$-bundle, let $\cH\subset \sT P$ be
a $G$-invariant horizontal distribution, and let $\mu(\cdot,\cdot)$ be
a $G$-invariant sub-Riemannian metric on $\cH$ ($\mu(\cdot,\cdot)$
can be understood as a base metric lifted to $\cH$ by the
horizontal lift). The problem is now to find a horizontal curve
$q(t)$ with $t\in[0,1]$ joining two fixed points $q_0$, $q_1$ in
$P$ and minimizing the total energy
$$\frac 12\int_0^1\mu(\dot q(t),\dot q(t))\dd t.$$

Clearly, due to the $G$-invariance of the problem, after changing
the fixed-end-point condition into a fixed-homotopy condition (as
discussed in detail in Chapter \ref{ch:OCP}), the above problem
is equivalent to an OCP of the form \eqref{eqn:P} on the Atiyah
algebroid $E=\T P/G$.

With the invariant distribution $\cH$, understood as a principal
connection, we can associate a map $\nabla:\sT M\ra E$ inducing a
splitting $E\simeq_\nabla\sT M\times\g$. Our control system will
be $f:\sT M\lra \sT M\times\g$ given by $f(X)=(X,0)$ (this assures
that the trajectory is horizontal), the cost function $L:\sT
M\lra\R$ reads as $L(X)=\frac 12 \mu(X,X)$, and the fixed
$E$-homotopy class is simply a reduction of a classical homotopy
class in $P$.

Note two differences with the formulation of the OCP
\eqref{eqn:P}. Firstly, our control and cost functions have
arguments in $\sT M$ instead of in $M\times U$. Of course, this makes
no big difference, since locally $\sT M\approx M\times\R^n$.
Secondly, our time interval is fixed. This, in turn, results in
substituting the condition $H(x(t),\xi(t),u(t))=0$ by
$H(x(t),\xi(t),u(t))=\mathrm{const}$ in the assertion of Theorem
\ref{thm:pmp} (cf. Section \ref{sec:pmp_versions}).

Now we can apply Theorem \ref{thm:pmp} with the Hamiltonian
evolution described in Theorem \ref{thm:ham_E1}. The covector
$\xi\in E^\ast$ can be decomposed as $\xi=(p,\zeta)\in\sT^\ast
M\times\g^\ast$, and the corresponding Hamiltonian is
$$H(p,\zeta,X)=\<X,p>+\frac 12\lambda_0\mu(X,X)=:h(p,X),$$
with $\lambda_0\leq 0$. The maximum principle reads as
$p(t)=-\lambda_0\mu(X(t),\cdot)$; hence on the optimal trajectory,
$H(p(t),\zeta(t),X(t))=-\frac 12\lambda_0\mu(X(t),X(t))$ (which is
constant in $t$). The evolution of $p(t)$ and $\zeta(t)$ is given
by
\begin{align*}
&\dot\zeta(t)=0\,,\\
&\dot p(t)=\X^{\sT^\ast M}_{h(p,X)}+\<\xi(t),F_\nabla(X,\cdot)>\,;
\end{align*}
hence $\zeta(t)=\mathrm{const}$. The second equation is equivalent to
$$\lambda_0\nabla_X^\mu X=\<\zeta,F_\nabla(X,\cdot)>^{\#\mu},$$
where $\nabla^\mu$ denotes the Levi-Civita covariant derivative on
$(M,\mu)$ and $A^{\#\mu}$ is the vector dual to $A$ by means of
$\mu$. Indeed, the equation $\dot p(t)=\X^{\sT^\ast M}_{h(p,X)}$,
together with $p(t)=-\lambda_0\mu(X(t),\cdot)$, is the PMP for a
geodesic problem on $(M,\mu)$. Passing to the dual vector
$p(t)^{\#\mu}=\lambda_0X(t)$ we should obtain the geodesic
equation multiplied by the factor $\lambda_0$. The equation
$\dot\zeta(t)=0$ means that the curve $\zeta(t)\in\g^\ast$ is
covariantly constant, hence
$$\nabla_X\zeta=0.$$
We have thus obtained the Wong equations as in \cite{montgomery_isohol}.

The abnormal case $\lambda_0=0$ implies $p(t)=0$ and
$\<\zeta,F_\nabla(X,\cdot)>=0$. This allows us to exclude abnormal
solutions in certain situations. For example, if $P$ is a bundle
of circles over a two-dimensional base and the connection is
non-integrable (i.e., $F_\nabla$ is non-vanishing), we have
$\<\zeta,F_\nabla(X,\cdot)>=0$ if and only if $X=0$ (hence the
solution is trivial) or $\zeta=0$, which can be excluded by the
non-vanishing of the covector in the PMP.

\subsection{Applications to variational problems}

It is a well-known fact that Euler-Lagrange equations can be derived by means of the classical PMP if one considers a trivial control system on a manifold $M$
$$f:M\times\R^n\underset{\text{loc}}\approx \T M\overset{\id}\lra\T M.$$
In this case, since we make no restrictions for velocities, the abnormal case can be excluded.

Similarly, for a trivial control system on a general AL algebroid
$$f:M\times\R^m\underset{\text{loc}}\approx E\overset{\id}\lra E,$$
we can obtain generalised Euler-Lagrange equations studied by many authors (see e.g. \cite{GG_var_calc} and the references therein). 

Indeed, for a system of the above form with a cost function $$L:M\times\R^m\underset{\text{loc}}\approx E\lra\R$$ consider the OCP \eqref{eqn:P} for some fixed homotopy class $[\sigma]$ and fixed time interval $[t_0,t_1]$ (cf. Section \ref{sec:pmp_versions}), and denote by $\gamma:[t_0,t_1]\lra E$ its solution (the controlled pair). In local coordinates $\gamma(t)\sim\left(x^a(t),y^i(t)\right)$. The associated time-dependent Hamiltonian $H:\R\times E^\ast\lra\R$ reads as
$$H(t,\xi)=\<\gamma(t),\xi>_\tau-\xi_0L(\gamma(t)).$$
The maximum principle 
$$H(t,\xi(t))=\sup_{e\in E_{\pi(\xi(t))}}\<e,\xi(t)>+\xi_0L(e)$$
implies that if $\xi_0=0$, then also $\xi(t)=0$, which is forbidden by the PMP.  
Consequently, we may assume that $\xi_0=-1$. Now the maximum condition implies that the Pontryagin covector $\xi(t)$ is a vertical part of the derivative $\dd L$ evaluated on $\gamma(t)$; i.e., $\xi(t)=\T^\ast\pi\left( \dd L(\gamma(t))\right)\sim\left(x^a(t),\frac{\pa L}{\pa y^i}(x(t),y(t))\right)$. 

The evolution equation reads as
\begin{equation}\label{eqn:ham}
\dot\xi(t)=\wt \Pi_{E^\ast}\left(\dd_\xi H(t,\xi(t))\right),
\end{equation}
where $\wt \Pi_{E^\ast}:\T^\ast E^\ast\lra\T E^\ast$ is induced by the linear bi-vector field $\Pi_{E^\ast}$. 

For our purposes it will be more convenient to describe the dynamics via the canonical double vector bundle isomorphism $\mathcal{R}^{-1}:\T^\ast E^\ast\lra T^\ast E$ (see \cite[Sec. 11]{mackenzie}), which in local coordinates reads as
$$\mathcal{R}^{-1}:(x^a,\xi_i,p_b,y^j)\mapsto(x^a,y^i,-p_b,\xi_j).$$
Since $\dd_\xi H(t,\xi)\sim(x^a(t),\xi^i(t),-\frac{\pa L}{\pa x^b}(x(t),y(t)),y^i(t))$, the image $\mathcal{R}^{-1}\left(\dd_\xi H(t,\xi(t))\right)$ is simply the derivative $\dd L$ evaluated at $\gamma(t)$. Equation \eqref{eqn:ham} can be thus expressed as
$$\frac{\dd}{\dd t}\T^\ast\tau(\gamma(t))=\eps\circ \dd L(\gamma(t)),$$
where $\eps:=\wt \Pi_{E^\ast}\circ\mathcal{R}:\T^\ast E\lra \T E^\ast$. This equation considered as an implicit differential equation for $\gamma(t)$ is precisely the generalised \emph{Euler--Lagrange equations}\index{Euler--Lagrange equations} considered in \cite{GG_var_calc,GGU_geom_mech}. In local coordinates it reads as
\begin{equation}\label{eqn:EL}
\begin{split}
&\frac{\dd x^a}{\dd t}=\rho^a_k(x)y^k\\
&\frac{\dd }{\dd t}\left(\frac{\pa L}{\pa y^j}\right)=c^k_{ij}(x)\frac{\pa L}{\pa y^k}+\rho^a_j(x)\frac{\pa L}{\pa x^a}.
\end{split}
\end{equation}

In a special case if $E=\g$ is a Lie algebra we recover the \emph{Euler--Poincar{\'e} equations}\index{Euler--Poincar{\'e} equations}
$$\frac{\dd }{\dd t}\left(\frac{\pa L}{\pa y}\right)=\operatorname{ad}^\ast_y\left(\frac{\pa L}{\pa y}\right)$$

More generally, for the Atiyah algebroid $E=\T P/G$, generalised Euler--Lagrange  equations \eqref{eqn:EL} take a from of \emph{Hammel equations}\index{Hammel equations} (if we use local trivialisation defined by a local section---see \eqref{eqn:bracket1}) and \emph{reduced Euler-Lagrange equations}\index{reduced Euler-Lagrange equations} (in local trivialisation given by a principal connection---see \eqref{eqn:bracket}) The interested reader should confront \cite[Sec. 5]{cendra_holm_marsden}.

\section{Other examples} \label{sec:examples_other}

\subsection{Two-point time OCP on \texorpdfstring{$\so$}{TEXT}}
Consider now a rigid body in $\R^3$ which can rotate with constant
angular velocity along two fixed axes in the body. At every moment
the position of the body is described by an element $q\in SO(3)$.
The rotation axes can be represented by elements of the Lie
algebra $l_+,l_-\in\so$. The rotation along the axis $l_\pm$ is
described by the equation
$$\pa_t q=ql_\pm.$$
It would be suitable to write $l_+=a+b$ and $l_-=a-b$. The above
equation can be regarded as a control system on the Lie group
$SO(3)$ with the control function $F(q,u)=qf(u)$, where
$f(u)=a+ub$ and the set of controls is simply $U=\{-1,1\}$. We
would like to find a control $u(t)$ which moves the body from a
position $q_0\in SO(3)$ to $q_1\in SO(3)$ (or such that the
trajectory belongs to a fixed homotopy class in $SO(3)$) in the
shortest possible time.

It is obvious that the above OCP on the Lie
group reduces to the OCP on the Lie algebra
$\so$ with the control function $f$ and the cost function $L\equiv
1$. Fix a basis $(e_1,e_2,e_3)$ on $\so$, and denote by
$c^\alpha_{\beta\gamma}$ the structure constants of the Lie
algebra in this basis. Let $u(t)$, for $t\in[t_0,t_1]$, be a
solution of the above OCP. It follows from
theorem \ref{thm:pmp} that there exist a number $\lambda_0\leq 0$
and a curve $\zeta(t)\in \so^*$ such that
$$H(\zeta(t),u(t))=\< \zeta(t),a+u(t)b>+\lambda_0=\max_{v=\pm 1}\< \zeta(t),a+vb>+\lambda_0.$$
This implies that $u(t)=\operatorname{sgn}\big(\<
\zeta(t),b>\big)$. Moreover, the evolution of $\zeta(t)$ is given
by the equation
$$\pa_t \zeta_\beta(t)=c^\gamma_{\alpha\beta}(a^\alpha+u(t)b^\alpha)\zeta_\gamma(t).$$
We have obtained the same equation as in (\cite[Sec.
19.4]{agrachev}). We refer the reader to this book for the detailed discussion on
solutions.

\subsection{An application to a nonholonomic system}

In \cite{GGU_geom_mech} and \cite{GG_var_calc} a framework of geometric mechanics on
general algebroids was presented. Roughly speaking, the structure
of an algebroid on a bundle $\tau:E\ra M$ allows one to develop
Lagrangian formalism for a given Lagrangian function $L:E\ra\R$.
Moreover, if $E$ is an AL algebroid, then the associated
Euler-Lagrange equations have a variational interpretation: a
curve $\gamma:[t_0,t_1]\ra E$ satisfies the Euler-Lagrange
equations if and only if it is an extremal of the action
$\mathcal{J}(\gamma):=\int_{t_0}^{t_1}L(\gamma(t))\dd t$
restricted to those $\gamma$'s which are admissible and belong to
a fixed $E$-homotopy class \cite{GG_var_calc}.  Hence, the trajectories of
the Lagrange system should be derivable from our version of the PMP
for the unconstrained control system on $E$ with the cost function
$L$.

In \cite{grabowski_nonholonomic} it has been shown that if $D\subset E$ is a
subbundle and $L$ is of mechanical type (that is, $L(a)=\frac 12
\mu(a,a)-V(\tau(a))$, where $\mu$ is a metric on $E$ and $V$ is an
arbitrary function on the base), then nonholonomically constrained
Euler-Lagrange equations associated with $D$ can be obtained as
unconstrained Euler-Lagrange equations on the skew-algebroid
$\left(D,\rho_E|_D,[\cdot,\cdot]_D:=\PP_D[\cdot,\cdot]_E\right)$,
where $\PP_D:E\ra D$ denotes the projection orthogonal w.r.t.
$\mu$. It follows that if $D$ with the algebroid structure
defined above is AL, then the solutions of the nonholonomically
constrained Euler-Lagrange equations are extremals of the
unconstrained OCP on $D$ with the cost function $L|_D$. On the
other hand, using our version of the PMP on the algebroid $E$ with
controls restricted to $D$ and the cost function $L$, one will
obtain nonholonomically constrained Euler-Lagrange equations
associated with $D$. Note that the algebroid bracket
$[\cdot,\cdot]_D$ need not satisfy Jacobi identity even if
$[\cdot,\cdot]_E$ does. Concluding, the PMP on general (not
necessarily Lie) AL algebroids can be used in the theory of
nonholonomic systems. To our knowledge this point of view is
completely novel.

To give a concrete example we will use PMP to study the Chaplygin
sleigh. It is an example of a nonholonomic system on the Lie
algebra $\mathfrak{se}(2)$ which describes a rigid body sliding on
a plane. The body is supported in three points, two of which
slide freely without friction, while the third point is a knife
edge. This imposes the constraint of no motion orthogonal to this
edge (see \cite{chaplygin,neimark}).

The configuration space before reduction is the Lie group
$G=SE(2)$ of the Euclidean motions of the two-dimensional plane
$\R^2$. Elements of the Lie algebra $\mathfrak{se}(2)$ are of the
form
$$\hat{\xi}=
\begin{pmatrix}
0&\omega&v_1\\
-\omega&0&v_2\\
0&0&0
\end{pmatrix}=v_1E_1+v_2E_2+\omega E_3,
$$
where $[E_3,E_1]=E_2$, $[E_2, E_3]=E_1$, and  $[E_1, E_2]=0$.

The system is described by the purely kinetic Lagrangian function
$L:\mathfrak{se}(2)\ra\R$, which reads as
$$L(v_1, v_2, \omega)=\frac{1}{2}\left[ (J+m(a^2+b^2))\omega^2 + mv_1^2+m v_2^2-2bm\omega v_1-2am\omega v_2\right].$$
Here $m$ and $J$ denote the mass and the moment of inertia of
the sleigh relative to the contact point, while $(a, b)$
represents the position of the centre of mass w.r.t. the
body frame, determined by placing the origin at the contact point
and the first coordinate axis in the direction of the knife axis.
Additionally, the  system is subjected to the nonholonomic
constraint determined by the linear subspace
$$ D=\{(v_1, v_2, \omega)\in \mathfrak{se}(2)\; |\; v_2=0\}\subset\mathfrak{se}(2).$$
Instead of $\{E_1, E_2, E_3\}$ we take another basis of
$\mathfrak{se}(2)$:
$$
e_1=E_3,\quad e_2=E_1,\quad e_3= -ma E_3-mab E_1+(J+ma^2) E_2,$$ adapted
to the decomposition $D\oplus D^\perp$; $D=\hbox{span }\{ e_1,
e_2\}$ and $D^\perp=\hbox{span }\{ e_3\}$. The induced
skew-algebroid structure on $D$ is given by
$$
[e_1, e_2]_{D}=\frac{ma}{J+ma^2} e_1+\frac{mab}{J+ma^2}e_2.$$
 Therefore, the structural constants are ${\mathcal C}^1_{12}=\frac{ma}{J+ma^2}$ and
${\mathcal C}^2_{12}=\frac{mab}{J+ma^2}$. The algebroid $D$ is
almost Lie (in fact, in this simple case it is a Lie algebra).
Next, we will use theorem \ref{thm:pmp} to derive the nonholonomic
equations of motion. Set $U=\R^2\ni(y^1,y^2)$ and the control
function to be a map $f:U\ra D$ given by
$$ f(y^1,y^2)=y^1e_1+y^2e_2\in D.$$
The Lagrangian restricted to $D$ defines the cost function
$L:U\ra\R$,
$$
L(y^1, y^2)=\frac{1}{2}\left[ (J+m(a^2+b^2))(y^1)^2 +
m(y^2)^2-2bmy^1y^2\right].$$ For a curve
$\xi(t)=\xi_1(t)e^\ast_1+\xi_2(t)e^\ast_2\in D^\ast$ the maximum
principle reads
\begin{eqnarray} \label{eqn:max_nh} H(\xi(t),y(t))&=&\xi_1(t)y^1+\xi_2(t)y^2+\ul\xi_0 \cdot L(y^1,y^2)\\&=&
\max_{(v^1,v^2)\in\R^2}(\xi_1(t)v^1+\xi_2(t)v^2+\ul\xi_0 \cdot
L(v^1,v^2))\,.\nonumber\end{eqnarray} 
If $\ul\xi_0=0$, then maximality would
give $\xi(t)=0$, which is impossible. Hence, we may assume that
$\ul\xi_0=-1$. Now from \eqref{eqn:max_nh} we will get
\begin{equation}\label{eqn:ev_nh}
\begin{split}
\xi_1(t)&=\left(J+m(a^2+b^2)\right)y^1-bmy^2\,,\\
\xi_2(t)&=my^2-bmy^2\,.
\end{split}
\end{equation}
Finally, the Hamiltonian evolution \eqref{eqn:par_trans_*A} is
simply
\begin{align*}
\dot \xi_1&=\mathcal{C}^1_{21}y^2\xi_1+\mathcal{C}^2_{21}y^2\xi_2=-\frac{ma}{J+ma^2}y^2(\xi_1+b\xi_2),\\
\dot
\xi_1&=\mathcal{C}^1_{21}y^1\xi_1+\mathcal{C}^2_{12}y^1\xi_2=\frac{ma}{J+ma^2}y^1(\xi_1+b\xi_2).
\end{align*}
In view of \eqref{eqn:ev_nh} and the above equations we conclude
that the equations of motion are
\begin{align*}
(J+m(a^2+b^2))\dot{y}^1 -bm \dot{y}^2&= -ma y^1y^2,\\
m\dot{y}^2-bm\dot{y}^1&=ma(y^1)^2,
\end{align*}
which completely agrees with \cite{grabowski_nonholonomic}.

\chapter{Needle variations}\label{ch:needle}

In order to prove Theorems \ref{thm:pmp_A} and \ref{thm:pmp_A_rel} we shall somehow compare the cost on the optimal trajectory $\bm f(\bm x(t),u(t))$ with costs of nearby trajectories. As our assumptions input on the
set of controls $U$ are very mild, we cannot use the natural
concept of a continuous deformation, as in the standard calculus
of variations ($U$ can be for instance discrete). Instead, we introduce
the notion of \emph{needle variations} after \cite{pontryagin}.
For a given admissible control $u:[t_0,t_1]\lra U$ this variation
will be, roughly speaking, the family of controls $ u_s(t)$
obtained by substituting $u(t)$ by given elements $v_i\in U$ on a
small intervals $I_i=(\tau_i-s\del t_i,\tau_i]\subset[t_0,t_1]$. Our main result in this chapter is Theorem \ref{thm:1st_main}, where we study the $\bm A$-homotopy classes of the trajectories of the system \eqref{eqn:con_sys_A} obtained for controls $u_s(t)$. 
We finish this chapter with the definition of $\bm K_\tau^u$---the set of infinitesimal variations of the trajectory $\bm f(\bm x(t), u(t))$.

\subsection{Needle variation of controls and trajectories}

Throughout this chapter we will work with a fixed admissible control $u:[t_0,t_1]\lra U$ and fixed trajectory $\bm a(t):=\bm f(\bm x(t),u(t))$. 

Choose points $t_0<\tau_1\leq\tau_2\leq\hdots\leq\tau_k\leq\tau<t_1$,
being regular points of $u$. Next, choose non-negative numbers
$\delta t_1,\hdots,\delta t_k$ and an arbitrary real number
$\delta t$. Finally, take (not necessarily different) elements
$v_1,\hdots,v_k\in U$. The whole set of data $(\tau_i,
v_i,\tau,\delta t_i, \delta t)_{i=1,\hdots,k}$ will be denoted by
$\A$ and called a \emph{symbol}. Its role will be to encode the variation of the control $u(t)$. Intuitively, points $\tau_i$ emphasise moments in which we substitute $u(t)$ by a constant control $v_i$ on an interval $I_i=(\tau_i-s\del t_i,\tau_i]$ of length $s\del t_i$, while $s\del t$ is responsible for shortening or lengthening the time for which $u(t)$ is defined. The precise
definition is quite technical, because one should take care to make the intervals $I_i$  pair-wise disjoint.

Take
$$l_i=\begin{cases}
\del t-(\del t_i+\hdots+\del t_k) &\text{when $\tau_i=\tau$;}\\
\phantom{x.}-(\del t_i+\hdots+\del t_k) &\text{when $\tau_i=\tau_k<\tau$;}\\
 \phantom{x.}-(\del t_i+\hdots+\del t_j) &\text{when
$\tau_i=\tau_{i+1}=\hdots=\tau_j<\tau_{j+1}$,}
\end{cases} $$
and define $s$-dependent intervals  $I_i:=(\tau_i+s l_i,\tau_i+s(l_i+\del t_i)]$. As we see, $I_i$ is left-open and right-closed and it has
length $s\cdot\delta t_i$. If $\tau_i<\tau_{i+1}$, or $i=k$ and
$\tau_k<\tau$, the end-point of $I_i$ lies at $\tau_i$. If
$\tau_i=\tau_{i+1}$, then the end-point of $I_i$ coincides with the
initial-point of $I_{i+1}$. If $\tau_k=\tau$, we set the
end-point of $I_k$ at $\tau+s\delta t$. Clearly, for $s$ small
enough, the intervals $I_i$ lie inside $[t_0,t_1]$ and are
pairwise disjoint.


\begin{definition}\label{def:need_var} For a symbol $\A=(\tau_i, v_i,\tau,\delta t_i, \delta t)_{i=1,\hdots,k}$ we introduce a $s$-dependent
family of admissible controls defined on intervals
$[t_0,t_1+s\delta t]$:
\begin{equation}
\label{eqn:need_var} u^\A_s(t)=
\begin{cases}
v_i & \text{for $t\in I_i$},\\
u(t) & \text{for $t\in [t_0,\tau+s\del t]\setminus \bigcup_i
I_i$}\\
u(t-s\del t) &\text{for $t\in(\tau+s\del t,t_1+s\del t]$}.
\end{cases} \end{equation}
We will call $u^\A_s$ a (\emph{needle})
\emph{variation of the control} $u$ \emph{associated with the symbol
$\A$}. 
\end{definition}

Using $u^\A_s(t)$ and an AC path $s\mapsto \bm x_0(s)\in M\times\R$ where $\bm x_0(0)=\bm x_0$ we can define the variation of $\bm a(t)$.

\begin{definition}
The family of trajectories
$$\bm{a}(t,s):=\bm{f}\left(\B{x}(t,s),u^\A_s(t)\right)$$
of the system \eqref{eqn:con_sys_A}, with the initial conditions $\bm x(t_0,s)=\bm
x_0(s)$, where $t\in[t_0,t_1+s\del t]$, will be called a
\emph{variation of the trajectory} $\bm a(t)=\bm f(\bm
x(t),u(t))$ \emph{associated with the symbol $\A$ and the initial base-point variation $\bm x_0(s)$}.
\end{definition}

\begin{remark}\label{rem:var_dt=0}
Observe that, when $\delta t_i=0$, the interval $I_i$ is empty.
It follows that adding a triple $(\tau_i, v_i, \del t_i=0)$ to the symbol $\A$ does not change the variation 
$u^\A_s$ and, consequently, the associated variations $\bm a(t,s)$.
\end{remark}

\subsection{Needle variations and \texorpdfstring{$\bm A$}{\bf{A}}-homotopy classes} 

Our goal now is to compare the $\bm A$-homotopy classes of the trajectory $\bm a(t)$ and its variation $\bm a(t,s)$ introduced above. We need this because OCPs \eqref{eqn:P} and \eqref{eqn:P_rel} are defined in term of algebroid homotopy classes. Having in mind Lemma \ref{lem:gen_E_htp} and the construction of a $\bm A$-homotopy associated with a control system \eqref{eqn:con_sys_A} given in Chapter \ref{ch:OCP}, we may expect that the family of trajectories $\bm a(t,s)$ forms an $\bm A$-homotopy (for some initial-point homotopy $\bm b_0(s)$). Consequently, the description of $\bm A$-homotopy classes of $\bm a(t,s)$ should be possible by meas of Lemma \ref{lem:htp}. This is indeed the case, yet some technical work is needed in order to reparametrise $\bm a(t,s)$ in a suitable way.

\begin{theorem}\label{thm:1st_main}  Let $s\mapsto \bm b_0(s)$ be a  bounded measurable $\bm A$-path covering $s\mapsto \bm x_0(s)$, where $\bm x_0(0)=\bm x_0$. Consider a variation $\bm a(t,s)=\bm f(\bm x(t,s),u^\A_s(t))$ of the trajectory $\bm a(t)=\bm f(\bm x(t),u(t))$ associated with a symbol $\A=(\tau_i, v_i,\tau,\delta t_i, \delta t)_{i=1,\hdots,k}$ and initial base-point variation $\bm x_0(s)$. 

Then there exists a number $\theta>0$ and an $\bm A$-path $s\mapsto \bm d^{\A}(s)$ defined for $0\leq s\leq\theta$ such that
\begin{equation}\label{eqn:htp_1st_lem}
[\bm b_0(s)]_{s\in[0,\eps]}[\bm{a}(t,\eps)]_{t\in[t_0,t_1+\eps\del t]}=[\bm
{a}(t)]_{t\in[t_0,t_1]}[\bm{d}^{\A}(s)]_{s\in[0,\eps]},\end{equation} 
for every $\eps\leq \theta$. 

Moreover, if $(\tau_i,v_i,\tau)$ in $\A$ are fixed, we can choose $\theta>0$ universal for all $(\del t_i,\del t)$ belonging to a fixed compact set.

Finally, if $\bm b_0(s)$ is regular at $s=0$, then  $\bm d^{\A}(s)$, regarded as a function of $s$, $\del t_i$ and $\del t$, is uniformly regular w.r.t. $\del t_i$ and $\del t$ at $s=0$. What is more,
\begin{equation}
\label{eqn:d_at_0} 
\begin{split}\bm{d}^{\A}(0)=&\bm B_{t_1\tau}[\bm{f}(\bm{x}(\tau),u(\tau))]\del t+\bm{B}_{t_1 t_0}(\bm
b_0(0))\\&+\sum_{i=1}^k
\bm{B}_{t_1\tau_i}\Big[\bm{f}(\bm{x}(\tau_i),v_i)-\bm{f}(\bm{x}(\tau_i),u(\tau_i))\Big]\del
t_i\in\bm{A}_{\bm{x}(t_1)}.
\end{split}
\end{equation} 
\end{theorem}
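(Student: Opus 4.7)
The plan is to build an explicit $\bm A$-homotopy realising the identity \eqref{eqn:htp_1st_lem} by composing simpler $\bm A$-homotopies corresponding to the atomic ingredients of the symbol $\A$: the initial-point variation $\bm b_0(s)$, each needle $(\tau_i, v_i, \del t_i)$, and the final time-extension $\del t$. The key tools are Lemma \ref{lem:gen_E_htp} (a smooth family of $\bm A$-paths extends canonically to an $\bm A$-homotopy), Remark \ref{rem:B_htp} (the parallel transport realises the final-point homotopy generated by varying initial data), Lemma \ref{lem:reparam1} (to smooth out the dependence on $s$ of the perturbed segments), Lemma \ref{lem:htp} together with Remark \ref{rem:lem_htp} (to convert $\bm A$-homotopies into equalities of $\bm A$-homotopy classes), and Theorem \ref{thm:param_dif} (for regular dependence of ODE solutions on parameters).

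The first step is to treat the three elementary variations separately. \emph{(i)} For the initial base-point variation alone (all $\del t_i = \del t = 0$), the family $\bm a(t,s) = \bm f(\bm x(t,s), u(t))$ arises from solving \eqref{eqn:con_sys_A} with the initial data $\bm x_0(s)$. By Lemma \ref{lem:gen_E_htp} and Definition \ref{def:op_par_tr}, this is an $\bm A$-homotopy with initial-point homotopy $\bm b_0(s)$ and final-point homotopy $\bm{B}_{t_1 t_0}\bm b_0(s)$, and Lemma \ref{lem:htp} then yields \eqref{eqn:htp_1st_lem} with $\bm d^{\A}(s)=\bm{B}_{t_1 t_0}\bm b_0(s)$. \emph{(ii)} For an isolated needle $(\tau_i, v_i, \del t_i)$, the trajectory is perturbed only on $I_i$. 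Using Lemma \ref{lem:reparam1}, I would reparametrise the interval $I_i$ onto a fixed interval with parameter $s$, producing a family of $\bm A$-paths depending regularly on $s$. Combined with the unperturbed initial segment and, after time $\tau_i$, the evolution with control $u$ (which depends regularly on the perturbed endpoint $\bm x(\tau_i,s)$ by Theorem \ref{thm:param_dif}), one obtains a smooth family of $\bm A$-paths. Lemma \ref{lem:gen_E_htp} then produces an $\bm A$-homotopy whose initial-point homotopy is trivial; its final-point homotopy, computed using \eqref{eqn:par_trans} and the fact that $\tau_i$ is a regular point of $u$, has derivative $\bm{B}_{t_1\tau_i}[\bm f(\bm x(\tau_i), v_i) - \bm f(\bm x(\tau_i), u(\tau_i))]\del t_i$ at $s=0$. \emph{(iii)} The time-extension $\del t$ alone corresponds, after a reparametrisation provided by Lemma \ref{lem:reparam1} (or a direct computation via Lemma \ref{lem:reparam}), to extending the original trajectory by a segment of length $s\del t$ near $\tau$; its contribution to $\bm d^{\A}(0)$ is $\bm{B}_{t_1\tau}[\bm f(\bm x(\tau), u(\tau))]\del t$.

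The second step is to glue these contributions. Because the intervals $I_i$ are pairwise disjoint (this is the purpose of the shifts $l_i$ in Definition \ref{def:need_var}) and the needles are ordered $\tau_1\leq\ldots\leq\tau_k\leq\tau$, I would concatenate the atomic $\bm A$-homotopies along the trajectory: on each subinterval between consecutive distinct times among $\{\tau_i, \tau\}$, the relevant family of $\bm A$-paths is the one generated by pure initial-point variation (so by Lemma \ref{lem:gen_E_htp} it is an $\bm A$-homotopy transported by $\bm B$). Sequential application of Lemma \ref{lem:htp} (or Remark \ref{rem:lem_htp}) at each junction converts the final-point homotopy of one stage into the initial-point homotopy of the next, all the variations being ``transported to $t_1$'' by the appropriate parallel transport operators. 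The linearity of the derivative at $s=0$ then produces the sum formula \eqref{eqn:d_at_0}. The uniform regularity of $\bm d^{\A}(s)$ with respect to $(\del t_i, \del t)$ varying in a compact set, and the universality of $\theta$, follow from the parametric version of ODE existence and regularity (Theorems \ref{thm:exist} and \ref{thm:param_dif}) applied to each atomic piece and from the continuity of the parallel transport $\bm B$ in its arguments (Remark \ref{rem:B_htp}).

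The main obstacle will be the combinatorics of coinciding times. When several $\tau_i$ agree, or $\tau_k=\tau$, the corresponding intervals $I_i$ become adjacent rather than separated, which complicates the reparametrisation used in step (ii) and forces a careful case analysis dictated by the definition of $l_i$. A related technical nuisance is that a needle variation is genuinely non-smooth in $s$ at $s=0$, so Lemma \ref{lem:gen_E_htp} cannot be applied naively; the reparametrisation afforded by Lemma \ref{lem:reparam1} is essential here, and one must check that the reparametrised family inherits enough regularity for Theorem \ref{thm:param_dif} to yield the stated uniform regularity of $\bm d^{\A}$ in all parameters. Finally, care is needed to ensure that the composition of $\bm A$-homotopies produced at each stage remains valid when the final interval is lengthened by $\eps\del t$, since then the ``base'' homotopy class on the left of \eqref{eqn:htp_1st_lem} lives on $[t_0, t_1+\eps\del t]$; this is precisely why the time-extension contribution is treated as one of the atomic pieces rather than as a global rescaling.
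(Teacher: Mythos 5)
Your proposal is correct and follows essentially the same strategy as the paper: decompose the variation into atomic pieces (initial-point variation transported by $\bm B$, each needle, the time-extension), realise each as an explicit $\bm A$-homotopy via reparametrisation and parallel transport, glue them with Lemma \ref{lem:htp}, and track uniform regularity through the toolkit of Appendix \ref{sec:meas}; the paper merely packages this as an induction on $k$ with two sublemmas and, instead of applying Lemma \ref{lem:gen_E_htp} to the reparametrised needle/extension segments (where $s$-differentiability of the control dependence is delicate), verifies the WT-homotopy equations for the explicit pair directly. The combinatorial difficulty of coinciding times that you flag is exactly what the paper's case analysis (2.A, 2.B) handles.
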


\begin{proof}
The proof is technically complicated, yet conceptually not very difficult. The idea is to decompose
$\bm a(t,s)$ into several parts, which, after a
suitable reparametrisation, form an $\bm A$-homotopy. As one may have expected, these parts correspond to ''switches'' in the needle variation associated with the symbol $\A$. Our argument will be therefore inductive w.r.t. $k$---the number of ''switches'' in $\A$. Formula \eqref{eqn:htp_1st_lem} will be obtained from the repetitive usage of Lemma \ref{lem:htp} for the partial homotopies, and  \eqref{eqn:d_at_0} will follow from the concrete form of these homotopies. The preservation of the uniform regularity will be obtained using the technical results introduced in Appendix \ref{sec:meas}. 

Finally, let us explain the role of the number $\theta$. We know from Theorem \ref{thm:exist} that if a solution of the ODE for a fixed initial condition $\bm x_0$ is defined on an interval $[t_0,t_1]$, then so are the solutions for initial conditions close enough to $\bm x_0$. Since the base variation $\bm x(t,s)$ associated with $u^\A_s(t)$ is obtained as a composition of the solutions of \eqref{eqn:con_sys} with perturbations on intervals of length $s\del t_i$ and $s\del t$, it is clear that, if numbers $\del t_i$ and $\del t$ are bounded and $\tau_i$, $\tau$ and $v_i$ fixed, for a given $\bm x_0(s)$, we can chose $\theta>0$ such that the trajectory $\bm x(t,s)$ will stay close enough to $\bm x(t)$ to be well-defined for all $0\leq s\leq\theta$ and all $[t_0,t_1]$. 

In our inductive reasoning it will be more convenient to assume that all the data depends on an additional parameter $p\in P$ (i.e., we have $\bm x_0(s,p)$ instead of $\bm x_0(s)$, $\bm a(t,s,p)$ instead of $\bm a(t,s)$, etc.). In the assertion we demand that \eqref{eqn:htp_1st_lem} and \eqref{eqn:d_at_0} hold for each fixed $p\in P$. Moreover, for fixed $(\tau_i,v_i,\tau)$ we want $\bm d^{\A,p}(s)$ to be uniformly regular w.r.t. $p$, $\del t_i$, and $\del t$ at $s=0$ if $\bm b_0(s,p)$ is uniformly regular w.r.t. $p$ at $s=0$.

In what follows we will need two technical lemmas. 
\begin{lemma}\label{lem:sub_lem1} Let $t\mapsto\bm a(t,s,\wt p)= \bm f(\bm x(t,s,\wt p),v(t))$, with $t\in[\wt t_0,\wt t_1]$, be a family of bounded measurable admissible paths over $\bm x(t,s,\wt p)$ parameterised by $\wt p\in \wt P$. Let $s\mapsto \wt{\bm b}_0(s,\wt p)$ be a family of  bounded measurable $\bm A$-paths over $\bm x(t_0,s,\wt p)$. There exists a number $\theta>0$ and a family of bounded measurable $\bm A$-paths $s\mapsto\bm d_1^{\wt p}(s)$ defined for $0\leq s\leq\theta$ such that
\begin{equation}\label{eqn:needle1}
[\wt{\bm b}_0(s,\wt p)]_{s\in[0,\eps]}[\bm{f}(\bm x(t,\eps,\wt p),v(t))]_{t\in[\wt t_0,\wt t_1]}=[\bm
{f}(\bm x(t,0,p),v(t))]_{t\in[\wt t_0,\wt t_1]}[\bm{d}_1^{\wt p}(s)]_{s\in[0,\eps]},\end{equation} 
for all $\eps\leq \theta$. 
 
Explicitly, $\bm d_1^{\wt p}(s)=\bm B^v_{t\wt{t_0}}\left[\wt{\bm b}_0(s,\wt p)\right]$, where $\bm B^v_{t\wt{t_0}}$ is a parallel transport operator associated with the control $v(t)$. Moreover, if $\wt{\bm b}_0(s,\wt p)$ is uniformly regular w.r.t. $\wt p$ at $s=0$, then so is $\bm d_1^{\wt p}(s)$.
\end{lemma}
\begin{proof}[Proof of the lemma]
Fix $\wt p\in\wt P$ and consider $\bm b(t,s,\wt p):=\bm B^v_{t\wt t_0}\left[\wt{\bm b}_0(s,\wt p)\right]$. It follows from the definition of the operator of parallel
transport $\bm B^v_{t\wt{t_0}}$ that the pair $(\bm a(t,s,\wt p),\bm b(t,s,\wt p))$ is and $\bm A$-homotopy over $\bm x(t,s,\wt p)$ (see Remark \ref{rem:B_htp}).  Now \eqref{eqn:needle1} follows directly from Lemma \ref{lem:htp}, since $\bm b(\wt t_1,s,p)=\bm B^v_{\wt t_1\wt t_0}\left[\wt{\bm b}_0(s,\wt p)\right]=\bm d_1^{\wt p}(s)$. 

Finally, since by Remark \ref{rem:B_htp} the map $\bm B^v_{\wt t_1\wt t_0}(\cdot)$ is  continuous  for every fixed $\wt t_1$ and $\wt t_0$, in light of Lemma \ref{lem:ur_comp}, it preserves the uniform regularity of $\wt{\bm b}_0(s,\wt p)$ .
\end{proof}

\noindent The second lemma is the following one.

\begin{lemma}\label{lem:sub_lem2} Let $t\mapsto\bm a(t,s,\wt p)= \bm f(\bm x(t,s,\wt p),v(t))$ be a family of bounded measurable admissible
paths over $\bm x(t,s,\wt p)$ parametrised by $\wt p\in P$. Let $s\mapsto \wt{\bm b}_0(s,\wt p)$ be a family of bounded measurable $\bm A$-paths over $\bm x(\wt t_0+sc,s,\wt p)$. Then there exists a number $\theta>0$ and a family of bounded measurable $\bm A$-paths $s\mapsto\bm d_2^{\wt p,c,d}(s)$ defined for $0\leq s\leq\theta$ such that
\begin{equation}\label{eqn:needle3}
[\wt{\bm b}_0(s,\wt p)]_{s\in[0,\eps]}[\bm{f}(\bm x(\wt t_0+t,\eps,\wt p),v(t))]_{t\in[\eps c,s\eps d]}=[\bm{d}_2^{\wt p,c,d}(s)]_{s\in[0,\eps]},\end{equation} 
for every $\eps\leq \theta$. 
 
 Moreover, if $\wt{\bm b}_0(s,\wt p)$ is uniformly regular  w.r.t. $\wt p$ at $s=0$ and $\wt t_0$ is a regular point of the control $v(t)$,  then $\bm d_2^{\wt p,c,d}(s)$ is uniformly regular  w.r.t. $\wt p$, $c$, and $d$ at $s=0$. Finally, 
\begin{equation}\label{eqn:needle3a}
\bm{d}_2^{\wt p,c,d}(0)=\wt{\bm b}_0(s,\wt p)+(d-c)\bm f(\bm x(\wt t_0,0,\wt p),v(\wt t_0)).\end{equation}
\end{lemma}

\begin{proof}[Proof of the lemma]
For notation simplicity let forget about the $\wt p$-dependence. Define
$$\wh {\bm b}_0(s):=\bm B_{\wt t_0(\wt t_0+sc)}\left[\wt{\bm b}_0(s)-c\bm f(\bm x(\wt t_0+cs),v(\wt t_0+cs))\right].$$
Clearly, $\wh{\bm b}_0(s)$ is an admissible paths over $\bm x(\wt t_0,s)$. Now define a pair of maps
\begin{align*}
\bm a(t,s)&=s\bm f\left(\bm x(\wt t_0+ts,s),v(\wt t_0+ts)\right),\\
\bm b(t,s)&=\bm B^v_{(\wt t_0+ts)\wt t_0}\left[\wt{\bm b}_0(s)\right]+t\bm f\left(\bm x(\wt t_0+ts,s),v(\wt t_0+st)\right),
\end{align*}
where $t\in[c,d]$ and $s\in[0,\theta]$. We shall prove that this pair is an $\bm A$-homotopy.

If this is the case, then clearly \eqref{eqn:needle3} follows form Lemma \ref{lem:htp} since the initial-point $\bm A$-homotopy is 
\begin{align*}
\bm b(c,s)=&\bm B^v_{(\wt t_0+cs)\wt t_0}\bm B^v_{\wt t_0(\wt t_0+sc)}\left[\wt{\bm b}_0(s)-c\bm f\left(\bm x(\wt t_0+sc),v(\wt t_0+sc)\right)\right]+\\
\phantom{=}&+c\bm f\left(\bm x(\wt t_0+sc),v(\wt t_0+sc)\right)=\wt {\bm b}_0(s),
\intertext{the final-point $\bm A$-homotopy is}
\bm d_2^{c,d}(s):=&\bm b(d,s)=\bm B^v_{(\wt t_0+ds)\wt t_0}\left[\wt{\bm b}_0(s)-c\bm f\left(\bm x(\wt t_0+sc),v(\wt t_0+sc)\right)\right]+\\
\phantom{=}&+d\bm f\left(\bm x(\wt t_0+sd),v(\wt t_0+sd)\right), 
\end{align*}
and, by Lemma \ref{lem:reparam}, $\left[\bm a(t,s)\right]_{t\in[c,d]}=\left[\bm f(\wt t_0+t,s),v(\wt t_0+t)\right]_{t\in[cs,ds]}$.
Evaluating the formula for $\bm d_2^{c,d}(s)$ at $s=0$ we get \eqref{eqn:needle3a}. 

Finally, $\bm d_2^{c,d}(s)$ is uniformly regular if $\wt{\bm b}_0(s)$ is and $\bm t_0$ is a regular point of $v(t)$. Indeed, we can use the results from Appendix \ref{sec:meas}. The point is to observe that $\bm d_2^{c,d}(s)$ is obtained from measurable maps $v(\wt t_0+s)$ and $\bm b_0(s)$ regular at $s=0$ by operations described in Propositions \ref{prop:ur_cont}--\ref{prop:ur_multiplication} and Lemmas \ref{lem:ur_rescal}--\ref{lem:ur_comp} which preserve the uniform regularity. One has also to use the fact that $\bm B_{t_1 t_0}^v(\bm b_0)$, $f(x,u)$, and $x(t,s)$ are continuous maps (cf. Remark \ref{rem:B_htp}). 

Now it remains to check that $\bm a(t,s)$ and $\bm b(t,s)$ are indeed an $\bm A$-homotopy. Let us calculate the WT-derivatives:
\begin{align*}
\pa_s\bm a^i(t,s)\overset{\text{WT}}=&\bm f^i\left(\bm x(\wt t_0+ts,s),v(\wt t_0+ts)\right)+ts\pa_{\ol t}\bm f^i\left(\bm x(\ol t,s),v(\ol t)\right)|_{\ol t=\wt t_0+ts}+\\
&+s\pa_{s}\bm f^i\left(\bm x(\ol t,s),v(\ol t)\right)|_{\ol t=\wt t_0+ts},
\intertext{and}
\pa_t\bm b^i(t,s)\overset{\text{WT}}=&\pa_{\ol t}\bm B^v_{\ol t\wt t_0}\left[\wh{\bm b}_0(s)\right]^i\Big|_{\ol t=\wt t_0+ts}+\bm f^i\left(\bm x(\wt t_0+ts,s),v(\wt t_0+ts)\right)+\\
&+ts\pa_{\ol t}\bm f^i\left(\bm x(\ol t,s),v(\ol t)\right)|_{\ol t=\wt t_0+ts}.
\end{align*}
Now, since $\bm f\left(\bm x(\ol t,s),v(\ol t)\right)$ and 
$\bm B^v_{\ol 
t\wt t_0}\left[\wh{\bm b}_0(s)\right]$ is an $\bm A$-homotopy (cf. Lemma \ref{lem:sub_lem1}), we have
$$\pa_{\ol t}\bm B^v_{\ol t\wt t_0}\left[\wh{\bm b}_0(s)\right]^i-\pa_{s}\bm f^i\left(\bm x(\ol t,s),v(\ol t)\right)\overset{\text{WT}}=c^i_{jk}(\bm x(\ol t,s))\bm B^v_{\ol t\wt t_0}\left[\wh{\bm b}_0(s)\right]^j\bm f^k\left(\bm x(\ol t,s),v(\ol t)\right).$$
Consequently, 
\begin{align*}
\pa_t \bm b^i(t,s)-\pa_s\bm a(t,s)\overset{\text{WT}}=&s\left[\pa_{\ol t}\bm B^v_{\ol t\wt t_0}\left[\wh{\bm b}_0(s)\right]^i-\pa_{s}\bm f^i\left(\bm x(\ol t,s),v(\ol t)\right)\right]\Big|_{\ol t=\wt t_0+ts}\\\overset{\text{WT}}=&
sc^i_{jk}(\bm x(\ol t,s))\bm B^v_{\ol t\wt t_0}\left[\wh{\bm b}_0(s)\right]^j\bm f^k\left(\bm x(\ol t,s),v(\ol t)\right)\big|_{\ol t=\wt t_0+ts}\\
=&c^i_{jk}(x(\wt t_0+ts,s))\bm b^j(t,s)\bm a^k(t,s).
\end{align*} \end{proof}

No we return to the inductive proof of Theorem \ref{thm:1st_main}. We will prove first that the assertion is true for $t_1=\tau$. Our argument will be inductive w.r.t. $k$---the number of switches in the symbol $\A=(\tau_i, v_i,\tau,\delta t_i, \delta t)_{i=1,\hdots,k}$.

\underline{Step 1, $k=0$.} We start with $k=0$. This means that $u^\A_s(t)=u(t)$. For \underline{$\del t=0$} we simply have $\bm a(t,s)=\bm f(\bm x(t,s,p),u(t))$ with $t\in[t_0,\tau]$, where $\bm x(t_0,s,p)=\bm x_0(s,p)$. Now we can use the Lemma \ref{lem:sub_lem1} taking $\wt t_0=t_0$, $\wt t_1=\tau$, $\wt p=p$, $\wt {\bm b}_0(s,\wt p)=\bm b_0(s,p)$, and $v(t)=u(t)$ to get the assertion. 

If \underline{$\del t\neq 0$}, things are a little more complicated. We have $\bm a(t,s,p)=\bm f(\bm x(t,s,p),u(t))$ where $t\in[t_0,\tau+s\del t]$ and $\bm x(t_0,s,p)=\bm x_0(s,p)$. We can decompose
\begin{equation}\label{eqn:needle2}\begin{split}
&\left[\bm f(\bm x(t,\eps,p),u(t))\right]_{t\in[t_0,\tau+\eps\del t]}\\&=\left[\bm f(\bm x(t,\eps,p),u(t))\right]_{t\in[t_0,\tau]}\cdot\left[\bm f(\bm x(t,\eps,p),u(t))\right]_{t\in[\tau,\tau+\eps\del t]}.\end{split}
\end{equation}
Now using the assertion for $\del t=0$ we get
\begin{equation}\label{eqn:needle4}
[\bm b_0(s, p)]_{s\in[0,\eps]}[\bm{f}(\bm x(t,\eps, p),u(t))]_{t\in[ t_0,\tau]}=[\bm
{f}(\bm x(t,0,p),u(t))]_{t\in[t_0,\tau]}[\bm{d}_1^{ p}(s)]_{s\in[0,\eps]},\end{equation} 
where $\bm d^p_1(s)$ is uniformly regular w.r.t. $p$ at $s=0$, and $\bm d^p_1(0)=\bm B_{\tau t_0}[\bm b_0(0,p)]$. 
Next, using Lemma \ref{lem:sub_lem2} for $\wt t_0=\tau$, $c=0$, $d=\del t$, $\wt p=p$, $\wt {\bm b}_0(s,\wt p)=\bm d_1^p(s)$, and $v(t)=u(t)$, we get
\begin{equation}\label{eqn:needle5}
[\bm d_1^p(s)]_{s\in[0,\eps]}[\bm{f}(\bm x(\tau+t,\eps,\wt p),v(\tau+t))]_{t\in[0,\eps\del t]}=[\bm{d}_2^{ p,\del t}(s)]_{s\in[0,\eps]},\end{equation} 
where $\bm d_2^{p,\del t}(s)$ is uniformly regular w.r.t. $p$, and $\del t$ at $s=0$ and $$\bm d_2^{p,\del t}(0)=\bm d_1^p(0)+\del t \bm f(\bm x(\tau,0,p),u(\tau)).$$ 
Multiplying \eqref{eqn:needle4} by $[\bm{f}(\bm x(\tau+t,\eps,\wt p),v(\tau+t))]_{t\in[0,\eps\del t]}$, using \eqref{eqn:needle2} and \eqref{eqn:needle5}, and taking $\bm d^{\A,p}(s):=\bm d_2^{p,\del t}(s)$, we get the assertion.

\ul{Step 2.} Assume that the assertion holds for all $l<k$. Consider a symbol $\A=(\tau_i,v_i,\tau,\del t_i, \del t)_{i=1,\hdots, k}$. We will distinguish the following two situations:

\noindent\ul{Situation 2.A.} Not all $\tau_i$ are equal. In particular, $$t_0<\tau_1\leq\hdots\leq \tau_l<\tau_{l+1}\leq\hdots\leq\tau_k\leq\tau$$ for some $l<k$. We can now use the inductive assumption for a symbol $\A_1=(\tau_i,v_i,\tau=\tau_l,\del t_i, \del t=0)_{i=1,\hdots, l}$ to get 
\begin{equation}\label{eqn:needle6}\begin{split}
&[\bm b_0(s,p)]_{s\in[0,\eps]}[\bm{f}(\bm x(t,\eps,p),u^{\A_1}_\eps(t))]_{t\in[t_0,\tau_l]}\\&
=[\bm{f}(\bm x(t,0,\eps),u(t))]_{t\in[t_0,\tau_l]}[\bm{d}^{\A_1,p}(s)]_{s\in[0,\eps]},\end{split}\end{equation} 
where $\bm d^{\A_1,p}(s)$ is uniformly regular w.r.t. $p$, $\del t_1,\hdots,\del t_l$ at $s=0$, and
$$\bm d^{\A_1,p}(0)=\bm{B}_{\tau_l t_0}(\bm
b_0(0,p))+\sum_{i=1}^l
\bm{B}_{\tau_l\tau_i}\Big[\bm{f}(\bm{x}(\tau_i),v_i)-\bm{f}(\bm{x}(\tau_i),u(\tau_i))\Big]\del
t_i.$$
Using the inductive assumption for $\A_2=(\tau_i,v_i,\tau,\del t_i, \del t)_{i=l+1,\hdots, k}$ with $t_0=\tau_l$ and $\bm b_0(s, p,\del t_1,\hdots,\del t_l)=\bm d_1^{\A_1,p}(s)$, we get  
\begin{equation}\label{eqn:needle7}\begin{split}
&[\bm d^{\A_1,p}(s)]_{s\in[0,\eps]}[\bm{f}(\bm x(t,\eps,p),u^{\A}_\eps(t))]_{t\in[\tau_l,\tau]}\\
&=[\bm
{f}(\bm x(t,0,\eps),u(t))]_{t\in[\tau_l,\tau]} [\bm{d}^{\A_1,\A_2,p}(s)]_{s\in[0,\eps]},\end{split}\end{equation} 
where $\bm d^{\A_1,\A_2,p}(s)$ is uniformly regular w.r.t. $p$, $\del t_i$ and $\del t$ at $s=0$, and
\begin{align*}\bm d^{\A_1,\A_2, p}(0)&=\bm{B}_{\tau\tau_l}(\bm
d^{\A_1,p}(0))+\sum_{i=l}^k
\bm{B}_{\tau\tau_i}\Big[\bm{f}(\bm{x}(\tau_i),v_i)-\bm{f}(\bm{x}(\tau_i),u(\tau_i))\Big]\del
t_i\\
&=\bm{B}_{\tau t_0}(\bm
b_0(0,p))+\sum_{i=1}^k
\bm{B}_{\tau\tau_i}\Big[\bm{f}(\bm{x}(\tau_i),v_i)-\bm{f}(\bm{x}(\tau_i),u(\tau_i))\Big]\del
t_i.\end{align*}
Multiplying \eqref{eqn:needle6} by $[\bm{f}(\bm x(t,\eps,p),u^{\A}_\eps(t))]_{t\in[\tau_l,\tau]}$, using \eqref{eqn:needle7}, and taking $\bm d^{\A,p}(s):=\bm d^{\A_1,\A_2,p}(s)$, we get the assertion.

\noindent\ul{Situation 2.B.} If all $\tau_i$ are equal then either $$\tau_1=\hdots\tau_k=\tau\quad \text{or}\quad \tau_1=\hdots\tau_k<\tau.$$

\noindent\ul{2.B.1.} In the first case using the assertion for $\A_1=(\tau_i,v_i,\del t_i,\tau, \del t-\del t_k)_{i=1,2,\hdots,k-1}$ we get 
\begin{equation}\label{eqn:przyp_B1}\begin{split}
&[\bm b_0(t,s)]_{s\in[0,\eps]}[\bm f(\bm x(t,\eps,p),u^{\A_1}_\eps(t))]_{t\in[t_0,\tau+(\del t-\del t_k)\eps]}\\&
=[\bm f(\bm x(t,0,p),u(t))]_{t\in[t_0,\tau]}[\bm d_1^{\A_1,p}(s)]_{s\in[0,\eps]},\end{split}
\end{equation} 
where $\bm d_1^{\A_1,p}(s)$ is uniformly regular w.r.t. $p$, $\del t_1,\hdots,\del t_{k-1}$, $\del t-\del t_k$ at $s=0$ and
$$\bm d_1^{\A_1,p}(0)=\bm B_{\tau t_0}(\bm b_0(0,p))+(\del t-\del t_k)\bm f(x(\tau),u(\tau))+\sum_{i=1}^{k+1}\bm B_{\tau\tau_i}\left[\bm f(\bm x(\tau_i),v_i)-\bm f(\bm x(\tau_i),u(\tau_i))\right]\del t_i.$$
Now using Lemma \ref{lem:sub_lem2} for $\wt t_0=\tau$, $c=\del t-\del t_k$, $\wt v(t)=v_k$, $\wt p=(p,\del t_1,\hdots,\del t_{k-1},\del t-\del t_k)$, and $\wt{\bm b}_0(s,\wt p)=\bm d_1^{\A_1,p}(s)$ we get
\begin{equation}\label{eqn:needle8}
\left[\bm d_1^{\A_1,p}(s)\right]_{s\in[0,\eps]}\left[\bm f(\bm x(\tau+t,\eps,p),v_k)\right]_{t\in[\eps(\del t-\del t_{k-1}),\eps\del t]}=\left[\bm d_2^{\A_1,\del t,\del t_k,p}(s)\right]_{s\in[0,\eps]},
\end{equation}
where $\bm d_2^{\A_1,\del t,\del t_k,p}(s)$ is uniformly regular w.r.t. $p$, $\del t_1,\hdots,\del t_k$, $\del t$ at $s=0$ and
$$\bm d_2^{\A_1,\del t,\del t_k,p}(0)=\bm d_1^{\A_1,p}(0)-\del t_k\bm f(\bm x(\tau,0,p),u(\tau)).$$
Again multiplying \eqref{eqn:needle7} by $\left[\bm f(\bm x(\tau+t,\eps,p),v_k)\right]_{t\in[\eps(\del t-\del t_{k-1}),\eps\del t]}$, using \eqref{eqn:needle8} and taking $\bm d^{\A,p}(s)=\bm d_2^{\A_1,\del t,\del t_k,p}(s)$ we get the assertion.

\noindent\ul{2.B.1.} If $\tau_1=\hdots=\tau_k<\tau$ we can use the result from 2.B.1 for a symbol $\A_1=(\tau_i,v_i,\del t_i,\tau=\tau_k,\del t=0)_{i=1,\hdots,k-1}$ and then use the inductive assumption for $[t_0,\tau]=[\tau_k,\tau]$ and $\A_2=(\tau,\del t)$ on $[t_0=\tau_k,\tau]$ in essentially the same way as in the case A. The inductive argument is now complete.

Finally, to obtain the assertion for $t_1$ not $\tau$, one has just to proceed as in the step 1 with $\del t=0$ and  use Lemma \ref{lem:sub_lem1} again, taking $\wt t_0=\tau$, $\wt t_1=t_1$, $\wt{\bm b}_0(s,\wt p)$ to be the final-point $\bm A$-homotopy $\bm d^{\A,p}(s)$ derived for $t_1=\tau$, and the control $v(t)=u(t)=u^{\A}_s(t+s\del t)$. \end{proof}

\subsection{The set of infinitesimal variations \texorpdfstring{$\bm  K^u_\tau$}{TEXT} } 

\begin{remark}\label{rem:interpretation_K}
Observe that choosing $\bm b_0(s)\equiv \theta_{\bm x_0}$ in Theorem \ref{thm:1st_main} we obtain an admissible path  $\bm d^\A(s)$, regular at $s=0$, defined for $0\leq s<\theta$, and satisfying
\begin{equation}
\label{eqn:var_htp}[\bm{a}(t,\eps)]_{t\in[t_0,t_1+\eps\del
t]}=[\bm {a}(t)]_{t\in[t_0,t_1]}[\bm{d}^\A(s)]_{s\in[0,\eps]},
\end{equation}
for $\eps\leq\theta$.

We define the set $\bm K_\tau^u$ consisting of elements of the form $\bm d^\A(0)$, where $\A=(\tau_i, v_i,\tau,\delta t_i, \delta t)_{i=1,\hdots,k}$ are symbols with $\tau$ fixed: 
\begin{align*} 
\bm K^u_\tau&:=\left\{\bm B_{t_1\tau}[\bm{f}(\bm{x}(\tau),u(\tau))]\del
t+\sum_{i=1}^k
\bm{B}_{t_1\tau_i}\Big[\bm{f}(\bm{x}(\tau_i),v_i)\right.\\&-\left.\bm{f}(\B{x}(\tau_i),u(\tau_i))\Big]\del
t_i:(\tau_i, v_i,\tau, \delta t_i, \delta
t)_{i=1,\dots,k}\text{ is a symbol}\right\}\subset\bm A_{\bm x(t_1)}.
\end{align*}
We will call $\bm K_\tau^u$ the
\emph{set of infinitesimal variations of the trajectory $\bm f(\bm
x(t),u(t))$ associated with the regular $\tau\in (t_0,t_1)$}.

The set $\bm{K}^u_\tau$ can be interpreted as the set of all generalised 
directions in $\bm{A}_{\bm{x}(t_1)}$ in which one can move the final base-point $\bm{x}(t_1)$ by performing needle variations, associated with symbols $\A=(\tau_i, v_i,\tau,\delta t_i, \delta t)_{i=1,\hdots,k}$ with fixed $\tau$ and trivial initial base-point variations $\bm x_0(s)\equiv\bm x_0$. 
\end{remark}

The geometry of $\bm{K}^u_\tau$ will be an object of our main interests in Chapter \ref{ch:proof}. Now let us note the following property

\begin{lemma}\label{lem:K_convex}
The set $\bm{K}^u_\tau$ is a convex cone in
$\bm{A}_{\bm{x}(t_1)}$.
\end{lemma}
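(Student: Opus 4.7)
The plan is to exploit the fact that the defining formula for an element of $\bm K_\tau^u$ is manifestly linear in the ``parameters'' $\del t, \del t_1, \dots, \del t_k$ of the symbol, while the geometric data $\tau_i, v_i, \tau, u$ enter only through fixed vectors $\bm B_{t_1\tau}[\bm f(\bm x(\tau), u(\tau))]$ and $\bm B_{t_1\tau_i}[\bm f(\bm x(\tau_i), v_i) - \bm f(\bm x(\tau_i), u(\tau_i))]$ in the fibre $\bm A_{\bm x(t_1)}$. Both the cone property and convexity then reduce to elementary bookkeeping on symbols.

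First I would check the cone property. Given $\bm d \in \bm K_\tau^u$ associated with a symbol $\A = (\tau_i, v_i, \tau, \del t_i, \del t)_{i=1,\dots,k}$ and a scalar $\lambda \ge 0$, the rescaled tuple $\lambda \A := (\tau_i, v_i, \tau, \lambda \del t_i, \lambda \del t)_{i=1,\dots,k}$ is still a valid symbol: the $\tau_i, \tau, v_i$ are unchanged, and the sign constraint $\lambda \del t_i \ge 0$ is preserved. By linearity of the defining expression, the associated variation is $\lambda \bm d$, so $\lambda \bm d \in \bm K_\tau^u$. Taking $\lambda = 0$ (or, equivalently, any symbol with all $\del t_i$ and $\del t$ equal to zero) yields $0 \in \bm K_\tau^u$.

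Once the cone property is established, for convexity it suffices to prove closure under addition. Given $\bm d_1, \bm d_2 \in \bm K_\tau^u$ associated with symbols $\A^{(\alpha)} = (\tau_i^{(\alpha)}, v_i^{(\alpha)}, \tau, \del t_i^{(\alpha)}, \del t^{(\alpha)})_{i=1,\dots,k_\alpha}$, $\alpha = 1,2$ (both sharing the fixed $\tau$), I would construct a merged symbol $\A$ by concatenating the two lists of triples $(\tau_i^{(\alpha)}, v_i^{(\alpha)}, \del t_i^{(\alpha)})$ and relabelling so that the first entries form a non-decreasing sequence bounded above by $\tau$ --- each such entry is automatically a regular point of $u$ because both original lists consist of such points --- while setting the new ``shift'' parameter to $\del t^{(1)} + \del t^{(2)}$. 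This is a valid symbol, and by construction the corresponding variation is exactly $\bm d_1 + \bm d_2$. Combining additivity with cone scaling gives $\lambda \bm d_1 + (1-\lambda)\bm d_2 \in \bm K_\tau^u$ for any $\lambda \in [0,1]$.

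I anticipate no genuine obstacle; the whole argument is symbol-manipulation in a vector space. The single point deserving a remark is the asymmetry between the nonnegativity constraint $\del t_i \ge 0$ and the unconstrained sign of $\del t$, which is precisely why cone scaling is restricted to nonnegative $\lambda$ --- the natural convention for a convex cone --- and why the merging step simply adds the two $\del t^{(\alpha)}$ without any sign issue.
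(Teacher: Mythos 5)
Your proposal is correct and rests on the same key observation as the paper's proof, namely that the expression \eqref{eqn:d_at_0} defining elements of $\bm K_\tau^u$ is linear in $(\del t_i,\del t)$ while the remaining symbol data only select fixed vectors of $\bm A_{\bm x(t_1)}$; the only cosmetic difference is that you realise the sum of two elements by concatenating and re-sorting the lists of triples, whereas the paper first pads both symbols with triples having $\del t_i=0$ (Remark \ref{rem:var_dt=0}) so that they share the same $(\tau_i,v_i)$ data and then forms the conic combination of the parameters directly. Both devices are valid and interchangeable here.
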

\begin{proof}  Take symbols $\A=(\tau_i, v_i,\tau, \del t_i, \del
t)_{i=1,\hdots,k}$, $\A^{'}=(\tau^{'}_i, v^{'}_i,\tau, \del
t^{'}_i, \del t^{'})_{i=1,\hdots,k^{'}}$ and numbers
$\nu,\nu^{'}\geq 0$. We have to find a symbol $\mathfrak{v}$ such
that
$$\bm{d}^{\mathfrak{b}}(0)=\nu\bm{d}^{\A}(0)+\nu^{'}\bm{d}^{\A^{'}}(0).$$
Due to Remark \ref{rem:var_dt=0}, we may change the symbol by
adding $(\tau_i,v_i,\delta t_i=0)$ without changing the variation
$u^\A_\eps$. As we see from the form of \eqref{eqn:d_at_0},
such an addition will not change $\bm{d}^\A(0)$. Consequently, we
may assume that $k=k^{'}$, $\tau_i=\tau^{'}_i$, $v_i=v^{'}_i$, and
the symbols $\A$ and $\A^{'}$ differ only by $\del t_i$ and $\del
t$. Now consider the symbol $\mathfrak{v}=(\tau_i, v_i,\tau,
\nu\del t_i+\nu^{'}\del t^{'}_i, \nu\del
t+\nu^{'}\del^{'})_{i=1,\hdots,k}$. The formula
\eqref{eqn:d_at_0} (for $\bm b_0(0,p)=0$) is linear with respect
to $\del t_i$ and $\del t$, hence
$$\bm{d}^{\mathfrak{v}}(0)=\nu\bm{d}^{\A}(0)+\nu^{'}\bm{d}^{\A^{'}}(0).$$
\end{proof}

\noindent At the end of this chapter we define several geometric objects which will play an important role in Chapter \ref{ch:proof}. 

Consider the real line $\R$ with the canonical coordinate $t\in\R$. The tangent space $\T_{\ul x}\R$ is spanned by the canonical vector $\pa_t$. Denote by $\Lambda_{\ul x}$ the ray
\begin{align*}
\Lambda_{\ul x}&:=\R_+\cdot(-\pa_t)\subset\T_{\ul x}\R,
\intertext{and by $\bm \Lambda_{\bm x}$ the ray}
\bm \Lambda_{\bm x}&:=\theta_x\oplus\Lambda_{\ul x}\subset E_x\oplus\T_{\ul x}\R=\bm A_{\bm x},
\end{align*}
where $\bm x=(x,\ul x)$. Finally define 
$$\K_\tau^u:=\conv\left\{\bm B_{t_1t_0}(\eS_0\oplus\theta_{\ul x(t_0)}),\bm K_\tau^u\right\},$$ 
where $\eS_0=\Phi_0(\T_{z_0}S_0)$ was defined in Chapter \ref{ch:pmp}. By Theorem \ref{thm:1st_main}, $\K_\tau^u$ has an interpretation of the set of all generalised directions in $\bm A_{\bm x(t_1)}$ in which one can move the final base-point $\bm x(t_1)$ by performing needle variations, associated with symbols $\A=(\tau_i, v_i,\tau,\delta t_i, \delta t)_{i=1,\hdots,k}$ with fixed $\tau$ and initial base-point variations in the directions of $\eS_0\oplus\theta_{\ul x(t_0)}$. 

\chapter{Technical lemmas}\label{ch:main}

In this chapter we prove two technical results about $E$-homotopies --- Lemmas \ref{lem:htp_of_a_r} and \ref{lem:htp_of_a_r1}, which will be crucial in the proof of Theorems \ref{thm:pmp} and \ref{thm:pmp_A_rel}. To discuss briefly the results, given a family of smooth curves $x_{\vec r}:I\ra\R^m$, parameterized by $\vec r\in B^m(0,1)\subset\R^m$, which emerges from a single point $x_{\vec r}(0)=0$ and points into every direction $\dot x_{\vec r}(0)=\vec r$, it is quite obvious that, for every $t>0$ small enough, there exists a curve $x_{\vec {r_0}}$ from this family which reaches $0$ at time $t$. A similar result holds for families of admissible curves on a skew-algebroid $E$. Any such family which is sufficiently regular and emerges from a single point into every direction in $E$ will realise a zero homotopy class. This is Lemma \ref{lem:htp_of_a_r}. In Lemma \ref{lem:htp_of_a_r1} we prove that two sufficiently regular families of admissible paths in $E$ emerging from a single point must have a nonempty intersection of homotopy classes, provided that the set of their initial (generalized) velocities is rich enough.

These results seem to be quite natural and they are indeed, if such an algebroid is (locally) integrable. In this case $E$-homotopy classes can be represented by points on a finite-dimensional manifold. However, if $E$ is not integrable, $E$-homotopy is just a relation in the space of bounded measurable curves. Therefore to prove the results we have to pass through the Banach space setting. The main idea in the proof is to semi-parametrise the $E$-homotopy classes by a finite dimensional-space and reduce the problem to a finite-dimensional topological problem. By a semi-parametrisation we mean an epimorphism from a finite-dimensional space to the space of $E$-homotopy classes. 

\subsection{Local coordinates} Since we are going to work in a Banach space setting it is convenient to introduce local coordinates on an algebroid $E$. Consider coordinates $(x^a,y^i)\in U\times\R^m\subset\R^n\times\R^m$ trivialising the bundle $\tau:E\ra M$ around a point $p\in M$. We may assume that $p$ corresponds to $0\in U$.  As usual, we will denote
the structural functions of $E$ in these coordinates by
$\rho^a_i(x)$ and $c^i_{jk}(x)$. Since these functions are smooth,
we can assume (after restricting ourselves to a compact
neighborhood  $\ol V\ni 0$ in $\R^n$) that they are bounded (by
numbers $C_\rho$ and $C_c$, respectively) and Lipschitz w.r.t. $x^a$ (with constants $L_\rho$ and $L_c$,
respectively). It will be convenient to think of $V\times\R^m$
with those functions as of a (local) AL algebroid. Observe that
every bounded measurable $E$-path with the base initial-point
$p$ is represented by a pair of paths
$(x(t),a(t))\in\R^n\times\R^m$, where $a(t)$ is bounded measurable
and $x(t)$ is an AC-solution of the ODE
$$\begin{cases}
\dot x^b(t)=\rho^b_i(x(t))a^i(t),\\
x^b(0)=0.
\end{cases}
$$
As we see, $x(t)$ is determined entirely by $a(t)$. We can thus
identify the space $\mathcal{ADM}_p(I,E)$ of bounded
measurable admissible paths originated at $p$ with  the
space $\BM(I,\R^m)$ of bounded measurable maps $a:I\ra\R^m$. We
will consequently speak of algebroid homotopy classes in
$\BM(I,\R^m)$. Note that $\BM(I,\R^m)$, equipped with the
$L^1$-norm, is a Banach space. We will denote this norm simply by $\|\cdot\|$. The same symbol will be also used for $L^1$-norm in $\R^m$.  In our considerations we will understand a product of Banach spaces $(\mathcal{B}_1, \|\cdot\|_1)$ and  $(\mathcal{B}_2, \|\cdot\|_2)$ as a space  $\mathcal{B}_1\times\mathcal{B}_2$ equipped with the norm $\|\cdot\|= \|\cdot\|_1+\|\cdot\|_2$.

\subsection{First lemma}

\begin{lemma}\label{lem:htp_of_a_r}
Let $a_{\vec{r}}(\cdot)\in\BM(I,R^m)$, where $\vec r\in  B^m(0,1)$, be
a family of $E$-paths uniformly regular at $t=0$ w.r.t. $\vec r$
and such that  $a_{\vec{r}}(0)=\vec{r}$. Then there exists a number $\eta>0$ with the following property. For every $0<\eps<\eta$ there exists a 
vector $\vec r_0$ such that the curve
$a_{\vec r_0}(t)$, after restricting to the interval $[0,\eps]$,
is null-$E$-homotopic:
$$\big[a_{\vec{r}_0}(t)\big]_{t\in[0,\eps]}=\big[0\big].$$
\end{lemma}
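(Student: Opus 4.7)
The plan is to reduce this infinite-dimensional statement to a finite-dimensional topological one by (i) semi-parametrising a neighbourhood of the trivial $E$-homotopy class by $\R^m$, and (ii) applying a Brouwer/degree argument, using uniform regularity to make the relevant map a small perturbation of the identity on $B^m(0,1)$.

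First, I would normalise the problem by reparametrisation. For fixed $\eps$, the path $c^\eps_{\vec r}(t):=\eps\, a_{\vec r}(\eps t)$, $t\in[0,1]$, satisfies $[c^\eps_{\vec r}]_{t\in[0,1]}=[a_{\vec r}|_{[0,\eps]}]$ by Lemma \ref{lem:reparam}; moreover, the uniform regularity of $a_{\vec r}$ at $t=0$ w.r.t.\ $\vec r$ translates into uniform $L^1$-closeness of $c^\eps_{\vec r}$ to the constant path $\eps\vec r$ as $\eps\to 0$. So we may replace the original family by a family of $L^1$-small admissible paths on the fixed interval $[0,1]$, with $c^\eps_{\vec r}(0)=\eps\vec r$.

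Second, I would build the semi-parametrisation. Working in the local coordinates $(x^a,y^i)\in\R^n\times\R^m$ introduced before the lemma, admissible paths starting at $p=0$ are identified with elements of the Banach space $\BM([0,1],\R^m)$. Consider the continuous functional $J_\eps(c):=\tfrac{1}{\eps}\int_0^1 c(t)\,\dd t\in\R^m$; on the constant path $\eps\vec r$ it returns exactly $\vec r$. The key algebraic step is that, for $E$-paths of $L^1$-size $O(\eps)$, $J_\eps$ descends to an $O(\eps)$-almost invariant on $E$-homotopy classes: this follows from the Stokes-like characterisation \eqref{eqn:stokes} of $E$-homotopy combined with the uniform bounds $|\rho^a_i|\le C_\rho$, $|c^i_{jk}|\le C_c$ on a compact neighbourhood of the origin, and from Lemma \ref{lem:gen_E_htp}, which guarantees that any sufficiently regular one-parameter family of $E$-paths actually lifts to a genuine $E$-homotopy (the crucial place where the AL axiom enters). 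Conversely, by augmenting the family with a fibre-direction needle variation as in Chapter \ref{ch:needle} and invoking Lemma \ref{lem:htp}, one can realise every small target value $\vec q\in\R^m$ as $J_\eps$ of a path $E$-homotopic to $c^\eps_{\vec r}$; in particular $J_\eps(c^\eps_{\vec r})=0$ allows one to construct an explicit null $E$-homotopy of $c^\eps_{\vec r}$.

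Third, I would close the argument by degree theory. Define $\Phi_\eps(\vec r):=J_\eps(c^\eps_{\vec r})$. By uniform regularity $\Phi_\eps(\vec r)=\vec r+o_{\eps\to 0}(1)$ uniformly on $\ol{B^m(0,1)}$, so there exists $\eta>0$ such that for every $0<\eps<\eta$ the restriction $\Phi_\eps|_{\partial B^m(0,1)}$ is homotopic to the identity through maps avoiding the origin. Hence $\deg(\Phi_\eps,B^m(0,1),0)=1$ and $\Phi_\eps$ has a zero $\vec r_0\in B^m(0,1)$. By the semi-parametrisation step, the corresponding path $c^\eps_{\vec r_0}$, and therefore $a_{\vec r_0}|_{[0,\eps]}$, is null $E$-homotopic.

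The main obstacle is the second step. For a non-integrable AL algebroid the quotient $\BM([0,1],\R^m)/\!\sim_E$ has no finite-dimensional manifold structure, so one cannot literally parametrise homotopy classes by points in $E_0$; one must work inside the infinite-dimensional Banach space $\BM([0,1],\R^m)$ and show both that $J_\eps$ is approximately an $E$-homotopy invariant on $L^1$-small paths, and that its fibres over small values of $\vec q\in\R^m$ are non-empty in a continuous way. This requires carefully combining the Stokes formula, the parallel-transport continuity of Remark \ref{rem:B_htp}, and the recombination rule for initial- and final-point $E$-homotopies of Lemma \ref{lem:htp}, to produce the needed approximate semi-parametrisation — exactly the point, flagged in the introduction, where one is forced to detour through an infinite-dimensional Banach space before reducing to a finite-dimensional topological problem.
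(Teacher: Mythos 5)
Your overall architecture -- reparametrise to $[0,1]$, semi-parametrise the small $E$-homotopy classes by $\R^m$, and finish with a Brouwer/degree argument on a map that is a small perturbation of the identity -- is exactly the paper's strategy, and your first and third steps match the paper's. The gap is in the second step: you take the averaging functional $J_\eps(c)=\frac1\eps\int_0^1 c(t)\,\dd t$ as your coordinate on classes, and you yourself only establish that it is an ``$O(\eps)$-almost invariant''. From a zero of an approximate invariant you cannot conclude $[c^\eps_{\vec r_0}]=[0]$; you only get that the class is ``$O(\eps^2)$-close to trivial'', which is not the assertion. Concretely: integrating \eqref{eqn:htp_smooth} over the square shows that for a fixed-end-point homotopy $(a,b)$ of size $O(\eps)$ one has $\int c-\int c'=\iint c^i_{jk}b^ja^k=O(\eps^2)$, so $J_\eps$ separates classes only up to $O(\eps)$ errors. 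Already for $E=\so$ (an algebroid over a point) a path of size $\eps$ with vanishing average lifts to a path in $\SO$ whose endpoint differs from the identity by a commutator-type term of order $\eps^2$ that is generically nonzero; such a path is \emph{not} null-homotopic. So the sentence ``$J_\eps(c^\eps_{\vec r})=0$ allows one to construct an explicit null $E$-homotopy'' is a non sequitur, and the preceding ``realise every small target value $\vec q$ as $J_\eps$ of a path $E$-homotopic to $c^\eps_{\vec r}$'' would, if true for every $\vec r$, make $J_\eps$ useless as an invariant.

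What the paper does instead, and what your argument is missing, is an \emph{exact} local chart. One defines $\Phi(\vec r,d)\in\BM(I,\R^m)$ as the time-one endpoint of the fixed-end-point $E$-homotopy generated from the constant path $c_{\vec r}$ by the direction $b(t)=\int_0^t d$, with $d\in\BM_0(I,\R^m)$ so that $b(0)=b(1)=0$; then $[\Phi(\vec r,d)]=[c_{\vec r}]$ holds \emph{exactly}, by construction, not approximately. The analytic work is to show that $\wt\Phi(\vec r,d)=\Phi(\vec r,d)-(c_{\vec r}+d)$ is a contraction-sized perturbation, so that $\Phi$ is a bi-Lipschitz homeomorphism onto a neighbourhood of $0$ in $\BM(I,\R^m)$ via the Banach fixed-point theorem. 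The finite-dimensional map to which the topological lemma is applied is then $\vec r\mapsto\frac1\eps\operatorname{pr}_1\Phi^{-1}(\wt a_{\vec r})$, which differs from your $J_\eps$ precisely by the correction $\frac1\eps\int\wt\Phi(\Phi^{-1}(\wt a_{\vec r}))$; a zero of \emph{this} map gives $\wt a_{\vec r_0}=\Phi(0,d)$ and hence a genuine null $E$-homotopy. Your closing paragraph correctly identifies that one must detour through the Banach space, but the mechanism you propose does not yet supply the exact invariant needed to close the argument.
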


Let us briefly sketch the strategy of the proof. 
Denote by $c_{\vec r}$ a constant path $c_{\vec r}(s)=\vec r$ in $\R^m$. We will construct
a continuous and invertible (local) map of Banach spaces
$\Phi:\R^m\times \mathcal{B}\lra \BM(I,\R^m)$ (the space
$\mathcal{B}$ will be specified later) which will have an
additional property that the homotopy class of the image is
determined by the first factor only
$$\left[\Phi(\vec r,d)\right]=\left[c_{\vec r}\right].$$

In such a way we will realise our idea from the introduction to this chapter --- $\R^m$ will semi-parametrise all local $E$-homotopy classes of $\mathcal{ADM}_p(I,E)$. Next, using
the map $a:\vec r\mapsto a_{\vec r}$, we will construct a
continuous map of finite-dimensional spaces 
$$\R^m\supset B^m(0,1)\overset
a{\lra}\BM([0,1],\R^m)\overset{\Phi^{-1}}\lra\R^m\times
\mathcal{B}\overset{\operatorname{pr}_1}\lra\R^m.$$ 
A topological argument will prove that $0$ lies in the image of this map, hence 
$$\left[a_{\vec{r_0}}\right]=\left[c_0\right]=\left[0\right]\quad \text{for some $\vec{r_0}$}.$$

\begin{proof}
Consider an $E$-path with a constant $\R^m$-part $a(t)=\vec r$,
where $\vec r$ is a fixed element in $\R^m$, $t\in[0,1]$. The
associated base path $x(t)\in\R^m$ is the solution of
\begin{equation}\label{eqn:a_x}
\left\{ \begin{aligned}
\pa_t x^b(t)&=\rho^b_i(x(t))a^i(t)=\rho^b_i(x(t))r^i,\\
x^b(0)&=0.
\end{aligned}\right.
\end{equation}
Clearly, if $\|\vec r\|$ is small enough, the solution of this
equation exists for $t\in[0,1]$ and is contained entirely in $\ol
V\subset \R^n$. Now for $a(t)$ and $x(t)$ as above and fixed paths
$d\in\BM(I,\R^m)$, $b\in\mathcal{AC}(I,\R^m)$, consider the
following system of differential equations:
\begin{equation}\label{eqn:phi_1}
\left\{ \begin{aligned}
\pa_s a^i(t,s)&=d^i(t)+c^i_{jk}(x(t,s))a^j(t,s)b^k(t),\\
a^i(t,0)&=a^i(t)=r^i,\\
\pa_s x^b(t,s)&=\rho^b_i(x(t,s))b^i(t),\\
x^b(t,0)&=x^b(t).
\end{aligned}\right.
\end{equation}
The existence and regularity of the solutions of \eqref{eqn:phi_1} can be discussed using the theory developed in Appendix \ref{sapp:ode}. Let us concentrate first on the equation for $x(t,s)$. The right hand-side of this equation is AC in $t$ and Lipschitz in $x$, the initial value depends AC on a parameter $t$, and hence, by the standard theory of ODEs, the solution $x(t,s)$ is defined locally and is AC w.r.t. both variables. By shrinking the norm $\|b\|_{\sup}$ we may change the Lipschitz constant in the defining equation. Consequently, for $\|b\|_{\sup}$ (and $\|\vec r\|$) small enough, the solution $x(t,s)$ is defined for all $t,s\in[0,1]$ and entirely contained in $\ol V$.

Now the right hand-side of the first equation in \eqref{eqn:phi_1} is locally Lipschitz w.r.t. $a$ and bounded measurable w.r.t. the parameter $t$. The initial value $a^i(t,0)$ depends continuously on $t$, hence, by Theorem \ref{thm:param}, the solution $a(t,s)$ locally exists, is AC w.r.t. $s$, and is bounded measurable w.r.t. $t$. Again, shrinking $\|b\|_{\sup}$ makes the Lipschitz constant smaller, hence for $\|b\|_{\sup}$ small enough $a(t,s)$ is defined for all $t,s\in[0,1]$.

Now let us consider \eqref{eqn:phi_1} with $b(t)=\int_0^td(s)\dd
s$, where $d$ is chosen in such a way, that $b(0)=b(1)=0$. We have
\begin{equation}\label{eqn:phi_2}
\left\{ \begin{aligned}
\pa_s a^i(t,s)&=\pa_tb^i(t)+c^i_{jk}(x(t,s))a^j(t,s)b^k(t),\\
\pa_s x^b(t,s)&=\rho^b_i(x(t,s))b^i(t).
\end{aligned}\right.
\end{equation}
We recognise equations \eqref{eqn:htp_smooth} for $E$-homotopy.
Indeed, in such a situation $a(t,s)$ and $b(t,s)=b(t)$ form an
$E$-homotopy with fixed end-points (since $b(0)=b(1)=0$).
Consequently, the homotopy classes of $a(t,0)=c_{\vec r}(t)$ and
$a(t,1)$ are equal. Since $\|b\|_{\sup}\leq\|d\|$, for
$\|d\|$ and $\|\vec r\|$ small enough, this homotopy is defined
for all $t,s\in[0,1]$. For $a(t)$ and $d(t)$ as above we define
$$\Phi(\vec r,d)(t):=a(t,1).$$
This is a (local) map of Banach spaces
$$\Phi:\R^m\times \BM_0(I,\R^m)\supset W_0\lra\BM(I,\R^m),$$
where $\BM_0(I,\R^m)=\{d\in \BM(I,\R^m):\int_0^1d(s)\dd s=0\}$
is a Banach subspace of $\BM(I,\R^m)$ and $W_0$ is some open neighbourhood of the point $(0,0)$. We shall now prove the following:
\begin{description}
    \item[(A)]\label{cond:A} $\Phi$ maps $(\vec{r},0)$ into a constant path $c_{\vec{r}}\in \BM(I,\R^m)$.
     \item[(B)]\label{cond:B} $\Phi$ is a continuous map of Banach spaces.
     \item[(C)]\label{cond:C} The $E$-homotopy class of the curve $\Phi(\vec{r},d)$ is determined by $\vec{r}$; that is,
$$\big[\Phi(\vec{r},d)\big]=\big[\Phi(\vec{r},0)\big]\overset{(A)}{=}\big[c_{\vec{r}}\big].$$
     \item[(D)]\label{cond:D} The map $\widetilde{\Phi}(\vec{r},d):=\Phi(\vec{r},d)-(c_{\vec{r}}+d)$ is Lipschitz with constant $\frac{1}{6}$.
     \item[(E)]\label{cond:E} The map $\Phi$ posses a continuous inverse $\Phi^{-1}$ defined on some open neighbourhood $V_0\ni 0$ in $\BM(I,\R^m)$. Moreover, $\Phi^{-1}$ is Lipschitz with constant $6$.
\end{description}

Property (\textbf{C}) is clear from the construction of $\Phi$, as
$a(t,1)=\Phi(\vec r,d)(t)$ and $a(t,0)=c_{\vec r}(t)$ are
$E$-homotopic. 

Property (\textbf{A}) is obvious, since $\Phi(\vec
r,0)$ is the solution (taken  at $s=1$) of the differential equation $\pa_sa(t,s)=0$
with the  initial condition $a(t,0)=\vec r$.

Property (\textbf{B}) will follow from (\textbf{D}). Indeed, if
$\wt\Phi$ is Lipschitz, then $\Phi(\vec
r,d)=\wt\Phi(\vec r,d)+c_{\vec r}+d$ is continuous as a sum of
continuous maps.

Assuming (\textbf{D}) again, we will be able to prove
(\textbf{E}). As one might have expected, the existence and the
Lipschitz condition for $\Phi^{-1}$ will be proven essentially in
the same way as in the standard proof of the inverse function
theorem (cf. \cite{Lang}). First, we will establish a pair
of linear isomorphism between Banach spaces
\begin{align*}
\BM(I,\R^m)&\overset{\alpha}{\lra}\R^m\times \BM_0(I,\R^m)\,,\\
a(t)&\longmapsto \left(\int_0^1a(s)\dd s,\ a(t)-\int_0^1a(s)\dd
s\right)\,,
\intertext{and}
\R^m\times \BM_0(I,\R^m)&\overset{\beta}{\lra}\BM(I,\R^m),\\
(\vec{r},d)&\longmapsto c_{\vec{r}}+d.
\end{align*}
It is straightforward to verify that $\alpha$ and $\beta$ are
continuous inverses of each other and that $\alpha$ is Lipschitz
with constant $3$. The map $\Phi$ is defined on some open
neighbourhood $W_0\ni(0,0)$. Take $R$ such that $B(0,2R)\subset
W_0$. The map $\widetilde{\Phi}$ is Lipschitz with constant
$\frac{1}{6}$ and $\alpha$ is Lipschitz with constant $3$; hence
$\alpha\circ\widetilde{\Phi}$ is Lipschitz with constant
$\frac{1}{2}$ and, since it preserves the origin, it maps the ball
$B(0,2R)$ into the ball $B(0,R)$.

Fix now any $a\in \BM(I,\R^m)$ such that $\|a\|<\frac{R}{3}$. We
shall construct a unique element $(\vec r,d)\in B(0,2R)\subset W_0$
satisfying $\Phi(\vec r,d)=a$. Consider a map
$\Phi_a(\vec{r},d):=\alpha(a-\widetilde{\Phi}(\vec{r},d))$. From the Lipschitzity of $\alpha$ and $\wt \Phi$ we deduce that 
$$\|\Phi_a(\vec r,d)\|\leq 3\|a-\wt\Phi(\vec r,d)\|\leq 3\|a\|+3\|\wt\Phi(r,d)\|\leq 3\cdot\frac R3+\frac 12\|(\vec r,d)\|.$$ 
Consequently, $\Phi_a$ maps
the ball $B(0,2R)$ into $B(0,2R)$. Moreover,
\begin{align*}
\|\Phi_a(\vec r,d)-\Phi_a(\vec r^{'},d^{'})\|&=\|\alpha(a-\wt \Phi(\vec r,d))-\alpha(a-\wt\Phi(\vec r^{'},d^{'}))\|\leq\\
&\leq 3\|\wt\Phi(\vec r,d)-\wt\Phi(\vec r^{'},d^{'})\|\leq 3\cdot\frac 16\|(\vec r,d)-(\vec r^{'},d^{'})\|,
\end{align*}
 hence $\Phi_a$ is a contraction. Now, using the Banach fixed point theorem,
we deduce that $\Phi_a$ has a unique fixed point $(\vec{r},d)\in
B(0,2R)$. Consequently,
$$a-\widetilde{\Phi}(\vec{r},d)=\beta\circ\alpha\left(a-\widetilde{\Phi}(\vec{r},d)\right)=\beta\circ\Phi_a(\vec{r},d)=\beta(\vec r,d)=c_{\vec{r}}+d,$$
and hence $a=\wt\Phi(\vec{r},d)+c_{\vec{r}}+d=\Phi(\vec{r},d).$
We have proven the existence of $\Phi^{-1}$.

Take now $a, a^{'}\in \BM(I,\R^m)$, and let
$\Phi^{-1}(a)=(\vec{r},d)$,
$\Phi^{-1}(a^{'})=(\vec{r}^{'},d^{'})$. Using the Lipschitz
condition for $\alpha$ and $\widetilde{\Phi}$ once more, we get
\begin{align*}
&\|(\vec{r},d)-(\vec{r}^{'},d^{'})\|=\|\Phi_a(\vec{r},d)-\Phi_{a^{'}}(\vec{r}^{'},d^{'})\|=
\|\alpha\Big(a-\widetilde{\Phi}(\vec{r},d)\Big)-\alpha\Big(a^{'}-\widetilde{\Phi}(\vec{r}^{'},d^{'})\Big)\|\leq\\
&\leq
3\|a-a^{'}\|+3\|\widetilde{\Phi}(\vec{r},d)-\widetilde{\Phi}(\vec{r}^{'},d^{'})\|\leq
3\|a-a^{'}\|+3\cdot\frac{1}{6}\|(\vec{r},d)-(\vec{r}^{'},d^{'})\|.
\end{align*}
We finish the proof of property (\textbf{E}) concluding that
$$\|\Phi^{-1}(a)-\Phi^{-1}(a^{'})\|=\|(\vec{r},d)-(\vec{r}^{'},d^{'})\|\leq 6\|a-a^{'}\|.$$

We are now left with the proof of (\textbf{D}). This will be done
by introducing several integral estimations. In our calculations
we will, for simplicity, omit the indices (hence $c$ will stand
for $c^i_{jk}$, $a$ for $a^i$, etc.). Take pairs $(r,d)$ and
$(r^{'},d^{'})$ from $\R^m\times\BM_0(I,\R^m)$. Denote by
$x(t,s)$, $a(t,s)$, $a(t)$, $b(t)$ and  $x^{'}(t,s)$,
$a^{'}(t,s)$, $a^{'}(t)$, $b^{'}(t)$, respectively, the objects
defined as in the construction of $\Phi$ for pairs $(\vec r,d)$ and $(\vec r^{'},d^{'})$.
To begin with, observe that, since $b(t)=\int_0^td(s)\dd s$, we
have $|b(t)|\leq\int_0^1|d(s)|\dd s$; hence
$$\|b\|_{\sup}\leq\|d\|.$$
Similarly, $\|b^{'}\|_{\sup}\leq\|d^{'}\|$ and
$\|b-b^{'}\|_{\sup}\leq\|d-d^{'}\|$.

Let us now estimate the difference $|x(t,s)-x^{'}(t,s)|$. Since,
by \eqref{eqn:phi_2},
$x(t,s)=x(t)+\int_0^s\rho(x(t,\sigma))b(t)\dd\sigma$, we have
\begin{align*}
|x(t,s)-x^{'}(t,s)|&=\left|\int_0^s\Big(\rho(x(t,\sigma))b(t)-\rho(x^{'}(t,\sigma))b^{'}(t)\Big)\dd\sigma\right|\leq\\
&\leq\int_0^1\Big|\rho(x(t,s))b(t)-
\rho(x^{'}(t,s))b^{'}(t)\Big|\dd s
\leq\\
&\leq\int_0^1\Big|\Big(\rho(x(t,s))-\rho(x^{'}(t,s))\Big)b(t)\Big|\dd
s+\\&\phantom{=}+
\int_0^1\Big|\rho(x^{'}(t,s))\Big(b(t)-b^{'}(t)\Big)\Big|\dd s\leq\\
&\leq
L_\rho\sup_{(t,s)}|x(t,s)-x^{'}(t,s)|\|d\|+C_\rho\|d-d^{'}\|.
\end{align*}
It follows that
$$
\sup_{(t,s)}|x(t,s)-x^{'}(t,s)|\Big(1-L_\rho \|d\|\Big)\leq
C_\rho\|d-d^{'}\|.$$ For $\|d\|$ sufficiently small (in other
words, after a possible shrinking of $W_0$) we will obtain
\begin{equation}\label{est:x}
\sup_{(t,s)}|x(t,s)-x^{'}(t,s)|\leq 2C_\rho\|d-d^{'}\|.
\end{equation}
Now introduce
\begin{align*}
&\del a(t,s):= a(t,s)-(a(t)+sd(t)) \quad \text{and}\\
&\del a^{'}(t,s):= a^{'}(t,s)-(a^{'}(t)+sd^{'}(t)).
\end{align*}
Note that $\del a(t,1)=\wt\Phi(\vec r,d)$ and $\del a^{'}(t,1)=\wt\Phi(\vec r^{'},d^{'})$. 
From \eqref{eqn:phi_1} we deduce that
$$\del a(t,s)=\int_0^s c(x(t,\sigma))a(t,\sigma)b(t)\dd\sigma=\int_0^s c(x(t,\sigma))\Big(\del a(t,\sigma)+a(t)+\sigma d(t)\Big)b(t)\dd\sigma.$$
Further, for a fixed $s\in[0,1]$,
\begin{align*}
\|\del a(\cdot,s)\|&=\int_0^1|\del a(t,s)|\dd
t\leq\int_0^1\int_0^s\left|c(x(t,\sigma))\Big(\del
a(t,\sigma)+a(t)+
\sigma d(t)\Big)b(t)\right|\dd\sigma\dd t\leq\\
&\leq \int_0^1\int_0^1C_c\big(|\del a(t,s)|+\|\vec r\|+|d(t)|\big)\|b\|_{\sup}\dd s\dd t\leq\\
&\leq C_c\|b\|_{\sup}\left(\sup_s\|\del a(\cdot,s)\|+\|\vec
r\|+\|d\|\right)\leq\\
&\leq C_c\|d\|\left(\sup_s\|\del
a(\cdot,s)\|+\|\vec r\|+\|d\|\right)
\end{align*}
and we conclude that
$$\sup_s\|\del a(\cdot,s)\|(1-C_c\|d\|)\leq C_c(\|\vec r\|+\|d\|).$$
Hence, for $\|d\|$ small enough (after possible shrinking of
$W_0$), we get
\begin{equation}\label{est:del_a}
\sup_s\|\del a(\cdot,s)\|\leq 2C_c(\|\vec r\|+\|d\|).
\end{equation}
Finally,
\begin{align*}
\|\del a&(\cdot,s)-\del a^{'}(\cdot,s)\|=\int_0^1
|\del a(t,s)-\del a^{'}(t,s)|\dd t\leq \\
\leq&\int_0^1\int_0^s\left|c(x(t,s))\Big(\del
a(t,\sigma)+a(t)+\sigma d(t)\Big)b(t)\right.+\\&- \left.c(x^{'}(t,\sigma))\left(\del
a^{'}(t,\sigma)+a^{'}(t)+
\sigma d^{'}(t)\right)b^{'}(t)\right|\dd\sigma\dd t\leq\\
\leq&\int_0^1\int_0^1\left|c(x(t,\sigma))\del a(t,s)b(t)-c(x^{'}(t,s))\del a^{'}(t,s)b^{'}(t)\right|\dd s\dd t+\\
&+\int_0^1\int_0^1\left|c(x(t,s))a(t)b(t)-c(x^{'}(t,s))a^{'}(t)b^{'}(t)\right|\dd s\dd t+\\
&+\int_0^1\int_0^1\left|c(x(t,s))d(t)b(t)-c(x^{'}(t,s))
d^{'}(t)b^{'}(t)\right|\dd s\dd t=:I_1+I_2+I_3
\end{align*}
Now we estimate
\begin{align*}
I_1&\leq\int_0^1\int_0^1\left|c(x(t,s))-c(x^{'}(t,s))\right||\del a(t,s)||b(t)|\dd t\dd s+\\
&\phantom{=}+\int_0^1\int_0^1\left|c(x^{'}(t,s))\right|\left|\del a(t,s)-\del a^{'}(t,s)\right|\left|b(t)\right|\dd t\dd s+\\
&\phantom{=}+\int_0^1\int_0^1\left|c(x^{'}(t,s))\right|\left|\del a^{'}(t,s)\right|\left|b(t)-b^{'}(t)\right|\dd t\dd s\leq\\
&\leq L_c\sup_{(t,s)}|x(t,s)-x^{'}(t,s)|\sup_s\|\del
a(\cdot,s)\|\cdot\|b\|_{\sup}+
C_c\sup_s\|\del a(\cdot,s)-\del a^{'}(\cdot,s)\|\cdot\|b\|_{\sup}+\\
&\phantom{=}+C_c\sup_s\|\del a^{'}(\cdot,s)\|\cdot\|b-b^{'}\|_{\sup}\,.
\end{align*}
Finally, using \eqref{est:x}, \eqref{est:del_a} and $\|b\|_{\sup}\leq\|d\|$, we get
\begin{align*}
I_1&\leq L_c 2C_\rho\|d-d^{'}\|2C_c(\|\vec r\|+\|d\|)\|d\|+
C_c\sup_s\|\del a(\cdot,s)-\del a^{'}(\cdot,s)\|\cdot\|d\|
+C_c2C_c(\|\vec r^{'}\|\\&+\|d^{'}\|)\|d-d^{'}\|
=C_c\|d\|\sup_s\|\del a(\cdot,s)-\del
a^{'}(\cdot,s)\|+\|d-d^{'}\|\cdot
F_1(\|r\|,\|d\|,\|r^{'}\|,\|d^{'}\|),
\end{align*}
where $F_1$ converges to 0 when its arguments do. Similar
estimations for $I_2$ and $I_3$ will give
\begin{align*}
&I_2\leq \left(\|r-r^{'}\|+\|d-d^{'}\|\right)\cdot F_2(\|r\|,\|d\|,\|r^{'}\|,\|d^{'}\|),\\
&I_3\leq \left(\|r-r^{'}\|+\|d-d^{'}\|\right)\cdot
F_3(\|r\|,\|d\|,\|r^{'}\|,\|d^{'}\|),
\end{align*}
where $F_2$ and $F_3$ behave as $F_1$. Putting together the
partial results, we would get
$$\sup_s\|\del a(\cdot,s)-\del a^{'}(\cdot,s)\|(1-2C_c\|d\|)\leq  \left(\|r-r^{'}\|+\|d-d^{'}\|\right)\cdot F\left(\|r\|,\|d\|,\|r^{'}\|,\|d^{'}\|\right),$$
where $F$ converges to 0 when its arguments do. As
$\widetilde{\Phi}(r,d)=\del a(t,1)$ and
$\widetilde{\Phi}(r^{'},d^{'})=\del a^{'}(t,1)$, for $W_0$ small
enough, $\wt\Phi$ is Lipschitz with constant $\frac 16$. That
proves property (\textbf{D}).

Now using properties (\textbf{A})--(\textbf{E}) of $\Phi$ we will
make the final step of the proof of Lemma \ref{lem:htp_of_a_r}.
The family $a_{\vec r}(t)$ is uniformly regular w.r.t. $\vec r\in
\ol B^m(0,1)$ at $t=0$. The family $c_{\vec r}(t)$ has the same
properties, so $a_{\vec r}(t)-c_{\vec r}(t)$ is also uniformly
regular (cf. Proposition \ref{prop:ur_sum}) and
$$\int_0^t|a_{\vec r}(s)-c_{\vec r}(s)|\dd s=t|a_{\vec r}(0)-c_{\vec r}(0)|+f(t,\vec r)=t\cdot 0+f(t,\vec r),$$
where $\frac 1tf(t,\vec r)$ converges uniformly to $0$ as $t\to
0$. Hence, there exists a number $\eta>0$ such that
$$\int_0^\eps|a_{\vec r}(s)-c_{\vec r}(s)|\dd s<\frac 1{12}\eps,$$
for every $0\leq\eps\leq\eta$ and $\vec{r}\in\ol B^m(0,1)$. Reparametrising the
paths by the rule
$$\wt a_{\vec r}(t):=\eps a_{\vec r}(\eps t),$$
we will obtain another uniformly regular family of paths satisfying $\wt a_{\vec r}(0)=\eps \vec r$.

By the uniform regularity of $\wt a_{\vec r}$, the map $\wt a:\vec r\mapsto \wt a_{\vec r}$ is a continuous map from $B^m(0,1)$ to $\BM(I,\R^m)$ with $L_1$-topology. For $\eta$ small enough $\wt a$ takes values in $\Phi(W_0)$. 
Composing $\wt a$ with
$\Phi^{-1}$ we will obtain a continuous map
$$\ol B^m(0,1)\xra{(\psi,\phi)}\R^m\times \BM_0(I,\R^m).$$

Observe that 
$$\|\wt a_{\vec r}-c_{\eps\vec r}\|=\int_0^1|\eps a_{\vec
r}(t\eps)-\eps c_{\vec r}(t\eps)|\dd t=\int_0^\eps| a_{\vec
r}(t)- c_{\vec r}(t)|\dd t\leq\frac 1{12}\eps.$$
Using this, the Lipschitz condition for  $\Phi^{-1}$ (property (\textbf{E})), and the fact that $\Phi^{-1}(c_{\vec r})=(\vec r,0)$ (property (\textbf{A})), we obtain
$$\|\frac 1\eps\psi(\vec{r})-\vec{r}\|\leq
\frac 1\eps\|\psi(\vec{r})-\eps\vec{r}\|+\frac 1\eps\|\phi(\vec{r})-0\|=
\frac 1\eps\|\Phi^{-1}(\widetilde{a}_{\vec{r}})-\Phi^{-1}(c_{\eps\vec{r}})\|\leq\frac
6\eps\|\widetilde{a}_{\vec{r}}-c_{\eps
\vec{r}}\|\leq\frac{1}{2}.$$
In other words, $\wt\psi:=\frac 1\eps \psi$ maps a ball $B^m(0,1)$ continuously into $\R^m$ in such a way that $\|\wt\psi(\vec r)-\vec r\|\leq \frac 12$. By Lemma \ref{lem:top1_app}, point $0\in\R^m$ lies in the image of $\wt\psi$. However, that means that $\psi(\vec {r_0})=0$ for some $\vec{r_0}$, and hence $\wt a_{\vec{r_0}}=\Phi(0,d)$ for  some $d\in\BM_0(I,\R^m)$. 
By property (\textbf{C}), $\left[\wt a_{\vec{r_0}}\right]_{t\in[0,1]}=0$. Finally, by Lemma \ref{lem:reparam},
$$0=\left[\wt a_{\vec r_0}(t)\right]_{t\in[0,1]}=\left[a_{\vec r_0}(t)\right]_{t\in[0,\eps]},$$
which finishes the proof.
\end{proof}

\subsection{Second lemma}

We will now formulate and prove a result generalising Lemma \ref{lem:htp_of_a_r}. We will work in the same local setting as before. Let us introduce a decomposition $\R^m=\R^{m_1}\oplus\R^{m_2}$ and fix $\vec k_0\in\R^m$.

\begin{lemma}\label{lem:htp_of_a_r1}
Let $a_{\vec{r}}(\cdot)\in\BM(I,R^m)$, where $\vec r\in B^{m_1}(0,1)\oplus\theta_{m_2}$, be
a family of $E$-paths uniformly regular at $t=0$ w.r.t. $\vec r$
and such that  $a_{\vec{r}}(0)=\vec{r}+\vec k_0$. Let $b_{\vec{s}}(\cdot)\in\BM(I,R^m)$, where $\vec s\in \theta_{m_1}\oplus B^{m_2}(0,1)$, be
a family of $E$-paths uniformly regular at $t=0$ w.r.t. $\vec s$
and such that  $b_{\vec{s}}(0)=\vec{s}+\vec k_0$. 

Then there exists a number $\eta>0$ such that, for every $0<\eps<\eta$, there exists  
vectors $\vec r_0$ and $\vec s_0$ ensuring that the curves
$a_{\vec r_0}(t)$ and $b_{\vec s_0}(t)$, after restricting to the interval $[0,\eps]$,
realise the same $E$-homotopy class
$$\big[a_{\vec{r}_0}(t)\big]_{t\in[0,\eps]}=\big[b_{\vec s_0}(t)\big]_{t\in[0,\eps]}.$$
\end{lemma}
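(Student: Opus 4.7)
The strategy will mirror that of Lemma \ref{lem:htp_of_a_r}: I will reuse the local Banach-space parametrisation $\Phi : \R^m \times \BM_0(I,\R^m) \supset W_0 \lra \BM(I,\R^m)$ constructed there (with its continuous $6$-Lipschitz inverse $\Phi^{-1}$ and the key property (\textbf{C}) that $[\Phi(\vec r,d)] = [c_{\vec r}]$), and reduce the $E$-homotopy problem to a finite-dimensional topological one. The only essential change is that the single-ball argument based on Lemma \ref{lem:top1_app} will be replaced by a Brouwer-degree argument on a product of two balls.

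After the reparametrisation $\wt a_{\vec r}(t) := \eps\, a_{\vec r}(\eps t)$ and $\wt b_{\vec s}(t) := \eps\, b_{\vec s}(\eps t)$ (which preserves the relevant $E$-homotopy classes by Lemma \ref{lem:reparam}), the uniform regularity of both families at $t=0$ gives, for any prescribed $\delta>0$ and $\eta$ small enough, the estimates
\begin{equation*}
\|\wt a_{\vec r} - c_{\eps(\vec r + \vec k_0)}\| \leq \delta\eps \quad \text{and} \quad \|\wt b_{\vec s} - c_{\eps(\vec s + \vec k_0)}\| \leq \delta\eps
\end{equation*}
uniformly in $\vec r,\vec s$ and $0<\eps\leq\eta$. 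Applying $\Phi^{-1}$ (and using property (\textbf{A}), $\Phi^{-1}(c_{\vec v})=(\vec v,0)$, together with the Lipschitz bound (\textbf{E})) produces continuous maps $(\psi_a,\phi_a): \ol B^{m_1}(0,1)\to \R^m\times\BM_0(I,\R^m)$ and $(\psi_b,\phi_b): \ol B^{m_2}(0,1)\to \R^m\times\BM_0(I,\R^m)$ satisfying $\|\psi_a(\vec r) - \eps(\vec r + \vec k_0)\|\leq 6\delta\eps$ and the analogue for $\psi_b$. By (\textbf{C}), $[\wt a_{\vec r}] = [c_{\psi_a(\vec r)}]$ and $[\wt b_{\vec s}] = [c_{\psi_b(\vec s)}]$, so it suffices to find $(\vec r_0,\vec s_0)$ such that $\psi_a(\vec r_0) = \psi_b(\vec s_0)$.

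For this I would normalise by setting $\wt\psi_a(\vec r) := \tfrac{1}{\eps}\psi_a(\vec r) - \vec k_0$ and $\wt\psi_b(\vec s) := \tfrac{1}{\eps}\psi_b(\vec s) - \vec k_0$, obtaining $\|\wt\psi_a(\vec r)-\vec r\|\leq 6\delta$ and $\|\wt\psi_b(\vec s)-\vec s\|\leq 6\delta$, with $\vec r\in \R^{m_1}\oplus\theta_{m_2}$ and $\vec s\in \theta_{m_1}\oplus\R^{m_2}$ treated as elements of $\R^m$. Define
\begin{equation*}
F : \ol B^{m_1}(0,1)\times \ol B^{m_2}(0,1)\lra \R^m, \qquad F(\vec r,\vec s) := \wt\psi_a(\vec r) - \wt\psi_b(\vec s),
\end{equation*}
together with the reference map $F_0(\vec r,\vec s) := \vec r - \vec s$. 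The dimensions match ($m_1+m_2=m$), while $\|F-F_0\|\leq 12\delta$ uniformly on the domain and $\|F_0(\vec r,\vec s)\| = \|\vec r\|+\|\vec s\| \geq 1$ on the boundary (using the complementarity of the two subspaces). Choosing $\delta<\tfrac{1}{24}$, the straight-line homotopy between $F_0$ and $F$ is nonvanishing on the boundary, so $\deg(F,0) = \deg(F_0,0) = \pm 1$ (since $F_0$ is a global homeomorphism of the domain onto $\ol B^{m_1}(0,1) \oplus (-\ol B^{m_2}(0,1))$, which contains $0$ in its interior), yielding an interior zero $(\vec r_0,\vec s_0)$ of $F$ and completing the argument.

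The analytic estimates are essentially copied from the proof of Lemma \ref{lem:htp_of_a_r}, so I expect the main difficulty to lie in the topological step: one must verify the nonvanishing of the straight-line homotopy $(1-t)F_0 + tF$ on the boundary of the product of balls. This hinges on the specific form of the reference $F_0$ together with the fact that $\R^{m_1}$ and $\R^{m_2}$ are complementary inside $\R^m$, so that $\|F_0\|$ attains at least $1$ on every boundary point --- a property that would fail if the subspaces were allowed to overlap.
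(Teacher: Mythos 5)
Your proposal is correct and follows essentially the same route as the paper: reuse of the map $\Phi$ and its Lipschitz inverse from Lemma \ref{lem:htp_of_a_r}, reparametrisation by $\eps$, reduction via $\Phi^{-1}$ to a finite-dimensional intersection problem on the complementary subspaces $\R^{m_1}\oplus\theta_{m_2}$ and $\theta_{m_1}\oplus B^{m_2}(0,1)$. The only (cosmetic) deviation is in the last step, where you invoke Brouwer degree for $F(\vec r,\vec s)=\wt\psi_a(\vec r)-\wt\psi_b(\vec s)$ against the reference $F_0(\vec r,\vec s)=\vec r-\vec s$, whereas the paper packages the identical computation as Lemma \ref{lem:top2_app} (which combines the two maps into one perturbation of the identity on a single ball and applies the retraction argument of Lemma \ref{lem:top1_app}); both hinge on the same boundary estimate coming from the complementarity of the two subspaces.
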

\begin{proof}
We will follow the scheme of the final part of the proof of Lemma \ref{lem:htp_of_a_r}, making extensive use of the map $\Phi:\R^m\times\BM_0(I,\R^m)\supset W_0\ra\Phi(W_0)\subset\BM(I,\R^m)$ constructed before.

By the uniform regularity of the families $a_{\vec r}$, $b_{\vec s}$, and $c_{\vec R}$; there exists a number $\eta>0$ such that, for $0<\eps<\eta$,
\begin{align*}
&\int_0^\eps|a_{\vec r}(t)-c_{\vec r+\vec k_0}(t)|\dd t<\frac 1{24}\eps, &&
\text{ for every $\vec{r}\in B^{m_1}(0,1)\oplus\theta_{m_2}$ and}\\
&\int_0^\eps|b_{\vec s}(t)-c_{\vec s+\vec k_0}(t)|\dd t<\frac 1{24}\eps, &&
\text{ for every $\vec{s}\in \theta_{m_1}\oplus B^{m_2}(0,1)$.}
\end{align*}
Now let us reparametrise the paths $a_{\vec r}$ and $b_{\vec s}$ by the rule
\begin{align*}
&\wt a_{\vec r}(t):=\eps a_{\vec r}(\eps t)   \quad\text{and}\\
&\wt b_{\vec s}(t):=\eps b_{\vec s}(\eps t) \quad \text{for $t\in[0,1]$.}
\end{align*} 
We obtained another uniformly regular families of $E$-paths satisfying $\wt a_{\vec r}(0)=\eps(\vec r+\vec k_0)$ and $\wt b_{\vec s}(0)=\eps(\vec s+\vec k_0)$. Moreover, $\|\wt a_{\vec r}-c_{\eps(\vec r+\vec k_0)}\|\leq \frac 1{24}\eps$ and $\|\wt b_{\vec s}-c_{\eps(\vec s+\vec k_0)}\|\leq \frac 1{24}\eps$, and since $\eta$ can be chosen arbitrary small, we may assume that $\wt a_{\vec r}$ and $\wt b_{\vec s}$ belong to $\Phi(W_0)$ for all $\vec r$ and $\vec s$. 

By the uniform regularity of $\wt a_{\vec r}$ and $\wt b_{\vec s}$, the maps $\wt a:\vec r\mapsto\wt a_{\vec r}$ and $\wt b:\vec s\mapsto\wt b_{\vec s}$ are continuous maps form $B^{m_1}(0,1)\oplus\theta_{m_2}$ and $\theta_{m_1}\oplus B^{m_2}(0,1)$, respectively, to $\BM(I,\R^m)$ with $L_1$-topology. Composing them with $\Phi^{-1}$ we will obtain continuous maps
\begin{align*}
B^{m_1}(0,1)\oplus\theta_{m_2}&\xra{(\psi_a,\phi_a)}\R^m\times \BM_0(I,\R^m) \quad\text{and}\\
\theta_{m_1}\oplus B^{m_2}(0,1)&\xra{(\psi_b,\phi_b)}\R^m\times \BM_0(I,\R^m). 
\end{align*}
Now, using the Lipschitz condition for $\Phi^{-1}$ and the fact that $\Phi^{-1}(c_{\vec r})=(\vec r,0)$, we can estimate in a way analogous as in the proof of Lemma \ref{lem:htp_of_a_r} that
\begin{align*}
&\|\frac 1\eps\psi_a(\vec r)-(\vec r+\vec k_0)\|\leq \frac 14 &&\text{for $\vec r\in B^{m_1}(0,1)\oplus\theta_{m_2}$ and}\\
&\|\frac 1\eps\psi_b(\vec s)-(\vec s+\vec k_0)\|\leq \frac 14 &&\text{for $\vec s\in \theta_{m_1}\oplus B^{m_2}(0,1)$.}
\end{align*}
By Lemma \ref{lem:top2_app}, the images of the maps $\frac 1\eps \psi_a$ and $\frac 1\eps \psi_b$ have a nonempty intersection. In other words, there exist $\vec r_0$ and $\vec s_0$, vector $\vec R\in \R^m$, and $d_a,d_b\in \BM_0(I,\R^m)$ such that $\wt a_{\vec r_0}=\Phi(\vec R,d_a)$ and $\vec b_{\vec s_0}=\Phi(\vec R,d_b)$. By property (\textbf{C}), the $E$-homotopy classes of $\wt a_{\vec r_0}$ and $\vec b_{\vec s_0}$ are equal. Consequently,  by Lemma \ref{lem:reparam},
$$\left[a_{\vec r_0}(t)\right]_{t\in[0,\eps]}=\left[\wt a_{\vec r_0}(t)\right]_{t\in[0,1]}=\left[\wt b_{\vec s_0}(t)\right]_{t\in[0,1]}=\left[b_{\vec s_0}(t)\right]_{t\in[0,\eps]},$$
which finishes the proof. 
\end{proof}

\chapter{The proof of the PMP}\label{ch:proof}

In this chapter we will finish the proof of Theorems \ref{thm:pmp_A} and \ref{thm:pmp_A_rel}. In our considerations it is crucial to understand the geometry of the cone $\B K^u_\tau$ of infinitesimal variations along the optimal trajectory $\B f(\B x(t),u(t))$. We interpreted $\B K^u_\tau$ as the set of all
directions in $\bm A_{\bm x(t_1)}$ in which one can move the
point $\bm x(t_1)$ by performing needle variations of the control
$u$ associated with symbols $\A=(\tau_i, v_i,\tau,\delta t_i, \delta t)_{i=1,\hdots,k}$, where $\tau$ is fixed. Consequently, a movement in the direction of the ray 
$$\bm\Lambda_{\bm
x(t_1)}:=\theta_{x(t_1)}\oplus\R_+\cdot(-\pa_t)\subset
E_{x(t_1)}\oplus\sT_{\ul{x}(t_1)}\R=\bm A_{\bm x(t_1)}$$ 
would correspond to a variation which
decreases the total cost of the trajectory without making changes
in the $E$-evolution. Such a behaviour should not be
possible if $\B f(\B x(t),u(t))$ is a solution of the OCP \eqref{eqn:P_A}, so we may expect that the ray $\B\Lambda_{\B x(t_1)}$ can be separated from the cone $\B K^u_\tau$ in such a case. This result is formulated in Theorem \ref{thm:separation_K_Lambda}. In the proof we use technical Lemma \ref{lem:htp_of_a_r} to deduce the existence of $E$-paths realising certain $E$-homotopy classes from the infinitesimal picture expressed in the language of the cone $\B K^u_\tau$ and the ray $\B\Lambda_{\B x(t_1)}$. When Theorem \ref{thm:separation_K_Lambda} is proved, to finish the proof of Theorem \ref{thm:pmp_A} we need only to follow a few rather technical steps from the original proof of Pontryagin and his collaborators \cite{pontryagin}. 

Theorem \ref{thm:pmp_A_rel} is proven analogously with some technical modifications. The main difference is that instead of $\B K^u_\tau$ we use a bigger set $\B\K^u_\tau$ which contains information about both needle variations and initial variations of a given trajectory. Using technical Lemma \ref{lem:htp_of_a_r1} we prove Theorem \ref{thm:separation_K_S} describing the geometry of  $\B\K^u_\tau$. Then, basing on this result, we make a few final steps after \cite{pontryagin}.

\section [The proof of Theorem 5.3]{The proof of Theorem \ref{thm:pmp_A}}\label{sec:proof_pmp}

\subsection{The geometry of the cone \texorpdfstring{$\bm K^u_\tau$}{TEXT} }

Throughout this section we assume that the controlled pair $(\bm x(t),u(t))$ is a solution of the OCP \eqref{eqn:P_A}. All results obtained in this section are valid under this assumption. 

\begin{theorem}\label{thm:separation_K_Lambda} Let $(\B{x}(t), u(t))$, for $t\in[t_0,t_1]$, be a solution of the optimal control
problem \eqref{eqn:P_A}. Then the ray
$\bm{\Lambda}_{\B x(t_1)}$ and the convex cone
$\B K^u_\tau$ can be separated for any $\tau\in(t_0,t_1)$, which is a regular point of $u$.
\end{theorem}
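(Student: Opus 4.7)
The plan is to argue by contradiction. Suppose $\bm\Lambda_{\bm x(t_1)}$ and $\bm K^u_\tau$ cannot be separated in $\bm A_{\bm x(t_1)}$. Since both share the apex $\theta$, classical finite-dimensional convex analysis implies that the generator $-\pa_t$ of $\bm\Lambda_{\bm x(t_1)}$ lies in the interior of $\bm K^u_\tau$. Hence I can choose finitely many symbols $\A_0,\dots,\A_N$, with $N=\dim \bm A_{\bm x(t_1)}$, such that the vectors $\bm k_i:=\bm d^{\A_i}(0)$ form an $N$-simplex whose interior contains a nontrivial positive multiple of $-\pa_t$. By Remark \ref{rem:var_dt=0} and a small perturbation I may further assume that all switch data $(\tau_j,v_j)$ appearing in the $\A_i$ are pairwise distinct regular points of $u$, uniform across all symbols, so that the $\A_i$ differ only in the scaling parameters $(\delta t_j,\delta t)$.

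Next, I would assemble a one-parameter family of symbols $\wt\A(\vec\lambda)$, $\vec\lambda=(\lambda_0,\dots,\lambda_N)\in\Delta^N$, by taking convex combinations of the scaling parameters with weights $\lambda_i$. By the linearity of formula \eqref{eqn:d_at_0} in the scaling parameters, $\bm d^{\wt\A(\vec\lambda)}(0)=\sum_i\lambda_i\bm k_i$, and by Theorem \ref{thm:1st_main} the family $s\mapsto\bm d^{\wt\A(\vec\lambda)}(s)$ is uniformly regular at $s=0$ w.r.t. $\vec\lambda$. Choose $\vec\lambda_c\in\operatorname{int}\Delta^N$ so that $\bm d^{\wt\A(\vec\lambda_c)}(0)=c\cdot(-\pa_t)$ for some $c>0$; then the affine map $\vec\lambda\mapsto\bm d^{\wt\A(\vec\lambda)}(0)$ is an open surjection onto a neighborhood of $c\cdot(-\pa_t)$ in $\bm A_{\bm x(t_1)}$. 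Now I would apply Lemma \ref{lem:htp_of_a_r} to the $E$-projection of this family, after a linear reparametrization by $\vec r$ ranging over a small ball of $E_{x(t_1)}$ around $\theta_{x(t_1)}$: since the $E$-projection of $-\pa_t$ is $\theta_{x(t_1)}$ and the $E$-projections of the $\bm k_i$ span a neighborhood of $\theta_{x(t_1)}$, the lemma yields parameters $\vec\lambda_0$ arbitrarily close to $\vec\lambda_c$ and $\eps>0$ arbitrarily small such that the $E$-projection of $\bigl[\bm d^{\wt\A(\vec\lambda_0)}(s)\bigr]_{s\in[0,\eps]}$ is the trivial $E$-homotopy class.

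By continuity of the $\T\R$-component in $\vec\lambda$ and $\eps$, for $\vec\lambda_0$ sufficiently close to $\vec\lambda_c$ and $\eps$ small, the $\T\R$-component of the endpoint of $\bm d^{\wt\A(\vec\lambda_0)}$ is strictly negative. Combining this with the composition identity \eqref{eqn:var_htp} applied with trivial initial variation $\bm b_0\equiv\theta_{\bm x_0}$, the varied controlled pair $(\bm x(t,\eps),u^{\wt\A(\vec\lambda_0)}_\eps(t))$ has an $E$-trajectory in the prescribed $E$-homotopy class $[\sigma]$, satisfies $\ul x(t_0)=0$, and has terminal $\ul x$-value strictly less than $\ul x(t_1)$, contradicting the optimality of $(\bm x(t),u(t))$. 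The main obstacle will be to decouple the $E$- and $\T\R$-components cleanly enough to apply Lemma \ref{lem:htp_of_a_r} purely on the $E$-side while preserving the cost decrease on the $\T\R$-side; this requires exploiting the product structure $\bm A=E\times\T\R$, the block-diagonal form of the parallel transport seen in \eqref{eqn:par_trans_*A}, and the fact that $-\pa_t$ projects to the zero vector in $E_{x(t_1)}$, so that the topological inversion argument in Lemma \ref{lem:htp_of_a_r} can be carried out fibrewise.
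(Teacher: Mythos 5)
Your proposal follows essentially the same route as the paper's proof: argue by contradiction, extract finitely many symbols whose infinitesimal variations surround a positive multiple of $-\pa_t$, form convex combinations of symbols using Remark \ref{rem:var_dt=0} and the linearity of \eqref{eqn:d_at_0}, apply Lemma \ref{lem:htp_of_a_r} to the $E$-projection of the resulting uniformly regular family, and read the strict cost decrease off the $\T\R$-projection before invoking \eqref{eqn:var_htp}. The only structural difference is that you extract the finite family via a Carath\'eodory-type simplex around an interior point, whereas the paper uses Lemma \ref{lem:separation} to get the specific configuration $\bm\lambda,\bm\lambda\pm\bm e_i$; both are fine.

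Two points in your write-up need repair, though neither damages the strategy. First, the claim that you may perturb the switch data so that all $(\tau_j,v_j)$ are pairwise distinct is both unnecessary and unjustified: $u(\tau_j)$ need not vary continuously as $\tau_j$ moves through regular points, so such a perturbation could move $\bm d^{\A_i}(0)$ by a non-small amount; the padding by triples with $\delta t_j=0$ from Remark \ref{rem:var_dt=0} already suffices, and coinciding $\tau_j$'s are handled by the needle-variation construction. Second, Lemma \ref{lem:htp_of_a_r} does \emph{not} produce a parameter $\vec\lambda_0$ ``arbitrarily close to $\vec\lambda_c$''; it only produces \emph{some} $\vec r_0$ in the parametrizing ball, so your continuity argument for the negativity of the cost term cannot be anchored at $\vec\lambda_c$. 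The fix is to shrink the parametrizing ball beforehand so that $\ul d^{\wt\A(\vec\lambda(\vec r))}(0)\le -c/2<0$ for \emph{every} $\vec r$ in the ball, and then use the uniform regularity to get $\int_0^\eps \ul d^{\wt\A(\vec\lambda(\vec r))}(s)\,\dd s<0$ uniformly over the ball for all small $\eps$ --- which is exactly what the paper achieves automatically, since its parametrization keeps the $\T\R$-component at $s=0$ equal to $-1$ for all $\vec r$.
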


The idea of the proof is the following.  Assuming the contrary we will
construct a family of symbols
$\A(\vec r)$, where $\vec r\in E_{x(t_1)}$, such that the associated infinitesimal variations
$\bm d^{\A(\vec r)}(s)$ are uniformly regular w.r.t. $\vec r$ at $s=0$ and  point into the 
directions $\vec r-\pa_t\in E_{x(t_1)}\oplus\T_{\ul x(t_1)}\R=\B A_{\B x(t_1)}$.
For such a family we will be able to use Lemma \ref{lem:htp_of_a_r} to deduce 
that for some $\vec r_0$ the variation $\B d^{\A(\vec r_0)}(s)$ has special properties. Next we will show that in such a case the pair $(\B x(t),u(t))$ cannot be a solution of the OCP \eqref{eqn:P_A}.

\begin{proof}
Assume the contrary, i.e., that the convex cone $\B K^u_\tau$ and the ray $\bm{\Lambda}_{\B{x}(t_1)}$ cannot be separated. Denote by  $\B \lambda:=\theta_{x(t_1)}-\pa_t\in E_{x(t_1)}\oplus\T_{\ul{x}(t_1)}\R$ a vector spanning $\B \Lambda_{\B x(t_1)}$. It follows from Lemma \ref{lem:separation} that  there exists a basis $\{\B e_1,\hdots,\B e_m\}$ of
$ E_{x(t_1)}\oplus\theta_{\ul{x}(t_1)}\subset
\B{A}_{\B x(t_1)}$ such that vectors
$\B\lambda,\B\lambda+\B e_i,\B\lambda-\B e_i$ lie in
$\B K^u_\tau$ for $i=1,\hdots, m$. 

Denote by $\A$, $\A_i$, and $\BB_i$ some symbols such that elements in $\bm K_\tau^u$ corresponding to these symbols are $\bm d^\A(0)=\B\lambda$, $\bm d^{\A_i}(0)=\B\lambda+\B e_i$, and $\bm d^{\BB_i}(0)=\B\lambda-\B e_i$, respectively. We deal with a finite set of symbols, hence we can assume that they all are of the form $(\tau_i, v_i,\tau, \delta t_i, \delta t)_{i=1,\hdots,k}$, where $\tau_i$, $v_i$, k, and $\tau$ are
fixed, and that they differ by $\del t_i$ and $\del t$ (we can
always add a triple $(\tau_i,v_i,\del t_i=0)$ to a symbol without
changing anything --- cf. Remark \ref{rem:var_dt=0}). For any
$\vec{r}=\sum_{i=1}^mr^i\B e _i\in B^m(0,1)=\{\vec r:\|\vec r\|=\sum_i|r_i|\leq 1\}$  we
may define a new symbol
$$\A(\vec{r})=\left(1-\sum_{i=1}^m|r^i|\right)\A+\sum_{i=1}^m
h^+(r^i)\A_i+\sum_{i=1}^m h^-(r^i)\BB_i,$$ where
$h^+(r)=\max\{r,0\}$ and $h^-(r)=\max\{-r,0\}$ are non-negative, and
the convex combination of symbols is defined using the natural
rule
\begin{align*}&\nu(\tau_i, v_i,\tau, \delta t_i, \delta
t)_{i=1,\hdots,k}+\mu(\tau_i, v_i,\tau, \delta t^{'}_i, \delta
t^{'})_{i=1,\hdots,k}\\&=(\tau_i, v_i,\tau, \nu\delta t_i+\mu\del
t_i^{'}, \nu\delta t+\mu\del t^{'})_{i=1,\hdots,k}\,.
\end{align*} 
We will now study the properties of $\B A$-paths $s\mapsto\B d^{\A(\vec r)}(s)$ corresponding to symbols $\A(\vec r)$ (see Remark \ref{rem:interpretation_K}).  

From \eqref{eqn:d_at_0} it is straightforward to verify that 
$$\B{d}^{\A(\vec{r})}(0)=\B\lambda+\vec{r}.$$

If follows from Theorem \ref{thm:1st_main} that, since the numbers $\del t(\vec r)$ and $\del t_i(\vec r)$ in the symbol $\A(\vec r)$ depend continuously on $\vec r$, which takes values in a compact set, we may choose $\theta>0$ such that $\bm d^{\A(\vec r)}(s)$ is well-defined for $s\in[0,\theta]$ and all $\vec r\in B^m(0,1)$,

Now consider $\ul d^{\A(\vec r)}(s)$ --- the projections of the family of $\B A$-paths $\B d^{\A(\vec r)}(s)$ to the algebroid $\T\R$.
Observe that since $\bm d^{\A(\vec r)}(s)$ are uniformly regular, so are  $\ul d^{\A(\vec r)}(s)$. Since in canonical coordinates on $\T\R$ we have $\ul d^{\A(\vec
r)}(0)=-1$, there exist a number $0<\eta\leq\theta$ such that 
\begin{equation}\label{eqn:cost_less_zero}
\int_0^\eps\ul d^{\A(\vec r)}(s)\dd s<0
\end{equation}  for all
$\eps\leq\eta$ and all $\vec r\in B^m(0,1)$. This property will be
used later.

After projecting $\B d^{\A(\vec{r})}(s)$ from $\B A $ onto $E$,
we obtain a family of bounded measurable admissible paths
$d^{\A(\vec{r})}(s)$, again uniformly
regular at $s=0$ w.r.t. $\vec{r}\in {B}^m(0,1)\subset \R^m$,
and such that
$d^{\A(\vec{r})}(0)=\vec{r}\in\R^m\approx E_{x(t_1)}$. In other words, the paths $d^{\A(\vec r)}(\cdot)$ satisfy the assumptions of Lemma \ref{lem:htp_of_a_r} and, consequently, there exists a vector $\vec r_0\in B^m(0,1)$ and a number $0<\eps< \eta$ such that 
\begin{equation}\label{eqn:htp_zero}
\left[d^{\A(\vec r_0)}(t)\right]_{r\in[0,\eps]}=\left[0\right].
\end{equation}

Properties \eqref{eqn:cost_less_zero} and \eqref{eqn:htp_zero} contradict the optimality of $\B f(\B x(t),u(t))$. Indeed, from \eqref{eqn:var_htp} we know that $\B f(\B x(t,s), u_s(t))$ --- the variation of the trajectory $\B f(\B x(t),u(t))$ associated with a needle variation $u_s(t):=u^{\A(\vec r_0)}_s(t)$ satisfies
$$\left[\B{f}(\B{x}(t,\eps),u_\eps(t))\right]_{t\in[t_0,t_1+\eps\del
t(\vec r_0)]} =\left[\B{f}(\B{x}(t),u(t))\right]_{t\in[t_0,t_1]}\left[\B{d}^{\A(\vec
r_0)}(s)\right]_{s\in[0,\eps]}.$$
Projecting the above equality to the algebroid $E$ and using \eqref{eqn:htp_zero} we get
$$\left[f(x(t,\eps),u_\eps(t))\right]_{t\in[t_0,t_1+\eps\del
t(\vec r_0)]}=\left[f(x(t),u(t))\right]_{t\in[t_0,t_1]},$$
and hence the $E$-homotopy classes agree.

What is more, the $\T\R$-projection gives
$$\left[L(x(t,\eps),u_\eps(t))\right]_{t\in[t_0,t_1+\eps\del
t(\vec r_0)]}=\left[L(x(t),u(t))\right]_{t\in[t_0,t_1]}\left[\ul d^{\A(\vec
r_0)}(s)\right]_{s\in[0,\eps]}.$$ From \eqref{eqn:cost_less_zero} we deduce
that the total costs satisfy the following inequality:
\begin{align*}\ul x(t_1+\eps\del t(\vec r_0),\eps)=&\int_{t_0}^{t_1+\eps\del
t(\vec r_0)}L(x(s,\eps),u_\eps(s))\dd s=\int_{t_0}^{t_1}L(x(s),u(s))\dd
s+\int_0^\eps \ul d^{\A(\vec r_0)}(s)\dd
s\\ \overset{\eqref{eqn:cost_less_zero}}<
&\int_{t_0}^{t_1}L(x(s),u(s))\dd s=\ul x(t_1).
\end{align*}
The above inequality proves that $\B f(\B x(t),u(t))$ cannot be a solution of the OCP \eqref{eqn:P_A}, which stays in a contradiction to our assumptions. 
\end{proof}

To finish the proof of Theorem \ref{thm:pmp_A} we will now follow the steps of the original result of \cite{pontryagin}. All
the important information is contained in Theorem
\ref{thm:1st_main} telling us that the set of infinitesimal
variations $\bm K_\tau^u$ is a convex cone with elements defined by means of a
local one-parameter group $\bm B_{tt_0}$ (see
\eqref{eqn:d_at_0}) and in Theorem \ref{thm:separation_K_Lambda} describing the geometry of this cone. The structure of an AL
algebroid, necessary to prove the above results, will now play no
essential role.

\subsection{The construction of \texorpdfstring{$\bm{\xi}(t)$}{TEXT} and the ``maximum principle''}
Fix an element $\tau\in(t_0,t_1)$ to be a regular point of $u$. In
view of Theorem \ref{thm:separation_K_Lambda} there exists a non-zero covector
$\bm\xi(t_1)\in\bm A^\ast_{\bm x(t_1)}$ separating $\bm K^u_\tau$ and $\bm\Lambda_{\bm x(t_1)}$; that is (confront Remark \ref{rem:separation}),
\begin{equation}
\label{eqn:K_cov} 
\<\bm d,\bm\xi(t_1)>_{\bm{\tau}}\leq 0\leq\<\bm \lambda,\bm \xi(t_1)>_{\bm \tau} \text{
\ for every $\bm d\in\bm K^u_\tau$}.
\end{equation}
Let us define $\bm\xi(t):=\bm B^\ast_{t t_1}(\bm\xi(t_1))\in\bm
A^\ast_{\bm x(t)}$ for $t\in[t_0,t_1]$.

\begin{lemma}\label{lem:HM} For every $t\in[t_0,\tau]$ which is a regular point of the control $u(t)$ the following ``maximum principle'' holds:
$$\bm{H}(\bm x(t),\bm \xi(t),u(t))=\sup_{v\in U}\bm{H}(\bm x(t),\bm\xi(t),v).$$
Moreover, $\bm{H}(\bm x(\tau),\bm\xi(\tau),u(\tau))=0$.
\end{lemma}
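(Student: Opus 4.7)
The plan is to exploit the separation inequality $\<\bm d,\bm\xi(t_1)>_{\bm\tau}\leq 0$ for $\bm d\in\bm K_\tau^u$ by inserting carefully chosen symbols into the explicit formula \eqref{eqn:d_at_0} for elements of $\bm K_\tau^u$, and then transporting the pairing from time $t_1$ back to time $t$ via the duality $\<\bm B_{t_1 t}(\bm a),\bm\xi(t_1)>_{\bm\tau}=\<\bm a,\bm B^\ast_{tt_1}(\bm\xi(t_1))>_{\bm\tau}=\<\bm a,\bm\xi(t)>_{\bm\tau}$ from Remark \ref{rem:B_paring}.

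To obtain the maximum principle, fix a regular point $t\in[t_0,\tau]$ of the control $u$, and pick any $v\in U$ together with a number $\delta t_1\geq 0$. Consider the single-switch symbol $\A=(\tau_1=t,\ v_1=v,\ \tau,\ \delta t_1,\ \delta t=0)$. By formula \eqref{eqn:d_at_0} applied with $\bm b_0(0)=0$, the associated infinitesimal variation
\[
\bm d^\A(0)=\bm B_{t_1 t}\bigl[\bm f(\bm x(t),v)-\bm f(\bm x(t),u(t))\bigr]\delta t_1
\]
lies in $\bm K_\tau^u$. Pairing with $\bm\xi(t_1)$ and pushing the transport operator over to the covector side gives
\[
\bigl(\bm H(\bm x(t),\bm\xi(t),v)-\bm H(\bm x(t),\bm\xi(t),u(t))\bigr)\,\delta t_1\leq 0.
\]
Since $\delta t_1\geq 0$ and $v\in U$ are arbitrary, the inequality $\bm H(\bm x(t),\bm\xi(t),v)\leq\bm H(\bm x(t),\bm\xi(t),u(t))$ holds for all $v\in U$, which is the maximum principle.

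For the vanishing of $\bm H$ at $\tau$, I would use instead the symbol $\A=(\tau,\delta t)$ with $k=0$ and $\delta t\in\R$ arbitrary. Formula \eqref{eqn:d_at_0} now yields
\[
\bm d^\A(0)=\bm B_{t_1\tau}\bigl[\bm f(\bm x(\tau),u(\tau))\bigr]\,\delta t\in\bm K_\tau^u.
\]
Crucially, in this symbol $\delta t$ is allowed to take both signs (unlike $\delta t_i\geq 0$), so both $\bm d^\A(0)$ and $-\bm d^\A(0)$ belong to the cone $\bm K_\tau^u$. The separation inequality applied to both elements forces
\[
\<\bm f(\bm x(\tau),u(\tau)),\bm\xi(\tau)>_{\bm\tau}=0,
\]
i.e.\ $\bm H(\bm x(\tau),\bm\xi(\tau),u(\tau))=0$. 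No real obstacle is expected here, since all the work has already been absorbed into the separation theorem; the only subtlety is the sign convention on $\delta t$ versus $\delta t_i$ in the definition of a symbol, which is precisely what distinguishes the two parts of the statement.
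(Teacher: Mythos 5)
Your proposal is correct and follows essentially the same route as the paper: the single-switch symbol $(\tau_1=t,v_1=v,\tau,\delta t_1,\delta t=0)$ combined with formula \eqref{eqn:d_at_0}, the separation inequality \eqref{eqn:K_cov}, and the duality $\<\bm B_{t_1t}(\bm a),\bm\xi(t_1)>_{\bm\tau}=\<\bm a,\bm\xi(t)>_{\bm\tau}$ of Remark \ref{rem:B_paring} for the first part, and the symbol $(\tau,\delta t)$ with $\delta t$ of arbitrary sign for the second. The only cosmetic point is that you should take $\delta t_1>0$ (not merely $\geq 0$) to cancel it from the inequality, exactly as the paper does.
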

\begin{proof}
Choose a regular point $t\in[t_0,\tau]$, take an arbitrary element
$v\in U$ and a number $\del t_1>0$, and consider a symbol
$\A=(\tau_1=t,\delta t_1,v_1=v,\tau,\delta t=0)$. The
corresponding element $\bm{d}^\A(0)\in \bm K^u_\tau$ equals
$\bm{B}_{t_1 t}[\bm f (\bm x (t),v)-\bm f (\bm x (t),u(t))]\delta
t_1$ (cf. \eqref{eqn:d_at_0}). From \eqref{eqn:K_cov} we obtain
\begin{align*}
&\:\,0\geq\<\bm{B}_{t_1
t}[\bm{f}(\bm{x}(t),v)-\bm{f}(\bm{x}(t),u(t))],\bm\xi(t_1)>_{\bm{\tau}}\delta
t_1\\
&\overset{\text{rem. } \ref{rem:B_paring}}{=}
\<\bm{B}_{t t_1}\bm{B}_{t_1
t}[\bm{f}(\bm{x}(t),v)-\bm{f}(\bm{x}(t),u(t))],\bm{B}^\ast_{t t_1}(\bm\xi(t_1))>_{\bm{\tau}}\delta t_1\\
&\phantom{X}=\<\bm f(\bm{x}(t),v)-\bm{f}(\bm{x}(t),u(t)),\bm\xi(t)>_{\bm\tau}\delta t_1\\
&\phantom{X}=\Big(\bm{H}(\bm x(t),\bm{\xi}(t),v)-\bm{H}(\bm
x(t),\bm{\xi}(t),u(t))\Big)\delta t_1.
\end{align*}
Since $\del t_1>0$, we have $\bm{H}(\bm
x(t),\bm{\xi}(t),v)\leq\bm{H}(\bm x(t),\bm{\xi}(t),u(t))$
for arbitrarily chosen $v\in U$.

To prove the second part of the assertion, consider a symbol
$\mathfrak{v}=(\tau,\del t)$. The associated element
$\bm{d}^{\mathfrak{v}}(0)$ is $\bm B_{t_1\tau}\left(\bm f (\bm x (\tau),u(\tau))\right)\del
t\in\bm K^u_\tau$. Consequently, from \eqref{eqn:K_cov}, we obtain
\begin{align*}
&\:\,0\geq\<\bm B_{t_1\tau}\left(\bm f (\bm{x}(\tau),u(\tau))\right),\bm\xi(t_1)>_{\bm{\tau}}\delta t\\
&\overset{\text{rem. } \ref{rem:B_paring}}{=}
\<\bm f (\bm{x}(\tau),u(\tau)),\bm\xi(\tau)>_{\bm{\tau}}\delta t=\bm{H}(\bm x(\tau),\bm{\xi}(\tau),u(\tau))\del t.
\end{align*}
Since $\del t$ can be arbitrary, we deduce that $\bm{H}(\bm
x(\tau),\bm{\xi}(\tau),u(\tau))=0$.
\end{proof}

\subsection{The condition \texorpdfstring{$\bm{H}(\bm{x}(t),\bm{\xi}(t),u(t))=0$}{TEXT} } To finish the proof just two more things are left.
We have to check that the Hamiltonian
$\bm{H}(\bm{x}(t),\bm\xi(t),u(t))$ is constantly 0 along the
optimal trajectory, and we have to extend the ''maximum principle'' to all regular  $t\in[t_0,t_1]$ (so far it holds only on the interval $[t_0,\tau]$, where
$\tau<t_1$ is a fixed regular point).

\begin{lemma}\label{lem:M=0}
For $\bm\xi(t)$ defined as above, the equality
$\bm{H}(\bm{x}(t),\bm\xi(t),u(t))=0$ holds at every regular point
$t\in[t_0,\tau]$ of the control $u$.
\end{lemma}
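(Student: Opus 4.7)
The strategy is to deduce $\bm{H}(\bm{x}(t),\bm\xi(t),u(t))=0$ at every regular $t\in[t_0,\tau]$ from Lemma \ref{lem:HM}, which already provides this at $t=\tau$. The plan is to introduce the upper envelope
\[
M(t):=\sup_{v\in U}\bm{H}(\bm{x}(t),\bm\xi(t),v),
\]
and observe that by Lemma \ref{lem:HM}, $M(t)=\bm{H}(\bm{x}(t),\bm\xi(t),u(t))$ at every regular $t\in[t_0,\tau]$; in particular $M(\tau)=0$. It therefore suffices to show that $M$ is constant on $[t_0,\tau]$, and then the desired equality will follow at every regular point, which form a set of full measure.

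First I would check that $M$ is locally Lipschitz on $[t_0,\tau]$. The pair $(\bm{x}(t),\bm\xi(t))$ is absolutely continuous as a Carath\'eodory solution of \eqref{eqn:par_trans_*A}, and $\bm{H}(\bm x,\bm\xi,v)=\<\bm f(\bm x,v),\bm\xi>_{\bm\tau}$ has $(\bm x,\bm\xi)$-derivatives bounded uniformly in $v$ on any compactum of $\bm{A}^\ast$; the regularity assumptions on $\bm f$ and $\T_{\bm x}\bm f$ from Definition \ref{def:con_sys}, together with the boundedness of $\bm\xi(t)$, imply that $t\mapsto \bm{H}(\bm{x}(t),\bm\xi(t),v)$ is Lipschitz in $t$ with constant uniform in $v$, and this bound passes to the supremum. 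If $U$ is not compact, I would restrict the sup to a neighbourhood of $\{u(t):t\in[t_0,\tau]\}$, which is legitimate in view of the maximum principle of Lemma \ref{lem:HM} and the continuity of $\bm f,L$.

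The main step is to show $\dot M(t)=0$ a.e., adapting the classical Pontryagin scheme to the algebroid setting. At a regular $t$ and $h>0$ small, the maximum principle gives $M(t+h)=\bm H(\bm x(t+h),\bm\xi(t+h),u(t+h))$ and $M(t)\ge \bm H(\bm x(t),\bm\xi(t),u(t+h))$, whence
\[
M(t+h)-M(t)\le \bm H(\bm x(t+h),\bm\xi(t+h),u(t+h))-\bm H(\bm x(t),\bm\xi(t),u(t+h));
\]
the right-hand side, integrated along the Hamiltonian flow $\X_{\bm H_{u(s)}}$ satisfied by $(\bm x,\bm\xi)$, equals $\int_t^{t+h}\{\bm H_{u(t+h)},\bm H_{u(s)}\}(\bm x(s),\bm\xi(s))\,\dd s$, and a symmetric lower bound uses $u(t)$ in place of $u(t+h)$. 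The hard part will be to push both one-sided estimates to zero as $h\to 0^+$ despite the low regularity of $u$: this rests on the antisymmetry identity $\{\bm H_v,\bm H_v\}\equiv 0$ (following from the linear bi-vector $\bm\Pi$ on $\bm A^\ast$) combined with a Lebesgue-point argument for the bounded measurable control $u$, which forces the integral averages of $\{\bm H_{u(t+h)},\bm H_{u(s)}\}$ and $\{\bm H_{u(t)},\bm H_{u(s)}\}$ to vanish in the limit. Combining the Lipschitz property of $M$ with $\dot M=0$ a.e.\ then yields $M\equiv M(\tau)=0$ on $[t_0,\tau]$, giving the lemma.
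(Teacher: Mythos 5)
Your overall strategy coincides with the paper's: both introduce the upper envelope $\m(t)=\max_v\bm H(\bm x(t),\bm\xi(t),v)$ (the paper takes the maximum over the compact set $P=\cl\{u(t):t\in[t_0,\tau]\}$, which settles your worry about non-compact $U$), both prove that this envelope is Lipschitz on $[t_0,\tau]$, and both reduce the claim to showing its derivative vanishes a.e., which ultimately rests on the coordinate identity $\xi_kc^k_{ij}(x)f^i(x,u)f^j(x,u)=0$ coming from the skew-symmetry of the algebroid bracket --- your identity $\{\bm H_v,\bm H_v\}\equiv 0$.

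There is, however, a genuine gap in your mechanism for proving $\dot M=0$. Your upper one-sided estimate reads
$$M(t+h)-M(t)\leq\int_t^{t+h}\{\bm H_{u(t+h)},\bm H_{u(s)}\}(\bm x(s),\bm\xi(s))\,\dd s,$$
and you claim a Lebesgue-point argument forces the average of the integrand to vanish as $h\to 0^+$. It does not: a Lebesgue point $t$ of the bounded measurable control $u$ controls averages of $|u(s)-u(t)|$ over $[t,t+h]$, but gives no control over the pointwise value $u(t+h)$, which need not converge to $u(t)$ as $h\to 0^+$ (take $u$ to be the indicator of a fat Cantor set: regular points $t+h$ with $u(t+h)$ far from $u(t)$ accumulate at $t$). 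Consequently $\{\bm H_{u(t+h)},\bm H_{u(s)}\}\approx\{\bm H_{u(t+h)},\bm H_{u(t)}\}$ for most $s\in[t,t+h]$, and this quantity need not be small. The paper avoids the upper estimate altogether: having established that $\m(\bm x(t),\bm\xi(t))$ is Lipschitz, it invokes Rademacher's theorem and, at a point $t$ where the envelope is differentiable and $u$ is regular, uses only the lower bound $\m(t')-\m(t)\geq\bm H(\bm x(t'),\bm\xi(t'),u(t))-\bm H(\bm x(t),\bm\xi(t),u(t))$ --- with the control frozen at $u(t)$ --- for $t'$ on \emph{both} sides of $t$; dividing by $t'-t$ of either sign squeezes the already-existing derivative between $0$ and $0$. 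Your argument becomes correct if you replace the $h\to0^+$ squeeze by this two-sided use of the single lower estimate at points of differentiability.
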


\begin{proof}
Denote by $P$ the closure of the set $\{u(t):t\in[t_0,\tau]\}$.
Since $u(t)$ is bounded, $P$ is a compact subset of $U$. Define a
new function $\m:\B{A}^*\lra\R$ by the formula
$$\m(\bm x,\bm{\xi}):=\max_{v\in P}\bm{H}(\bm x,\bm{\xi},v).$$
It follows from the previous lemma that
$\m(\bm{x}(t),\bm\xi(t))=\bm{H}(\bm x(t),\bm \xi(t),u(t))$ at
every regular point $t$ of $u$. We shall show that
$\m(\bm{x}(t),\bm\xi(t))$ is constant on $[t_0,\tau]$, and hence
equals $\bm{H}(\bm x(\tau),\bm\xi(\tau),u(\tau))=0$ (confront Lemma
\ref{lem:HM}). Observe that the function $\bm H\left(\bm
x(t),\bm\xi(t),v\right)$ is uniformly (for all $v\in P$) Lipschitz
w.r.t. $t$. Indeed, in local coordinates $(x^a,\ul
x,y^i,\ul y)$ on $\bm A$ and $(x^a,\ul x,\xi_i,\ul \xi)$ on $\bm
A^\ast$ we have $\bm H\left(\bm
x(t),\bm\xi(t),v\right)=\xi_i(t)f^i(x(t),v)+\ul{\xi}L(x(t),v)$.
Note that, by assumption, functions $f^i(x,v)$ and $L(x,v)$ are $C^1$ w.r.t. $x$ and their $x$-derivatives are continuous functions of both variables. Since $x(t)$ is an AC path with bounded derivative, functions
$\frac{\pa f^i}{\pa x^a}(x(t),v)$ and $\frac{\pa L}{\pa
x^a}(x(t),v)$, as well as functions $f^i(x(t),v)$ and $L(x(t),v)$,
are bounded in $[t_0,\tau]\times P$. As the evolution of $\xi(t)$
is governed by \eqref{eqn:par_trans_*A} and $u(t)$ is
bounded on $[t_0,\tau]$, the derivatives $\pa_t\xi^k(t)$ are also
bounded on $[t_0,\tau]$. Consequently, since the path $(\bm
x(t),\bm\xi(t))\in\bm A^\ast$ can be covered by a finite number of
coordinate charts, the $t$-derivative of $\bm H\left(\bm
x(t),\bm\xi(t),v\right)$ is bounded on $[t_0,\tau]\times P$. As a
result there exists a number $C$ such that
$$\left|\bm{H}(\bm{x}(t),\bm\xi(t),v)-\bm{H}(\bm{x}(t^{'}),\bm\xi(t^{'}),v)\right|\leq C|t-t^{'}|$$
for all $t,t^{'}\in [t_0^{'},\tau]$ and for any $v\in P$. Observe
also that
\begin{align*}
&\frac\pa{\pa t}\bm H\left(\bm
x(t),\bm\xi(t),v\right)|_{v=u(t)}=\pa_t\xi_i(t)f^i(x(t),u(t))+\\
&\phantom{==}+
\xi_i(t)\pa_t f^i(x(t),v)|_{v=u(t)}+\ul{\xi}\pa_tL(x(t),v)|_{v=u(t)}\overset{\eqref{eqn:par_trans_*A}}{=}\\
&=
\left[-\rho^a_i\left(x\right)\left(\frac{\pa f^k}{\pa
x^a}\left(x,u(t)\right)\xi_k(t)+\frac{\pa L}{\pa
x^a}\left(x,u(t)\right)\ul\xi(t)\right)+c^k_{ji}\left(x\right)f^j\left(x,u(t)\right)\xi_k(t) \right]f^i(x,u(t))\\
&\phantom{==} +\xi_i(t)\frac{\pa f^i}{\pa x^a}(x(t),u(t))\rho^a_k(x)f^k(x(t),u(t))+\ul{\xi}\frac{\pa L}{\pa x^a}(x(t),u(t))\rho^a_k(x)f^k(x(t),u(t))\\
&\ =\xi_k(t)c^k_{ij}(x)f^i(x(t),u(t))f^j(x(t),u(t))=0
\end{align*}
by the skew-symmetry of $c^k_{ij}(x)$.

Now take any regular points $t,t^{'}\in[t_0,\tau]$. Since
$u(t),u(t^{'})\in P$, we have
\begin{align*}
-C|t-t^{'}|&\leq\bm{H}(\bm{x}(t),\bm\xi(t),u(t^{'}))-\bm{H}(\bm{x}(t^{'}),\bm\xi(t^{'}),u(t^{'}))\\&\leq\m(\bm{x}(t),\bm\xi(t))-
\m(\bm{x}(t^{'}),\bm\xi(t^{'}))\\
&=\bm{H}(\bm{x}(t),\bm\xi(t),u(t))-\bm{H}(\bm{x}(t^{'}),\bm\xi(t^{'}),u(t^{'}))\\
&\leq\bm{H}(\bm{x}(t),\bm\xi(t),u(t))-\bm{H}(\bm{x}(t^{'}),\bm\xi(t^{'}),u(t))\leq C|t-t^{'}|;
\end{align*}
i.e., $\m(\bm x(t),\xi(t))$ satisfies the Lipschitz condition on
the set of regular points (dense in $[t_0,\tau]$). It is also a
continuous map (since $x(t)$ and $\xi(t)$ are AC and the maps $L(x,v)$,
$f(x,v)$ are continuous in both variables), therefore it is
Lipschitz on the whole interval $[t_0,\tau]$. By Rademacher{'}s
theorem, $\m(\bm x(t),\xi(t))$ is almost everywhere differentiable on $[t_0,\tau]$. Now take any point $t$ of
differentiability of $\m(\bm{x}(t),\bm\xi(t))$ which is also a
point of the regularity of the control $u$. We have
$$\m(\bm{x}(t^{'}),\bm\xi(t^{'}))-\m(\bm{x}(t),\bm\xi(t))\geq
\bm{H}(\bm
x(t{'}),\bm{\xi}(t^{'}),u(t))-\B{H}(\B{x}(t),\bm\xi(t),u(t)).$$
For $t^{'}>t$, we get
$$\frac{\m(\B{x}(t^{'}),\bm\xi(t^{'}))-\m(\B{x}(t),\bm\xi(t))}{t^{'}-t}\geq\frac{\B{H}(\B{x}(t^{'}),\bm\xi(t^{'}),u(t))-
\B{H}(\B{x}(t),\bm\xi(t),u(t))} {t^{'}-t}.$$ Consequently,
$$\frac{d}{dt}\m(\B{x}(t),\bm\xi(t))\geq\frac{\partial}{\partial
t^{'}}\Big|_{t^{'}=t}\B{H}(\B{x}(t^{'}),\bm\xi(t^{'}),u(t))=0.$$
Similarly, for $t^{'}<t$, we would get
$\frac{d}{dt}\m(\B{x}(t),\bm\xi(t))\leq 0$. We deduce that
$\frac{d}{dt}\m(\B{x}(t),\bm\xi(t))=0$ a.e. in
$[t_0,\tau]$; hence $\m(\B{x}(t),\bm\xi(t))$ is constant and equals $\bm H(\bm x(\tau),\bm \xi(\tau),u(\tau))=0$.
\end{proof}

\subsection{Extending the ''maximum principle'' to \texorpdfstring{$[t_0,t_1]$}{TEXT}}
\begin{lemma}\label{lem:K_increase} Let $t\in[t_0,\tau]$ be any regular point of
the control $u$. Then 
$$\B{K}^u_t\subset
\cl\left(\B{K}^u_\tau \right).$$
\end{lemma}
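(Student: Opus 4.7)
The plan is as follows. Take any $\bm d \in \bm K^u_t$ corresponding to a symbol $\A = (\tau_i, v_i, t, \delta t_i, \delta t)_{i=1,\hdots,k}$ with $\tau_i \le t$. By the explicit formula \eqref{eqn:d_at_0},
$$\bm d \;=\; \underbrace{\bm B_{t_1 t}[\bm f(\bm x(t), u(t))]\,\delta t}_{=:\,\bm d_{\mathrm{str}}} \;+\; \underbrace{\sum_{i=1}^k \bm B_{t_1 \tau_i}\bigl[\bm f(\bm x(\tau_i), v_i) - \bm f(\bm x(\tau_i), u(\tau_i))\bigr]\,\delta t_i}_{=:\,\bm d_{\mathrm{sub}}}.$$
Since the closure of the convex cone $\bm K^u_\tau$ (Lemma \ref{lem:K_convex}) is itself a convex cone, it suffices to show separately that $\bm d_{\mathrm{sub}} \in \bm K^u_\tau$ and that $\bm d_{\mathrm{str}} \in \cl(\bm K^u_\tau)$.

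The substitution part is immediate: the modified symbol $\A_{\mathrm{sub}} := (\tau_i, v_i, \tau, \delta t_i, 0)_{i=1,\hdots,k}$ is admissible in the definition of $\bm K^u_\tau$ (because $\tau_i \le t \le \tau$), and its infinitesimal variation equals $\bm d_{\mathrm{sub}}$ verbatim, so $\bm d_{\mathrm{sub}}\in\bm K^u_\tau$. The real content therefore lies in showing $\bm d_{\mathrm{str}}\in\cl(\bm K^u_\tau)$.

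For that step, I would use $\bm B_{t_1 t} = \bm B_{t_1 \tau}\circ \bm B_{\tau t}$ and the continuity of the linear isomorphism $\bm B_{t_1 \tau}$ to reduce matters to proving that $w := \delta t \cdot \bm B_{\tau t}[\bm f(\bm x(t), u(t))]$ lies in the closure of the ``pre-transport'' cone $\bm B_{\tau t_1}[\bm K^u_\tau] \subset \bm A_{\bm x(\tau)}$, which is generated by $\bm f(\bm x(\tau), u(\tau))\cdot \R$ together with vectors $\bm B_{\tau \tau'}\bigl[\bm f(\bm x(\tau'), v) - \bm f(\bm x(\tau'), u(\tau'))\bigr]\cdot\R_{\ge 0}$ for regular $\tau'\in(t,\tau]$ and $v\in U$. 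The approximating sequence is obtained by augmenting $\A_{\mathrm{sub}}$ with (i) a stretch $\delta t$ at $\tau$ and (ii) a dense family of needle substitutions placed on a fine partition of $(t,\tau]$, whose aggregate infinitesimal contribution is designed to supply precisely the correction $\bm B_{\tau t}[\bm f(\bm x(t), u(t))] - \bm f(\bm x(\tau), u(\tau))$. The guiding identity, valid distributionally, is
$$\bm B_{\tau t}[\bm f(\bm x(t), u(t))] - \bm f(\bm x(\tau), u(\tau)) \;=\; -\int_t^\tau \bm B_{\tau s}\Bigl[\frac{\pa \bm f}{\pa u}(\bm x(s), u(s))\,\dot u(s)\Bigr]\,\dd s,$$
which one recognises as an integrated Riemann sum of elementary needle contributions at points $\tau_j^{(n)}\in(t,\tau]$ with values $v_j^{(n)}$ obtained from $u$ at nearby Lebesgue points.

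The main obstacle will be executing this discretisation rigorously in the measurable setting: since $u$ is only bounded measurable, its distributional derivative $\dot u$ need not be a function or finite measure, and the symbolic identity above cannot be invoked directly. I would therefore carry out the approximation either at the level of $E$-homotopies themselves, exploiting the uniform regularity and composition properties from Theorem \ref{thm:1st_main}, or by a concrete Lebesgue-point argument: since $t$ is a Lebesgue point of $u$, one can choose the partition points and the substitution values $v_j^{(n)}$ so that the needle contributions converge in the Lebesgue sense to the required correction, independently of the lack of pointwise regularity of $u$ elsewhere. Both signs of $\delta t$ are then treated identically, because the stretch ray at $\tau$ is two-sided.
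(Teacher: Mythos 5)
Your decomposition of $\bm d$ into the substitution part and the stretch part, and the observation that the substitution part already lies in $\bm K^u_\tau$ because $\tau_i\leq t<\tau$, is exactly how the paper begins. The gap is in your treatment of $\bm d_{\mathrm{str}}=\bm B_{t_1t}[\bm f(\bm x(t),u(t))]\,\del t$. The ``guiding identity'' you invoke has no meaning under the paper's hypotheses: $f$ is only assumed continuous in $u$ (no $\pa \bm f/\pa u$ exists), and $u$ is only bounded measurable (no $\dot u$, not even as a measure). More importantly, even a rigorous telescoping version of your Riemann sum runs into two problems you do not resolve. First, substitution directions enter the cone only with non-negative weights $\del t_i\geq 0$, so a single telescoping identity can at best produce one sign of the correction $\bm B_{t_1t}[\bm f(\bm x(t),u(t))]-\bm B_{t_1\tau}[\bm f(\bm x(\tau),u(\tau))]$; the case $\del t<0$ needs the opposite sign, and the two-sidedness of the stretch ray at $\tau$ does not supply it --- you would need a second, genuinely different choice of substitution values, which you do not give. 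Second, the error in replacing $\bm B_{s_js_{j-1}}[\bm f(\bm x(s_{j-1}),u(s_{j-1}))]$ by $\bm f(\bm x(s_j),u(s_{j-1}))$ accumulates to an $L^1$-distance between $u$ and a sampled discretisation of it, which for merely measurable controls does not tend to zero for an arbitrary partition; forcing it to zero requires a careful choice of sampling points, all of which must in addition be regular points of $u$. None of this is carried out, so the key step is not established.

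The paper avoids the construction entirely by a duality argument. Suppose $\bm B_{t_1t}[\bm f(\bm x(t),u(t))]\,\del t\notin\cl(\bm K^u_\tau)$. By strict separation of a closed convex cone from a point (Theorem \ref{thm:separation}) there is a covector $\ol{\bm\xi}(t_1)$ with $\<\bm d,\ol{\bm\xi}(t_1)>_{\bm\tau}\leq 0$ for all $\bm d\in\cl(\bm K^u_\tau)$ but $0<\<\bm B_{t_1t}\big(\bm f(\bm x(t),u(t))\del t\big),\ol{\bm\xi}(t_1)>_{\bm\tau}$. The key observation is that Lemmas \ref{lem:HM} and \ref{lem:M=0} use only this one-sided inequality on $\bm K^u_\tau$, so they apply to $\ol{\bm\xi}(t):=\bm B^\ast_{tt_1}(\ol{\bm\xi}(t_1))$ and give $\bm H(\bm x(t),\ol{\bm\xi}(t),u(t))=0$ at the regular point $t$; rewriting the strict inequality via Remark \ref{rem:B_paring} gives $\bm H(\bm x(t),\ol{\bm\xi}(t),u(t))\,\del t\neq 0$ --- a contradiction. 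All the analytic difficulty is thereby absorbed into the already-proven constancy of the maximized Hamiltonian, which is the device Pontryagin's original argument also uses at this point; it is the route you should take here.
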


\begin{proof}
Consider an element $\bm d\in\B{K}_t^u$ of the
form
$$\B{d}=\bm B_{t_1t}[\B{f}(\B{x}(t),u(t))]\del
t+\sum_{i=1}^s\B{B}_{t_1\tau_i}\Big[\B{f}(\B{x}(\tau_i),v_i)-\B{f}(\B{x}(\tau_i),u(\tau_i))\Big]\del
t_i.$$ 
Since $\cl\left(\bm K_\tau^u\right)$ is a convex cone it is enough to show that $\bm B_{t_1t}\left(\bm{f}(\B{x}(t),u(t))\del
t\right)$ and $\sum_{i=1}^s\B{B}_{t_1\tau_i}\left[\B{f}(\B{x}(\tau_i),v_i)-\B{f}(\B{x}(\tau_i),u(\tau_i))\right]\del
t_i$ belong to $\cl\left(\B{K}^u_\tau\right)$. The later clearly belongs to $\bm K_\tau^u\subset\cl (\bm K_\tau^u)$ since $\tau_i\leq t<\tau$.

Assume now that  $\B{B}_{t_1
t}\left[\B{f}(\B{x}(t),u(t))\del t\right]$ does not belong to $\cl\left(\bm{K}^u_\tau\right)$. Since this set is a closed convex cone, by Theorem \ref{thm:separation}, there exists a covector $\ol{\bm{\xi}}(t_1)\in \bm A^\ast_{\bm x(t_1)}$ strictly separating  $\cl\left(\bm{K}^u_\tau\right)$ from $\left\{\B{B}_{t_1 t}\left(\B{f}(\B{x}(t),u(t))\delta
t\right)\right\}$; i.e., 
$$\<\B{d},\ol{\B{\xi}}(t_1)>_{\bm{\tau}}\leq 0<\<\B{B}_{t_1 t}\left(\B{f}(\B{x}(t),u(t))\del
t\right),\ol{\B{\xi}}(t_1)>_{\bm{\tau}}\quad \text{ for any $\B{d}\in
\cl(\bm{K}^u_\tau$).}$$ 
Define $\ol{\B{\xi}}(t):=\B{B}^\ast_{t t_1}(\ol{\B{\xi}}(t_1))$. Lemmas
\ref{lem:HM} and \ref{lem:M=0} hold for $\ol{\B{\xi}}(t)$ (we needed only $\<\bm d,\ol{\bm\xi}(t_1)>\leq 0$ for $\bm d\in \bm K_\tau^u\subset\cl(\bm K_\tau^u)$ in the proofs), hence,
in particular,
$$\<\B{f}(\B{x}(t),u(t)),\ol{\B{\xi}}(t)>_{\bm{\tau}}=\B{H}(\bm
x(t),\ol{\B{\xi}}(t),u(t))=0,$$ as $t$ is a regular point of $u$.
On the other hand,
$$0<\<\B{B}_{t_1 t}\big(\B{f}(\B{x}(t),u(t))\del
t\big),\ol{\B{\xi}}(t_1)>_{\bm{\tau}}=\<\B{f}(\B{x}(t),u(t)),\ol{\B{\xi}}(t)>_{\bm{\tau}}\del
t=\bm H(\bm x(t),\ol{\bm\xi}(t),u(t))\del t,$$
and hence $\B{H}(\bm
x(t),\ol{\B{\xi}}(t),u(t))\neq 0$. This contradiction finishes the proof.
\end{proof}

Note that, so far, we could define the set $\B{K}^u_\tau$ only for
a regular point $\tau<t_1$. With the help of the above lemma we can
also define $\B{K}^u_{t_1}$ as the direct limit of the increasing
family of sets $\cl\left(\B{K}^u_\tau\right)$,
$$\B{K}^u_{t_1}:=\bigcup_{\tau<t_1, \text{ $\tau$ regular}}\cl\left(\B{K}^u_\tau\right).$$
It is clear that
$\B{K}^u_{t_1}$  is a convex cone in
$\B{A}_{\B{x}(t_1)}$.
It has geometric properties analogous to the
properties of $\B{K}^u_\tau$ described in Theorem \ref{thm:separation_K_Lambda}.

\begin{lemma}\label{lem:K_at_t1}
The ray $\bm{\Lambda}_{\B{x}(t_1)}$ and the convex cone
$\B{K}^u_{t_1}$ are separable.
\end{lemma}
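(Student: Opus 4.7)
My plan is to promote the family of separating covectors provided by Theorem \ref{thm:separation_K_Lambda} to a covector which separates the direct-limit cone $\bm{K}^u_{t_1}$ from $\bm{\Lambda}_{\bm{x}(t_1)}$, using a standard compactness argument in the finite-dimensional fibre $\bm{A}^\ast_{\bm{x}(t_1)}$.

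First I would fix an auxiliary norm $\|\cdot\|$ on $\bm{A}^\ast_{\bm{x}(t_1)}$ and, for every regular point $\tau\in(t_0,t_1)$, invoke Theorem \ref{thm:separation_K_Lambda} to obtain a covector $\bm{\xi}_\tau\in\bm{A}^\ast_{\bm{x}(t_1)}$ of unit norm (after rescaling, which is allowed since separation is a homogeneous condition) satisfying
\begin{equation*}
\<\bm{d},\bm{\xi}_\tau>_{\bm{\tau}}\leq 0\leq\<\bm{\lambda},\bm{\xi}_\tau>_{\bm{\tau}}\quad\text{for every $\bm{d}\in\bm{K}^u_\tau$ and $\bm{\lambda}\in\bm{\Lambda}_{\bm{x}(t_1)}$.}
\end{equation*}
Since regular points of the bounded measurable control $u$ form a dense subset of $[t_0,t_1]$, I can choose an increasing sequence of regular points $\tau_n\nearrow t_1$. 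The unit sphere in $\bm{A}^\ast_{\bm{x}(t_1)}$ is compact, so, passing to a subsequence, I may assume $\bm{\xi}_{\tau_n}\to\bm{\xi}_\ast$, where $\bm{\xi}_\ast$ is a covector of unit norm, hence nonzero.

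Next I would verify that $\bm{\xi}_\ast$ separates $\bm{K}^u_{t_1}$ from $\bm{\Lambda}_{\bm{x}(t_1)}$. Take any $\bm{d}\in\bm{K}^u_{t_1}$. By the very definition of $\bm{K}^u_{t_1}$, there exists a regular $\tau<t_1$ such that $\bm{d}\in\cl(\bm{K}^u_\tau)$. By Lemma \ref{lem:K_increase} applied to each regular $\tau_n\geq\tau$, we have $\bm{K}^u_\tau\subset\cl(\bm{K}^u_{\tau_n})$, and taking closures yields $\cl(\bm{K}^u_\tau)\subset\cl(\bm{K}^u_{\tau_n})$; in particular, $\bm{d}\in\cl(\bm{K}^u_{\tau_n})$. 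The inequality $\<\bm{d},\bm{\xi}_{\tau_n}>_{\bm{\tau}}\leq 0$ holds on $\bm{K}^u_{\tau_n}$ and extends to its closure by continuity of the pairing, so it holds for our $\bm{d}$. Passing to the limit $n\to\infty$ gives $\<\bm{d},\bm{\xi}_\ast>_{\bm{\tau}}\leq 0$. The same continuity-and-limit argument applied to the (fixed) ray $\bm{\Lambda}_{\bm{x}(t_1)}$ yields $\<\bm{\lambda},\bm{\xi}_\ast>_{\bm{\tau}}\geq 0$ for every $\bm{\lambda}\in\bm{\Lambda}_{\bm{x}(t_1)}$, completing the separation.

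There is no real obstacle here; the only two points deserving care are (i) having at one's disposal a sequence of regular $\tau_n\nearrow t_1$, which is immediate from density of regular points of $u$, and (ii) the monotonicity of the family $\{\cl(\bm{K}^u_\tau)\}$ along such a sequence, which is exactly what Lemma \ref{lem:K_increase} provides. All the substantive geometric information has already been extracted in Theorem \ref{thm:separation_K_Lambda}; the present lemma is essentially a compactness-plus-monotonicity packaging of that result.
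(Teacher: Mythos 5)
Your proof is correct, but it takes a genuinely different route from the paper's. You argue directly: normalise the separating covectors $\bm{\xi}_{\tau_n}$ supplied by Theorem \ref{thm:separation_K_Lambda} along a sequence of regular points $\tau_n\nearrow t_1$, extract a convergent subsequence on the compact unit sphere of the finite-dimensional fibre $\bm{A}^\ast_{\bm{x}(t_1)}$, and pass the weak inequalities to the limit, using Lemma \ref{lem:K_increase} to guarantee that any fixed $\bm{d}\in\bm{K}^u_{t_1}$ eventually lies in $\cl(\bm{K}^u_{\tau_n})$. This is the classical compactness argument going back to Pontryagin's book, and every step checks out (density of regular points gives the sequence $\tau_n$; the pairing is continuous; the limit covector has unit norm, hence is nonzero). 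The paper instead argues by contradiction through Lemma \ref{lem:separation}: non-separability of $\bm{K}^u_{t_1}$ and $\bm{\Lambda}_{\bm{x}(t_1)}$ is witnessed by finitely many vectors ($k\in\bm{K}^u_{t_1}\cap\bm{\Lambda}_{\bm{x}(t_1)}$ and $e_1,\dots,e_m$ with $k\pm e_i\in\bm{K}^u_{t_1}$ spanning appropriately), and since $\bm{K}^u_{t_1}$ is an increasing union, these finitely many witnesses already sit inside some $\cl(\bm{K}^u_t)$, contradicting Theorem \ref{thm:separation_K_Lambda} via Lemma \ref{lem:separation_closure}. The paper's argument is purely finite-combinatorial --- no norm, no subsequence extraction --- and reuses verbatim the machinery (Lemma \ref{lem:separation}) already needed for the relative version in Lemma \ref{lem:KK_at_t1}; your argument avoids Lemma \ref{lem:separation} entirely and has the minor added benefit of explicitly producing the limiting covector $\bm{\xi}_\ast$ that is then used in the remainder of the proof of Theorem \ref{thm:pmp_A}.
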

\begin{proof}
Assume the contrary. By Lemma \ref{lem:separation} (we use it for $V=\bm A_{\bm x(t_1)}=W\oplus\R=E_{x(t_1)}\oplus\T_{\ul x(t_1)}\R$, $K_1=\bm K^u_{t_1}$, $S=\{0\}\subset W$, and $K_2=\bm \Lambda_{\bm x(t_1)}$), there exists a vector $k\in \bm K^u_{t_1}\cap\bm\Lambda_{\bm x(t_1)}$ and vectors $e_1,\hdots,e_m\in E_{x(t_1)}$ such that \eqref{cond:sep_A} and \eqref{cond:sep_B} hold. 
Since $\bm K^u_{t_1}$ is a limit of an increasing family of sets, there exists a regular $t<t_1$ such that \eqref{cond:sep_A} and \eqref{cond:sep_B} hold for $K_1=\cl\left(\bm K_t^u\right)$. In other words, $\bm\Lambda_{\bm x(t_1)}$ and $\cl\left(\bm K_t^u\right)$ are not separable. By Lemma \ref{lem:separation_closure} also $\bm\Lambda_{\bm x(t_1)}$ and $\bm K_t^u$ are not separable. This contradicts
Theorem \ref{thm:separation_K_Lambda}.
\end{proof}

Now choose a non-zero covector $\B{\xi}(t_1)\in\B{A}^\ast_{\B{x}(t_1)}$
separating $\bm K_{t_1}^u$ and $\bm\Lambda_{\bm x(t_1)}$ and define $\bm\xi(t):=\bm B^\ast_{tt_1}(\bm \xi(t_1))$.  We have 
$$\<\B{d},\B{\xi}(t_1)>_{\bm{\tau}}\leq 0\leq\<\bm\lambda,\B{\xi}(t_1)>_{\bm{\tau}}\quad\text{for every $\B{d}\in \B{K}^u_{t_1}$}.$$ 
Since, by construction, $\bm K^u_\tau\subset \bm K_{t_1}^u$, the covector $\bm\xi(t_1)$ separates also 
$\bm K^u_\tau$ and $\bm\Lambda_{\bm x(t_1)}$. This is enough for Lemmas \ref{lem:HM} and \ref{lem:M=0} to hold for $\bm \xi(t)$.
As a consequence, for every regular point of the control $u$, we have
$$
\bm{H}(\bm x(t),\bm \xi(t),u(t))=\sup_{v\in U}\bm{H}(\bm
x(t),\bm\xi(t),v)=0\,.
$$
  
Finally, since $\<\bm \lambda, \bm \xi(t_1)>\geq 0$, we have 
 $\ul\xi(t)\equiv\ul\xi(t_1)\leq 0$. This finishes the proof
of Theorem \ref{thm:pmp_A}. \hfill$\qed$

\section [The proof of Theorem 5.4]{The proof of Theorem \ref{thm:pmp_A_rel}} \label{sec:proof_pmp_rel}
In this section we assume that the controlled pair $(\bm x(t), u(t))$, with $t\in [t_0,t_1]$, solves the OCP \eqref{eqn:P_A_rel}. Recall that we assume that $\bm x(t_0)=(x(t_0),\ul x(t_0))=(\phi_0(z_0),0)$ and $\bm x(t_1)=(x(t_1),\ul x(t_1))=(\phi_1(w_0),\ul x(t_1))$, where $\phi_i$ are base projections of algebroid morphisms $\Phi_i:\T S_i\ra E$ for $i=0,1$. We use the following notation $\eS_0:=\Phi_0(\T_{z_0}S_0)$, $\eS_1:=\Phi_1(\T_{w_0}S_1)$, $\Lambda_{\ul x}:=\R_+(-\pa_t)\subset\T_{\ul x}\R$, and $\K_\tau^u:=\conv\left\{\bm B_{t_1t_0}(\eS_0\oplus\theta_{\ul{x}(t_0)}),\bm K_\tau^u\right\}$.

\begin{theorem}\label{thm:separation_K_S} Let $(\B{x}(t), u(t))$, for $t\in[t_0,t_1]$, be a solution of the optimal control
problem \eqref{eqn:P_A_rel}. Then the  convex cones $\K_\tau^u$ and $\eS_1\oplus\Lambda_{\ul x(t_1)}$ can be separated for any $\tau\in(t_0,t_1)$ which is a regular point
of $u$.
\end{theorem}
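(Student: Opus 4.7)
The plan is to argue by contradiction, imitating the strategy of Theorem \ref{thm:separation_K_Lambda} but with two essential enhancements dictated by the relative setting: the ray $\bm\Lambda_{\bm x(t_1)}$ is replaced by the larger convex cone $\eS_1\oplus\Lambda_{\ul x(t_1)}$, so instead of shrinking a single family to a zero class I will need to match two families of admissible paths, which is exactly what Lemma \ref{lem:htp_of_a_r1} (rather than Lemma \ref{lem:htp_of_a_r}) delivers.

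Suppose $\K_\tau^u$ and $\eS_1\oplus\Lambda_{\ul x(t_1)}$ cannot be separated. Apply Lemma \ref{lem:separation} to these two convex cones in $\bm A_{\bm x(t_1)}$ to extract a common vector $\bm k_0\in\K_\tau^u\cap(\eS_1\oplus\Lambda_{\ul x(t_1)})$ and a basis $\{\bm e_1,\dots,\bm e_m\}$ of a complementary subspace, split as $m=m_1+m_2$, such that
$$\bm k_0\pm\bm e_i\in\K_\tau^u\ \text{for } i=1,\dots,m_1,\qquad \bm k_0\pm\bm e_j\in\eS_1\oplus\Lambda_{\ul x(t_1)}\ \text{for } j=m_1{+}1,\dots,m,$$
with the property (guaranteed by the version of Lemma \ref{lem:separation} applied to the splitting $\bm A_{\bm x(t_1)}=E_{x(t_1)}\oplus\sT_{\ul x(t_1)}\R$) that the direction $-\pa_t$ appears nontrivially in the positive cone generated by $\bm k_0,\bm e_{m_1+1},\dots,\bm e_m$; this is the ingredient responsible for the cost drop.

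Build the first family $\bm a_{\vec r}$ for $\vec r$ in the unit ball of $\spann\{\bm e_1,\dots,\bm e_{m_1}\}$ by convexly combining needle-variation directions from $\bm K_\tau^u$ with $\bm B_{t_1t_0}$-transported initial-boundary directions from $\eS_0\oplus\theta_{\ul x(t_0)}$. Concretely, decompose $\bm k_0+\vec r$ in the convex-hull definition of $\K_\tau^u$, select AC curves $z_{\vec r}:[0,1]\ra S_0$ with $z_{\vec r}(0)=z_0$ and $\dot z_{\vec r}(0)$ depending continuously on $\vec r$ so that $\Phi_0(\dot z_{\vec r}(0))$ realises the required $\eS_0$-component, and choose a continuously-varying symbol $\A(\vec r)$ realising the required $\bm K_\tau^u$-component (as in the proof of Theorem \ref{thm:separation_K_Lambda}). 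Theorem \ref{thm:1st_main}, applied with initial-point $\bm A$-homotopy $\bm b_0^{\vec r}(s):=\Phi_0(\dot z_{\vec r}(s))\oplus\theta_{\ul x(t_0)}$, supplies a family of $\bm A$-paths $\bm a_{\vec r}(s):=\bm d^{\A(\vec r),\bm b_0^{\vec r}}(s)$ starting at $\bm x(t_1)$, uniformly regular in $\vec r$ at $s=0$, with $\bm a_{\vec r}(0)=\bm k_0+\vec r$. For the second family, for $\vec s$ in the unit ball of $\spann\{\bm e_{m_1+1},\dots,\bm e_m\}$ write $\bm k_0+\vec s=(\Phi_1(v_{\vec s}),-\mu_{\vec s}\pa_t)$ with $v_{\vec s}\in\T_{w_0}S_1$ and $\mu_{\vec s}\ge 0$, choose smooth $w_{\vec s}:[0,1]\ra S_1$ with $w_{\vec s}(0)=w_0$ and $\dot w_{\vec s}(0)=v_{\vec s}$, and put $\bm b_{\vec s}(s):=(\Phi_1(\dot w_{\vec s}(s)),-\mu_{\vec s}\pa_t)$; since $\Phi_1$ is an algebroid morphism and the $\sT\R$-component is admissible, $\bm b_{\vec s}$ is an admissible $\bm A$-path originating at $\bm x(t_1)$, uniformly regular in $\vec s$ at $s=0$, with $\bm b_{\vec s}(0)=\bm k_0+\vec s$.

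Project both families to the $E$-component and invoke Lemma \ref{lem:htp_of_a_r1} (with $\vec k_0$ equal to the $E$-component of $\bm k_0$): for all sufficiently small $\eps>0$ there exist $\vec r_0,\vec s_0$ for which $[a_{\vec r_0}(s)]_{s\in[0,\eps]}=[b_{\vec s_0}(s)]_{s\in[0,\eps]}$. Now apply Theorem \ref{thm:1st_main} to rewrite
$$[\bm b_0^{\vec r_0}(s)]_{s\in[0,\eps]}\cdot[\bm f(\bm x(t,\eps),u^{\A(\vec r_0)}_\eps(t))]_t =[\bm f(\bm x(t),u(t))]_t\cdot[\bm a_{\vec r_0}(s)]_{s\in[0,\eps]},$$
and substitute $[\bm a_{\vec r_0}]=[\bm b_{\vec s_0}]$ on the $E$-component. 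By the very definition of the relative algebroid homotopy class (Definition \ref{def:htp_class_rel}) and Lemma \ref{lem:htp}, the trajectory of the modified controlled pair $(\bm x(\cdot,\eps),u^{\A(\vec r_0)}_\eps)$ lies in the same relative class $[\sigma]\modulo(\Phi_0,\Phi_1)$: its initial-point boundary homotopy equals $\Phi_0(\dot z_{\vec r_0})$ and its final-point boundary homotopy equals the $E$-part of $\bm b_{\vec s_0}$, which is $\Phi_1(\dot w_{\vec s_0})$. Reading the same equality on the $\sT\R$-component, the new total cost is $\ul x(t_1)-\eps\mu_{\vec s_0}+o(\eps)$, and by the choice of $\bm k_0$ and the $\vec s$-directions we may assume $\mu_{\vec s_0}>0$, which contradicts the optimality of the pair $(\bm x(t),u(t))$.

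The main obstacle will be the careful bookkeeping in the fourth step: verifying that the composite $\bm A$-homotopy genuinely matches the relative class $[\sigma]\modulo(\Phi_0,\Phi_1)$ and not merely an ordinary $E$-homotopy class, and ensuring that the direction $-\pa_t$ is really accessible (so that $\mu_{\vec s_0}>0$ can be forced). The first difficulty is resolved by unpacking the composition of $\bm A$-homotopies provided by Lemma \ref{lem:htp} and checking that both endpoint homotopies factor through $\Phi_0$ and $\Phi_1$ respectively. The second is built into the geometry of Lemma \ref{lem:separation} applied to the decomposition singling out the $\sT\R$-factor: non-separability of $\K_\tau^u$ from a cone containing the full ray $\Lambda_{\ul x(t_1)}$ forces the offending configuration to point strictly into $-\pa_t$, which then yields the needed strict cost decrease.
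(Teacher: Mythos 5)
Your proposal follows essentially the same route as the paper's proof: assume non-separability, apply Lemma \ref{lem:separation} to produce a common vector and spanning directions, realise the $\K_\tau^u$-directions by needle variations with $\Phi_0$-generated initial-point variations and the $\eS_1$-directions by a family factoring through $\Phi_1$, match the two families via Lemma \ref{lem:htp_of_a_r1}, and obtain a relatively homotopic trajectory of strictly smaller cost. The only slips are cosmetic: Lemma \ref{lem:separation} actually yields $\bm k\pm e_i\in\K_\tau^u$ together with $\spann\{e_1,\dots,e_{m_1},\eS_1\}=E_{x(t_1)}$ (not $\bm k\pm e_j\in\eS_1\oplus\Lambda_{\ul x(t_1)}$ for the second block, which your construction does not in fact use), and the $\eps$-order cost drop is read off the $\T\R$-component of the first family (equal to that of $\bm k$ at $s=0$), exactly as in the paper.
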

\begin{proof}
The argument is very similar to that from Theorem \ref{thm:separation_K_Lambda}. 

Assume that $(\bm x(t),u(t))$ is a solution of \eqref{eqn:P_A_rel} but the cones  $\K_\tau^u$ and $\eS_1\oplus\Lambda_{\ul x(t_1)}$ are not separable.
First, construct a family of admissible paths $s\mapsto b_0(s,p)\in E$ parametrised by $p\in\eS_0$ such that the following conditions hold:
\begin{itemize}
\item the family is uniformly regular w.r.t $p$ at $s=0$,
\item $b_0(0,p)=p$,
\item $b_0(s,p)$ lies in the image of $\Phi_0$.
\end{itemize} 

A family with the desired properties can be build as follows. First, choose a linear subspace $V\subset \T_{z_0}S_0$ such that $\Phi_0|_V:V\ra\eS_0=\Phi_0(\T_{z_0}S_0)$ is a linear isomorphism ($\Phi_0$ at $z_0$ is a linear map from $\T_{z_0}S_0$ to $\eS_0\subset E_{x(t_0)}$). For $p\in\eS_0$ take $v(p)=\left(\Phi_0|_V\right)^{-1}(p)\in V$ and consider a curve $s\mapsto z(s,p)=\exp_{z_0}(s\cdot v(p))$ in $S_0$, where $\exp_{z_0}(\cdot)$ is an exponential map
around $z_0$ defined for some metric on $S_0$. We define $b_0(s,p):=\Phi_0(\pa_sz(s,p))$. 

Let us check that $b_0(s,p)$ has the desired properties. Clearly, it lies in the image of $\Phi_0$. The properties of the exponential map imply that $s\mapsto \wt z(s,v)=\pa_s\left(\exp_{z_0}(s\cdot v)\right)$ is a family of paths uniformly regular at $s=0$ w.r.t $v$. These paths are admissible in the tangent algebroid $\T S_0$. Now, since $b_0(s,p)$ is obtained as a composition of $\wt z(s,v)$ with a continuous map $p\mapsto v(p)$ and a smooth map $\Phi_0$, the uniform regularity is preserved. Since $\Phi_0$ is an algebroid morphism, also admissibility is preserved. Finally, observe that $b_0(0,p)=\Phi_0\left[\pa_s|_{0}\exp_{z_0}(s\cdot v(p))\right]=\Phi_0(v(p))=\Phi_0\left(\left(\Phi_0|_V\right)^{-1}\right)(p)=p$.

Similarly, we construct a family of admissible paths $s\mapsto b_1(s,q)\in E$ parametrised by $q\in\eS_1$ and such that the following holds:
\begin{itemize}
\item the family is uniformly regular w.r.t $q$ at $s=0$,
\item $b_1(0,q)=q$,
\item $b_1(s,q)$ lies in the image of $\Phi_1$.
\end{itemize} 

If the cones $\K_\tau^u$ and $\eS_1\oplus\Lambda_{\ul x(t_1)}$ are not separable, by Lemma \ref{lem:separation}, there exists a vector $\bm k\in\K_\tau^u\cap\left( \eS_1\oplus\Lambda_{\ul x(t_1)}\right)$ and vectors $ e_1,\hdots  e_{m_1}\in E_{x(t_1)}\oplus\theta_{\ul x(t_1)}$ such that 
\begin{itemize}
\item $\spann\{e_1,\hdots,e_{m_1},\eS_1\}=E_{x(t_1)}$,
\item $\bm k\pm e_i\in\K_\tau^u$ for $i=1,\hdots,m_1$.
\end{itemize}
In the last formula we understand $\bm k+e_i$ as $\bm k+(e_i+\theta_{\ul x(t_1)})$, where $e_i+\theta_{\ul x(t_1)}\in E_{x(t_1)}\oplus\T_{\ul x(t_1)}=\bm A_{\bm x(t_1)}$. In view the first property, we can decompose $E_{x(t_1)}\approx\R^m=\R^{m_1}\oplus\R^{m_2}=\spann\{e_1,\hdots,e_{m_1}\}\oplus\eS_1$ and choose a basis $(\wt e_1,\hdots,\wt e_{m_2})$ of $\eS_1$. 

Without the loss of generality (cones are invariant under rescaling) we may assume that $\bm k$ projects to $-\pa_t$ under $\operatorname{p}_{\T\R}:\bm A=E\times\T\R\ra\T\R$. Observe that the $E$-projection $\vec k_0:=\operatorname{p}_E(\bm k)$ belongs to $\eS_1$.

Introduce $\bm b_0(s,p):=(b_0(s,p),0)\in E\times\T\R=\bm A$.
We can consider variations $\bm f(\bm x(t,s,p),u^\A_s(t))$ associated with symbols $\A=(\tau_i, v_i,\tau,\delta t_i, \delta t)_{i=1,\hdots,k}$ and initial base-point variations $\bm x_0(s,p)=\bm \tau(\bm b_0(s,p))$ as in Theorem \ref{thm:1st_main}. From \eqref{eqn:htp_1st_lem}, there exists a family of $\bm A$-paths $s\mapsto\bm d^{\A,p}(s)$ defined for $0\leq s\leq\theta$ such that
\begin{equation}\label{eqn:var_htp_1}\begin{split}
&[\bm b_0(s,p)]_{s\in[0,\eps]}[\bm{f}(\bm x(t,\eps,p)),u^\A_\eps(t)]_{t\in[t_0,t_1+\eps\del t]}\\
&=[\bm{f}(\bm x(t),u(t)))_{t\in[t_0,t_1]}[\bm{d}^{\A,p}(s)]_{s\in[0,\eps]},
\end{split}\end{equation} 
Observe that, due to \eqref{eqn:d_at_0}, $\bm d^{\A,p}(0)\in\K^u_\tau$ and, moreover, all elements of $\K^u_\tau$ can be obtained in this way.

Choose symbols $\A$, $\A_i$, and $\BB_i$ and elements $p,p_i,\wt p_i\in\eS_0$ such that  $\bm d^{\A,p}(0)=\bm k$, $\bm d^{\A_i,p_i}(0)=\bm k+\bm e_i$, and $\bm d^{\BB_i,\wt p_i}(0)=\bm k-\bm e_i$. Since we deal with a finite set of symbols, we can assume that they all are of the form $(\tau_i, v_i,\tau, \delta t_i, \delta t)_{i=1,\hdots,k}$, where $\tau_i$, $v_i$, $\tau$, and $k$ are
fixed, and that they differ by $\del t_i$ and $\del t$ (see Remark \ref{rem:var_dt=0}). Define for any
$\vec{r}=\sum_{i=1}^{m_1}r^i e _i\in B^{m_1}(0,1)\oplus\theta_{m_2}=\{\vec r:\|\vec r\|=\sum_i|r_i|\leq 1\}\subset E_{x(t_1)}$ a new symbol
\begin{align*} \A(\vec{r})&=\left(1-\sum_{i=1}^m|r^i|\right)\A+\sum_{i=1}^m
h^+(r^i)\A_i+\sum_{i=1}^m h^-(r^i)\BB_i,
\intertext{and a new element of $\eS_1$}
p(\vec{r})&=\left(1-\sum_{i=1}^{m_1}|r^i|\right)p+\sum_{i=1}^{m_1}
h^+(r^i)p_i+\sum_{i=1}^{m_1} h^-(r^i)\wt p_i,
\end{align*}
 where
$h^+(r)=\max\{r,0\}$ and $h^-(r)=\max\{-r,0\}$ are non-negative. 
From \eqref{eqn:d_at_0} we get
$$\bm{d}^{\A(\vec{r}),p(\vec r)}(0)=\bm k+\vec{r}.$$
Now form Theorem \ref{thm:1st_main} there exists $\theta>0$ such that $\bm{d}^{\A(\vec{r}),p(\vec r)}(s)$ is well-defined for $0\leq s\leq\theta$ and all $\vec r\in B^{m_1}(0,1)\oplus\theta_{m_2}$ and, moreover, it is uniformly regular w.r.t. $\vec r$ at $s=0$. 

Repeating the reasoning from the proof of Theorem \ref{thm:separation_K_Lambda} we can show that the $\T\R$-projections  $\ul d^{\A(\vec r),p(\vec r)}(s)$ satisfy 
\begin{equation}\label{eqn:cost_less_zero1}
\int_0^\eps\ul d^{\A(\vec r)}(s)\dd s<0
\end{equation}  for all
$\eps\leq\eta$ and all $\vec r\in B^{m_1}(0,1)$, where $0<\eta\leq\theta$ is a fixed number. 

Projecting $\B d^{\A(\vec{r}),p(\vec r)}(s)$ to $E$
we obtain a family of bounded measurable admissible paths
$d^{\A(\vec{r}),p(\vec r)}(s)$ uniformly
regular at $s=0$ w.r.t. $\vec{r}\in {B}^{m_1}(0,1)\oplus\theta_{m_2}\subset \R^{m}$, and such that
$d^{\A(\vec{r}),p(\vec r)}(0)=\vec{r}+\vec k_0\in\R^m\approx E_{x(\tau)}$. 

For 
$$\vec s\in\theta_{m_1}\oplus B^{m_2}(0,1)=\{\vec s=\sum_{i=1}^{m_2}\wt e_i s^i\in\eS_1:\|\vec s\|=\sum_i|s^i|\leq 1\}$$ 
we have a family of $E$-paths $s\mapsto b_1(s,\vec s+\vec k_0)$. This family is uniformly regular w.r.t. $\vec s$ at $s=0$ and, moreover, $b_1(0,\vec s+\vec k_0)=\vec s+\vec k_0$. 

The families $a_{\vec r}(s):=d^{\A(\vec r), p(\vec r)}(s)$ and $b_{\vec s}(s):=b_1(s,\vec s+\vec k_0)$ satisfy the assumptions of Lemma \ref{lem:htp_of_a_r1}, and hence there exist vectors $\vec r_0$, $\vec s_0$ and a number $0<\eps\leq\eta$ such that 
$$[d^{\A(\vec r_0),p(\vec r_0)}(s)]_{s\in[0,\eps]}=[b_1(s,\vec s_0+\vec k_0)]_{s\in[0,\eps]}.$$
Projecting equality \eqref{eqn:var_htp_1} (for $\A=\A(\vec r_0)$ and $p=p(\vec r_0)$) to $E$ and using the above equality, we get
\begin{align*}
[b_0(s,p(\vec r_0))]_{s\in[0,\eps]}\left[f( x(t,\eps,p(\vec r_0)),u^{\A(\vec r_0)}_\eps(t))\right]_{t\in[t_0,t_1+\eps\del t(\vec r_0)]}=\\
\left[{f}(x(t),u(t))\right]_{t\in[t_0,t_1]}[b_1(s,\vec s_0+\vec k_0)]_{s\in[0,\eps]}.
\end{align*}
Since $b_0(s,p(\vec r_0))$ lies in $\Image\Phi_0$ and $b_1(s,\vec s_0+\vec k_0)$ in $\Image\Phi_1$, the trajectories  $f(x(t),u(t))$, with $t\in[t_0,t_1]$, and $f( x(t,\eps,p(\vec r_0)),u^{\A(\vec r_0)}_\eps(t))$, with $t\in[t_0,t_1+\eps\del t(\vec r_0)]$, are $E$-homotopic relative to $(\Phi_0,\Phi_1)$. On the other hand, from \eqref{eqn:cost_less_zero1} we deduce (in the same way as in the proof of Theorem \ref{thm:separation_K_Lambda}) that the cost on the first of these trajectories is smaller. This contradicts the optimality of $(\bm x(t),u(t))$.
\end{proof}

\noindent Now, as a simple consequence of Lemma \ref{lem:K_increase}, we obtain the following result.
\begin{lemma}\label{lem:KK_incerase}
 Let $t\in[t_0,\tau]$ be any regular point of
the control $u$. Then 
$$\K^u_t\subset
\cl\left(\K^u_\tau\right).$$
\end{lemma}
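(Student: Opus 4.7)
The plan is to reduce the claim to Lemma~\ref{lem:K_increase} by exploiting the fact that the two generating pieces of $\K_\tau^u$ behave very differently under variation of $\tau$. Recall
$$\K_\tau^u = \conv\bigl\{\bm B_{t_1 t_0}(\eS_0 \oplus \theta_{\ul x(t_0)}),\ \bm K_\tau^u\bigr\}.$$
The first generating set, $\bm B_{t_1 t_0}(\eS_0 \oplus \theta_{\ul x(t_0)})$, does not depend on $\tau$ at all; it is the same piece in both $\K_t^u$ and $\K_\tau^u$. The second generating set, $\bm K_\tau^u$, is exactly the cone of infinitesimal needle-variations to which Lemma~\ref{lem:K_increase} directly applies.

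First I would note that both generating sets of $\K_t^u$ lie inside $\cl(\K_\tau^u)$. For the initial-point piece this is immediate, since
$$\bm B_{t_1 t_0}(\eS_0 \oplus \theta_{\ul x(t_0)}) \subset \K_\tau^u \subset \cl(\K_\tau^u)$$
by the very definition of $\K_\tau^u$. For the needle-variation piece, Lemma~\ref{lem:K_increase} yields $\bm K_t^u \subset \cl(\bm K_\tau^u) \subset \cl(\K_\tau^u)$, where the last inclusion follows from $\bm K_\tau^u \subset \K_\tau^u$ and the monotonicity of closure.

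Next I would invoke the fact that $\cl(\K_\tau^u)$ is a closed convex cone (it is the closure of a convex cone, and closures of convex cones are again convex cones). Since it contains both generating sets of $\K_t^u$, it must contain their convex conical hull, which is precisely $\K_t^u$. Therefore
$$\K_t^u \subset \cl(\K_\tau^u),$$
as required. No step here is a serious obstacle: the only nontrivial ingredient is Lemma~\ref{lem:K_increase}, whose proof is already in place. The rest is the formal observation that adjoining a $\tau$-independent set to a monotonically growing family of cones preserves the monotonicity up to closure.
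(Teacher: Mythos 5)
Your proof is correct and follows essentially the same route as the paper: apply Lemma~\ref{lem:K_increase} to the needle-variation piece, observe the initial-point piece $\bm B_{t_1t_0}(\eS_0\oplus\theta_{\ul x(t_0)})$ is $\tau$-independent, and conclude by convexity. If anything, your phrasing (a closed convex cone containing both generating sets contains their convex hull) is slightly cleaner than the paper's, which writes the step as an equality $\conv\{\cl(\cdot),\cl(\cdot)\}=\cl(\conv\{\cdot,\cdot\})$ where only the inclusion $\subset$ is actually needed or guaranteed.
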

\begin{proof} 
By Lemma \ref{lem:K_increase}, $\bm K^u_t\subset\cl(\bm K^u_\tau)$, and hence
\begin{align*}
\K^u_t= &\conv\{\bm B_{t_1t_0}(\eS_0\oplus\theta_{\ul x(t_0)}),\bm K^u_t\}\subset  \conv\{ \cl(\bm B_{t_1t_0}(\eS_0\oplus\theta_{\ul x(t_0)})),\cl( \bm K^u_\tau)\}\\
=&\cl\left(\conv\{\bm B_{t_1t_0}(\eS_0\oplus\theta_{\ul x(t_0)}),\bm K^u_\tau\}\right)=\cl\left(\K^u_\tau\right).
\end{align*}
\end{proof}
\noindent The above result allows us to define a convex cone
$$\K^u_{t_1}:=\bigcup_{\tau<t_1, \text{ $\tau$ regular}}\cl\left(\K^u_\tau\right)$$
in $\bm A_{\bm x(t_1)}$. Similarly as in Section \ref{sec:proof_pmp} we have the following result.
\begin{lemma}\label{lem:KK_at_t1}
The the convex cones
$\K^u_{t_1}$ and $\eS_1\oplus\Lambda_{\ul x(t_1)}$ are separable.
\end{lemma}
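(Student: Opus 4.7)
The plan is to imitate directly the proof of Lemma \ref{lem:K_at_t1}, replacing the ray $\bm\Lambda_{\bm x(t_1)}$ by the convex cone $\eS_1\oplus\Lambda_{\ul x(t_1)}$ and the cones $\bm K^u_\tau$ by $\K^u_\tau$. The assumption that $\K^u_{t_1}$ and $\eS_1\oplus\Lambda_{\ul x(t_1)}$ are not separable must be led, via Lemma \ref{lem:separation}, to a non-separability statement involving a single $\cl(\K^u_\tau)$ for some regular $\tau<t_1$, which then contradicts Theorem \ref{thm:separation_K_S} through Lemma \ref{lem:separation_closure}.

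More precisely, I would argue by contradiction. Suppose $\K^u_{t_1}$ and $\eS_1\oplus\Lambda_{\ul x(t_1)}$ cannot be separated. Apply Lemma \ref{lem:separation} in $V=\bm A_{\bm x(t_1)}$, with $W=E_{x(t_1)}$, $K_1=\K^u_{t_1}$, $K_2=\eS_1\oplus\Lambda_{\ul x(t_1)}$, and an appropriate subspace replacing $S$ (exactly as in the proof of Theorem \ref{thm:separation_K_S}, where $\eS_1$ together with the span of the $e_i$'s fills $E_{x(t_1)}$). We obtain a vector $\bm k\in\K^u_{t_1}\cap(\eS_1\oplus\Lambda_{\ul x(t_1)})$ together with vectors $e_1,\dots,e_{m_1}\in E_{x(t_1)}\oplus\theta_{\ul x(t_1)}$ satisfying the spanning condition and such that $\bm k\pm e_i\in\K^u_{t_1}$ for every $i$.

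Now I use the fact that $\K^u_{t_1}=\bigcup\cl(\K^u_\tau)$ is the union of an increasing family of convex cones indexed by regular $\tau<t_1$ (monotonicity of the family is exactly Lemma \ref{lem:KK_incerase}). Since only finitely many vectors $\bm k,\bm k\pm e_1,\dots,\bm k\pm e_{m_1}$ are involved, a single regular $\tau<t_1$ can be chosen so that all of them lie in $\cl(\K^u_\tau)$. Thus the conditions of Lemma \ref{lem:separation} that witness non-separability still hold with $\K^u_{t_1}$ replaced by $\cl(\K^u_\tau)$, showing that $\cl(\K^u_\tau)$ and $\eS_1\oplus\Lambda_{\ul x(t_1)}$ are not separable. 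By Lemma \ref{lem:separation_closure} this would also mean that $\K^u_\tau$ and $\eS_1\oplus\Lambda_{\ul x(t_1)}$ are not separable, contradicting Theorem \ref{thm:separation_K_S}.

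The argument is almost mechanical once Theorem \ref{thm:separation_K_S} and Lemma \ref{lem:KK_incerase} are in place; the only mildly delicate point is the bookkeeping in the application of Lemma \ref{lem:separation} to ensure that the subspace $S$ (resp.\ $\eS_1\oplus\Lambda_{\ul x(t_1)}$) plays exactly the same role as in Theorem \ref{thm:separation_K_S}, so that the non-separability at level $\K^u_{t_1}$ descends, via the finiteness of witnesses and the monotonicity of the family $\{\cl(\K^u_\tau)\}_\tau$, to non-separability at some level $\cl(\K^u_\tau)$. I do not expect any essential obstacle: the whole point of introducing the direct-limit cone $\K^u_{t_1}$ and of Lemma \ref{lem:separation_closure} was precisely to make this passage straightforward.
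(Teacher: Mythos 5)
Your proposal is correct and follows essentially the same route as the paper: the paper's own proof is the one-line instruction to repeat the argument of Lemma \ref{lem:K_at_t1} (apply Lemma \ref{lem:separation} with $S=\eS_1$, use the finitely many witnesses $\bm k,\bm k\pm e_i$ together with the monotonicity of $\{\cl(\K^u_\tau)\}$ to descend to a single regular $\tau<t_1$, then invoke Lemma \ref{lem:separation_closure} and Theorem \ref{thm:separation_K_S}), which is exactly what you have spelled out.
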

\begin{proof}
If $\K^u_{t_1}$ and $\eS_1\oplus\Lambda_{\ul x(t_1)}$ were not separable, then, by repeating the argument from the proof of Lemma \ref{lem:K_at_t1}, the cones $\cl(\K^u_t)$ and  $\eS_1\oplus\Lambda_{\ul x(t_1)}$ would not be separable for some regular $t<t_1$. Consequently, also $\K^u_t$ and  $\eS_1\oplus\Lambda_{\ul x(t_1)}$ would not be separable (Lemma \ref{lem:separation_closure}), which contradicts Theorem \ref{thm:separation_K_S}. 
\end{proof}
Now we can finish the proof of Theorem \ref{thm:pmp_A_rel}. From the previous lemma we can deduce that there exists a covector $\bm \xi(t_1)\in\bm A^\ast_{\bm x(t_1)}$ separating $\K^u_{t_1}$ and $\eS_1\oplus\Lambda_{\ul x(t_1)}$; i.e.,
$$\<\bm d,\bm \xi(t_1)>\leq 0\leq\<\bm s,\bm \xi(t_1)>\quad\text{for any $\bm d\in\K^u_{t_1}$ and $\bm s\in\eS_1\oplus\Lambda_{\ul x(t_1)}$}.$$

We define $\bm\xi(t):=\bm B^\ast_{t t_1}\bm\xi(t_1)$. Now, since $\bm K_{t_1}^u\subset\K_{t_1}^u$ and $\bm\Lambda_{\bm x(t_1)}=\theta_{x(t_1)}\oplus\Lambda_{\ul x(t_1)}\subset\eS_1\oplus\Lambda_{\ul x(t_1)}$, the covector $\bm\xi(t_1)$ separates also $\bm K^u_{t_1}$ and $\bm\Lambda_{\bm x(t_1)}$. Consequently, $\bm\xi(t)$ satisfies the assertion of Theorem \ref{thm:pmp_A}. On the other hand, the separating covector $\bm\xi(t_1)$ must vanish on the linear subspaces $\bm B_{t_1t}^u(\eS_0\oplus\theta_{\ul x(t_0)})\subset\K^u_{t_1}$ and $\eS_1\oplus\theta_{\ul x(t_1)}\subset \eS_1\oplus\Lambda_{\ul x(t_1)}$,
which gives additional conditions from Theorem \ref{thm:pmp_A_rel}. \hfill$\qed$

\appendix
\renewcommand{\chaptername}{Appendix}

\chapter{Differnetial geometry}\label{app:dif_geom}

\section{Lie groupoids}\label{sapp:groupoids}

In this section we give the definition of a Lie groupoid, study some fundamental examples and recall the construction of a Lie algebroid of a Lie groupoid. Later we address some questions about integrability of Lie algebroids. The discussion is based mostly on \cite{mackenzie}.
\subsection{Lie groupoids}

\begin{definition} A \emph{Lie groupoid}\index{Lie groupoid} consists of two manifolds: $\GG$ (a \emph{groupoid}) and $M$ (a \emph{base}), together with two surjective submersions $\alpha,\beta:\GG\lra M$ called the \emph{source}\index{source map} and the \emph{target maps}\index{target map}, a smooth map $\iota:x\mapsto\iota_x$, $M\lra\GG$, called the \emph{inclusion map}\index{inclusion map}, and a smooth partial multiplication $(h,g)\mapsto hg$, $\GG\ast\GG=\{(h,g)\in\GG\times\GG: \alpha(h)=\beta(g)\}\lra\GG$, subject to the following conditions:
\begin{itemize}
\item $\alpha(hg)=\alpha(g)$ and $\beta(hg)=\beta(h)$ for all $(h,g)\in \GG\ast\GG$;
\item the partial multiplication is associative; i.e., $j(hg)=(jh)g$ for all $j$, $h$, $g$ such that $(j,h),(h,g)\in\GG\ast\GG$;
\item $\alpha(\iota_x)=\beta(\iota_x)=x$ for all $x\in M$;
\item $\iota$ is a two-sided identity; i.e., $g\iota_{\alpha(g)}=\iota_{\beta(g)}g=g$ for all $g\in\GG$;
\item each $g\in\GG$ has a two-sided inverse $g^{-1}\in\GG$ such that $\alpha(g^{-1})=\beta(g)$, $\beta(g^{-1})=\alpha(g)$, $g^{-1}g=\iota_{\alpha(g)}$, and $gg^{-1}=\iota_{\beta(g)}$.
\end{itemize}
A \emph{morphism of Lie groupoids}\index{morphism of Lie groupoids} $\GG$ and $\wt\GG$ is a pair of smooth maps $F:\GG\lra\wt\GG$ and $f:M\lra\wt M$ which preserves the source and target maps; i.e., $ f\circ \alpha=\wt\alpha\circ F$ and $ f\circ\beta= \wt\beta\circ F$, and preserves the multiplication; that is, $F(h)F(g)=F(hg)$ for all $(h,g)\in\GG\ast\GG$.  

\end{definition}

From the point of view of category theory, a groupoid may be regarded as a small category with the set of objects equal to $M$ and the set of arrows equal to $\GG$, and such that all the arrows are invertible. The word ''Lie'' refers to the fact that the groupoid structure is compatible with the smooth structures on $M$ and $\GG$ (similarly as a Lie group is a group with a smooth structure compatible with the multiplication). It is an easy exercise to show that the assumption that $\alpha$ and $\beta$ are surjective submersions imply that $\GG\ast\GG=(\alpha\times\beta)^{-1}(\Delta_M)$ has a smooth structure of a submanifold  of $\GG\times\GG$. Moreover, it follows from the smoothness of the partial multiplication and the properties of $\alpha$ and $\beta$ that the inverse mapping $g\mapsto g^{-1}$ is a diffeomorphism. Details can be found in \cite{mackenzie}.

Lie groupoids appear naturally in many situations. A basic example is a Lie group $G$, with
$M=\{\textrm{pt}\}$ being a single point, trivial $\alpha$ and $\beta$, and  group multiplication. Another standard
example is a \emph{pair groupoid}\index{pair groupoid} $\GG=M\times M$ of a manifold $M$, with the source $\alpha(x,y)=y$, the target $\beta(x,y)=x$, and the multiplication $(x,y)(y,z)=(x,z)$.

For a (right) principal $G$-bundle $G\lra P\overset\pi\lra M$ we can construct an  important example of a \emph{gauge groupoid
$\GG_P=P\times P/G$}\index{gauge groupoid} over $M$.  In $\GG_P$ we identify pairs $(p,q)$ and $(pg,q g)$ for all $p,q\in P$ and $g\in G$. The source and target maps are simply
$\alpha[(p,q)]=\pi(q)$ and $\beta[(p,q)]=\pi(p)$, and the
multiplication reads as $[(p,q)][(q,r)]=[(p,r)]$. For the two extreme cases: ($P=M$, $G=\{1\}$) and ($P=G$, $M=\{\textrm{pt}\}$), $\GG_P$ is the pair groupoid $M\times M$ and the Lie group $G$, respectively. Note that $\GG_P$ can be regarded as a pair groupoid $P\times P$ divided by the action of $G$ (all the groupoid data for $P\times P$ is $G$-equivariant).

For a Lie groupoid $\GG$ and $x\in M$ we may define an \emph{$\alpha$-fibre of $\GG$ over $x$}\index{$\alpha$-fibre}
$$\GG_x:=\{g\in\GG:\alpha(g)=x\}\subset\GG.$$
Note that $\GG_x$ is a closed embedded submanifold of $\GG$. The groupoid $\GG$ is called \emph{$\alpha$-simply connected}\index{Lie groupoid!$\alpha$-simply connected} if each of it $\alpha$-fibres is simply connected. 

Take now an element $g\in\GG$. A \emph{right translation}\index{right translation} corresponding to $g$ is $R_{g}:\GG_{\beta(g)}\lra\GG_{\alpha(g)}$ defined simply as $h\mapsto hg$.

\subsection{A Lie algebroid of a Lie groupoid}

In this part we describe the construction of a Lie algebroid of a Lie groupoid basing mostly on \cite{mackenzie,weinstein_silva}. The procedure follows closely
the standard construction of a Lie group--Lie algebra reduction.

Consider a Lie groupoid $\GG$ over $M$ and the right action of $\GG$ on itself.  Since the right translation $R_g:\GG_{\beta(g)}\ra\GG_{\alpha(g)}$ is a diffeomorphism of $\alpha$-fibres (not the whole $\GG$) there is a sense speaking of right-invariant vector fields on $\GG$ only for fields tangent to $\alpha$-fibres. 

Denote by $\T^\alpha \GG:=\ker\T\alpha\subset\T\GG$ the distribution tangent to  the foliation of $\GG$ by $\alpha$-fibres $\GG^\alpha=\{\alpha^{-1}(x):x\in M\}$. A vector field $X\in\Sec(\T^\alpha\GG)$ is said to be \emph{right-invariant}\index{right-invariant vector field} if $(\T R_g)(X(h))=X(hg)$ for all $(h,g)\in\GG\ast\GG$. The set of right-invariant vector fields will be denoted by  $\XX_R(\GG)$. Observe that a right-invariant vector field is uniquely determined by its value along the identity section $\iota(M)$. Indeed, we have $X(g)= (\T R_g)X(\iota_{\beta (g)})$. Consequently, we can identify the space  $\XX_R(\GG)$ with the space of sections of the bundle $\AG:=\T^\alpha\GG|_{\iota(M)}\lra M$
$$\XX_R(\GG)\approx\Sec(\AG, M).$$

Since $\AG\lra M$ is a pullback of $\T^\alpha\GG\lra\GG$ via $\iota:M\lra G$,
$$\xymatrix{ \AG\ar[d]\ar[rr]  && 
\T^\alpha\GG\ar[d] \\  M\ar[rr]^{\iota} && \GG },$$
it has a smooth vector bundle structure induced from $\T^\alpha\GG$. What is more, since the Lie bracket of right-invariant vector fields on $\GG$ is again right-invariant, the Lie bracket on $\XX_R(\GG)$ induces a natural skew-symmetric bilinear bracket $[\cdot,\cdot]$ on sections of $\AG$. This bracket satisfies the Liebniz rule \eqref{eqn:lieb_rule} for $\rho=\T\beta|_{\AG}$. The bundle $\AG\ra M$, together with $[\cdot,\cdot]$ and $\rho$, is a Lie algebroid called \emph{a Lie algebroid of a Lie groupoid $\GG$}\index{Lie algebroid!of a Lie groupoid|main}. 
Lie algebroids which come from some Lie groupoid by the construction described above are called \emph{integrable}.\index{Lie algebroid!integrable}

Note that the maps $\T R_{g^{-1}}:\T^\alpha_g\GG\ra\T_{\iota_{\beta(g)}}^\alpha\GG=\AG_{\beta(g)}$ defined point-wise for all $g\in\GG$ give rise to a vector bundle map (\emph{reduction map})\index{reduction map}
\begin{equation}\label{eqn:reduction}
\xymatrix{ {\T ^\alpha\GG}\ar[d]_{\T\beta}\ar[rr]^{\mathcal{R}}  &&
\AG\ar[d]_{\rho} \\ \T M\ar[rr]^{\id_{\T M}} && \T M },
\end{equation}
which is a fibre-wise isomorphism.

The construction of $\AG$ can be repeated also for the left action of $\GG$ and left-invariant vector fields. It is a matter of convention which construction we use. 

Natural examples of Lie algebroids are, in fact, associated with Lie groupoids. A Lie algebroid associated with a Lie group $G$ is its Lie algebra $\g\approx\T_eG$. For the pair groupoid $M\times M$, $\alpha$-fibres are simply $\alpha^{-1}(x)=M\times\{x\}$, and hence $\T^\alpha_x\GG\approx\T_x M$. A Lie algebroid associated with this groupoid is the \emph{tangent algebroid}\index{tangent algebroid|main} on $\T M\lra M$. A Lie algebroid associated with the gauge groupoid $\GG_P$ is called an \emph{Atyiah algebroid}. It will be described in detail in Section \ref{sapp:atiyah}.

We have shown above that to every Lie groupoid $\GG$ corresponds a Lie algebroid $\AG$. In fact, also every Lie groupoid morphism $F:\GG\lra\wt\GG$ over $f:M\lra\wt M$ induces a natural morphism of Lie algebroids $\mathcal{A}(F):\AG\lra\mathcal{A}(\wt G)$ over $f:M\lra\wt M$ which can be described as follows. Since $F$ preserves the source map, the vector bundle morphism $\T F:\T\GG\lra\T\wt\GG$ restricts to 
$$
\xymatrix{ {\T ^\alpha\GG}\ar[d]\ar[rr]^{\T^\alpha F}  &&
\T^\alpha\wt\GG\ar[d] \\ \GG\ar[rr]^{F} && \wt\GG }.
$$
Now, since $\wt\iota\circ F=f\circ \iota$, the derivative $\T^\alpha F$ induces a map of pullbacks $\mathcal{A}(F):\AG=\iota^\ast\T^\alpha\GG\lra\mathcal{A}(\wt\GG)=\wt\iota^\ast\T^\alpha\wt\GG$ over $f:M\lra\wt M$. One can check that this is a morphism of Lie algebroids. Lie algebroid morphisms of the form $\mathcal{A}(F)$ are called \emph{integrable}.\index{integrable Lie algebroid morphism}  In fact the association of a Lie algebroid $\AG$ to a Lie groupoid $\GG$, and a Lie algebroid morphism $\mathcal{A}(F)$ to a morphism of Lie groupoids $F$ is a functor form the category of Lie groupoids to the category of Lie algebroids. In literature it is known as a \emph{Lie functor}\index{Lie functor}.

\subsection{Lie theory}\index{Lie theory for Lie algebroids}

Since a Lie groupoid--Lie algebroid reduction can be considered as a generalisation of the Lie group--Lie algebra reduction, it is natural to ask a question about possible extension of Lie integrability theorems to this new context. This topic is extensively treated in \cite{mackenzie} and solved by \cite{crainic_fernandes}. In this work we are interested in two problems:
\begin{itemize}
\item Integration of Lie algebroids---does every Lie algebroid is a Lie algebroid of some Lie groupoid?
\item Integration of Lie algebroids morphism---can we lift a morphism of two integrable Lie algebroids to a morphism of the corresponding groupoids?
\end{itemize}

\label{CF}The answer to the first problem is in general negative. A complete solution was given by \cite{crainic_fernandes} (see also \cite{almeria,almeida_kumpera,almeida_molino,mackenzie_1987,cattaneo_felder}). The idea goes back to covering theory. Recall that the universal cover $\wt X$ of a topological space $X$ can be constructed as a space of homotopy classes of paths emerging from a fixed point $x_0\in X$. If $X=G$ is a Lie group, each sufficiently regular path in $G$ can be reduced to a path in $\g$---its Lie algebra. Now it turns out that the homotopies of paths in $G$ can be reduced to homotopies in $\g$, which are expressed entirely in terms of the Lie algebra structure, without referring to the structure of the underling Lie group. Consequently, the universal cover $\wt G$ of $G$ can be defined as the space of paths in $\g$ divided by the equivalence relation coming from homotopy. The group structure on $\wt G$ is given by the composition of paths \cite{duistermaat_kolk}. The same construction can be repeated for Lie algebroids, yet we have to use admissible paths and algebroid homotopies (see Chapter \ref{ch:E_htp}). The quotient space, with the multiplication defined by the composition of admissible paths, has a structure of an $\alpha$-simply connected topological groupoid. Unlike to the case of a Lie algebra, there may be some obstructions to introduce a smooth structure on this groupoid. These are described in \cite{crainic_fernandes}. Theorem \ref{thm:int_htp} is closely related with the ideas sketched above. Note, however, that we work in a measurable category, whereas Crainic and Fernandes use the smooth data.

The second integrability problem has a positive solution under mild topological assumptions.
\begin{theorem}[\cite{mackenzie_xu}]\label{thm:Lie1}
Consider Lie groupoids $\mathcal{H}$ over $S$ and $\GG$ over $M$, and suppose that $\Phi:\mathcal{A(H)}\ra\AG$ over $f:S\lra M$ is an algebroid morphism. If $\mathcal{H}$ is $\alpha$-simply connected, there is an unique morphism of Lie groupoids $F:\mathcal{H}\ra\GG$ over $f:S\lra M$ such that $\mathcal{A}(F)=\Phi$.
\end{theorem}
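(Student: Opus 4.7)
The plan is to build $F$ fibrewise on the $\alpha$-fibres of $\mathcal{H}$ by lifting admissible paths through $\Phi$ and then using Theorem \ref{thm:int_htp} to integrate them in $\GG$. This generalises the construction sketched in the remark after Theorem \ref{thm:int_htp} for the pair-groupoid case, the $\alpha$-simple-connectedness of $\mathcal{H}$ playing the same role as the simple connectedness of $S$ in the pair-groupoid argument.

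More precisely, fix $y\in S$ and let $h\in\mathcal{H}_y$. Since $\mathcal{H}_y$ is simply connected (and in particular path-connected), pick any smooth curve $\eta:[0,1]\lra\mathcal{H}_y$ with $\eta(0)=\iota_y$ and $\eta(1)=h$. Using the reduction map \eqref{eqn:reduction} for $\mathcal{H}$, the curve $\eta$ determines an admissible path $a_\eta(t):=\mathcal{R}^{\mathcal{H}}(\pa_t\eta(t))$ in $\mathcal{A(H)}$ over $\beta\circ\eta$. Push it forward by $\Phi$ to obtain an admissible path $\Phi\circ a_\eta$ in $\AG$, covering $f\circ\beta\circ\eta$. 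By Theorem \ref{thm:int_htp} this admissible path lifts uniquely to an ACB curve $g:[0,1]\lra\GG_{f(y)}$ with $g(0)=\iota_{f(y)}$. Set $F(h):=g(1)$. Well-definedness is the key point: if $\eta_0,\eta_1$ are two such curves, the $\alpha$-simple connectedness of $\mathcal{H}$ yields a fixed-end-point homotopy $\eta(t,s)$ in $\mathcal{H}_y$; its reduction gives an $\mathcal{A(H)}$-homotopy with fixed end-points, which $\Phi$ carries to an $\AG$-homotopy with fixed end-points, and the second part of Theorem \ref{thm:int_htp} lifts it to a homotopy in $\GG_{f(y)}$ with the same end-points. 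Hence $g_0(1)=g_1(1)$ and $F$ is well defined.

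Having defined $F:\mathcal{H}\lra\GG$ I would verify, in the following order: (a) $\alpha_\GG\circ F=f\circ\alpha_\mathcal{H}$ and $\beta_\GG\circ F=f\circ\beta_\mathcal{H}$ (the first by construction, the second from the anchor-compatibility in \eqref{eqn:alg_morph} applied to $\Phi$, which forces $\beta_\GG(g(t))=f(\beta_\mathcal{H}(\eta(t)))$); (b) multiplicativity, checked by fixing $h_1,h_2$ composable in $\mathcal{H}$ and concatenating a path from $\iota_{\alpha(h_2)}$ to $h_2$ with a suitable right-translate of a path from $\iota_{\alpha(h_1)}$ to $h_1$, so that Theorem \ref{thm:int_htp} and the composition property of admissible paths give $F(h_1h_2)=F(h_1)F(h_2)$; (c) $\mathcal{A}(F)=\Phi$, obtained by differentiating the defining construction at $\iota_y$ along curves $\eta$ tangent to an arbitrary $v\in\mathcal{A(H)}_y$ — the lifted curve in $\GG_{f(y)}$ is tangent to $\Phi(v)$; (d) smoothness of $F$, which is the main technical obstacle. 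For (d) one should argue that $F$ arises locally as the time-$1$ map of an ODE on $\GG$, with right-hand side depending smoothly on parameters (the path $\eta$ and basepoint $y$), so that Theorem \ref{thm:param_dif}-type results on smooth dependence on parameters imply smoothness of $F$.

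The hard part will be the smoothness (d) and the global multiplicativity (b): both require covering $\mathcal{H}$ by families of curves $\eta$ depending smoothly on $(y,h)$, which is feasible locally because $\mathcal{H}_y$ is a smooth manifold varying smoothly with $y$, but one has to choose the curves consistently so that the fibrewise lifts glue into a smooth map on $\mathcal{H}$. Once smoothness is in place, uniqueness is immediate: any Lie groupoid morphism $F'$ with $\mathcal{A}(F')=\Phi$ must, by the functoriality of the Lie operation and the uniqueness statement in Theorem \ref{thm:int_htp}, send $\eta$ to the unique lift of $\Phi\circ a_\eta$ starting at $\iota_{f(y)}$, hence coincide with $F$ on every $\mathcal{H}_y$ and therefore on $\mathcal{H}$.
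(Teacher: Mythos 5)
Your proposal is correct and follows exactly the route the paper itself indicates: the paper does not actually prove Theorem \ref{thm:Lie1} (it imports it from \cite{mackenzie_xu}), but the remark following Theorem \ref{thm:int_htp} sketches precisely this path-lifting argument for the pair-groupoid case and asserts that a similar argument works in full generality --- your proposal is that generalisation, with the well-definedness, multiplicativity and $\mathcal{A}(F)=\Phi$ steps all worked out correctly. The one part you leave as an outline, the smoothness of $F$, is indeed the technical core of the Mackenzie--Xu proof, but your strategy is the standard workable one; note only that the gluing consistency you worry about is automatic from the well-definedness you already established, so smoothness is a purely local matter of choosing a smooth family of paths $\eta_{(y,h)}$ near a given point and invoking smooth parameter dependence of the lifting ODE.
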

 
We shall now study groupoid morphisms $\mathcal{H}\lra\GG$ in a special situation when $\mathcal{H}=S\times S$ is a pair groupoid. From Theorem \ref{thm:Lie1} we can easily derive the following result.

\begin{lemma}\label{lem:unique_mfd_lifts} Let $\Phi:\T S\lra \AG$ over $f:S\lra M$ be a morphism of Lie algebroids. Then, if $S$ is connected, there is at most one morphism of Lie groupoids $F:S\times S\lra\GG$ integrating $\Phi$. If $S$ is simply connected then such a morphism $F$ exists. 
\end{lemma}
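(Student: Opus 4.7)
The plan is to deduce existence from Theorem \ref{thm:Lie1} and to prove uniqueness by hand, by lifting paths using the reduction correspondence of Theorem \ref{thm:int_htp}.

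First, I would handle existence. The $\alpha$-fibre of the pair groupoid $S\times S$ over any $y\in S$ is $S\times\{y\}\cong S$, so $S\times S$ is $\alpha$-simply connected precisely when $S$ is simply connected. Under that hypothesis, Theorem \ref{thm:Lie1} applied to $\mathcal{H}=S\times S$ and the given $\Phi:\T S=\mathcal{A}(S\times S)\lra\AG$ supplies a (unique) morphism $F:S\times S\lra\GG$ with $\mathcal{A}(F)=\Phi$.

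For uniqueness under the weaker assumption that $S$ is merely connected, assume $F_1,F_2:S\times S\to\GG$ both integrate $\Phi$. I would fix a base point $x_0\in S$ and, for each $y\in S$, choose a smooth path $\gamma:[0,1]\to S$ from $x_0$ to $y$ (possible by connectedness of a manifold). Consider the smooth curves $g_i(t):=F_i(\gamma(t),x_0)\in\GG_{f(x_0)}$, which both start at $\iota_{f(x_0)}$ since $F_i(x_0,x_0)=\iota_{f(x_0)}$. The key computation is to show
\[
\mathcal{R}(\pa_t g_i(t))=\Phi(\dot\gamma(t))\qquad\text{for every } t,
\]
a quantity independent of $i$. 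To establish this, I would apply the morphism identity $F_i\circ R_h=R_{F_i(h)}\circ F_i$ to $h=(x_0,\gamma(t))$ and the curve $\tilde\gamma(t)=(\gamma(t),x_0)$ in the $\alpha$-fibre $(S\times S)_{x_0}$. Differentiating at $\partial_t\tilde\gamma(t)=(\dot\gamma(t),0)$, the left-hand side becomes $\mathcal{A}(F_i)$ applied to $\mathcal{R}_{S\times S}(\pa_t\tilde\gamma(t))=\dot\gamma(t)\in\T_{\gamma(t)}S=\mathcal{A}(S\times S)_{\gamma(t)}$, which equals $\Phi(\dot\gamma(t))$, while the right-hand side is exactly $\T R_{g_i(t)^{-1}}(\pa_t g_i(t))=\mathcal{R}(\pa_t g_i(t))$.

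Having this identity, the uniqueness part of Theorem \ref{thm:int_htp} forces $g_1\equiv g_2$, since both curves in $\GG_{f(x_0)}$ start at $\iota_{f(x_0)}$ and reduce to the same admissible path $\Phi\circ\dot\gamma$. In particular $F_1(y,x_0)=g_1(1)=g_2(1)=F_2(y,x_0)$ for every $y\in S$. The multiplicative formula
\[
F_i(x,y)=F_i(x,x_0)\cdot F_i(x_0,y)=F_i(x,x_0)\cdot F_i(y,x_0)^{-1}
\]
then yields $F_1=F_2$ on all of $S\times S$, finishing the proof. I do not anticipate a genuine obstacle here; the only delicate point is verifying the reduction identity, but this is essentially a bookkeeping exercise that follows from unpacking $\mathcal{A}(F_i)=\Phi$ together with the right-translation equivariance of a groupoid morphism.
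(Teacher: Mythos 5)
Your proof is correct, and while the existence half coincides with the paper's (both invoke Theorem \ref{thm:Lie1} after observing that the $\alpha$-fibres of $S\times S$ are copies of $S$), your uniqueness argument takes a genuinely different route. The paper passes to the universal cover $\pi:\wt S\to S$, notes that $F\circ(\pi\times\pi)$ integrates $\Phi\circ\T\pi$ and is therefore unique by Theorem \ref{thm:Lie1}, and then descends to $S\times S$ by an open--closed--nonempty connectedness argument using that $\pi\times\pi$ is a covering. You instead fix $x_0$, lift each path $\gamma$ from $x_0$ to $y$ to the curves $g_i(t)=F_i(\gamma(t),x_0)$ in the $\alpha$-fibre $\GG_{f(x_0)}$, verify via the right-translation equivariance of $F_i$ and $\mathcal{A}(F_i)=\Phi$ that both curves reduce to the same admissible path $\Phi(\dot\gamma(t))$, and conclude $g_1\equiv g_2$ from the uniqueness clause of Theorem \ref{thm:int_htp}; the identity $F_i(x,y)=F_i(x,x_0)F_i(y,x_0)^{-1}$ then finishes. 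Your computation of $\mathcal{R}(\pa_t g_i(t))=\Phi(\dot\gamma(t))$ checks out (the relevant translation is by $h=(\gamma(t),x_0)^{-1}=(x_0,\gamma(t))$, which carries $(\gamma(t),x_0)$ to $\iota_{\gamma(t)}$, and your right-hand side $\T R_{g_i(t)^{-1}}(\pa_t g_i(t))$ is exactly the reduction map). What your route buys is independence of the uniqueness statement in Mackenzie--Xu's theorem: the mechanism is laid bare as ODE uniqueness for path lifting, which the paper proves itself in Theorem \ref{thm:int_htp}; the cost is that you must carry out the equivariance bookkeeping explicitly, whereas the paper's covering-space argument is shorter once Theorem \ref{thm:Lie1} is taken as given.
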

\begin{proof} If $S$ is simply connected, the existence of $F$ follows immediately from Theorem \ref{thm:Lie1}, as the groupoid $S\times S$ is $\alpha$-simply connected ($\alpha$-fibres are of the form $S\times\{x_0\}$). 

Assume now that $S$ is an arbitrary connected manifold and let $F:S\times S\lra\GG$ be a morphism integrating $\Phi$. Consider the universal cover $\pi:(\wt S,\wt{x_0})\lra(S,x_0)$. Clearly, $\wt S\times\wt S\overset{\pi\times\pi}\lra S\times S$ is a groupoid morphism, hence also the composition $\wt S\times\wt S\overset{\pi\times\pi}\lra S\times S\overset{F}\lra\GG$ is a groupoid morphism. Moreover, it integrates the Lie algebroid morphism $\T\wt S\overset{\T \pi}\lra\T S\overset{\Phi}\lra \AG$. We will prove that uniqueness of $F\circ(\pi\times\pi)$ (following from Theorem \ref{thm:Lie1})  implies the uniqueness of $F$. To see this, observe that if $F\circ(\pi\times\pi)=F^{'}\circ(\pi\times\pi)$ then the set of points in $S\times S$ on which $F$ and $F^{'}$ coincide would be: closed, since $F$ and $F^{'}$ are continuous; nonempty, since $F(x_0,x_0)=F^{'}(x_0,x_0)=\iota_{f(x_0)}$; and open, since $\pi\times\pi$ is a covering. We deduce that it is the whole $S\times S$. 
\end{proof}

Observe now that, if $F:S\times S\lra\GG$, with $S$ connected, is a groupoid morphism integrating $\Phi:\T S\lra\AG$ over $f:S\lra M$, then, for each $x_0\in S$, $\wt\Phi:x\mapsto F(x,x_0)$ is a map from $S$ to the $\alpha$-fibre $\GG_{f(x_0)}$ of $\GG$ such that the following diagram of vector bundle morphisms commutes 
\begin{equation}\label{eqn:int_mfd}\xymatrix{
\T S\ar[r]^(.35){\T\wt\Phi} \ar[rrd]^{\Phi}&\T\left(\GG_{f(x_0)}\right)\ar@{^{(}->}[r]&\T^\alpha\GG\ar[d]^{\mathcal{R}} \\
&&\AG}.
\end{equation}
Note that, if $\wt \Phi:S\lra\GG_{f(x_0)}$ is a smooth map such that \eqref{eqn:int_mfd} is satisfied, then $F_{\wt\Phi}(x,y):=\wt\Phi(x)^{-1}\wt\Phi(y)$ is a groupoid morphism $F_{\wt\Phi}:S\times S\lra \GG$ integrating $\Phi$.

By Lemma \ref{lem:unique_mfd_lifts}, $F_{\wt\Phi}$ is unique. Consequently, if $\wt\Phi,\wt\Phi^{'}:S\lra\GG_{f(x_0)}$ are two maps satisfying \eqref{eqn:int_mfd}, they are related by $\wt\Phi(x)^{-1}\wt\Phi(y)=\wt\Phi^{'}(x)^{-1}\wt\Phi^{'}(y)$, and hence $\wt\Phi^{'}(y)=\wt\Phi^{'}(x)\wt\Phi(x)^{-1}\wt\Phi(y)$.

\begin{corollary}\label{cor:int} Let $S$ be a connected manifold.
If a Lie algebroid morphism $\Phi:\T S\ra\AG$ over $f:S\ra M$ is integrable then, for each $x_0,y_0\in S$ and $g\in \alpha^{-}(f(x_0))\cap\beta^{-1}(f(y_0))$, there exists a unique smooth map $\wt\Phi:S\lra\GG_{f(x_0)}=\alpha^{-1}(f(x_0))$ such that $\mathcal{R}\circ\T\wt\Phi=\Phi$ and $\wt\Phi(y_0)=g$.

Conversely, if $\wt\Phi$ as above exists (for some $x_0$, $y_0$ and $g$), then the Lie algebroid morphism $\Phi$ is integrable. In particular, such $\wt\Phi$ exists if $S$ is simply connected.
\end{corollary}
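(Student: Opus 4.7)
The plan is to reduce everything to the analysis already carried out in the discussion preceding the corollary, where, for a Lie groupoid morphism $F:S\times S\lra\GG$ integrating $\Phi$, the map $\wt\Phi_F(x):=F(x,y_0)$ was shown to verify the commuting diagram \eqref{eqn:int_mfd}. I will obtain the general statement by twisting $\wt\Phi_F$ by right translation in $\GG$, using the fact that both $\mathcal{R}$ and $\T\beta$ are constructed via right-invariant objects and are therefore unaffected by such a twist.

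First, for the forward direction, integrability of $\Phi$ yields a Lie groupoid morphism $F:S\times S\lra\GG$ over $f$ with $\mathcal{A}(F)=\Phi$. Since $\alpha(F(x,y_0))=f(y_0)=\beta(g)$, the element $\wt\Phi(x):=F(x,y_0)\cdot g=R_g(F(x,y_0))$ is well defined, lies in $\alpha^{-1}(f(x_0))=\GG_{f(x_0)}$, and satisfies $\wt\Phi(y_0)=\iota_{f(y_0)}\cdot g=g$. To verify $\mathcal{R}\circ\T\wt\Phi=\Phi$, I would note that $\mathcal{R}=\T R_{h^{-1}}$ at the point $h$, and apply $R_{h_1 h_2}=R_{h_2}\circ R_{h_1}$ together with $(F(x,y_0)g)^{-1}=g^{-1}F(x,y_0)^{-1}$ to get
\[
\mathcal{R}\bigl(\T\wt\Phi(v)\bigr)=\T R_{F(x,y_0)^{-1}}\circ\T R_{g^{-1}}\circ\T R_g\bigl(\T F(v,0_{y_0})\bigr)=\T R_{F(x,y_0)^{-1}}\bigl(\T F(v,0_{y_0})\bigr),
\]
which is exactly $\mathcal{R}\circ\T\wt\Phi_F(v)$; by the discussion before the corollary this equals $\Phi(v)$.

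Second, for uniqueness, suppose $\wt\Phi,\wt\Phi':S\lra\GG_{f(x_0)}$ both satisfy \eqref{eqn:int_mfd} and take value $g$ at $y_0$. Following the recipe already indicated in the excerpt, the maps $F_{\wt\Phi}(x,y):=\wt\Phi(x)\cdot\wt\Phi(y)^{-1}$ and $F_{\wt\Phi'}(x,y):=\wt\Phi'(x)\cdot\wt\Phi'(y)^{-1}$ are Lie groupoid morphisms $S\times S\lra\GG$ over $f$, both integrating $\Phi$ (this is verified by a direct computation using \eqref{eqn:int_mfd} to match tangent directions at $\iota(S)$). Lemma \ref{lem:unique_mfd_lifts} forces $F_{\wt\Phi}=F_{\wt\Phi'}$, and evaluating at $y=y_0$ with $\wt\Phi(y_0)=\wt\Phi'(y_0)=g$ gives $\wt\Phi(x)\cdot g^{-1}=\wt\Phi'(x)\cdot g^{-1}$, hence $\wt\Phi=\wt\Phi'$.

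For the converse, given any $\wt\Phi$ as above I would set $F(x,y):=\wt\Phi(x)\cdot\wt\Phi(y)^{-1}$. The composability condition $\alpha(\wt\Phi(x))=f(x_0)=\beta(\wt\Phi(y)^{-1})$ holds automatically since $\wt\Phi$ lands in $\GG_{f(x_0)}$, and $F$ is seen at once to be a Lie groupoid morphism over $f$; the identity $\mathcal{A}(F)=\Phi$ is exactly the content of \eqref{eqn:int_mfd}. Finally, if $S$ is simply connected then Lemma \ref{lem:unique_mfd_lifts} produces the required groupoid morphism $F$ integrating $\Phi$ automatically, so the previous construction supplies $\wt\Phi$.

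The main obstacle will be the verification of $\mathcal{R}\circ\T\wt\Phi=\Phi$ after the twist by $g$; concretely, one has to check carefully that right translation by the fixed element $g$ commutes with the point-wise right translation that defines $\mathcal{R}$. Everything else is either a straightforward groupoid computation or an appeal to Lemma \ref{lem:unique_mfd_lifts}.
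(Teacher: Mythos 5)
Your proposal is correct and follows essentially the same route as the paper: the corollary is deduced from the discussion surrounding diagram \eqref{eqn:int_mfd} together with the uniqueness statement of Lemma \ref{lem:unique_mfd_lifts}, with the normalisation $\wt\Phi(y_0)=g$ handled by a right translation, which is exactly the twist the paper leaves implicit in its final remark that any two solutions of \eqref{eqn:int_mfd} differ by right multiplication. Your composability-checked formula $F_{\wt\Phi}(x,y)=\wt\Phi(x)\cdot\wt\Phi(y)^{-1}$ is in fact the correct order given the paper's conventions ($\GG_x=\alpha^{-1}(x)$, product $hg$ defined when $\alpha(h)=\beta(g)$), so the discrepancy with the paper's written $\wt\Phi(x)^{-1}\wt\Phi(y)$ is a typo there, not a gap in your argument.
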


\section{The Atiyah algebroid}\label{sapp:atiyah}

In this section we describe the Atiyah algebroid---a Lie algebroid canonically associated with a principal bundle. In particular we study its Lie bracket and give a description of the associated linear Poisson structure. Our discussion is based mostly on \cite{mackenzie}.

\subsection{Invariant vector fields}
\begin{definition} A \emph{principal bundle}\index{principal bundle} $G\lra P\overset\pi\lra M$ is a locally trivial fibre bundle $\pi:P\lra M$, equipped with a (right) free action of the Lie group $G$ on $P$; $(p,g)\mapsto pg$; $P\times G\lra P$ such that its orbits coincide with the fibres of $\pi$. 
\end{definition}

Observe that if $G\lra P\overset\pi\lra M$ is a principal bundle, the action of $G$ on $P$ induces the action on the tangent bundle $\T P$. Denote this action by $R_g$. The quotient space $\T P/G$ has a natural structure of a vector bundle over $M$. We define the addition simply by
\begin{align*}
\left[X_p\right]+\left[Y_{pg}\right]&=\left[R_gX_p+Y_{pg}\right]
\intertext{and the base projection by}
\left[X_p\right]&\mapsto \pi(p).
\end{align*}
It is straightforward to verify that the above constructions are well-defined.

Observe that   section of the quotient bundle $E:=\T P/G\ra M$ can be canonically identified with $G$-invariant vector fields on $P$. Note that, since 
$G$-invariant vector fields on $P$ are closed under the Lie bracket $[\cdot,\cdot]_{\T P}$, we have an induced bracket $[\cdot,\cdot]_E$ on the space of sections of $E$. Clearly,  $[\cdot,\cdot]_{E}$ inherits the skew-symmetry and the Jacobi identity form $[\cdot,\cdot]_{\T P}$, and hence $(\Sec(E),[\cdot,\cdot]_E)$ is a Lie algebra. Moreover, since for any $G$-invariant vector fields $X,Y\in \XX^G(P)$, and every base function $f\in C^\infty(M)$ we have
$$\left[X,\pi^\ast fY\right]_{\T P}=\pi^\ast f\left[X,Y\right]_{\T P}+(\T\pi)(X)(f)Y,$$
and the derivative $(\T\pi)(X)(f)$ depends only on the class of $X$,
the bracket $[\cdot,\cdot]_E$ satisfies
the Leibniz rule 
$$\left[[X],f[Y]\right]_E=f\left[[X],[Y]\right]_E+\rho([X])(f)[Y],$$
where the anchor map $\rho:E\ra\T M$ is defined by  $\rho\left([X]\right)=\T\pi(X)$.
Clearly, $\rho$ satisfies also the compatibility condition \eqref{eqn:ala}.

To sum up, the bundle $E=\T P/G\lra M$, together with the bracket
$(E,[\cdot,\cdot]_E,\rho)$, and the anchor map $\rho:E\lra\T M$ is a Lie algebroid. 

\begin{definition} The Lie algebroid structure on $\pi:\T P/G\lra M$ described above is called an \emph{Atiyah algebroid}\index{Atiyah algebroid|main} of the principal bundle $G\lra P\overset{\pi}\lra M$. 
\end{definition}

As has been already mentioned the Atiyah algebroid can be also described as a Lie algebroid associated with the gauge groupoid $\GG_P$. Now, we shall investigate this structure in detail.

\subsection{The Atiyah sequence}

Observe that, since $G$ acts on the fibres of $\pi$, the action $R_g$ restricts to the space $VP\subset \sT P$ of vertical vectors (i.e., vectors tangent to the fibres of $\pi$).
Since $VP$ is spanned by the fundamental vector fields of the
$G$-action on $P$, we have a canonical isomorphism $VP\simeq
P\times\g$, where $\g$ is the Lie algebra of $G$. The action $R_g$
in this identification reads $R_g(p,a)=(pg,Ad_{g^{-1}}a)$, so 
$K:=VP/G\simeq P\times_G\g$, where the right action of $G$ on $\g$
is $g\mapsto Ad_{g^{-1}}$. What is more, the bracket of two
$G$-invariant vertical vector fields on $P$ corresponds, in this
identification, to the canonical (right) Lie bracket $[\cdot,\cdot]_\g$ on
$\g$. That is, if $x\mapsto (x, a(x))$ and $x\mapsto (x,b(x))$ are
two $G$-invariant sections of $P\times\g\simeq VP$, then
$$[(x,a(x)),(x,b(x))]_{\sT P}=(x,[a(x),b(x)]_\g)\in P\times\g.$$
This shows that the bundle $K$ is a Lie algebroid with the trivial
anchor and the Lie algebra structure in fibres isomorphic to $\g$. Alternatively, we may argue that the sections of $K$ can be identified with $G$-invariant vertical  vector fields on $P$, which are closed under the Lie bracket. The Lie algebroid structure on $K$ is thus the restriction of the Lie algebroid structure on $\T P/G$ to vertical vector fields. 
Hence, we get the following (exact) sequence of Lie algebroid morphisms
called the \emph{Atiyah sequence}\index{Atiyah sequence}:
$$ 0\to K:=P\times_G{\g}\to E\overset{\rho}{\ra}TM\to 0.$$

\subsection{Local description}
Introduce now a local trivialisation $\phi_s:V\times G\ra P|_V$
obtained from a local section $s:V\ra P$ by the formula
$\phi_s(x,g)=s(x)g$. Clearly, we may identify $E|_V$ with $\sT
P|_{s(V)}$, thus $E|_V\simeq \sT M|_V\times\g$. For two local
sections $x\mapsto (X(x),a(x))$ and $x\mapsto (Y(x),b(x))$ in this
trivialisation the Lie bracket reads as
\begin{equation}\label{eqn:bracket1}
\left[(X,a),(Y,b)\right]_E=\left([X,Y]_{\sT
M},[a,b]_\g+X(b)-Y(a)\right)
\end{equation}
and the anchor is $\rho\left((X,a)\right)=X$. In fact, this is just
the product of the Lie algebroids $\sT M|_V\lra V$ and $\g$ (cf.
the last paragraph of Chapter \ref{ch:algebroids}). We have a similar
description globally if the principal bundle $P$ is trivial. Note,
however, that, if we will work with principal bundles over a
neighbourhood of a path in $M$, we can always assume that our
principal bundle is trivial.

In some applications one has to work with a \emph{principal connection}\index{principal connection}
on $P$. It corresponds to a $G$-invariant horizontal distribution
in $\T P$ and is represented by a splitting $E=\T M\oplus_M K$
given by a bundle embedding $\nabla:\T M\ra E$ such that
$\rho\circ\nabla=\id_{\T M}$. If the bundle $K$ is trivial, then
we get another trivialisation $E\simeq_\nabla\sT M\times\g$,
associated with the connection $\nabla$, in which the Lie bracket reads as
\begin{equation}\label{eqn:bracket}
\left[(X,a),(Y,b)\right]_{E}=\left([X,Y]_{\sT M},
F_\nabla(X,Y)+[a,b]_\g+X(b)-Y(a)\right);
\end{equation}
where $F_\nabla$ is the \emph{curvature} of the connection
$\nabla$, i.e.,
$$F_\nabla(X,Y)=[\nabla(X),\nabla(Y)]_E-\nabla\left([X,Y]_{\sT M}\right)\,.$$
The anchor map is still simply $\rho((X,a))=X$. 

Observe that formula \eqref{eqn:bracket1} is a special case of \eqref{eqn:bracket}. Indeed, we may regard a local section $s:V\lra P$ as a $G$-invariant horizontal local distribution on $P$ obtained by spanning $\T s\subset\T P$ by the $G$-action. Clearly the curvature of this distribution vanishes, and hence \eqref{eqn:bracket} and \eqref{eqn:bracket1} coincide in this special case.

\subsection{The linear Poisson structure}

Now we shall describe the linear Poisson structure $\Pi_E$ on the dual bundle $E^\ast=\T^\ast P/G\lra M$ canonically associated with the Lie algebroid structure on $\T P/G$. 

Suppose for simplicity that the vertical subbundle $K=VP/G$ is
trivial (e.g. $P$ is trivial), $K=M\times\g$, and consider a
splitting $E\simeq_\nabla\sT M\oplus_MK$ induced by a principal
connection $\nabla:\sT M\ra E$, so that we get an identification
$E\simeq_\nabla \sT M\times\g$. Let $E^\ast\simeq_\nabla\sT^\ast
M\times\g^\ast$ be the corresponding identification of the dual
bundle.

\begin{theorem}\label{thm:ham_E1}
The Poisson tensor $\Pi_E$ associated with the Lie algebroid
structure on $E$ in the identification
$E^\ast\simeq_\nabla\sT^\ast M\times\g^\ast$ reads as
 $$\Pi_E(p_x,\zeta)=\left(\Pi_{\sT^\ast M}(p_x)+V_{\<\zeta,F_{\nabla}(x)(\cdot,\cdot)>}\right)\times \Pi_{\g^\ast}(\zeta),$$
where $p_x\in T^\ast_x M$, $\xi\in\g^\ast$, $\Pi_{\sT^\ast M}$ and
$\Pi_{\g^\ast}$ are the standard Poisson tensors, and
$V_{\<\zeta,F_{\nabla}(x)(\cdot,\cdot)>}$ is the two-form
$\<\zeta,F_{\nabla}(x)(\cdot,\cdot)>$ associated with the
curvature $F_\nabla(x):\bigwedge^2\sT_x M\ra \g$ understood as a
vertical tensor on $\sT^\ast M$.

Consequently, the Hamiltonian vector field defined by means of $\Pi_E$ and a
Hamiltonian $h:\sT^\ast M\times\g^\ast\lra\R$ reads as
$$\X^E_h(p_x,\zeta)=\left(\X^{\sT^\ast M}_{h(\cdot,\zeta)}(p_x)+V_{\<\zeta,F_{\nabla}(x)(\frac{\pa h}{\pa p}(p_x,\zeta),\cdot)>},
\X^{\g^\ast}_{h(p_x,\cdot)}(\zeta)\right),$$
where with $\X^{\T^\ast M}_h$ and $\X^{\g^\ast}_h$ we denoted the Hamiltonian vector fields associated with Poisson structures  $\Pi_{\sT^\ast M}$ and
$\Pi_{\g^\ast}$.

 In local coordinates,
$p\sim(x^a,p_b)$ and $\zeta\sim(\zeta_\alpha)$,
\begin{align*}\X^E_h(x,p,\zeta)=&\frac{\pa h}{\pa p_a}(x,p,\zeta)\pa_{x^a}+\left(
\zeta_\alpha F^\alpha_{ab}(x)\frac{\pa h}{\pa
p_a}(x,p,\zeta)-\frac{\pa h}{\pa x^b}(x,p,\zeta)\right)\pa_{p_b}\\&+
\zeta_\gamma C^\gamma_{\alpha\beta}\frac{\pa
h}{\pa\zeta_\alpha}\pa_{\zeta_\beta}\,,
\end{align*}
where $F^\alpha_{ab}(x)$
are the coefficients of the curvature $F_\nabla$ and
$C^\alpha_{\beta\gamma}$ are the structure constants of $\g$.
\end{theorem}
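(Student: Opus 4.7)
The plan is to read off the Poisson tensor $\Pi_E$ directly from the general formula \eqref{eqn:poisson}, once the structure functions of the Lie algebroid $E$ are computed in an adapted basis associated with the connection splitting $E\simeq_\nabla \T M\times\g$.

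First I would fix a local coordinate chart $(x^a)$ on $M$, a basis $(\epsilon_\alpha)$ of $\g$ with structure constants $C^\gamma_{\alpha\beta}$, and the corresponding adapted frame of local sections of $E$ given by $e_a:=\nabla(\pa_{x^a})$ and $e_\alpha:=(0,\epsilon_\alpha)$. The induced linear coordinates on $E$ are $(x^a, y^b, \eta^\alpha)$ and on $E^\ast$ they are $(x^a, p_b, \zeta_\alpha)$. Using \eqref{eqn:bracket} I would compute the structure constants in this frame: the anchor $\rho$ sends $e_a\mapsto \pa_{x^a}$ and $e_\alpha\mapsto 0$, so the only nonvanishing component of $\rho^b_i$ is $\rho^b_a=\del^b_a$. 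For the bracket, \eqref{eqn:bracket} applied to pairs of frame sections yields $[e_a,e_b]_E=F^\gamma_{ab}(x) e_\gamma$, $[e_\alpha,e_\beta]_E=C^\gamma_{\alpha\beta} e_\gamma$, and $[e_a,e_\beta]_E=0$ (the covariantly-constant terms $X(b),Y(a)$ vanish because the frame sections of $K$ are the constant ones). Hence the only nonzero structure functions are $c^\gamma_{ab}=F^\gamma_{ab}(x)$ and $c^\gamma_{\alpha\beta}=C^\gamma_{\alpha\beta}$.

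Substituting these data into the general formula \eqref{eqn:poisson} gives
\[
\Pi_{E^\ast}=\pa_{p_a}\wedge\pa_{x^a}+F^\gamma_{ab}(x)\zeta_\gamma\,\pa_{p_a}\wedge\pa_{p_b}+C^\gamma_{\alpha\beta}\zeta_\gamma\,\pa_{\zeta_\alpha}\wedge\pa_{\zeta_\beta}.
\]
The first summand is the canonical Poisson tensor $\Pi_{\T^\ast M}$; the last summand, involving only the $\zeta$-coordinates, is exactly the Lie--Poisson tensor $\Pi_{\g^\ast}$ (independent of $x$ and $p$); the middle term is purely vertical on $\T^\ast M\to M$ and, as a bivector on $\T^\ast M$ depending on the parameter $\zeta$, coincides with the contraction of the $\g$-valued $2$-form $F_\nabla(x)$ with $\zeta$, written as a vertical bivector $V_{\<\zeta, F_\nabla(x)(\cdot,\cdot)>}$. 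This gives the first formula in the assertion.

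For the Hamiltonian vector field, I would simply contract $\X^E_h=\iota_{\dd h}\Pi_{E^\ast}$ using the decomposition above. The summand $\Pi_{\T^\ast M}$ produces $\X^{\T^\ast M}_{h(\cdot,\zeta)}(p_x)$ at the current value of $\zeta$; the summand $\Pi_{\g^\ast}$, being independent of $(x,p)$, produces $\X^{\g^\ast}_{h(p_x,\cdot)}(\zeta)$ at the current value of $p_x$; and the vertical curvature term, contracted with $\dd h$ in the $p$-directions only, contributes the vertical vector $V_{\<\zeta,F_\nabla(x)(\frac{\pa h}{\pa p},\cdot)>}$. In coordinates this reproduces the displayed formula for $\X^E_h$. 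The only genuine issue is bookkeeping the signs and the skew-symmetry conventions (in particular the factor of $2$ that would appear if one were not careful with $\pa_{p_a}\wedge\pa_{p_b}$ versus the antisymmetrization of $F^\gamma_{ab}$), but since $F^\gamma_{ab}=-F^\gamma_{ba}$ and $C^\gamma_{\alpha\beta}=-C^\gamma_{\beta\alpha}$ everything is consistent with \eqref{eqn:ham_vf}. I do not anticipate any hard step beyond this verification; the heart of the argument is the identification of the structure functions in the connection-adapted frame, after which \eqref{eqn:poisson} does all the work.
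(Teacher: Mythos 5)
Your proposal is correct and follows exactly the route the paper takes: the paper's proof is the one-line remark that the result follows ``by an explicit coordinate calculation using the local description of the bracket and the anchor map \eqref{eqn:bracket} and formula \eqref{eqn:poisson},'' and you have simply carried out that calculation, correctly identifying the structure functions $c^\gamma_{ab}=F^\gamma_{ab}$, $c^\gamma_{\alpha\beta}=C^\gamma_{\alpha\beta}$, $\rho^b_a=\delta^b_a$ in the connection-adapted frame. Your version supplies the details the paper omits; there is no gap.
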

\begin{proof}
 The proof is straightforward by an explicit coordinate calculation using the local description of the bracket and the anchor map \eqref{eqn:bracket} and  formula \eqref{eqn:poisson} relating the bracket and anchor with the coefficient of the linear Poisson tensor.\end{proof}

\chapter{Analysis}\label{app:analysis}


\section{Mesuaralbe maps, regular points}\label{sec:meas}

In this section we briefly recall some basic properties of
measurable and absolutely continuous maps. Later we introduce a notion of \emph{uniform regularity} and study its basic properties. Ii is a quite important technical tool in our considerations.

\subsection{Basic facts} 
When speaking about measure we will always have in mind \emph{Lebesgue measure}\index{Lebesgue measure} in $\R^n$ or subsets of $\R^n$. This measure will be denoted by $\mu_L(\cdot)$. 

Recall that a map $f:V\supset\R^n\ra\R^k$, defined on a subset $V\subset\R^n$,
is \emph{measurable}\index{measurable map} if the inverse image of every open set is
Lebesgue measurable in $V$. The measurable map $f$ will be called
\emph{bounded}\index{measurable map!bounded} if the closure of its image is a compact set.
Observe that every bounded (or locally bounded) measurable
function is locally integrable. 

\noindent Measurable maps can be characterised as follows.

\begin{theorem}[Luzin]\label{thm:luzin}\index{Luzin theorem}
The map $f:\R^n\supset V\ra\R$ defined on a measurable set $V$ is measurable iff, for every $\eps>0$, there exists a closed subset $F\subset V$ such that
the restriction $f|_F$ is continuous and $\mu_L(V\setminus F)<\eps$.
\end{theorem}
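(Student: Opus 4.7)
The plan is to prove the two implications separately. The easy direction is the ``if'' part: assume that for every $\eps>0$ we can find a closed $F_\eps\subset V$ with $f|_{F_\eps}$ continuous and $\mu_L(V\setminus F_\eps)<\eps$. Taking $\eps=1/n$ we obtain closed sets $F_n$ with $\mu_L(V\setminus F_n)<1/n$ and $f|_{F_n}$ continuous. The set $N:=V\setminus\bigcup_n F_n$ has measure zero, hence is measurable, and for any open $U\subset\R$ the inverse image $(f|_{F_n})^{-1}(U)$ is relatively open in $F_n$, so measurable in $\R^n$. Writing
\[
f^{-1}(U)=\bigcup_n (f|_{F_n})^{-1}(U)\ \cup\ \big(N\cap f^{-1}(U)\big),
\]
we see that $f^{-1}(U)$ is the union of countably many measurable sets and a subset of a null set, hence measurable. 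This shows $f$ is measurable.

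For the harder ``only if'' direction, I would first reduce to the case where $f$ is bounded. If $f$ is measurable and finite a.e., then the sets $A_k=\{x\in V:|f(x)|>k\}$ form a decreasing family of measurable sets with $\mu_L(\bigcap_k A_k)=0$; by countable additivity (after localising to a set of finite measure, or splitting $V$ into pieces of finite measure and distributing $\eps$) we can remove a set of measure less than $\eps/2$ to make $f$ bounded on the remainder. So the main task is to prove the statement for a bounded measurable $f:V\to\R$ on a set $V$ of finite measure (the extension to $V$ of $\sigma$-finite measure proceeds by distributing $\eps$ geometrically over a countable exhaustion).

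The core step is Luzin's theorem for simple functions. If $s=\sum_{i=1}^N c_i\chi_{A_i}$ with disjoint measurable $A_i$ partitioning $V$, I would use the \emph{inner regularity} of the Lebesgue measure to choose, for each $i$, a closed set $K_i\subset A_i$ with $\mu_L(A_i\setminus K_i)<\eps/N$. Setting $F=\bigcup_i K_i$, the sets $K_i$ are pairwise disjoint and closed, hence each $K_i$ is open in $F$, and $s|_F$ is locally constant, hence continuous, with $\mu_L(V\setminus F)<\eps$.

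To pass from simple functions to a bounded measurable $f$, I would use the standard fact that there exists a sequence of simple measurable functions $s_n$ with $\|s_n-f\|_\infty\to 0$ on $V$ (for bounded $f$ one has even uniform convergence of the dyadic approximations). Applying the previous step with tolerance $\eps/2^{n+1}$, I obtain closed sets $F_n\subset V$ with $s_n|_{F_n}$ continuous and $\mu_L(V\setminus F_n)<\eps/2^{n+1}$. Let $F:=\bigcap_n F_n$; then $F$ is closed, $\mu_L(V\setminus F)\le\sum_n\mu_L(V\setminus F_n)<\eps$, and on $F$ every $s_n$ is continuous and $s_n\to f$ uniformly. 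The uniform limit of continuous functions being continuous, $f|_F$ is continuous, which is what we wanted. The main obstacle in this argument is essentially bookkeeping: handling the $\sigma$-finite case and the possible unboundedness of $f$ while keeping the total measure removed below~$\eps$; the genuine analytic content is confined to inner regularity of Lebesgue measure and uniform approximation by simple functions.
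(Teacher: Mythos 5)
The paper does not actually prove this statement: Theorem \ref{thm:luzin} is quoted as a background fact in the appendix and its proof is delegated to the reference \cite{lojasiewicz}, so there is no in-text argument to compare yours against. Your proof is the standard one and is correct. The ``if'' direction via completeness of Lebesgue measure, the reduction to bounded $f$ on pieces of finite measure, the inner-regularity argument for simple functions, and the passage to the limit along a uniformly convergent sequence of dyadic approximations (which lets you bypass Egorov's theorem, the route many textbooks take) are all sound. The one place where your bookkeeping genuinely needs care is the $\sigma$-finite gluing: if you choose closed sets $F_m\subset V_m$ for a countable exhaustion $V=\bigcup_m V_m$ and put $F=\bigcup_m F_m$, an infinite union of closed sets need not be closed. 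The standard repair is to take $V_m=V\cap\{x:\ m-1\le |x|<m\}$, so that the $F_m$ form a locally finite family of closed sets sitting in disjoint closed annuli; then $\bigcup_m F_m$ is closed because every compact set meets only finitely many of the $F_m$. With that adjustment (and noting that every measurable $V\subset\R^n$ is automatically $\sigma$-finite, so no extra hypothesis is being smuggled in) your argument is complete.
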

\noindent For the proof see \cite{lojasiewicz}.

\noindent In our considerations much attention will be payed to regular points of measurable maps.
\begin{definition}
Let $f:[a,b]\ra\R^m$ be a measurable map. A point $x\in [a,b]$ is called a
\emph{regular point} (also: \emph{Lebesgue} or \emph{density point})\index{regular point} of $f$, iff  
$$\lim_{t\to
0}\frac1{|t|}\int_0^t|f(x+s)-f(x)|\dd s=0.$$
\end{definition}

\noindent For bounded measurable (or more generally integrable) maps we have the following result.

\begin{theorem}[Lebesgue]\index{Lebesgue theorem}
For an integrable map $f:[a,b]\ra\R^m$ almost every point in $[a,b]$ is a regular point of $f$.
\end{theorem}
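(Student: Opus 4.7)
The plan is to reduce the statement to the classical Lebesgue differentiation theorem for real-valued integrable functions, which can be found in any standard text on real analysis (e.g.\ \cite{lojasiewicz}). First I would note that the notion of a regular point is defined componentwise in the sense that if $f=(f^1,\dots,f^m)$ and $x$ is a regular point of every component $f^j$ in the scalar sense, then $x$ is a regular point of $f$, since $|f(x+s)-f(x)|\leq\sum_j|f^j(x+s)-f^j(x)|$. Hence it suffices to work with an integrable $f:[a,b]\to\R$.

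Second, I would reduce the problem further. For a fixed $q\in\Q$ consider the function $g_q(s):=|f(s)-q|$ which is integrable. By the classical Lebesgue differentiation theorem, the indefinite integral $G_q(x):=\int_a^x g_q(s)\,\dd s$ is differentiable almost everywhere with $G_q'(x)=g_q(x)=|f(x)-q|$, so there is a null set $N_q\subset[a,b]$ outside of which
\[
\lim_{t\to 0}\frac{1}{|t|}\int_0^t|f(x+s)-q|\,\dd s=|f(x)-q|.
\]
Setting $N:=\bigcup_{q\in\Q}N_q$, the set $N$ is still of measure zero.

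The main step is now to show that every $x\in[a,b]\setminus N$ is regular for $f$. Fix such an $x$ and $\eps>0$, and choose $q\in\Q$ with $|f(x)-q|<\eps/2$. Then by the triangle inequality
\[
\frac{1}{|t|}\int_0^t|f(x+s)-f(x)|\,\dd s\leq \frac{1}{|t|}\int_0^t|f(x+s)-q|\,\dd s+|q-f(x)|.
\]
Taking $\limsup_{t\to 0}$ of both sides, the first term tends to $|f(x)-q|<\eps/2$ (since $x\notin N_q$), and the second is bounded by $\eps/2$, giving $\limsup_{t\to 0}\tfrac{1}{|t|}\int_0^t|f(x+s)-f(x)|\,\dd s\leq\eps$. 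Since $\eps>0$ was arbitrary, the limit exists and equals $0$, i.e.\ $x$ is a regular point.

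The main (and really only) obstacle is the classical differentiation theorem for the indefinite Lebesgue integral of a scalar integrable function; this can be proved via the Hardy--Littlewood maximal inequality or via Vitali-type covering arguments, but since we invoke it as a known result from real analysis, the proof above is essentially a routine reduction. No additional difficulty arises from working on $[a,b]$ rather than $\R$ (extend $f$ by zero outside $[a,b]$ if needed), nor from the vector-valued setting (componentwise reduction).
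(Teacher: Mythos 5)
Your proof is correct: the componentwise reduction, the application of the classical differentiation theorem to $g_q(s)=|f(s)-q|$ for each rational $q$, and the $\eps/2$-argument over the countable union of null sets constitute the standard proof that almost every point is a Lebesgue (regular) point. The paper itself gives no argument for this theorem---it only refers the reader to \cite{lojasiewicz}---and your proposal is precisely the textbook proof one finds there, so there is nothing further to compare.
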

\noindent For the proof see \cite{lojasiewicz}

A map $x:[a,b]\ra\R^k$ is called \emph{absolutely
continuous}\index{absolutely continuous map}\index{AC map|see{absolutely continuous map}}  (AC) if it can be written in the form
$$x(t)=x(a)+\int_{a}^{t}v(\tau)\dd \tau,$$
where $v(\cdot)$ is an integrable map. As we see,
an AC map $x(t)$ is differentiable at all the regular points $t$
of $v$ (hence, by Lebesgue Theorem, differentiable a.e.). Its
derivative at such a point is simply $v(t)$. In this work we concentrate our attention mostly on \emph{absolutely 
continuous maps with bounded derivative}\index{absolutely continuous map!with bounded derivative}\index{ACB map|see{absolutely continuous map with bounded derivative}}  (ACB maps) i.e. maps for which $v(\cdot)$ is bounded measurable.

In our considerations we will use the following lemma.
\begin{lemma}\label{lem:reg}
Let $a:[0,1]\ra\R$ be a bounded measurable map, let $h:[0,1]\ra[0,1]$ be a continuous function, and let $g:[0,1]\ra[0,1]$ be a $C^1$--map with a non-vanishing derivative. Then the map
$$s\longmapsto G(s):=\int_0^1\left|a(g(t)h(s))-a(g(t)h(s_0))\right|\dd t$$
is regular (in fact continuous) at every $s_0$ such that $h(s_0)\neq 0$.
\end{lemma}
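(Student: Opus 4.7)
The plan is to show that $G$ is actually continuous at $s_0$; since $G(s_0)=0$, continuity together with an elementary estimate $\frac{1}{|t|}\int_0^t G(s_0+r)\dd r \leq \sup_{|r|\leq |t|} G(s_0+r)$ will immediately give regularity. Thus the whole task reduces to proving that $G(s)\to 0$ as $s\to s_0$, under the assumption $h(s_0)\neq 0$ (which, since $h$ takes values in $[0,1]$, means $h(s_0)>0$, and so $h(s)\geq h(s_0)/2$ for $s$ in a neighbourhood $U$ of $s_0$ by continuity of $h$).

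The standard trick is approximation by continuous functions. First I would invoke Luzin's theorem (Theorem \ref{thm:luzin}) together with Tietze's extension theorem to produce, for each prescribed $\eta>0$, a continuous function $\wt a:[0,1]\to\R$ with $\|\wt a\|_\infty\leq \|a\|_\infty$ and $\mu_L(\{a\neq\wt a\})<\eta$. Then I would split
\begin{align*}
G(s)&\leq \int_0^1|a(g(t)h(s))-\wt a(g(t)h(s))|\dd t\\
&\phantom{\leq}+\int_0^1|\wt a(g(t)h(s))-\wt a(g(t)h(s_0))|\dd t\\
&\phantom{\leq}+\int_0^1|\wt a(g(t)h(s_0))-a(g(t)h(s_0))|\dd t.
\end{align*}
The middle term tends to $0$ as $s\to s_0$ by the uniform continuity of $\wt a$ on $[0,1]$ (since $g(t)h(s)\to g(t)h(s_0)$ uniformly in $t$ by continuity of $h$ and boundedness of $g$), and in fact dominated convergence makes this immediate.

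The two outer terms are handled by change of variables. Since $g$ is $C^1$ with non-vanishing derivative, $|g'|\geq c>0$ on the compact interval $[0,1]$, so $g$ is a $C^1$-diffeomorphism onto $g([0,1])$. For any $s\in U$ with $h(s)>0$, substituting $u=g(t)h(s)$ yields
$$\int_0^1|a-\wt a|(g(t)h(s))\dd t = \int_{h(s)g([0,1])}|a-\wt a|(u)\,\frac{\dd u}{|g'(g^{-1}(u/h(s)))|\,h(s)}\leq \frac{2}{c\,h(s_0)}\int_0^1|a-\wt a|(u)\dd u,$$
and the last integral is bounded by $2\|a\|_\infty\,\eta$. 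Thus both outer terms are at most $\frac{4\|a\|_\infty}{c\,h(s_0)}\eta$, uniformly in $s\in U$. Choosing $\eta$ sufficiently small makes these two contributions arbitrarily small, after which the middle term handles the limit $s\to s_0$, yielding $\limsup_{s\to s_0}G(s)=0$, which is what we need.

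The only real obstacle is the change of variables, which demands both $h(s)\neq 0$ and the invertibility of $g$; the first is guaranteed locally by the hypothesis $h(s_0)\neq 0$ and continuity of $h$, while the second follows from the non-vanishing of $g'$. Everything else is a routine three-$\eps$ argument of the type familiar from the continuity of translations in $L^1$.
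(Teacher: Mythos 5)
Your proof is correct and follows essentially the same route as the paper's: both arguments rest on Luzin's theorem, on the bound $|g'|\geq c>0$ to control the measure of preimages under $g$ (your change of variables is exactly the paper's estimate $\mu_L(g^{-1}(A))\leq\frac1c\mu_L(A)$), and on $h(s_0)\neq 0$ to keep the rescaling under control. The only cosmetic difference is that you extend the Luzin restriction to a continuous $\wt a$ via Tietze and run a three-term triangle inequality, whereas the paper works directly with the uniform continuity of $a$ on the Luzin set and bounds the integral over the exceptional sets; the estimates involved are the same.
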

\begin{proof}
Let $c$ be a number such that $0<c\leq\left|g^{'}(x)\right|$ for every $x\in[0,1]$. Now if $A\subset[0,1]$ is a measurable subset then
$\mu_L\left(g^{-1}( A)\right)\leq \frac 1c\mu_L(A)$. 

Choose $\eps>0$. By Luzin Theorem \ref{thm:luzin} there exists a closed set $F\subset[0,1]$ such that $a(\cdot)$ is continuous on $F$ and $\mu_L([0,1]\setminus F)<\eps$. Now $a$ is uniformly continuous on $F$, $g$ is bounded and $h$ continuous, hence there exists $\delta>0$ such that $\left|a(g(t)h(s))-a(g(t)h(s_0))\right|<\eps$ if only $|s-s_0|<\delta$ and $t$ and $s$ are such that $g(t)h(s)\in F$ and $g(t)h(s_0)\in F$. As a consequence for $|s-s_0|<\delta$, we can estimate
\begin{align*}
&\int_0^1|a(g(t)h(s))-a(g(t)h(s_0))|\dd t \leq
\int_{\{t:g(t)h(s)\notin F\}}2\|a\|\dd t+\int_{\{t:g(t)h(s_0)\notin F\}}2\|a\|\dd t+\\
&+\int_{\{t:g(t)h(s)\in F\}\cap\{t:g(t)h(s_0)\in F\}}|a(g(t)h(s))-a(g(t)h(s_0))|\dd t\\
&\leq 2\|a\|\cdot\mu_L\left(g^{-1}\left(\frac 1{h(s)}([0,1]\setminus F)\right)\right)+2\|a\|\cdot\mu_L\left(g^{-1}\left(\frac 1{h(s_0)}([0,1]\setminus F)\right)\right)+\int_0^1\eps\dd t\\
&\leq\eps\left(2\|a\|\frac 1 c\left(\frac 1{ h(s)}+\frac 1{h(s_0)}\right)+1\right)
\end{align*}
Since $h(s_0)\neq 0$, if $|s-s_0|$ is sufficiently small, the values s of $G(s)$ are arbitrarily close to $0=G(s_0)$, which finishes the proof. \end{proof}

\subsection{Uniform regularity} 
Regular points play an important role in our considerations, since the behaviour of a measurable map at a regular point is similar to the behaviour of a continuous map. To study behaviour of  the families of measurable maps we introduce a notion of \emph{uniform regularity}.

\begin{definition}\label{def:ur}
Let $P$ be a topological space and consider a map $f:[a,b]\times P\ra\R^m$ such that $t\mapsto f(t,p)$ is a measurable  for every $p\in P$. We call $f$ \emph{uniformly regular with respect to $p\in P$ at $x\in[a,b]$}\index{uniformly regular map} iff the following conditions are satisfied:
\begin{align}
&\frac 1{|t|}\int_0^t\left|f(x+s,p)-f(x,p)\right|\dd s\underset{t\to 0}\lra0\quad\text{locally uniformly w.r.t. $p$,}\label{eqn:ur1}\\
&\text{the map}\quad p\mapsto f(x,p) \quad\text{is continuous},\label{eqn:ur2}
 \intertext{and for every compact set $K\subset P$ there exists a number $t_0>0$ such that}
 & p\mapsto\Big([0,t_0]\ni s\mapsto f(x+s,p)\Big)\quad \text{is a continuous map from $K$ to $L^1([0,t_0],\R^m)$}.\label{eqn:ur3}
\end{align} 
\end{definition}

Usually in mathematics the word ''uniform'' means ''in the same way for all parameters''. In the context of regularity this can be expressed by the condition \eqref{eqn:ur1} itself. Therefore Definition \ref{def:ur} is more specific then what one could expect under the name ''uniform regularity''. The sense of this definition is, however, to abstract several technical properties of measurable  maps which are important from the point of view of this work. Since, according to our knowledge, the notion of uniform regularity is not a well established term, we hope that Definition \ref{def:ur} would not be confusing.

Let us now investigate some simple properties of uniformly regular maps. In what follows we will consider only uniform regularity at point $0\in\R$ and restrict our attention to parameter spaces $P$ which are metric (we can think of $P$  as of  a subset of $\R^m$). 

A basic example of a uniformly regular map is just a continuous map.\index{uniformly regular map!properties}
\begin{proposition}\label{prop:ur_cont}
Let $F:\R\times P\lra\R$ be a continuous map. Then $F$ is uniformly regular w.r.t. $p\in P$ at $s=0$. 
\end{proposition}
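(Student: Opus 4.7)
The plan is to verify the three conditions \eqref{eqn:ur1}, \eqref{eqn:ur2}, and \eqref{eqn:ur3} separately, with the key observation that all three follow from uniform continuity of $F$ on appropriate compact sets.

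First, \eqref{eqn:ur2} is immediate: the map $p \mapsto F(0,p)$ is the restriction of the continuous map $F$ to $\{0\} \times P$, hence continuous.

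For \eqref{eqn:ur1}, I would fix an arbitrary compact set $K \subset P$ and an $\eps > 0$. Since $F$ is continuous on the compact cylinder $[-1,1] \times K$, it is uniformly continuous there, so there exists $\delta \in (0,1]$ with $|F(s,p) - F(0,p)| < \eps$ whenever $|s| < \delta$ and $p \in K$. Then for any $0 < |t| < \delta$ and every $p \in K$,
\begin{equation*}
\frac{1}{|t|}\int_0^t |F(s,p) - F(0,p)|\,\dd s \leq \frac{1}{|t|}\int_0^{|t|} \eps \,\dd s = \eps,
\end{equation*}
which gives the required local uniform convergence.

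For \eqref{eqn:ur3}, fix a compact $K \subset P$ and take any $t_0 > 0$ (e.g., $t_0 = 1$). Uniform continuity of $F$ on the compact set $[0,t_0] \times K$ gives, for every $\eps > 0$, a $\delta > 0$ such that $|F(s,p) - F(s,p')| < \eps/t_0$ whenever $s \in [0,t_0]$ and $d(p,p') < \delta$. Integrating over $[0,t_0]$ yields
\begin{equation*}
\int_0^{t_0}|F(s,p) - F(s,p')|\,\dd s < \eps,
\end{equation*}
so the map $p \mapsto F(\cdot,p)|_{[0,t_0]}$ is continuous (in fact uniformly continuous) from $K$ to $L^1([0,t_0],\R)$.

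There is no real obstacle here; the proposition is essentially a bookkeeping exercise showing that Definition \ref{def:ur} is genuinely a weakening of continuity. The only point worth flagging is that in \eqref{eqn:ur3} one must choose $t_0$ depending on $K$ so that $[0,t_0]\times K$ lies in the domain of continuity of $F$; for $F$ defined on all of $\R \times P$ this is automatic, but the argument would carry over with obvious modifications to continuous maps defined only on a neighbourhood of $\{0\}\times P$.
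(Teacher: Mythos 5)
Your proof is correct and follows essentially the same route as the paper's: condition \eqref{eqn:ur2} is immediate, and both \eqref{eqn:ur1} and \eqref{eqn:ur3} are deduced from uniform continuity of $F$ on a compact cylinder $[0,t_0]\times K$, followed by integrating the resulting pointwise bound. No substantive differences.
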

\begin{proof} Condition \eqref{eqn:ur2} is obvious. Fix now a compact set $K_P\subset P$ and restrict $s$ to a fixed interval $[0,t_0]$. Since $F$ is uniformly continuous on $[0,t_0]\times K_P$, for every $\eps>0$ there exists $\delta>0$ such that $\left|F(s,p)-F(0,p)\right|<\eps$ if $|s|<\del$ and for all $p\in K_P$. Consequently, 
$$\int_0^t\left|F(s,p)-F(0,p)\right|\leq|t|\eps$$
for $|t|<\delta$ and all $p\in K_P$. 
This proves \eqref{eqn:ur2}, i.e., $\frac 1{|t|}\int_0^t\left|F(s,p)-F(0,p)\right|\underset{t\to 0}\lra 0$ uniformly w.r.t. $p\in K_P$. 

To check \eqref{eqn:ur3} observe that, by the uniform continuity of $F$ on $[0,t_0]\times K_P$, for every $\eps>0$ there exists $\del>0$ such that $\left|F(s,p)-F(s,p^{'})\right|\leq\eps$ for every $s\in[0,t_0]$ and all $p,p^{'}\in K_P$ such that $|p-p^{'}|<\del$. Consequently,
$$\int_0^{t_0}\left|F(s,p)-F(s,p^{'})\right|\dd s\leq\left|t_0\right|\eps$$
for all $p,p^{'}\in K_P$ such that $|p-p^{'}|<\del$. This proves \eqref{eqn:ur3}.
\end{proof}

\noindent Another simple example is the following.

\begin{proposition}\label{prop:ur_trivial}
Let $f:\R\ra\R^m$ be a measurable map regular at $s=0$. For $p\in P$ define $\wt f(s,p):=f(s)$. Then the map $\wt f$ is uniformly regular w.r.t. $p\in P$ at $s=0$.  
\end{proposition}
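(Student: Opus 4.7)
The proof is essentially immediate, as $\wt f$ does not depend on $p$, so each of the three conditions of Definition \ref{def:ur} reduces to a trivial statement. The plan is simply to verify \eqref{eqn:ur1}, \eqref{eqn:ur2}, and \eqref{eqn:ur3} one by one.

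For condition \eqref{eqn:ur1}, I would observe that the expression $\frac{1}{|t|}\int_0^t |\wt f(s,p) - \wt f(0,p)|\,ds = \frac{1}{|t|}\int_0^t |f(s) - f(0)|\,ds$ does not depend on $p$ at all. By the regularity of $f$ at $s=0$, this quantity tends to $0$ as $t\to 0$, and the convergence is trivially uniform in $p$ (locally or globally). For condition \eqref{eqn:ur2}, the map $p\mapsto \wt f(0,p) = f(0)$ is constant, hence continuous. For condition \eqref{eqn:ur3}, fix any $t_0 > 0$; then $p\mapsto ([0,t_0]\ni s\mapsto \wt f(s,p)) = f|_{[0,t_0]}$ is a constant map from $P$ into $L^1([0,t_0],\R^m)$ (the image is the single element $f|_{[0,t_0]}$, which belongs to $L^1$ because $f$ is measurable and, being regular at $s=0$, is in particular integrable on a neighbourhood of $0$; alternatively, one can simply take $t_0$ small enough that $f$ is integrable on $[0,t_0]$). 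A constant map is continuous.

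There is no real obstacle here; the only minor point to note is that integrability of $f$ on some $[0,t_0]$ is needed in \eqref{eqn:ur3} to make sense of $f|_{[0,t_0]}$ as an element of $L^1$, but this follows from the definition of regularity at $0$ (which presupposes that the integrals $\int_0^t|f(x+s)-f(x)|\,ds$ are finite for small $t$, hence $f$ is integrable near $0$). So the proof is a short verification of three trivial conditions.
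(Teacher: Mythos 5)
Your proof is correct and follows the same (only possible) route as the paper, which simply remarks that conditions \eqref{eqn:ur1}--\eqref{eqn:ur3} are trivially satisfied since $\wt f$ does not depend on $p$. Your extra observation about local integrability of $f$ near $0$ is a harmless and accurate elaboration.
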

\begin{proof}
Conditions \eqref{eqn:ur1}--\eqref{eqn:ur3} are trivially satisfied. 
\end{proof}

\noindent Below we discuss several ways of generating uniformly regular maps from given ones. 

\begin{proposition}\label{prop:ur_sum}
Let $f,g:\R\times P\ra\R^m$ be two maps uniformly regular w.r.t. $p\in P$ at $s=0$. Then the sum $f+g$ is also uniformly regular w.r.t $p\in P$ at $s=0$.
\end{proposition}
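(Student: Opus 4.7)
The plan is to verify that the sum $h := f+g$ inherits each of the three defining conditions \eqref{eqn:ur1}, \eqref{eqn:ur2}, \eqref{eqn:ur3} of uniform regularity at $s=0$, using in every case only the triangle inequality in $\R^m$ (respectively in $L^1$) applied to the corresponding property of $f$ and $g$ separately. No subtle analytic input is needed; this is a routine ``linearity of estimates'' argument, so I do not expect any serious obstacle.

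First I would check \eqref{eqn:ur1}. Fix a compact $K\subset P$ and $\eps>0$. By uniform regularity of $f$ and $g$, there exist $t_f,t_g>0$ such that for $|t|<t_f$ one has $\frac1{|t|}\int_0^t|f(s,p)-f(0,p)|\dd s<\eps/2$ uniformly in $p\in K$, and similarly for $g$ with $t_g$. Taking $t_0:=\min\{t_f,t_g\}$ and applying the pointwise triangle inequality under the integral,
\[
\tfrac1{|t|}\int_0^t\!|h(s,p)-h(0,p)|\dd s\leq \tfrac1{|t|}\int_0^t\!|f(s,p)-f(0,p)|\dd s+\tfrac1{|t|}\int_0^t\!|g(s,p)-g(0,p)|\dd s<\eps
\]
for all $|t|<t_0$ and $p\in K$, giving the required locally uniform convergence.

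Next, condition \eqref{eqn:ur2} is trivial: $p\mapsto h(0,p)=f(0,p)+g(0,p)$ is continuous as a sum of continuous maps. Finally, for \eqref{eqn:ur3}, fix a compact $K\subset P$ and choose $t_0>0$ that works simultaneously for $f$ and $g$ (take the minimum of the two numbers provided by the definition applied separately). Then the maps $\Psi_f,\Psi_g\colon K\to L^1([0,t_0],\R^m)$ defined by $p\mapsto f(\cdot,p)|_{[0,t_0]}$ and $p\mapsto g(\cdot,p)|_{[0,t_0]}$ are continuous by hypothesis, and since addition is continuous on the Banach space $L^1([0,t_0],\R^m)$, the map $p\mapsto \Psi_f(p)+\Psi_g(p)=h(\cdot,p)|_{[0,t_0]}$ is also continuous. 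This completes the verification of all three conditions, and hence $f+g$ is uniformly regular w.r.t.\ $p$ at $s=0$.
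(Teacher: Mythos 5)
Your proof is correct and follows essentially the same route as the paper: the triangle inequality for \eqref{eqn:ur1} and \eqref{eqn:ur2}, and for \eqref{eqn:ur3} passing to the smaller of the two intervals (the paper makes explicit the one small point you leave implicit, namely that restricting from $[0,\wt t_0]$ to $[0,t_0]$ preserves continuity because $\|a\|_{L^1([0,t_0])}\leq\|a\|_{L^1([0,\wt t_0])}$) and then using continuity of addition in $L^1$.
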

\begin{proof}
Property \eqref{eqn:ur1} is clear since 
$$|f(s,p)+g(s,p)-f(0,p)-g(0,p)|\leq|f(s,p)-f(0,p)|+|g(s,p)-g(0,p)|.$$
Property \eqref{eqn:ur2} is obvious as the sum of continuous maps is continuous. 

\noindent To prove \eqref{eqn:ur3} fix a compact set $K\subset P$ and assume that $K\ni p\mapsto\left([0,t_0]\ni s\mapsto f(s,p)\right)$ and $K\ni p\mapsto\left([0,\wt t_0]\ni s\mapsto g(s,p)\right)$ are continuous. Without loss of generality $t_0\leq \wt t_0$. Now the restriction  $K\ni p\mapsto\left([0,t_0]\ni s\mapsto g(s,p)\right)$ is also continuous since
$$\|a(\cdot)\|_{L^1([0,t_0],\R^m)}\leq \|a(\cdot)\|_{L^1([0,\wt t_0],\R^m)}.$$
Consequently, $K\ni p\mapsto f(\cdot,p)+g(\cdot,p)\in L^1([0,t_0],\R^m)$ is continuous as a sum of two continuous maps.
\end{proof}

\begin{proposition}\label{prop:ur_multiplication}
Let $f:\R\times P\ra\R^m$ be bounded and uniformly regular w.r.t. $p\in P$ at $s=0$. Let $h:P\lra\R$ be a continuous map. Then the map $\wt f(s,p)=h(p)f(s,p)$ is uniformly regular w.r.t. $p$ at $s=0$.
\end{proposition}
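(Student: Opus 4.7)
The plan is to verify each of the three conditions \eqref{eqn:ur1}--\eqref{eqn:ur3} of Definition \ref{def:ur} for $\wt f(s,p) = h(p)f(s,p)$, reducing them to the corresponding properties for $f$ together with the continuity of $h$. Throughout the argument I fix an arbitrary compact set $K \subset P$; by continuity of $h$, the number $C_h := \sup_{p \in K} |h(p)|$ is finite, and by the assumed boundedness of $f$, the number $M := \sup_{(s,p)} |f(s,p)|$ is finite as well. These two constants will do all the work.

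First I would check \eqref{eqn:ur1}. The pointwise identity
$$|\wt f(s,p) - \wt f(0,p)| = |h(p)|\cdot|f(s,p) - f(0,p)| \leq C_h \cdot |f(s,p) - f(0,p)|$$
holds for every $p \in K$ and every $s$. Integrating over $[0,t]$ and dividing by $|t|$, the locally uniform convergence of the right-hand side to $0$ (from uniform regularity of $f$) immediately gives the same for the left-hand side. Condition \eqref{eqn:ur2} is trivial, since $\wt f(0,p) = h(p) f(0,p)$ is a product of continuous functions of $p$.

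The main (though still routine) work is condition \eqref{eqn:ur3}. Choose $t_0 > 0$ small enough so that $p \mapsto f(\cdot, p) \in L^1([0,t_0], \R^m)$ is continuous on $K$. For $p, p' \in K$, I would write the telescoping decomposition
$$h(p)f(\cdot,p) - h(p')f(\cdot,p') = h(p)\bigl(f(\cdot,p) - f(\cdot,p')\bigr) + \bigl(h(p) - h(p')\bigr) f(\cdot,p'),$$
and estimate its $L^1$-norm by
$$\|\wt f(\cdot,p) - \wt f(\cdot,p')\|_{L^1} \leq C_h \|f(\cdot,p) - f(\cdot,p')\|_{L^1} + |h(p)-h(p')| \cdot M \cdot t_0.$$
Both terms tend to zero as $p' \to p$: the first by condition \eqref{eqn:ur3} for $f$, and the second by continuity of $h$ at $p$. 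Hence $p \mapsto \wt f(\cdot, p)$ is continuous from $K$ to $L^1([0,t_0], \R^m)$.

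There is no genuine obstacle here; the only point that requires any attention is making sure to use the boundedness of $f$ to control $\|f(\cdot, p')\|_{L^1}$ in the second term of the estimate above. Without that assumption, a large $f$ could amplify a small oscillation of $h$ and destroy the $L^1$-continuity, which is why boundedness of $f$ appears explicitly in the hypothesis.
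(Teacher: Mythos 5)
Your proof is correct and follows essentially the same route as the paper's: the same pointwise bound by $\sup_{p\in K}|h(p)|$ for condition \eqref{eqn:ur1}, and the same telescoping decomposition $h(p)f-h(p')f' = h(p)(f-f') + (h(p)-h(p'))f'$ for condition \eqref{eqn:ur3}. The only cosmetic difference is that you bound $\|f(\cdot,p')\|_{L^1}$ by $M\cdot t_0$ via global boundedness where the paper simply keeps the finite norm $\|f(\cdot,p')\|_{L^1}$.
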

\begin{proof}
Condition \eqref{eqn:ur2} is obvious. 
Choose now a compact set $K\subset P$. For $p\in K$ we have
$$\frac 1{|t|}\int_0^t\left|h(p)f(s,p)-h(p)f(0,p)\right|\dd s\leq \sup_{p\in K}|h(p)|\cdot \frac 1{|t|}\int_0^t\left|f(s,p)-f(0,p)\right|\dd s\underset{t\to 0}\lra 0$$
uniformly w.r.t. $p\in K$, and hence \eqref{eqn:ur1} is satisfied.

Finally, note that for $p,p^{'}\in K$ we have
\begin{align*}
&\int_0^{t_0}\left|h(p)f(s,p)-h(p^{'})f(s,p^{'})\right|\dd s\\
&\leq\left|h(p)\right|\int_0^{t_0}\left|f(s,p)-f(s,p^{'})\right|\dd s+\left|h(p)-h(p^{'})\right|\int_0^{t_0}\left|f(s,p^{'})\right|\dd s\\
&\leq\sup_{p\in K}|h(p)|\int_0^{t_0}\left|f(s,p)-f(s,p^{'})\right|\dd s+\left|h(p)-h(p^{'})\right|\left\|f(\cdot,p^{'})\right\|_{L^1}\underset{p\to p^{'}}\lra 0+0;
\end{align*}
that is, \eqref{eqn:ur3} is satisfied.
\end{proof}

\begin{lemma}\label{lem:ur_rescal}
Let $f:\R\times P\ra\R^m$ be bounded and uniformly regular w.r.t. $p\in P$ at $s=0$. Consider $\wt f(s,p,c):=f(sc,p)$ where $c\in\R$. Then $\wt f:\R\times P\times\R\lra\R^m$ is uniformly regular w.r.t. $p\in P$ and $c\in\R$ at $s=0$. 
\end{lemma}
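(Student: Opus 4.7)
The plan is to verify the three conditions \eqref{eqn:ur1}--\eqref{eqn:ur3} of Definition~\ref{def:ur} for $\wt f$ at $s=0$, treating $(p,c)\in P\times\R$ as the joint parameter. Condition \eqref{eqn:ur2} is immediate, since $\wt f(0,p,c)=f(0,p)$ depends only on $p$ and is continuous in $p$ by the uniform regularity of $f$, hence is jointly continuous in $(p,c)$.

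For condition \eqref{eqn:ur1}, the key step is the substitution $u=sc$, which for $c\neq 0$ yields
\[
\frac{1}{|t|}\int_0^t\bigl|\wt f(s,p,c)-\wt f(0,p,c)\bigr|\,\dd s
=\frac{1}{|tc|}\int_0^{tc}\bigl|f(u,p)-f(0,p)\bigr|\,\dd u,
\]
while the left-hand side vanishes identically for $c=0$. Restricting $(p,c)$ to a compact subset of $P\times\R$ on which $|c|\leq C_0$, the parameter $r:=tc$ satisfies $|r|\leq C_0|t|$, so $r\to 0$ uniformly in $c$ as $t\to 0$; applying the uniform regularity of $f$ to the right-hand side, viewed as a function of $r$ and $p$, then gives the required locally uniform convergence in $(p,c)$.

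For condition \eqref{eqn:ur3}, fix a compact $K\subset P\times\R$, denote by $K_P$ its $P$-projection, and let $C_0$ bound $|c|$ on $K$. Uniform regularity of $f$ on $K_P$ supplies some $\wt t_0>0$ for which $p\mapsto f(\cdot,p)$ is continuous from $K_P$ to $L^1([0,\wt t_0],\R^m)$, and I then choose $t_0>0$ so that $C_0 t_0\leq \wt t_0$. The triangle inequality splits
\[
\|\wt f(\cdot,p,c)-\wt f(\cdot,p_0,c_0)\|_{L^1[0,t_0]}\leq\|\wt f(\cdot,p,c)-\wt f(\cdot,p_0,c)\|_{L^1[0,t_0]}+\|\wt f(\cdot,p_0,c)-\wt f(\cdot,p_0,c_0)\|_{L^1[0,t_0]}
\]
into a $p$-part and a $c$-part. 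Away from $c_0=0$, a small enough neighbourhood of $(p_0,c_0)$ keeps $|c|$ bounded below, so both parts are handled by the same substitution $u=sc$ (resp.\ $u=sc_0$): the first reduces, modulo the bounded factor $1/|c|$, to the $L^1$-continuity of $p\mapsto f(\cdot,p)$ on $K_P$, and the second to the standard $L^1$-continuity of dilations applied to the bounded (hence locally integrable) function $f(\cdot,p_0)$.

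The only genuine obstacle is the case $c_0=0$, where the factor $1/|c|$ appearing in the change of variables would blow up. I plan to avoid the substitution there entirely. Inserting the intermediate term $f(0,p)$,
\[
|f(sc,p)-f(0,p_0)|\leq|f(sc,p)-f(0,p)|+|f(0,p)-f(0,p_0)|,
\]
and integrating over $[0,t_0]$ reduces the estimate to the quantity already controlled in the preceding paragraph (which tends to $0$ as $c\to 0$ uniformly on $K_P$, multiplied by $t_0$) together with $t_0\,|f(0,p)-f(0,p_0)|$, which vanishes as $p\to p_0$ by continuity of $p\mapsto f(0,p)$. Combining the two cases completes the verification of \eqref{eqn:ur3}.
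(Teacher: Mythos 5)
Your proof is correct, and for conditions \eqref{eqn:ur1} and \eqref{eqn:ur2} it coincides with the paper's argument (the substitution $u=sc$ plus the boundedness of $c$ on compacta). Where you genuinely diverge is in condition \eqref{eqn:ur3}. The paper verifies continuity of $(p,c)\mapsto\wt f(\cdot,p,c)$ into $L^1$ only \emph{separately} in $p$ (at fixed $c$, splitting the cases $c\neq 0$ and $c=0$) and in $c$ (at fixed $p$, where the case $c'\neq 0$ is handled by a hands-on Luzin-theorem argument: a closed set $F$ on which $f(\cdot,p,c')$ is uniformly continuous, plus measure estimates on the complement). You instead aim directly at joint continuity via the triangle-inequality split into a $p$-part and a $c$-part, treat the $c$-part by citing the standard $L^1$-continuity of dilations (which is exactly the fact the paper re-proves from scratch via Luzin), and -- most notably -- handle the degenerate point $c_0=0$ by inserting the intermediate term $f(0,p)$ and invoking \eqref{eqn:ur1} for $f$, thereby avoiding the $1/|c|$ blow-up in the change of variables. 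This buys you something real: the definition requires continuity on the compact set $K\subset P\times\R$ in the joint parameter $(p,c)$, and the paper's two separate continuities do not formally combine into joint continuity at points $(p_0,0)$ (its $p$-continuity estimate carries a factor $1/|c|$ that is not uniform near $c=0$), whereas your decomposition closes exactly that case. The only thing you leave to the reader is the dilation-continuity fact itself; since $f(\cdot,p_0)$ is bounded measurable on $\R$ and hence in $L^1$ of any bounded interval, that is a legitimate appeal to a standard result, though the paper's Luzin argument keeps the proof self-contained.
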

\begin{proof}
Since $\wt f(0,p,c)=f(0,p)$, condition \eqref{eqn:ur2} is obvious.

Consider now compact sets $K_C\subset\R$ and $K_P\subset P$. For 
 $c\in K_C$ and $p\in K_P$ we have
\begin{align*}
&\frac 1{|t|}\int_0^t\left|\wt f(s,p,c)-\wt f(0,p,c)\right|\dd s=\frac 1{|t|}\int_0^t\left|f(sc,p)-f(0,p)\right|\dd s\\
&=\frac 1{|t|c}\int_0^{tc}\left|f(s^{'},p)-f(0,p)\right|\dd s^{'}
\underset{t\to 0}\lra 0.
\end{align*}
Since $K_C$ is bounded and $\frac 1{|t^{'}|}\int_0^{t^{'}}\left|f(s,p)-f(0,p)\right|\dd s
\underset{t^{'}\to 0}\lra 0$ uniformly w.r.t. $p\in K_P$, the above convergence is uniform w.r.t. $p\in K_P$ and $c\in K_C$. 

We are left with the proof of property \eqref{eqn:ur3}.  We will check that $(p,c)\mapsto\wt f(\cdot,p,c)$; $K_P\times K_C\lra L^1\left([0,\wt t_0],\R^m\right)$ is continuous separately w.r.t. $p$ and w.r.t. $c$ for a suitably chosen $\wt t_0$. 

Let $t_0>0$ be a number from the property \eqref{eqn:ur3} for $f(s,p)$ and $K=K_P$. To prove the continuity w.r.t. $p$ fix $c\in K_C$. If $c\neq 0$, then
\begin{align*}\int_0^t\left|\wt f(s,p,c)-\wt f(s,p^{'},c)\right|\dd s&=\int_0^t\left|f(sc,p)-f(sc,p^{'})\right|\dd s\\
&=\frac 1{c}\int_0^{tc}\left|f(s^{'},p)-f(s^{'},p^{'})\right|\dd s^{'}\underset{p\to p^{'}, }\lra 0
\end{align*}
if only $tc\leq t_0$. 

For $c=0$ we have 
$$\int_0^t\left|\wt f(s,p,c)-\wt f(s,p^{'},c)\right|\dd s=\int_0^t\left|f(0,p)-f(0,p^{'})\right|\dd s=|t|\left|f(0,p)-f(0,p^{'})\right|\underset{p\to p^{'}}\lra 0$$
for every $t$. In particular, we proved continuity w.r.t. $p$  for $\wt t_0:=\frac{t_0}{\sup_{c\in K_C}|c|}$.

Now fix $p\in P$, fix $c^{'}\in K_C$ , choose $\eps>0$ and consider $c\in K_C$. 
If $c^{'}=0$, then
\begin{align*}
&\int_0^{\wt t_0}\left|\wt f(s,p,c)-\wt f(c,p,c^{'})\right|\dd s\\
&=\int_0^{\wt t_0}\left| f(sc,p)-f(0,p)\right|\dd s=
\left|\wt t_0\right|\frac 1{\left|\wt t_0\right|c}\int_0^{\wt t_0 c}\left|f(s^{'},p)-f(0,p)\right|\dd s^{'}\underset{c\to 0}\lra 0.
\end{align*}
If $c^{'}\neq 0$ consider a closed set $F\subset[0,\wt t_0]$ such that $\mu_L\left([0,\wt t_0]\setminus F\right)<\eps \left|\wt t_0\right|$ and $\wt f(\cdot,p,c^{'})$ is continuous on $F$ (note that $p$ and $c^{'}$ are fixed). Such a set exists by Luzin Theorem \ref{thm:luzin}. 

Since $\wt f(\cdot,p,c^{'})$ is uniformly continuous on $F$, there exists a number $\del>0$ such that $\left|\wt f(s,p,c^{'})-\wt f(s^{'},p,c^{'})\right|<\eps$ if $\left|s-s^{'}\right|\leq\delta$ and $s,s^{'}\in F$. Now $\wt f(s,p,c)=\wt f(\frac c{c^{'}}s,p,c^{'})$ and $\left|s-\frac c{c^{'}}s\right|\leq\left|\frac{c^{'}-c}{c^{'}}\right|\left|\wt t_0\right|$ for $s\in[0,\wt t_0]$, so we have 
 $$\left|\wt f(s,p,c^{'})-\wt f(s,p,c)\right|<\eps\quad\text{if}\quad |c^{'}-c|\leq\frac{c^{'}}{\left|\wt t_0\right|}\delta\quad \text{and}\quad s\in F\cap\frac{c^{'}}cF.$$
Note that 
\begin{align*}
&\mu_L\left([0,\wt t_0]\setminus F\cap\frac {c^{'}}c F\right)\leq\mu_L\left([0,\wt t_0]\setminus F\right)+\mu_L\left([0,\wt t_0]\setminus\frac {c^{'}}c F\right)\\
&\leq \eps\left|\wt t_0\right|+\left(\left|1-\frac{c^{'}}c\right|+\frac{c^{'}}c\eps\right)\left|\wt t_0\right|\leq 4\eps\left|\wt t_0\right|
\end{align*} if $|c-c^{'}|$ is small enough. Consequently,
\begin{align*}
&\int_0^{\wt t_0}\left|\wt f(s, p,c)-\wt f(s,p,c^{'})\right|\dd s\\
&\leq\int_{F\cap\frac {c^{'}}c F}\left|\wt f(s, p,c)-\wt f(s,p,c^{'})\right|\dd s+\int_{[0,\wt t_0]\setminus F\cap\frac {c^{'}}c F}\left|\wt f(s, p,c)-\wt f(s,p,c^{'})\right|\dd s\\
&\leq\eps\left|\wt t_0\right|+\mu_L\left([0,\wt t_0]\setminus F\cap\frac {c^{'}}c F\right)\cdot 2\left\|f\right\|\leq \eps\left(\left|\wt t_0\right|+8\left|\wt t_0\right|\left\|f\right\|\right)
\end{align*}
if $|c-c^{'}|$ is small enough. Since $\eps$ is an arbitrary positive number, this proves the continuity of $(p,c)\mapsto\wt f(\cdot,p,c)$ w.r.t. $c$.\end{proof}

\begin{lemma}\label{lem:ur_comp}
Let $f:\R\times P\ra\R^m$ be bounded and uniformly regular w.r.t. $p\in P$ at $s=0$, and let $G:\R^m\times\R\times P\times Q\lra\R^m$ be a continuous map w.r.t all variables. Then the composition $G(f(s,p),s,p,q)$ is uniformly regular w.r.t. $p\in P$ and $q\in Q$ at $s=0$.
\end{lemma}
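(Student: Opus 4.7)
The plan is to verify each of the three defining properties \eqref{eqn:ur1}--\eqref{eqn:ur3} of uniform regularity for the composition $F(s,p,q) := G(f(s,p),s,p,q)$, treating $(p,q)\in P\times Q$ as the parameter. Throughout I would fix a compact set $K=K_P\times K_Q\subset P\times Q$ and denote by $B\subset\R^m$ a closed ball containing $\Image f$, which exists since $f$ is bounded. Then $G$ restricted to the compact set $B\times[-t_*,t_*]\times K_P\times K_Q$ is uniformly continuous and bounded; let $\omega$ denote its modulus of continuity and $M$ its supremum norm.

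Condition \eqref{eqn:ur2} is immediate: $(p,q)\mapsto F(0,p,q)=G(f(0,p),0,p,q)$ is continuous as a composition, using the continuity of $p\mapsto f(0,p)$ guaranteed by \eqref{eqn:ur2} for $f$ and the joint continuity of $G$.

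For \eqref{eqn:ur1} I would use the splitting
\[
F(s,p,q)-F(0,p,q)=\bigl[G(f(s,p),s,p,q)-G(f(0,p),s,p,q)\bigr]+\bigl[G(f(0,p),s,p,q)-G(f(0,p),0,p,q)\bigr].
\]
The second bracket is the increment of the jointly continuous map $(s,p,q)\mapsto G(f(0,p),s,p,q)$, so Proposition \ref{prop:ur_cont} gives uniform regularity and its contribution to $\tfrac1{|t|}\int_0^t|\cdot|\,\dd s$ tends to $0$ uniformly on $K$. For the first bracket the pointwise bound is $\omega(|f(s,p)-f(0,p)|)$, and the main obstacle is that $|f(s,p)-f(0,p)|$ is only small on average, not pointwise. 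This is handled by the standard Chebyshev-type splitting: for $\eta>0$ let $A_{p,\eta,t}:=\{s\in[0,t]:|f(s,p)-f(0,p)|>\eta\}$; then
\[
\frac1{|t|}\int_0^t\omega(|f(s,p)-f(0,p)|)\,\dd s\le\omega(\eta)+M\cdot\frac{\mu_L(A_{p,\eta,t})}{|t|}\le\omega(\eta)+\frac{M}{\eta}\cdot\frac{1}{|t|}\int_0^t|f(s,p)-f(0,p)|\,\dd s.
\]
The integral on the right tends to $0$ uniformly on $K_P$ by \eqref{eqn:ur1} for $f$. Given $\eps>0$, choose $\eta$ with $\omega(\eta)<\eps/2$ and then $t$ so small that the last average is smaller than $\eta\eps/(2M)$ for all $p\in K_P$; this yields \eqref{eqn:ur1} for $F$.

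For \eqref{eqn:ur3} let $t_0>0$ be the number provided by \eqref{eqn:ur3} applied to $f$ and $K_P$, so that $p\mapsto f(\cdot,p)$ is continuous from $K_P$ into $L^1([0,t_0],\R^m)$. I would argue by contradiction: if continuity of $(p,q)\mapsto F(\cdot,p,q)$ into $L^1([0,t_0],\R^m)$ failed at some $(p_0,q_0)\in K$, there would be a sequence $(p_n,q_n)\to(p_0,q_0)$ and $\eps>0$ with
\[
\int_0^{t_0}|G(f(s,p_n),s,p_n,q_n)-G(f(s,p_0),s,p_0,q_0)|\,\dd s\ge\eps.
\]
Since $f(\cdot,p_n)\to f(\cdot,p_0)$ in $L^1$, extract a subsequence along which $f(s,p_{n_k})\to f(s,p_0)$ almost everywhere; then joint continuity of $G$ gives the pointwise a.e. convergence of the integrands, and they are uniformly bounded by $2M$ since all values of $f$ lie in $B$. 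The dominated convergence theorem forces the integral to $0$, contradicting the lower bound. This yields \eqref{eqn:ur3} and completes the verification.

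I expect the only real obstacle to be the first bracket in the verification of \eqref{eqn:ur1}: one cannot simply estimate by continuity of $G$, and the Chebyshev-type splitting together with the uniform (in $p$) control on $\tfrac1{|t|}\int_0^t|f(s,p)-f(0,p)|\,\dd s$ provided by Definition \ref{def:ur} is what makes the argument go through.
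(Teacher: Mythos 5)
Your proposal is correct, and for conditions \eqref{eqn:ur2} and \eqref{eqn:ur1} it follows essentially the same route as the paper: the paper also splits the increment into $G(f(s,p),s,p)-G(f(0,p),s,p)$ plus the increment of the continuous map $(s,p)\mapsto G(f(0,p),s,p)$, and handles the first term by exactly your Chebyshev-type device, estimating the measure of $A_p=\{s:|f(s,p)-f(0,p)|>\delta\}$ by $\frac1\delta\int_0^t|f(s,p)-f(0,p)|\,\dd s$ and splitting the integral over $A_p$ and its complement (the paper phrases this with an explicit $\eps$--$\delta$ from uniform continuity rather than a modulus $\omega$, which is only cosmetic). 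The one place you genuinely diverge is \eqref{eqn:ur3}: the paper runs the Chebyshev splitting a second time, bounding the measure of $B_{pp'}=\{s:|f(s,p)-f(s,p')|>\delta\}$ directly from the $L^1$-continuity of $p\mapsto f(\cdot,p)$ and then splitting the integral as before, whereas you argue by contradiction via an a.e.-convergent subsequence and dominated convergence. Both are valid (sequential continuity suffices since the parameter spaces are taken to be metric); your DCT argument is shorter and avoids repeating the quantitative estimate, while the paper's version is uniform and constructive and stays within the same toolkit used for \eqref{eqn:ur1}. One minor remark: the paper first reduces the four-argument $G$ to a three-argument one by absorbing $q$ into an enlarged parameter $\wt p=(p,q)$ (invoking Proposition \ref{prop:ur_trivial}); you implicitly do the same by treating $(p,q)$ as the parameter throughout, which is fine.
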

\begin{proof}
We will prove the assertion for $G$ trivially depending on $q\in Q$. This will suffice, since we can denote $G(f(s,p),s,p,q)$ as $G(\wt f(s,p,q),s,p,q)=G(\wt f(s,\wt p),s,\wt p)$, where $\wt f(s,p,q):=f(s,p)$ and $\wt p:=(p,q)\in P\times Q$. Clearly, $\wt f(s,\wt p)$ is uniformly regular w.r.t. $\wt p=(p,q)\in P\times Q$ at $s=0$ (cf. Proposition \ref{prop:ur_trivial}) and the investigated composition has a desired simpler form $G(\wt f(s,\wt p),s,\wt p)$.  

Property \eqref{eqn:ur2} is obvious. To prove \eqref{eqn:ur1} estimate
\begin{align*}
&\left|G(f(s,p),s,p)-G(f(0,p),0,p)\right|\\
&\leq\left|G(f(s,p),s,p)-G(f(0,p),s,p)\right|+\left|G(f(0,p),s,p)-G(f(0,p),0,p)\right|.
\end{align*}
Since $\wt G(s,p):=G(f(0,p),s,p)$ is continuous w.r.t. $p$ and $s$, it satisfies \eqref{eqn:ur1}. Consequently, it is enough to check if
$$\frac 1{|t|}\int_0^t\left|G(f(s,p),s,p)-G(f(0,p),s,p)\right|\dd s\underset{t\to 0}\lra 0$$
locally uniformly w.r.t. $p$. To prove it consider a compact set $K_P\subset P$ and restrict $s$ to the interval $[0,t_0]\subset\R$. Since $f$ is bounded, its image $\Image f$ is contained in a compact subset $K\subset\R^m$. Fix $\eps>0$. The map $G$ is uniformly continuous on $K\times[0,t_0]\times K_P$, so there exists a number $\delta>0$ such that
$$\left|G(x,s,p)-G(y,s,p)\right|<\eps$$
if $|x-y|<\delta$ and $x,y\in K$, $s\in[0,t_0]$, and $p\in K_P$.  
 Since $f(s,p)$ is uniformly regular there exists a number $0<\wt t_0\leq t_0$ such that 
\begin{equation}\label{eqn:ur4}
\int_0^t\left|f(s,p)-f(0,p)\right|\dd s<|t|\cdot\eps\cdot\delta
\end{equation}
for every $|t|\leq \wt t_0$ and each $p\in K_P$. Define now $A_p:=\{s\in[0,t_0]:\left|f(s,p)-f(0,p)|>\delta\right|\}$. From \eqref{eqn:ur4} we have
$$\delta\cdot\mu_L\left([0,t]\cap A_p\right)\leq\int_{[0,t]\cap A_p}\left|f(s,p)-f(s,p)\right|\dd s\leq\int_0^t\left|f(s,p)-f(s,p)\right|\dd s\leq|t|\cdot\eps\cdot\delta,$$
hence $\mu_L\left([0,t]\cap A_p\right)\leq\eps\cdot|t|$. Consequently, for $|t|<\wt t_0$, we have
\begin{align*}
&\int_0^t\left|G(f(s,p),s,p)-G(f(0,p),s,p)\right|\dd s\leq 2\cdot\sup_{K\times[0,t_0]\times K_P}|G|\int_{[0,t]\cap A_p}1\dd s+\int_{[0,t]\setminus A_p}\eps \dd s\\
&\leq 2\cdot\sup_{K\times[0,t_0]\times K_P}|G|\cdot\mu_L\left([0,t]\cap A_p\right)+\eps|t|=\eps|t|\left( 2\cdot\sup_{K\times[0,t_0]\times K_P}|G|+1\right).
\end{align*}
Since $\eps>0$ was arbitrary, this proves \eqref{eqn:ur1}.

To prove \eqref{eqn:ur3} we proceed similarly. Again we restrict our attention to $K\times[0,t_0]\times K_P$ and fix $\eps>0$. Let $\delta>0$ be such that, for $x,y\in K$, $s\in[0,t_0]$ and $p,p^{'}\in K_P$
$$\left|G(x,s,p)-G(y,s,p^{'})\right|<\eps$$
if $|x-y|<\delta$ and $|p-p^{'}|<\delta$. 

From the uniform regularity of $f(s,p)$, there exists a number $\wt\delta>0$ such that 
$$\int_0^{t_0}\left|f(s,p)-f(s,p^{'})\right|\dd s\leq\eps\cdot \delta$$
if $p,p^{'}\in K_p$ are such that $|p-p^{'}|<\wt\delta$. From that we deduce that the set $B_{pp^{'}}:=\{s\in[0,t_0]:\left|f(s,p)-f(s,p^{'})\right|>\delta\}$ has measure smaller than $\eps$ if $|p-p^{'}|<\wt \delta$. Indeed, we can estimate
$$\delta\cdot\mu_L\left(B_{pp^{'}}\right)\leq\int_{B_{pp^{'}}}\left|f(s,p)-f(s,p^{'})\right|\dd s\leq\int_0^{t_0}\left|f(s,p)-f(s,p^{'})\right|\dd s\leq \delta\cdot\eps.$$

Therefore for $|p-p^{'}|\leq\min\{\delta,\wt\delta\}$ we can estimate
\begin{align*}
&\int_0^{t_0}\left|G(f(s,p),s,p)-G(f(s,p^{'}),s,p^{'})\right|\dd s\leq\int_{B_{pp^{'}}}\left|G(f(s,p),s,p)-G(f(s,p^{'}),s,p^{'})\right|\dd s\\
&\phantom{=}+\int_{[0,t_0]\setminus B_{pp^{'}}}\left|G(f(s,p),s,p)-G(f(s,p^{'}),s,p^{'})\right|\dd s\\
&\leq 2\cdot\sup_{K\times[0,t_0]\times K_P}|G|\cdot\mu_L\left(B_{pp^{'}}\right)+\int_0^{t_0}\eps\dd s\leq\eps\left(2\cdot\sup_{K\times[0,t_0]\times K_P}|G|+|t_0|\right).
\end{align*}
This proves \eqref{eqn:ur3}. \end{proof}

\section{Ordinary differential equations}\label{sapp:ode}
This section contains a revision of  the theory of ordinary differential equations in a measurable setting. We formulate standard theorems about existence, uniqueness and regularity of solutions. We state these results after \cite{bressan} and give sketches of the proofs.

\subsection{Carath{\'e}odory solutions}
Consider an ordinary differential equation associated with a map
$g:\R^n\times\R\ra\R^n$,
\begin{equation}\label{eqn:ode}
\dot x(t)=g(x(t),t).
\end{equation}
By a \emph{(Carath{\'e}odory) solution}\index{Carath{\'e}odory solution}\index{measurable solution of ODE|see{Carath{\'e}odory solution}} of \eqref{eqn:ode} on an
interval $I=[t_0,t_1]$ we shall mean an AC map $t\mapsto x(t)$
which satisfies \eqref{eqn:ode} a.e. For the solutions in the
above sense one can develop the standard theory of existence,
uniqueness, and parameter dependence, as done in
\cite{bressan}. Let us recall the most important results of this
theory. Assume the following:
\begin{equation}\label{ass:A}\tag{A}
\text{$t\mapsto g(x,t)$ is measurable for
every $x$,} \text{ and } \text{$x\mapsto g(x,t)$ is continuous for
every $t$;}
\end{equation}
\begin{equation}\label{ass:B}\tag{B}
\text{$g(x,t)$ is locally bounded and locally Lipschitz w.r.t. $x$;}
\end{equation}
that is, for every compact set $K\subset\R^n\times\R$ there exist
constants $C_K$ and $L_K$ such that $|g(x,t)|\leq C_K$ and
$|g(x,t)-g(y,t)|\leq L_K|x-y|$ for every $(x,t),(y,t)\in K$.

\begin{theorem}[existence and uniqueness of solutions]\label{thm:exist}\index{Carath{\'e}odory solution!existence and uniqueness}
Assuming that \eqref{ass:A} and \eqref{ass:B} hold, for every
$x_0\in\R^n$ there exists a unique solution $x(t,x_0)$ of
\eqref{eqn:ode} with the initial condition $x(t_0)=x_0$, defined
on some interval $[t_0,t_0+\eps]$. If $g$ is globally bounded and
globally Lipschitz (so that the constants $C_K$ and $L_K$ in
\eqref{ass:B} can be chosen universally for all $K$'s), then the
solution is also defined globally. Moreover, if $x(t,x_0)$ is defined on the interval $[t_0,t_1]$ then so are the solutions $x(t,x_0^{'})$ for $x_0^{'}$ close enough to $x_0$. 
\end{theorem}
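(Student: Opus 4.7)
The plan is to recast \eqref{eqn:ode} as the integral equation
\begin{equation*}
x(t)=x_0+\int_{t_0}^t g(x(s),s)\,\dd s
\end{equation*}
and apply the Banach fixed point theorem to the associated operator $T[x](t):=x_0+\int_{t_0}^t g(x(s),s)\,\dd s$ on a suitable closed subset of the Banach space $C([t_0,t_0+\eps],\R^n)$ equipped with the $\sup$-norm. The first step is to check that, for any continuous $x:[t_0,t_0+\eps]\ra\R^n$, the composition $s\mapsto g(x(s),s)$ is measurable: by \eqref{ass:A}, $g$ is a Carath{\'e}odory map, so this follows from a standard approximation of $x$ by step functions together with the continuity of $g$ in $x$ and measurability in $t$. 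Local boundedness from \eqref{ass:B} then ensures the integral is well defined.

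Fix $x_0\in\R^n$ and $r>0$ and let $K=\ol B(x_0,r)\times[t_0,t_0+1]$, with associated constants $C_K$ and $L_K$ from \eqref{ass:B}. For $\eps\leq\min\{1,\,r/C_K,\,1/(2L_K)\}$, one easily verifies that $T$ sends the closed ball $\ol B\big(c_{x_0},r\big)$ in $C([t_0,t_0+\eps],\R^n)$ (where $c_{x_0}$ is the constant path) into itself, and is a contraction with constant $\eps L_K\leq 1/2$. The Banach fixed point theorem yields a unique $x\in\ol B(c_{x_0},r)$ with $T[x]=x$. Such $x$ is absolutely continuous (as a primitive of a locally bounded measurable function) and satisfies \eqref{eqn:ode} at every regular point of $s\mapsto g(x(s),s)$, hence a.e. Uniqueness among all Carath{\'e}odory solutions with the same initial datum follows from a standard Gr\"onwall argument applied to two solutions $x_1,x_2$: the difference $|x_1(t)-x_2(t)|\leq L_K\int_{t_0}^t|x_1(s)-x_2(s)|\,\dd s$ forces $x_1\equiv x_2$ as long as both stay inside a common compact set.

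In the globally bounded and globally Lipschitz case the constants $C_K,L_K$ can be taken universal, so the local $\eps$ above is uniform; iterating the construction on $[t_0,t_0+\eps],[t_0+\eps,t_0+2\eps],\ldots$ extends the solution to any prescribed interval. Finally, for the dependence on initial conditions, suppose $x(\cdot,x_0)$ is defined on $[t_0,t_1]$. Choose a compact tube $K\supset\{(x(t,x_0),t):t\in[t_0,t_1]\}$ of radius $r>0$, with associated constants $C_K,L_K$. For any initial value $x_0'$ with $|x_0'-x_0|<r/2$, the same contraction argument performed on successive subintervals of length $\eps=\min\{1,r/(2C_K),1/(2L_K)\}$ produces a solution $x(\cdot,x_0')$ that stays within $K$ on all of $[t_0,t_1]$, provided $|x_0'-x_0|$ is small enough that Gr\"onwall's inequality
\begin{equation*}
|x(t,x_0')-x(t,x_0)|\leq |x_0'-x_0|\,e^{L_K(t_1-t_0)}
\end{equation*}
keeps the trajectory inside $K$; this also yields continuous dependence on the initial condition. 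The main technical obstacle is the very first measurability step: ensuring that $T$ is well defined on continuous paths requires the joint Carath{\'e}odory structure of $g$, and it is there that \eqref{ass:A} is essential and cannot be relaxed.
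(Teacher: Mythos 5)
Your proposal is correct and follows essentially the same route as the paper, which sketches Picard's method (the contracting integral operator $x\mapsto x_0+\int_{t_0}^t g(x(\tau),\tau)\,\dd\tau$) and defers the details to Bressan. You merely make explicit the steps the paper leaves implicit — the Carath\'eodory measurability of $s\mapsto g(x(s),s)$, the Gronwall uniqueness argument, and the tube/continuation argument for nearby initial data — all of which are sound.
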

\begin{proof}[Sketch of the proof]
The proof uses the standard Picard's Method. One constructs a contracting map 
$$A_{x_0}:x(t)\longmapsto x_0+\int_{t_0}^tg(x(\tau),\tau)\dd\tau$$
and uses it to define inductively a sequence of functions $x^0(t,x_0)=x_0$, $x^{n+1}(t,x_0)=A_{x_0}(x^n(t,x_0))$ which converges uniformly in $t$ to the solution $x(t,x_0)$. The length of the interval $[t_0,t_1]$ on which the solution is well-defined depends on the Lipschitz bound of $g(x,t)$. The details can be found in \cite[Thm. 2.1.1]{bressan}. 
\end{proof}

\subsection{Parameter dependence} 
Consider now differential equation \eqref{eqn:ode} with an additional parameter dependence
\begin{equation}\label{eqn:ode1}
\dot{x}(t)=g(x(t),t,s),
\end{equation}
where $g:\R^n\times\R\times\R\lra\R^n$. Assume the following:
\begin{equation}\label{ass:A1}\tag{$A^{'}$}
\text{$t\mapsto g(x,t,s)\,,\ s\mapsto g(x,t,s)$
are measurable,} \text{ and } \text{$x\mapsto g(x,t,s)$ is
continuous;}
\end{equation}
\begin{equation}\label{ass:B1}\tag{$B^{'}$}
\text{$g(x,t,s)$ is locally bounded and locally Lipschitz w.r.t. $x$;}
\end{equation}
that is, for every compact set $K\subset\R^n\times\R\times\R$
there exist constants $C_K$, $L_K$ such that $|g(x,t,s)|\leq C_K$
and $|g(x,t,s)-g(y,t,s)|\leq L_K|x-y|$ for every
$(x,t,s),(y,t,s)\in K$.

\begin{theorem}[parameter dependence]\label{thm:param}\index{Carath{\'e}odory solution!regularity}
Assume that \eqref{ass:A1} and \eqref{ass:B1} hold, and denote by
$x(t,x_0,s)$ the solution of \eqref{eqn:ode1} for a fixed
parameter $s$ and the initial condition $x(t_0,x_0,s)=x_0$ (we
know that such solutions locally exist by Theorem
\ref{thm:exist}). Then the dependence $x_0\mapsto x(t,x_0,s)$ is
continuous, whereas, for any bounded measurable map $s\mapsto
x_0(s)$, the map $s\mapsto x(t,x_0(s),s)$ is also bounded and
measurable for every $t$.
\end{theorem}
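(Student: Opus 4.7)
The plan is to prove the two assertions separately: first the continuous dependence on the initial point $x_0$ for a fixed parameter $s$, and then the measurability in $s$ of $x(t, x_0(s), s)$ for a bounded measurable $s\mapsto x_0(s)$.

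\textbf{Continuity in the initial point.} Fix $s$ and two initial points $x_0, x_0'$ close enough that both solutions $x(t,x_0,s)$ and $x(t, x_0', s)$ exist on $[t_0,t_1]$ (by the last sentence of Theorem~\ref{thm:exist}). Both stay in some compact set $K\subset\R^n\times\R$ with Lipschitz constant $L_K$ for $g(\cdot,t,s)$. Subtracting the integral formulations gives
\[
|x(t,x_0,s)-x(t,x_0',s)|\leq |x_0-x_0'|+L_K\int_{t_0}^t|x(\tau,x_0,s)-x(\tau,x_0',s)|\,\dd\tau,
\]
and Grönwall's inequality yields $|x(t,x_0,s)-x(t,x_0',s)|\leq |x_0-x_0'|e^{L_K(t_1-t_0)}$, which is precisely the continuous dependence on $x_0$.

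\textbf{Measurability in $s$.} The strategy is to revisit the Picard iteration used in the proof of Theorem~\ref{thm:exist} with the parameter $s$ carried along, and to check that measurability in $s$ is preserved at every step. Define inductively $x^0(t,s):=x_0(s)$ and
\[
x^{n+1}(t,s):=x_0(s)+\int_{t_0}^t g(x^n(\tau,s),\tau,s)\,\dd\tau.
\]
Since $x_0(s)$ is bounded and measurable and the values of the iterates remain in a fixed compact ball (chosen via the local bound $C_K$), each $x^n(t,s)$ is uniformly bounded and, for each fixed $s$, continuous (in fact Lipschitz) in $t$. I claim that $(\tau,s)\mapsto x^n(\tau,s)$ is jointly measurable and, consequently, so is $(\tau,s)\mapsto g(x^n(\tau,s),\tau,s)$. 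Granting this claim, Fubini's theorem ensures that $x^{n+1}(t,s)$ is again measurable in $s$, and continuity in $t$ propagates. By the contraction argument of Theorem~\ref{thm:exist}, $x^n(t,s)\to x(t,x_0(s),s)$ uniformly in $t$ for each fixed $s$, so the limit inherits measurability in $s$.

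\textbf{Where the work is.} The main technical obstacle is the preservation of joint measurability under the composition $(\tau,s)\mapsto g(x^n(\tau,s),\tau,s)$, since $g$ is only a Carathéodory function (measurable in $(\tau,s)$, continuous in $x$). The standard way around this, which I would carry out, is to apply Luzin's theorem (Theorem~\ref{thm:luzin}) to restrict $g$ and $x^n$ to a compact set on which they are simultaneously continuous outside an arbitrarily small exceptional set, and then approximate $x^n$ by a sequence of continuous functions; composition of continuous functions with measurable ones is measurable, and the monotone convergence of the exceptional sets to a null set gives joint measurability of the composition. A subtle point is to base the induction on the observation that $x^n(t,s)$ is actually continuous in $t$ with a Lipschitz constant independent of $s$, so that the Luzin approximation of $s\mapsto x^n(\cdot,s)$ into $C([t_0,t_1],\R^n)$ yields genuine joint continuity on the good set. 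Once this is established, uniform boundedness of the iterates is immediate from the bound $C_K$, and the two assertions of the theorem follow.
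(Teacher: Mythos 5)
Your proposal follows essentially the same route as the paper: the paper's (sketched) proof also runs the Picard iteration with the parameter $s$ carried along, observes that measurability in $s$ is preserved at each step, and obtains the claim for the solution as a pointwise limit of measurable functions, so your Luzin-based discussion merely fills in the composition-measurability detail that the paper delegates to \cite{bressan}. The only divergence is minor: you derive continuity in $x_0$ from Gr\"onwall's inequality, whereas the paper reads it off from the uniform (in $t$ and $x_0$) convergence of the Picard iterates --- just note that the Gr\"onwall variant you invoke (with the additive constant $|x_0-x_0'|$) is the classical inhomogeneous one, not the homogeneous form stated as Theorem \ref{thm:gronwall} in the appendix.
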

\begin{proof}[Sketch of the proof]
As before one constructs a sequence $x^n(t,x_0,s)$ defined by means of the contracting map
$$A_{x_0,s}:x(t)\longmapsto x_0+\int_{t_0}^t g(x(\tau),\tau,s)\dd\tau.$$
The sequence converges to the solution $x(t,x_0,s)$ uniformly w.r.t. $t$ and $x_0$, which implies continuity of the solution w.r.t. the initial value. If $x_0(s)$ is measurable w.r.t. $s$, so is the sequence $x^n(t,x_0(s),s)$. The limit $x(t,x_0(s),s)$ is measurable as a point-wise limit of measurable functions. Moreover, since $A_{x_0,s}$ is a contraction, $\|x(t,x_0(s),s)\|$ is bounded by a constant times $\|x_0(s)\|$. Details can be found in \cite{bressan}.\end{proof}

\noindent Assuming higher regularity of  $g(x,t,s)$, one can
prove a stronger result.

\begin{theorem}[differentiability w.r.t. the initial value]\label{thm:param_dif}\index{Carath{\'e}odory solution!regularity}

Assume that the function $g(x,t,s)$ satisfies \eqref{ass:A1} and
\eqref{ass:B1}, it is differentiable w.r.t. $x$, and the derivative
$\frac{\pa g}{\pa x}(x,t,s)$ satisfies \eqref{ass:A1} and is
locally bounded. Then the solution $x(t,x_0,s)$ of
\eqref{eqn:ode1} is differentiable w.r.t. the initial
condition $x_0$. Moreover, the derivative $\frac{\pa x}{\pa
x_0}(t,x_0,s)$ is continuous in $x_0$, AC in $t$, and measurable
in $s$.
\end{theorem}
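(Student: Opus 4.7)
The plan is to identify the candidate derivative $W(t,x_0,s) := \partial x/\partial x_0(t,x_0,s)$ as the solution of the \emph{variational equation} obtained by formally differentiating \eqref{eqn:ode1} with respect to $x_0$, namely
\begin{equation}\label{eqn:var_eqn_prop}
\dot W(t) = \frac{\pa g}{\pa x}\bigl(x(t,x_0,s),t,s\bigr)\,W(t),\qquad W(t_0) = \mathrm{Id},
\end{equation}
and then to verify that this $W$ really is the Fr\'echet derivative. First, I would observe that \eqref{eqn:var_eqn_prop} is a linear ODE for the matrix-valued function $W(t)$ whose coefficient matrix $M(t) := \frac{\pa g}{\pa x}(x(t,x_0,s),t,s)$ is bounded (by the local boundedness assumption on $\pa g/\pa x$) and measurable in $t$ (by composition: $t\mapsto x(t,x_0,s)$ is AC hence continuous, and $\pa g/\pa x$ satisfies (A$'$)). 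Hence Theorem \ref{thm:exist} gives a unique solution $W(t,x_0,s)$ defined on the same interval as $x(t,x_0,s)$, AC in $t$, and Theorem \ref{thm:param}, applied to the linear equation, yields continuity in $x_0$ and measurability in $s$ (the right-hand side of \eqref{eqn:var_eqn_prop} depends continuously on the parameter $x_0$ and measurably on $s$ through $x(t,x_0,s)$, cf. Theorem \ref{thm:param}).

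The main step is to show that $W(t,x_0,s)$ coincides with $\pa x/\pa x_0$. For fixed $s$ and a nonzero increment $h\in\R^n$ set
\[
R(t,h) := x(t,x_0+h,s) - x(t,x_0,s) - W(t,x_0,s)\,h,
\]
and the goal is to prove $|R(t,h)|/|h|\to 0$ as $h\to 0$, uniformly in $t$ on compact subintervals. Using the integral form of \eqref{eqn:ode1} and \eqref{eqn:var_eqn_prop}, I would write
\begin{align*}
R(t,h) &= \int_{t_0}^{t}\Bigl[g\bigl(x(\tau,x_0+h,s),\tau,s\bigr) - g\bigl(x(\tau,x_0,s),\tau,s\bigr)\\
&\qquad\qquad - \tfrac{\pa g}{\pa x}\bigl(x(\tau,x_0,s),\tau,s\bigr)\,W(\tau,x_0,s)h\Bigr]\,\dd \tau.
\end{align*}
Inserting $\pm \tfrac{\pa g}{\pa x}(x(\tau,x_0,s),\tau,s)[x(\tau,x_0+h,s)-x(\tau,x_0,s)]$ and using the fundamental theorem of calculus in the form
\[
g(y,\tau,s) - g(x,\tau,s) = \int_0^1 \tfrac{\pa g}{\pa x}\bigl(x + \sigma(y-x),\tau,s\bigr)\,\dd\sigma\cdot (y-x),
\]
one obtains the estimate
\[
|R(t,h)| \le \int_{t_0}^{t}\!\! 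C\,|R(\tau,h)|\,\dd\tau + \int_{t_0}^{t}\!\!\eta(\tau,h)\bigl|x(\tau,x_0+h,s)-x(\tau,x_0,s)\bigr|\,\dd\tau,
\]
where $C$ is a local bound on $|\pa g/\pa x|$ and
\[
\eta(\tau,h) := \int_0^1 \Bigl|\tfrac{\pa g}{\pa x}\bigl(x(\tau,x_0,s)+\sigma(x(\tau,x_0+h,s)-x(\tau,x_0,s)),\tau,s\bigr) - \tfrac{\pa g}{\pa x}\bigl(x(\tau,x_0,s),\tau,s\bigr)\Bigr|\,\dd\sigma.
\]
By continuity of $\pa g/\pa x$ in $x$ and continuity of the flow in $x_0$ (Theorem \ref{thm:param}), $\eta(\tau,h)\to 0$ pointwise in $\tau$ as $h\to 0$, and is dominated by $2C$; dominated convergence gives $\int_{t_0}^{t}\eta(\tau,h)\,\dd\tau = o(1)$. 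Since $|x(\tau,x_0+h,s)-x(\tau,x_0,s)| \le \mathrm{const}\cdot|h|$ by the standard Gr\"onwall argument applied to the difference of two solutions of \eqref{eqn:ode1}, the second integral is $o(|h|)$, and Gr\"onwall's inequality applied to $|R(\cdot,h)|$ then yields $|R(t,h)| = o(|h|)$ uniformly on compact intervals. This identifies $W$ with $\pa x/\pa x_0$ and finishes the argument, since the regularity properties of $W$ have already been established.

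The main obstacle is the $o(|h|)$ estimate in the measurable setting: one has to be careful that the dominated convergence argument for $\int \eta(\tau,h)\,\dd\tau\to 0$ uses only the continuity of $\pa g/\pa x$ in $x$ (for almost every fixed $\tau$) together with the uniform boundedness, rather than any continuity in $\tau$, so the standard smooth proof translates essentially verbatim once the Carath\'eodory framework of Appendix \ref{sapp:ode} is in place.
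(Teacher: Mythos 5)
Your proposal is correct, but it identifies the derivative by a genuinely different argument than the paper. The paper's sketch augments \eqref{eqn:ode1} with the variational (linear) equation for $y$, runs the Picard iteration on the coupled system $(x,y)$, observes that $\frac{\pa x^n}{\pa x_0}=y^n$ at every stage of the iteration, and passes to the limit using the uniform convergence of both sequences; the regularity of $y$ then comes from Theorem \ref{thm:param} exactly as in your first paragraph. You instead set up the same variational equation but verify directly that its solution $W$ is the Fr\'echet derivative, via the integral remainder $R(t,h)$, the mean-value identity for $g$, dominated convergence for the modulus $\eta(\tau,h)$, and a Gr\"onwall estimate. The trade-off: the paper's route gets the identification almost for free from the structure of the Picard iterates, but silently invokes the fact that uniform convergence of $x^n$ together with uniform convergence of $\pa x^n/\pa x_0$ forces the limit to be differentiable with derivative $\lim y^n$ --- a step it does not spell out; your route makes the $o(|h|)$ estimate fully explicit and is self-contained in the Carath\'eodory setting, at the cost of a longer computation. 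Two small points to tidy up: (i) the Gr\"onwall inequality you need, $|R(t)|\le a+C\int_{t_0}^t|R|\,\dd\tau\Rightarrow|R(t)|\le a\,e^{C(t-t_0)}$, is the inhomogeneous form, whereas Theorem \ref{thm:gronwall} in the appendix states only the degenerate case $a=0$, so you should either cite the general form or note that the same proof gives it; (ii) the continuity of $W$ in $x_0$ is continuity with respect to a parameter entering the \emph{right-hand side} of the linear equation (through $x(\tau,x_0,s)$), not with respect to its initial value, so it does not follow verbatim from the statement of Theorem \ref{thm:param} --- though the standard Picard argument covers it, and the paper's own sketch elides the same point.
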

\begin{proof}[Sketch of the proof]
We proceed again according to the standard method paying more attention to measurability. Consider a variation of \eqref{eqn:ode1}
\begin{align*}
\dot x(t)&=g(x(t),t,s),\\
\dot y(t)&=\frac{\pa g}{\pa x}(x(t),t,s)
\end{align*}
with the initial conditions $x(t_0)=x_0$ and $y(t_0)=\id$. The above equations satisfy the assumptions of Theorem \ref{thm:param}, hence the solution $x(t,x_0,s)$ and $y(t,x_0,s)$ is a uniform (in $t$ and $x_0$) limit of the Picard's sequence $x^n(t,x_0,s)$ and $y^n(t,x_0,s)$. We observe that $\frac{\pa x^n}{\pa x_0}(t,x_0,s)=y^n(t,x_0,s)$, hence also $\frac{\pa x}{\pa x_0}(t,x_0,s)=y(t,x_0,s)$. The derivative $y(t,x_0,s)$ satisfies the regularity conditions by Theorem \ref{thm:param}. 

Again a detailed proof (the only difference is the absence of
the parameter $s$) can be found in \cite[Thm.
2.3.2]{bressan}. 
\end{proof}

\subsection{Gronwall Inequality}

At the end of this section we will recall the following classical result.

\begin{theorem}[integral Gronwall Inequality]\label{thm:gronwall}\index{Gronwall Inequality}
Assume that $b:[0,T]\ra\R$ is non-negative and integrable, and for almost every $t\in[0,T]$ we have
$$b(t)\leq C\cdot\int_0^tb(s)\dd s,$$
where $C>0$ is a constant. 
Then $b(t)=0$ a.e.
\end{theorem}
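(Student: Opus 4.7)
The plan is to pass from the pointwise inequality on $b$ to an inequality on its integral $B(t):=\int_0^t b(s)\,\dd s$, which is a much more regular object (absolutely continuous, non-negative, vanishing at $0$), and then show that $B\equiv 0$ by an exponential-weight trick or by iteration. Once $B\equiv 0$, differentiating a.e.\ (which is legitimate for the AC function $B$) gives $b=B'=0$ a.e., which is the claim.

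First, I would verify that the hypothesis $b(t)\leq C\int_0^t b(s)\,\dd s$ a.e.\ translates, via integration in $t$, into the integral inequality
\begin{equation*}
B(t)\leq C\int_0^t B(s)\,\dd s,\qquad t\in[0,T],
\end{equation*}
which now holds for \emph{every} $t\in[0,T]$ (not just a.e.), because both sides are continuous functions of $t$. Note also $B\geq 0$ and $B(0)=0$. These three properties, together with the integral inequality, are all that is needed.

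Next, I would consider the auxiliary function $\varphi(t):=e^{-Ct}B(t)$, which is absolutely continuous. Its a.e.-derivative is
\begin{equation*}
\varphi'(t)=e^{-Ct}\bigl(B'(t)-CB(t)\bigr)=e^{-Ct}\bigl(b(t)-CB(t)\bigr)\leq 0
\end{equation*}
for a.e.\ $t$, using the hypothesis. Since an AC function with a.e.\ non-positive derivative is non-increasing, and $\varphi(0)=0$, we conclude $\varphi(t)\leq 0$ for all $t\in[0,T]$. Combined with $\varphi\geq 0$ (as $B\geq 0$), this forces $\varphi\equiv 0$, hence $B\equiv 0$ on $[0,T]$. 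Differentiating a.e.\ yields $b(t)=B'(t)=0$ a.e., as required.

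I do not expect any real obstacle: the only points that need a sentence of justification are (i) that $B$ is AC with $B'=b$ a.e.\ (the Lebesgue differentiation theorem / fundamental theorem of calculus for Lebesgue integrals) and (ii) that an AC function with a.e.\ non-positive derivative is non-increasing (standard). If one prefers to avoid the exponential trick, an equally short alternative is iteration: inserting $B(t)\leq C\int_0^t B(s)\,\dd s$ into itself $n$ times gives $B(t)\leq C^n\frac{t^n}{n!}\|B\|_{\sup}$, which tends to $0$ as $n\to\infty$, yielding the same conclusion.
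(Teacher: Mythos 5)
Your proof is correct: the passage to the absolutely continuous primitive $B(t)=\int_0^t b(s)\,\dd s$, the exponential weight $\varphi(t)=e^{-Ct}B(t)$ with $\varphi'\leq 0$ a.e., and the conclusion $B\equiv 0$, hence $b=B'=0$ a.e., are all justified, and the iteration variant you sketch also works. The paper itself gives no proof of this statement (it only cites Evans, App.~B), and your argument is precisely the standard one found there, so there is nothing to reconcile.
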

For the proof see for instance \cite[App. B]{evans}.

\section{\texorpdfstring{$E$}{E}-homotopy type equations}\label{sapp:pde}

In this section we consider linear PDEs of a special kind which are important in the notion of $E$-homotopy. We study their solutions in a weak sense and address a question about the existence of the trace.  

\subsection{\texorpdfstring{$E$}{E}-homotopy type equations}\index{algebroid homotopy}
Consider two functions $a(t,s)$ and $b(t,s)$ defined on a
rectangle $K=[t_0,t_1]\times[0,1]\ni(t,s)$. We will treat $a$ and
$b$ as $\R$-valued, yet all results remain valid for $\R^n$-valued
maps. Assume that $a$ and $b$ are bounded and measurable w.r.t. both variables
separately. Let $c(t,s)$ be a fixed continuous
function on $K$. We will say that the pair $(a,b)$ is a \emph{weak
solution} (\emph{W-solution})\index{weak solution}\index{W-solution|see{weak solution}} of the differential equation
\begin{equation}\label{eqn:basic}
\pa_t b(t,s)=\pa_sa(t,a)+c(t,s)b(t,s)a(t,s)
\end{equation}
if for every function $\varphi\in C^\infty_0(K)$ the following
equality holds:
\begin{equation}\label{eqn:basic_weak}
-\iint_K\Big[b(t,s)\pa_t\varphi(t,s)-
a(t,s)\pa_s\varphi(t,s)+c(t,s)b(t,s)a(t,s)\varphi(t,s)\Big]\dd
t\dd s=0.
\end{equation}
Observe that, since we assumed only measurability of $a$ and $b$,
the boundary values on $\pa K$ are, in general, not well-defined.

\begin{definition}\index{weak solution!with well defined trace|see{WT-solution}}\index{WT-solution}
We say that a W-solution $(a,b)$ of \eqref{eqn:basic} has a \emph{well-defined trace} if there exist bounded measurable maps $a_0,a_1:[t_0,t_1]\ra\R$ and $b_0,b_1:[0,1]\ra\R$ such that, for every $\psi\in C^\infty(K)$, we have
\begin{equation}\label{eqn:basic_wt}
\begin{split}
&-\iint_K\Big[b(t,s)\pa_t\psi(t,s)- a(t,s)\pa_s\psi(t,s)+c(t,s)b(t,s)a(t,s)\psi(t,s)\Big]\dd t\dd s\\
&=\int_0^1b_1(s)\psi(t_1,s)-b_0(s)\psi(t_0,s)\dd
s+\int_{t_0}^{t_1}a_0(t)\psi(t,0)-a_1(t)\psi(t,1)\dd t.
\end{split}
\end{equation}
In such a case we will call $(a,b)$ a \emph{WT-solution} of \eqref{eqn:basic}. The maps $a_0$, $a_1$ and $b_0$, $b_1$ will be called \emph{traces} of $a$ and $b$, respectively.\index{trace}
\end{definition}

\begin{remark}Since the values of measurable functions are defined a.e. only, for a WT-solution $(a,b)$ of \eqref{eqn:basic} we will assume that the traces agree with the boundary values of $a$ and $b$, i.e.  $a_0(t)=a(t,0)$, $a_1(t)=a(t,1)$, $b_0(s)=b(t_0,s)$, and $b_1(t)=b(t_1,s)$.
\end{remark}

\subsection{Existence of the trace and properties of WT-solutions}

Under certain regularity conditions, W-solutions of \eqref{eqn:basic} are, in fact, WT-solutions.
\begin{theorem}\label{thm:w_wt}\index{WT-solution!existence}
Let $a$ and $b$ be a bounded W-solution of \eqref{eqn:basic}.
Assume in addition that $a$ and $b$ satisfy the following regularity conditions:
\begin{equation}\label{eqn:traces}
\int_{t_0}^{t_1}\int_0^\eps|a(t,s)-a(t,0)|\frac 1\eps\dd s\dd
t\underset{\eps\to 0}{\to}0\,,\
\int_{t_0}^{t_1}\int_{1-\eps}^1|a(t,s)-a(t,1)|\frac 1\eps\dd s\dd t\underset{\eps\to 0}{\to}0\,,
\end{equation}
\begin{equation}\int_{0}^{1}\int_{t_0}^{t_0+\eps}|b(t,s)-b(t_0,s)|\frac 1\eps\dd
t\dd s\underset{\eps\to 0}{\to}0\,,\
\int_{0}^{1}\int_{t_1-\eps}^{t_1}|b(t,s)-b(t_1,s)|\frac 1\eps\dd
t\dd s\underset{\eps\to 0}{\to}0.\label{eqn:traces1}
\end{equation}
Then $(a,b)$ is a  WT-solution of \eqref{eqn:basic} and the traces $a(t,0)$, $a(t,1)$, $b(t_0,s)$, and $b(t_1,s)$ are well-defined.
\end{theorem}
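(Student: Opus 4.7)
The approach is a boundary-cutoff argument: test \eqref{eqn:basic_weak} against a localised version of $\psi$ and peel off the four boundary contributions. Take a smooth family $\alpha_\eps:\R\to[0,1]$ with $\alpha_\eps(u)=0$ for $u\leq 0$, $\alpha_\eps(u)=1$ for $u\geq\eps$, and $\mathrm{supp}\,\alpha_\eps'\subset[0,\eps]$ (so $\int\alpha_\eps'=1$), and set
\begin{equation*}
\eta_\eps(t,s):=\alpha_\eps(t-t_0)\,\alpha_\eps(t_1-t)\,\alpha_\eps(s)\,\alpha_\eps(1-s).
\end{equation*}
For any $\psi\in C^\infty(K)$ the product $\eta_\eps\psi$ lies in $C_0^\infty(K)$, so it is a legitimate test function in \eqref{eqn:basic_weak}. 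Expanding via the product rule and rearranging yields
\begin{equation*}
-\iint_K\eta_\eps\bigl[b\,\pa_t\psi-a\,\pa_s\psi+cba\,\psi\bigr]\dd t\dd s=\iint_K\psi\bigl[b\,\pa_t\eta_\eps-a\,\pa_s\eta_\eps\bigr]\dd t\dd s.
\end{equation*}
The plan is to pass $\eps\to 0$: the left side will reproduce the left side of \eqref{eqn:basic_wt}, while the right side will split into four pieces, each supported near one edge of $K$ and yielding exactly one of the four boundary integrals.

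The left-side limit is immediate by dominated convergence, since $\eta_\eps\to 1$ pointwise in the interior of $K$ and the integrand is uniformly bounded on the compact set $K$. For the right side I will treat the four edges identically; let me spell out the edge $t=t_0$, whose associated contribution is
\begin{equation*}
J_\eps:=\iint_K\psi(t,s)\,b(t,s)\,\alpha_\eps'(t-t_0)\,\alpha_\eps(t_1-t)\,\alpha_\eps(s)\,\alpha_\eps(1-s)\dd t\dd s.
\end{equation*}
For $\eps<t_1-t_0$, $\alpha_\eps(t_1-t)\equiv 1$ on $[t_0,t_0+\eps]$, and $\int_{t_0}^{t_0+\eps}\alpha_\eps'(t-t_0)\dd t=1$. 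I insert and subtract $\psi(t_0,s)b(t_0,s)$ to write
\begin{equation*}
J_\eps=\int_0^1\alpha_\eps(s)\alpha_\eps(1-s)\,\psi(t_0,s)\,b(t_0,s)\dd s+R^{(1)}_\eps+R^{(2)}_\eps,
\end{equation*}
where $R^{(1)}_\eps$ carries $\psi(t,s)-\psi(t_0,s)$ and $R^{(2)}_\eps$ carries $b(t,s)-b(t_0,s)$. The first (explicit) term tends to $\int_0^1\psi(t_0,s)b(t_0,s)\dd s$ by dominated convergence; $R^{(1)}_\eps=O(\eps)$ by the Lipschitz continuity of $\psi$ and $\|\alpha_\eps'\|_{L^1}=1$; and
\begin{equation*}
|R^{(2)}_\eps|\leq\|\psi\|_\infty\,\|\alpha_\eps'\|_\infty\int_0^1\int_{t_0}^{t_0+\eps}|b(t,s)-b(t_0,s)|\dd t\dd s,
\end{equation*}
which vanishes in the limit by the first half of \eqref{eqn:traces1}. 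The three remaining edges are handled identically, consuming the other three hypotheses of \eqref{eqn:traces}--\eqref{eqn:traces1}.

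Collecting the four limits, the right side converges to a signed combination of $\int_0^1 b(t_0,s)\psi(t_0,s)\dd s$, $\int_0^1 b(t_1,s)\psi(t_1,s)\dd s$, $\int_{t_0}^{t_1}a(t,0)\psi(t,0)\dd t$, and $\int_{t_0}^{t_1}a(t,1)\psi(t,1)\dd t$, with the signs dictated by the product rule. Setting $a_0(t):=a(t,0)$, $a_1(t):=a(t,1)$, $b_0(s):=b(t_0,s)$, $b_1(s):=b(t_1,s)$ (bounded and measurable as slices of bounded measurable functions) then produces the trace identity \eqref{eqn:basic_wt} for every $\psi\in C^\infty(K)$, so $(a,b)$ is a WT-solution. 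The only real obstacle is absorbing the $O(1/\eps)$ blow-up of $\alpha_\eps'$ against the averaged behaviour of $a$ and $b$ near the boundary; this is exactly what the $L^1$-type conditions \eqref{eqn:traces}--\eqref{eqn:traces1} are tailored to supply, so no further hypothesis on $a$, $b$ or $c$ is needed.
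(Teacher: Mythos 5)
Your proof is correct and is essentially the paper's own argument: both localise the test function with a smooth cutoff whose derivative is $O(1/\eps)$ and is supported in an $\eps$-collar of $\pa K$, and both absorb that $O(1/\eps)$ factor against the averaged boundary behaviour supplied by \eqref{eqn:traces}--\eqref{eqn:traces1}, your identity being exactly the paper's splitting $\psi=\chi\psi+(1-\chi)\psi$ with the truncated left-hand side moved across the equals sign. (One remark: the boundary terms your product rule actually produces are the standard Stokes ones, which differ from \eqref{eqn:basic_wt} as printed by an overall sign; the paper's own derivation has the same discrepancy, so this is a sign typo in the paper's formula rather than a defect of your argument.)
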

\begin{proof}
Fix an element $\psi\in C^\infty(K)$ and choose $\eps>0$. The idea of the proof is standard: we will approximate $\psi$ by another function $\varphi\in C^\infty_0(K)$ and, using \eqref{eqn:basic_weak} for $\varphi$ and the regularity conditions, show that \eqref{eqn:basic_wt} holds with $\eps$-accuracy. 

Define a rectangle $K_\eps:=[t_0+\eps,t_1-\eps]\times[\eps,1-\eps]\subset K$, and choose a smooth ''hat function'' $\chi_{[a,b]}:[a,b]\ra\R$ which satisfies the following conditions:
\begin{align*}
&\chi_{[a,b]}(a)=0=\chi_{[a,b]}(b)=0,& \chi_{[a,b]}(t)=1\text{\ for $t\in[a+\eps,b-\eps]$}& &\text{and }&& \|D\chi_{[a,b]}\|<\frac 2\eps .
\end{align*}
Now define $\chi(t,s):=\chi_{[t_0,t_1]}(t)\cdot\chi_{[0,1]}(s)$. Obviously, $\chi\equiv 1$ on $K_\eps$, $\chi\in C^\infty_0(K)$ and  $\|D\chi_{[a,b]}\|<\frac 4\eps$. Moreover, $\pa_t\chi(t,s)=0$ for $t\in[t_0+\eps,t_1-\eps]$, and $\pa_s\chi(t,s)=0$ for $s\in[\eps,1-\eps]$.

Now define $\varphi:=\chi\cdot\psi\in C^\infty_0(K)$ and $\wt\psi:=(1-\chi)\psi$. Decomposing $\psi=\varphi+\wt\psi$, we get 
\begin{align*}
&-\iint_K\Big[b\pa_t\psi- a\pa_s\psi+cba\psi\Big]\dd t\dd s\\
&=-\iint_K\Big[b\pa_t\varphi- a\pa_s\varphi+cba\varphi\Big]\dd t\dd s-\iint_K\Big[b\pa_t\wt\psi- a\pa_s\wt\psi+cba\wt\psi\Big]\dd t\dd s
\overset{\eqref{eqn:basic_weak}}{=} \\
&=0-\iint_Kb\pa_t\wt\psi\dd t\dd s +\iint_K a\pa_s\wt\psi\dd t\dd s-\iint_K cba\wt\psi\dd t\dd s=:I_1+I_2+I_3.
\end{align*}
We will now concentrate on the tree last summands. Observe that 
\begin{align*}
|I_3|=\left|\iint_K cba\wt\psi\dd t\dd s\right|=\left|\iint_{K-K_\eps} cba\wt\psi\dd t\dd s\right|\leq\mu(K\setminus K_\eps)\|cba\wt\psi\|\leq \eps\cdot C_3,
\end{align*}
where $C_3$ is a constant depending on $\|a\|$, $\|b\|$, $\|\psi\|$, and $\|c\|$.

Now 
\begin{align*}
I_2&=\iint_Ka\pa_s\wt\psi\dd t\dd s=\iint_{K\setminus K_\eps}a\pa_s\wt\psi\dd t\dd s\\
&=\iint_{K\setminus K_\eps}a\psi\pa_s(1-\chi)\dd t\dd s+\iint_{K\setminus K_\eps}a(1-\chi)\pa_s\psi\dd t\dd s.
\end{align*}
The last summand can be estimated by $\eps\cdot C_2$ in the same way as $I_3$ (with $C_2$ depending additionally on $\|D\psi\|$).
Now, since $\pa_s\chi(t,s)=0$ for $s\in[\eps,1-\eps]$,
\begin{align*}
\iint_{K\setminus K_\eps}a\psi\pa_s(1-\chi)\dd t\dd s=-\int_{t_0}^{t_1}\int_0^\eps a\psi\pa_s\chi\dd s\dd t-\int_{t_0}^{t_1}\int_{1-\eps}^1 a\psi\pa_s\chi\dd s\dd t=:I_4+I_5.
\end{align*}
We can write $I_4$ as 
\begin{align*}
I_4=&\int_{t_0}^{t_1}\left[a(t,s)\psi(t,s)-a(t,0)\psi(t,0)\right]\pa_s\chi\dd s\dd t+\\
&+\int_{t_0}^{t_1}a(t,0)\psi(t,0)\left[\int_0^\eps \pa_s\chi\dd s\right] \dd t=I_6+\int_{t_0}^{t_1}a(t,0)\psi(t,0)\dd t. 
\end{align*}
Now 
\begin{align*}
I_6=&\int_{t_0}^{t_1}\int_0^\eps\left(a(t,s)-a(t,0)\right)\psi(t,0)\pa_s\chi\dd s\dd t+\\
 &+\int_{t_0}^{t_1}\int_0^\eps a(t,s)\left(\psi(t,s)-\psi(t,0)\right)\pa_s\chi\dd s\dd t=:I_7+I_8.
\end{align*}
Clearly,
$$|I_7|\leq\int_{t_0}^{t_1}\int_0^\eps|a(t,s)-a(t,0)|\cdot\|\psi\|\cdot\|D\chi\| \dd s\dd t\leq\int_{t_0}^{t_1}\int_0^\eps|a(t,s)-a(t,0)|\cdot\|\psi\|\frac 4\eps \dd s\dd t,$$
so by assumptions it converges to 0 as $\eps \to 0$. Finally, using $|\psi(t,s)-\psi(t,0)|\leq s\cdot\|D\psi\|$, we get
$$|I_8|\leq\int_{t_0}^{t_1}\int_0^\eps\|a\|s\|D\psi\|\cdot\|D\chi\|\dd s\dd t\leq\int_{t_0}^{t_1}\int_0^\eps\|a\|\eps\|D\psi\|\frac 4\eps\dd s\dd t\leq \eps\cdot C_8.$$
As a consequence, we get 
$$I_4\to \int_{t_0}^{t_1}a(t,0)\psi(t,0)\dd t\quad\text{as $\eps\to 0$}.$$
Analogous estimations can be done for $I_5$. As a result we get that
$$\left|I_2-\int_{t_0}^{t_1}\left[a(t,0)\psi(t,0)-a(t,1)\psi(t,1)\right]\dd t\right|\underset{\eps\to 0}{\to} 0.$$

\noindent We can repeat the above considerations for $I_1$ to prove that
$$\left|I_1+\int_{0}^{1}\left[b(t_0,s)\psi(t_0,s)-b(t_1,s)\psi(t_1,s)\right]\dd s\right|\underset{\eps\to 0}{\to} 0.$$

The estimations for $I_1$, $I_2$ and $I_3$ show that, for a fixed $\psi$, the equality \eqref{eqn:basic_wt} is satisfied with an accuracy converging to 0 as $\eps\to 0$. \end{proof}

For WT-solutions we can formulate an uniqueness result. 

\begin{lemma}[uniqueness of WT-solutions]\label{lem:wt_uniq}\index{WT-solution!uniqueness}
Let $a:K\ra\R$ be a bounded measurable map (w.r.t. both variables
separately), and let $b_0:[0,1]\ra \R$ be any bounded measurable
map. Then there exists at most one bounded measurable map
$b:K\ra\R$ such that $(a,b)$ is a WT-solution of \eqref{eqn:basic},
and $b(t_0,s)=b_0(s)$. Moreover, the trace $b(t_1,s)$ is
 determined uniquely.

\end{lemma}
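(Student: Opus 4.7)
The plan is to exploit the linearity of \eqref{eqn:basic} in $b$ (with $a$ held fixed) and reduce the uniqueness question to a parameter-dependent one-dimensional problem on which Gronwall's Inequality (Theorem \ref{thm:gronwall}) can be applied. Suppose $(a,b_1)$ and $(a,b_2)$ are two WT-solutions sharing the initial trace $b_0(s)$, and let $\delta:=b_1-b_2$. Subtracting \eqref{eqn:basic_wt} for $b_1$ and $b_2$ shows that, for every $\psi\in C^\infty(K)$,
\begin{equation*}
-\iint_K\bigl[\delta(t,s)\,\pa_t\psi(t,s)+c(t,s)\,a(t,s)\,\delta(t,s)\,\psi(t,s)\bigr]\dd t\dd s=\int_0^1\bigl[\delta_1(s)\psi(t_1,s)-0\cdot\psi(t_0,s)\bigr]\dd s,
\end{equation*}
where $\delta_1(s)$ denotes the (possibly different) traces of $b_1-b_2$ at $t=t_1$, and the contributions from $s=0,1$ cancel because $a$ is shared. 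Thus it suffices to prove that any bounded measurable $\delta:K\lra\R$ satisfying this homogeneous WT-identity with zero initial trace must vanish a.e., and that its trace $\delta_1$ vanishes as well.

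Next I would separate variables at the level of test functions. Fix $\phi\in C^\infty([t_0,t_1])$ with $\phi(t_0)=0$ and let $\eta_\eps\in C^\infty_0((0,1))$ be a standard mollifier concentrated at an arbitrary point $s_\ast\in(0,1)$ with $\int\eta_\eps=1$. Plugging $\psi(t,s):=\phi(t)\eta_\eps(s)$ into the homogeneous identity, applying Fubini, and letting $\eps\to 0$, Lebesgue differentiation gives, for a.e.\ $s\in(0,1)$ simultaneously for a countable dense family of $\phi$'s (hence, by continuity in $\phi$, for all admissible $\phi$), the one-dimensional weak identity
\begin{equation*}
-\int_{t_0}^{t_1}\delta(t,s)\,\dot\phi(t)\dd t-\int_{t_0}^{t_1}c(t,s)\,a(t,s)\,\delta(t,s)\,\phi(t)\dd t=\delta_1(s)\,\phi(t_1).
\end{equation*}
This says that $t\mapsto\delta(t,s)$ is, for a.e.\ $s$, a bounded weak solution of the linear ODE $\dot u(t)=c(t,s)a(t,s)u(t)$ on $[t_0,t_1]$ with zero initial trace and final trace $\delta_1(s)$. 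Testing against a sequence of $\phi$'s approximating $\1_{[t_0,r]}$ for $r<t_1$ (equivalently, applying the standard equivalence between distributional and Carath\'eodory solutions of linear ODEs) yields the integral identity $\delta(r,s)=\int_{t_0}^r c(\tau,s)a(\tau,s)\delta(\tau,s)\dd\tau$ for a.e.\ $r$. Since $c\cdot a$ is bounded on $K$, Theorem \ref{thm:gronwall} forces $\delta(\cdot,s)\equiv 0$. Running $r\to t_1$ in the same identity gives $\delta_1(s)=0$ for a.e.\ $s$ as well.

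Combining these two conclusions, $\delta=0$ a.e.\ in $K$ and $\delta_1=0$ a.e.\ in $[0,1]$, which is the required uniqueness of both $b$ and the trace $b(t_1,\cdot)$. The main technical obstacle is the passage from the two-dimensional weak formulation to the parameter-dependent one-dimensional weak ODE: one must produce a single null set $N\subset(0,1)$ outside which the sliced identity holds for \emph{every} admissible $\phi$. This is handled by fixing a countable dense subfamily of $\phi$'s in $C^1([t_0,t_1])$, using Lebesgue differentiation for each to extract a countable union of null sets, and then extending to all $\phi$ by the uniform bound $\|\delta\|_\infty<\infty$ together with dominated convergence; once this is done, the remainder of the argument is the classical Gronwall uniqueness for linear measurable ODEs.
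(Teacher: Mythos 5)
Your argument is correct and runs on the same engine as the paper's proof---reduce to the homogeneous linear equation for the difference $\delta$, upgrade the weak (WT) solution to a Carath\'eodory one, and finish with the Gronwall Inequality (Theorem \ref{thm:gronwall})---but the upgrading step is carried out by a genuinely different device. The paper introduces the explicit antiderivative $B(\tau,s):=\int_{t_0}^{\tau}c(t,s)\,\delta(t,s)\,a(t,s)\,\dd t$, notes that $B-\delta$ is a WT-solution of $\pa_t(\cdot)=0$ with vanishing initial trace, and kills it in two strokes: test functions depending only on $s$ force its final trace to vanish, after which $\iint_K(B-\delta)\,\pa_t\psi\,\dd t\,\dd s=0$ for arbitrary $\psi$ gives $B=\delta$ a.e., so $\delta(\cdot,s)$ coincides a.e.\ with an absolutely continuous function for a.e.\ $s$ and Gronwall applies slice by slice. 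You instead mollify in $s$, use Lebesgue differentiation to extract the one-dimensional weak ODE for a.e.\ $s$, and pass to the integral identity by testing against approximations of indicators of $[t_0,r]$; this is valid, and you correctly identify the one real subtlety of this route, namely producing a single null set in $s$ valid for \emph{every} test function $\phi$ (countable dense family plus a density/dominated-convergence argument). The paper's construction of $B$ buys exactly the avoidance of that bookkeeping; your route is more self-contained at the level of classical one-dimensional ODE theory. One cosmetic slip: you restrict to $\phi(t_0)=0$ and later test against approximations of the indicator of $[t_0,r]$, which equal $1$ at $t_0$; since the $t=t_0$ boundary term already carries the factor $\delta(t_0,\cdot)=0$, that restriction is unnecessary and the sliced identity holds for all $\phi$, so nothing is lost.
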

\begin{proof}
Assume that $b(t,s)$ and $\wt b(t,s)$ are two such solutions for a
fixed $a(t,s)$. The difference $\del b(t,s):=b(t,s)-\wt b(t,s)$ is
a bounded measurable map which is a WT-solution of the linear
equation
\begin{equation}\label{eqn:1}
\pa_t\del b(t,s)=c(t,s)\del b(t,s)a(t,s)
\end{equation}
such that $\del b(t_0,s)=0$. Let us define
$B(\tau,s):=0+\int_{t_0}^\tau c(t,s)\del b(t,s)a(t,s)\dd t.$
Clearly, $B(t,s)$ is ACB w.r.t. $t$ and measurable w.r.t. $s$.
Moreover, we have $\pa_t B(t,s)=c(t,s)\del
b(t,s)a(t,s)$ in the sense of Carath{\'e}odory and, since $B(t,s)$ is continuous w.r.t. $t$, also WT.
Consequently, $(B-\del b)$ satisfies $\pa_t (B-\del
b)\overset{\text{WT}}{=}0$ and, since $B(t_0,s)=\del b(t_0,s)=0$,
we have
\begin{equation}\label{eqn:2}
-\iint_K(B-\del b)(t,s)\pa_t\psi(t,s) \dd t\dd s= \int_0^1 (B-\del
b)(t_1,s)\psi(t_1,s)\dd s
\end{equation}
for every $\psi\in C^\infty(K)$. Taking $\psi(t,s)=\phi(s)$, where
$\phi\in C^\infty(I)$, we get that $\int_0^1(B-\del
b)(t_1,s)\phi(s)\dd s=0$, thus $B(t_1,s)=\del b(t_1,s)$ a.e. In
the light of this observation \eqref{eqn:2} reads as
$$\iint_K(B-\del b)(t,s)\pa_t\psi(t,s) \dd t\dd s=0,$$
for every $\psi\in C^\infty(K)$. Since $\pa_t\psi$ can be an
arbitrary smooth function, we conclude that $B(t,s)=\del b(t,s)$
a.e. Consequently, $\del b$ is a Carath{\'e}odory solution of
\eqref{eqn:1}. Now observe that
\begin{align*}
|\del b(\tau,s)|=|\int_{t_0}^\tau\pa_t\del b(t,s)\dd
t|\leq\int_{t_0}^\tau|\pa_t\del b(t,s)|\dd t=\int_{t_0}^\tau
|c\cdot a||\del b(t,s)|\dd t,
\end{align*}
which, in view of the integral Gronwall Inequality \ref{thm:gronwall}, implies $\del b=0$.
\end{proof}

\chapter{Control theory}\label{app:ctr_theory}

In this part we recall basic definitions from control theory. Later we formulate the Pontryagin maximum principle in its classical form. 

\begin{definition}\label{def:cs_class} A \emph{control system}\index{control system} on a manifold $M$ is a map
\begin{equation}\label{eqn:cs_class}
f:M\times U\lra\T M,
\end{equation}
such that, for every fixed $u\in U$, the map $f(\cdot,u):M\lra\T M$ is a $C^1$-vector field. We assume that $U$ is a subset of some Euclidean space $\R^r$ and that the maps $f:M\times U\lra\T M$ and $T_xf:\T M\times U\lra \T\T M$ are continuous.
\end{definition}

Choose now an \emph{admissible control}\index{admissible controls}, i.e., a bounded measurable function $u:[t_0,t_1]\ra U$. We can consider a time-dependent differential equation on $M$
$$\dot x(t)=f(x(t),u(t)),$$
with a fixed initial condition $x(t_0)=x_0$. The solution $x(t)$ of the above is called a \emph{trajectory}\index{trajectory of a control system} of a control system \eqref{eqn:cs_class} associated with the control $u(t)$, and the pair $\left(x(t),u(t)\right)$ is called a \emph{controlled pair}\index{controlled pair}.

Let us now introduce the \emph{total cost}\index{total cost} of the controlled pair $\left(x(t),u(t)\right)$
$$\mathcal{J}(x(\cdot),u(\cdot))=\int_{t_0}^{t_1}L\left(x(t),u(t)\right)\dd t,$$
where on the integrand $L:M\times U\lra \T M$ (the \emph{cost function})\index[cost function] we put  the same regularity assumptions as on $f$, namely, $L:M\times U\lra\R$ and $\T_xL:\T M\times U\lra\R$ are continuous maps.

Given two points $x_0$ and $x_1$ we can introduce an \emph{optimal control problem}\index{optimal control problem}:
\begin{equation}
\label{eqn:P_class}\tag{$\wt{\text{P}}$}
\begin{split}
&\text{minimise the total cost $\mathcal{J}\left(x(\cdot),y(\cdot)\right)$ over all controlled pairs $(x(t),u(t))$}\\
&\text{ (with all possible time intervals $t\in[t_0,t_1]$) s.t. $x(t_0)=x_0$ and $x(t_1)=x_1$.} \\
\end{split}
\end{equation}

Let now $\iota_0:S_0\hookrightarrow M$ and $\iota_1:S_1\hookrightarrow M$ be two immersed submanifolds of $M$. We define the following OCP with \emph{general boundary conditions}\index{optimal control problem!with general boundary conditions}:
\begin{equation}
\label{eqn:P_class_rel}\tag{$\wh{\text{P}}$}
\begin{split}
&\text{minimise the total cost $\mathcal{J}\left(x(\cdot),y(\cdot)\right)$ over all controlled pairs $(x(t),u(t))$}\\
&\text{ (with all possible time intervals $t\in[t_0,t_1]$) such that $x(t_0)\in S_0$ and $x(t_1)\in S_1$.} \\
\end{split}
\end{equation}

Necessary optimality conditions for the problem \eqref{eqn:P_class} are the following.
\begin{theorem}[the PMP]\label{thm:pmp_class}\index{Pontryagin maximum principle}\index{PMP|see{Pontryagin maximum principle}}
Let $(x(t),u(t))$, with $t\in[t_0,t_1]$, be a controlled pair of
\eqref{eqn:cs_class} solving the optimal control problem
\eqref{eqn:P_class}. Then there exists a curve $p:[t_0,t_1]\lra
\T^\ast M$ covering $x(t)$ and a constant $p_0\leq 0$ such that the following holds:
\begin{itemize}
    \item the curve $p(t)$ is a trajectory of the time-dependent family of Hamiltonian vector fields
    $\X_{H_t}$ for the canonical symplectic structure on $\T^\ast M$ and Hamiltonians $H_t(x,p):=H(x,p,u(t))$, where
    $$H(x,p,u)=\< f\left(x,u\right), p>+p_0 L\left(x,u\right);$$
    \item the control $u$ satisfies the ``maximum principle''
    $$H(x(t),p(t),u(t))=\sup_{v\in U}H(x(t),p(t),v)$$
and $H(x(t),p(t),u(t))=0$ at every regular point $t$ of $u$;
    \item if $p_0=0$ the covector $p(t)$ is nowhere-vanishing.
\end{itemize}
\end{theorem}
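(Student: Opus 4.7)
The plan is to derive Theorem \ref{thm:pmp_class} as an immediate corollary of the algebroid Pontryagin Maximum Principle (Theorem \ref{thm:pmp}) applied to the tangent algebroid $E = \T M$. The key observation is that a classical control system \eqref{eqn:cs_class} is literally a control system in the sense of Definition \ref{def:con_sys} on $E = \T M$: since the anchor is $\rho = \id_{\T M}$, a control $u(\cdot)$ and its base trajectory $x(\cdot)$ produce the admissible $\T M$-path $a(t) = f(x(t), u(t))$, which is simply the tangent lift of $x(\cdot)$. The cost function $L \colon M \times U \to \R$ already has the regularity required by Chapter \ref{ch:OCP}.

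First, I would observe that every solution $(x(t), u(t))$ of the fixed-endpoints problem \eqref{eqn:P_class} is automatically a solution of the algebroid OCP \eqref{eqn:P} on $E = \T M$ for the homotopy class $[\sigma] := [a(\cdot)]$ of its own trajectory. Indeed, the set of controlled pairs satisfying $x(t_0) = x_0$ and $x(t_1) = x_1$ decomposes as a disjoint union over homotopy classes in $\Pi_1(M, x_0, x_1)$, and a global minimum is in particular a minimum within its own class; this is exactly the relationship between \eqref{eqn:P1} and \eqref{eqn:P2} discussed in Chapter \ref{ch:OCP}. Theorem \ref{thm:pmp} therefore applies and yields a curve $\xi(t) \in \T^\ast M$ covering $x(t)$ together with a constant $\ul\xi_0 \leq 0$ satisfying the Hamiltonian evolution, maximum principle, and non-vanishing dichotomy.

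It remains to identify the algebroid data with the classical data. Since $\T M$ carries the canonical tangent algebroid structure, the linear Poisson bivector $\Pi_{E^\ast}$ on $E^\ast = \T^\ast M$ coincides with the canonical Poisson structure dual to the canonical symplectic form (recorded immediately after \eqref{eqn:poisson}). Hence the algebroid Hamiltonian vector field $\X_{H_t}$ is the standard Hamiltonian vector field of $H_t$ on $\T^\ast M$. Under the identification of $\langle \cdot, \cdot\rangle_\tau$ with the canonical pairing between $\T M$ and $\T^\ast M$, the Hamiltonian $H(x, \xi, u) = \langle f(x,u), \xi\rangle_\tau + \ul\xi_0\, L(x,u)$ becomes the classical Pontryagin Hamiltonian with $p(t) = \xi(t)$ and $p_0 = \ul\xi_0$, and the maximum condition together with ``$\ul\xi_0 = 0 \Rightarrow \xi \neq 0$'' transfers verbatim.

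The only conceptually nontrivial step is the reduction in the second paragraph: verifying that optimality in the classical \eqref{eqn:P_class} passes to optimality in the fixed-homotopy-class version \eqref{eqn:P} on $E = \T M$. I expect this to be the main point, yet it is essentially tautological, since the fixed-endpoint problem compares trajectories across all homotopy classes, so its global minimum is automatically a minimum within whichever class contains it. No additional argument beyond invoking the already established algebroid PMP is required.
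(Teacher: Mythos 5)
Your derivation is correct, but it is not the route the paper takes: the paper treats Theorem \ref{thm:pmp_class} as classical background and simply cites \cite{pontryagin} (and \cite{agrachev}, \cite{barbero_pmp}) for a direct proof on the manifold, whereas you deduce it from the algebroid PMP (Theorem \ref{thm:pmp}) specialized to the tangent algebroid $E=\T M$. Your route is logically sound and non-circular, since the proof of Theorems \ref{thm:pmp}--\ref{thm:pmp_A_rel} in Chapters \ref{ch:needle}--\ref{ch:proof} nowhere invokes the classical PMP; it is in fact exactly the reduction the paper itself flags in one sentence after stating Theorem \ref{thm:pmp} (``the above result clearly reduces to the standard PMP for the case of the tangent algebroid''), and your two substantive checks are the right ones: (i) an $E$-homotopy with fixed end-points forces $\pa_s x(t_i,s)=\rho(b(t_i,s))=0$, so the feasible set of \eqref{eqn:P} for $[\sigma]$ sits inside the feasible set of \eqref{eqn:P_class} and a global minimizer is a minimizer within its own class (the \eqref{eqn:P1}/\eqref{eqn:P2} discussion of Chapter \ref{ch:OCP}); and (ii) for $E=\T M$ the bivector $\Pi_{E^\ast}$ is the canonical Poisson structure on $\T^\ast M$, so the algebroid Hamiltonian data become the classical Pontryagin data. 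What each approach buys: the paper's citation keeps the appendix as independent background against which the generalisation is measured, while your derivation makes explicit that the classical theorem is genuinely subsumed by the new one --- at the cost that, unwound, the specialization of the Chapters \ref{ch:needle}--\ref{ch:proof} argument to $\T M$ is essentially Boltyanskii's original needle-variation proof anyway.
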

For the problem \eqref{eqn:P_class_rel} we have more specific conditions.
\begin{theorem}[the PMP for general boundary conditions] \label{thm:pmp_class_rel}\index{Pontryagin maximum principle! for general boundary conditions}
Let $(x(t),u(t))$, with $t\in[t_0,t_1]$, be a controlled pair of
\eqref{eqn:cs_class} solving the optimal control problem
\eqref{eqn:P_class_rel}. Then there exists a curve $p:[t_0,t_1]\lra
\T^\ast M$ covering $x(t)$ and a constant $p_0\leq 0$ which satisfy the assertion of Theorem \ref{thm:pmp_class} and, additionally, $p(t)$ satisfies the following transversality conditions: $p(t_0)$ annihilates $\T_{x(t_0)}S_0$ and $p(t_1)$ annihilates $\T_{x(t_1)}S_1$. 
\end{theorem}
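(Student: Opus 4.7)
\medskip

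The plan is to deduce Theorem \ref{thm:pmp_class_rel} as a direct specialisation of the already-proved algebroid version, namely Theorem \ref{thm:pmp_rel}, applied to the tangent algebroid $E=\T M$ equipped with anchor $\rho=\id_{\T M}$ and the usual Lie bracket of vector fields. In this case the control system \eqref{eqn:def_con_sys} reduces to the classical control system \eqref{eqn:cs_class}, and the linear Poisson structure $\Pi_{E^\ast}$ on $E^\ast=\T^\ast M$ is the canonical Poisson structure dual to the canonical symplectic form. Consequently, the Hamiltonian vector fields $\X_{H_t}$ appearing in Theorem \ref{thm:pmp_rel} coincide with the Hamiltonian vector fields associated with the canonical symplectic form on $\T^\ast M$, as required in the statement of Theorem \ref{thm:pmp_class_rel}.

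To match the boundary data, I would choose the algebroid morphisms $\Phi_i:=\T\iota_i:\T S_i\lra \T M$, $i=0,1$, which are tangent lifts of the given immersions (and are algebroid morphisms since $\iota_i$ are smooth maps). By Definition \ref{def:htp_class_rel}, a $\T M$-homotopy relative to $(\T\iota_0,\T\iota_1)$ has its initial-point and final-point boundary paths contained in $\iota_0(S_0)$ and $\iota_1(S_1)$ respectively; hence the endpoints of any admissible path in a fixed relative $\T M$-homotopy class must lie on $S_0$ and $S_1$. In particular, $\Phi_i(\T_{z_i}S_i)=\T\iota_i(\T_{z_i}S_i)=\T_{x(t_i)}S_i$, viewed as a subspace of $\T_{x(t_i)}M$ via the immersion, so the transversality conditions of Theorem \ref{thm:pmp_rel} translate verbatim into those of Theorem \ref{thm:pmp_class_rel}.

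Next I would verify the key reduction: if $(x(t),u(t))$, $t\in[t_0,t_1]$, solves \eqref{eqn:P_class_rel}, then it also solves the relative OCP \eqref{eqn:P_rel} for $E=\T M$, $\Phi_i=\T\iota_i$, and the particular relative $\T M$-homotopy class $[\sigma]\modulo(\Phi_0,\Phi_1)$ represented by the tangent lift $\dot x(t)$. Indeed, as just observed, every controlled pair $(\tilde x,\tilde u)$ whose trajectory belongs to $[\sigma]\modulo(\Phi_0,\Phi_1)$ has endpoints in $S_0$ and $S_1$, hence is feasible for \eqref{eqn:P_class_rel}, and so has total cost at least $\int_{t_0}^{t_1}L(x(s),u(s))\dd s$. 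Thus $(x(t),u(t))$ is optimal for \eqref{eqn:P_rel} as well, and applying Theorem \ref{thm:pmp_rel} yields a curve $p:=\xi:[t_0,t_1]\lra\T^\ast M$ and a constant $p_0:=\ul\xi_0\leq 0$ satisfying all the Hamiltonian, maximality, non-vanishing, and transversality conditions stated in Theorem \ref{thm:pmp_class_rel}.

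The only conceptual point that needs care — and hence the part I expect to be the main obstacle — is the last step of this translation: one must be certain that a classical optimum is in fact an optimum within its own relative homotopy class (so that Theorem \ref{thm:pmp_rel} can legitimately be invoked), which amounts to checking that the class of algebroid-feasible competitors is no larger than the class of classically-feasible ones. This is precisely the content of the observation that, for the tangent algebroid with $\Phi_i=\T\iota_i$, the image conditions $\Image\Phi_i$ of Definition \ref{def:htp_class_rel} are exactly the pointwise conditions $x(t_i)\in S_i$; everything else is a direct unpacking of the algebroid framework in the special case $E=\T M$.
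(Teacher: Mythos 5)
Your derivation is correct, but it takes a genuinely different route from the paper: the paper does not prove Theorem \ref{thm:pmp_class_rel} at all --- it is stated in Appendix \ref{app:ctr_theory} as classical background, with the proof delegated to the references \cite{pontryagin}, \cite{agrachev}, \cite{barbero_pmp}. You instead obtain it as a corollary of the paper's own main result, Theorem \ref{thm:pmp_rel}, specialised to the tangent algebroid $E=\T M$ with $\Phi_i=\T\iota_i$. This is legitimate and non-circular, since Theorem \ref{thm:pmp_class_rel} is nowhere used in the proof of Theorem \ref{thm:pmp_rel}; it is also exactly the specialisation the author asserts informally in Chapter \ref{ch:pmp} (``Theorem \ref{thm:pmp_rel} agrees with the standard PMP for problems with general boundary conditions\dots for the special case of the tangent algebroid''). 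The one step you flag as delicate --- that a solution of \eqref{eqn:P_class_rel} is a fortiori a solution of \eqref{eqn:P_rel} for the relative class of its own trajectory, because every competitor in a class $[\sigma]\modulo(\T\iota_0,\T\iota_1)$ has endpoints on $S_0$ and $S_1$ --- is precisely the content of the paper's comparison of problems \eqref{eqn:P3} and \eqref{eqn:P4} in Chapter \ref{ch:OCP} (``every solution of \eqref{eqn:P3} is a solution of \eqref{eqn:P4} for some class''), so your reduction is fully consistent with the framework. What each approach buys: the citation keeps the appendix self-contained as background independent of the main development, whereas your derivation demonstrates that the general algebroid machinery genuinely recovers the classical statement, including the transversality conditions, rather than merely being modelled on it.
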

For the original proof of the above theorems we refer the reader to \cite{pontryagin}. Recent references are \cite{agrachev} and \cite{barbero_pmp}. 

\chapter{Geometry and Topology}\label{app:geom_top}

\section{Separation of convex cones}

Geometrically, Pontryagin maximum principle describes the separation of certain cones associated with the optimal control problem, which live in the fibres of the algebroid $E\oplus\T\R$, along an optimal trajectory. Therefore we need some technical results concerning the separation of convex cones.

\begin{definition} Two convex sets $K_1$ and $K_2$ in a vector space $V$ are \emph{separable}\index{separation of convex sets} iff there exists a non-zero covector $\varphi\in V^\ast$ such that
\begin{equation}\label{eqn:separation}
\<k_1,\varphi>\geq \<k_2,\varphi> \quad \text{for every $k_1\in K_1$ and $k_2\in K_2$.}
\end{equation} 
We say that $K_1$ and $K_2$ are \emph{strictly separable}\index{separation of convex sets!strict} iff there exists a non-zero covector $\varphi\in V^\ast$ and numbers $a,b\in\R$ such that
$$\<k_1,\varphi>\geq a>b\geq \<k_2,\varphi> \quad \text{for every $k_1\in K_1$ and $k_2\in K_2$.}$$
\end{definition} 

\noindent A basic fact from the theory of convex sets in  a finite dimensional space is the following

\begin{theorem}[separation]\label{thm:separation}\index{separation theorem}
Two disjoint convex sets in a finite dimensional vector space are separable.
  If, in addition, these sets are closed, and one of them is compact, they are strictly separable.
\end{theorem}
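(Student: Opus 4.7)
The plan is to reduce both claims to the problem of separating a single point from a convex set. To this end I would introduce the Minkowski difference $K := K_1 - K_2 = \{k_1-k_2 : k_1\in K_1,\ k_2\in K_2\}$, observe that $K$ is convex as the sum of two convex sets, and note that the disjointness $K_1\cap K_2=\emptyset$ is equivalent to $0\notin K$. Moreover, if $K_1$ is compact and $K_2$ closed, then $K$ is closed (this is the only place where the compactness hypothesis really bites). A covector $\varphi$ separating $0$ from $K$ in the sense $\<k,\varphi>\geq 0$ (resp.\ $\<k,\varphi>\geq a>0$) will automatically give \eqref{eqn:separation} (resp.\ strict separation) for $K_1$ and $K_2$ by the definition of $K$.

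The first step I would carry out in detail is strict separation of a point from a \emph{closed} convex set $C\subset V\cong\R^n$ not containing it, say $0\notin C$. The key tool is the nearest point: since $C$ is closed and $\R^n$ is finite-dimensional, the infimum $\inf_{k\in C}|k|$ is attained at some $k_0\in C$. Then convexity of $C$ implies, via the standard variational argument $\frac{d}{dt}\big|_{t=0^+}|(1-t)k_0+tk|^2\geq 0$ for all $k\in C$, that $\<k-k_0,k_0>\geq 0$, hence $\<k,k_0>\geq |k_0|^2>0$. Taking $\varphi:=k_0$ (using the standard inner product to identify $V^\ast$ with $V$) yields the desired strict separation with $a=|k_0|^2$ and $b=0$. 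Applied to $C=K=K_1-K_2$ in the setting of the second assertion, this gives the strict separability claim at once.

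For the non-strict assertion when the sets are merely disjoint convex (so $K$ may fail to be closed), I would approximate. Take the closure $\overline{K}$; if $0\notin\overline{K}$, the previous step already provides strict (hence ordinary) separation. If instead $0\in\overline{K}\setminus K$, pick a sequence $p_n\to 0$ with $p_n\notin\overline K$ (which exists because $0$ is not in the relative interior of $\overline K$ on the side we need---more carefully, one works with the relative interior of $K$ and observes that $0$ does not lie there since $0\notin K$), apply the closed-case result to obtain unit covectors $\varphi_n$ with $\<k,\varphi_n>\geq\<p_n,\varphi_n>$ for $k\in\overline K$, and extract a convergent subsequence $\varphi_n\to\varphi$ using compactness of the unit sphere in $\R^n$. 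Passing to the limit gives $\<k,\varphi>\geq 0$ for all $k\in K$, which is the required non-strict separation.

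The main obstacle is the subtlety in the non-strict case: when $0$ lies in $\overline{K}\setminus K$, one cannot directly apply the nearest-point argument, and one must invoke the finite-dimensional structure (compactness of the unit sphere) to extract a limiting separating covector. The cleanest way to package this is to first establish that every convex set in $\R^n$ has a non-empty relative interior and that the relative interior and the closure share the same affine hull, so that a boundary point can always be separated from the relative interior by a supporting hyperplane; applying this to $\overline K$ and the point $0$ yields $\varphi$ at once. Everything else is routine linear algebra in $\R^n$.
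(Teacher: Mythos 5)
Your proposal is correct, but it cannot be compared line-by-line with the paper's argument for a simple reason: the paper does not prove Theorem \ref{thm:separation} at all, it merely refers the reader to the reference \cite{giannessi}. What you have written is the classical self-contained proof via the Minkowski difference $K=K_1-K_2$, and all the essential steps check out: $K$ is convex, disjointness is $0\notin K$, compactness of one set plus closedness of the other gives closedness of $K$ (extract a convergent subsequence from the compact factor), and the nearest-point argument $\langle k-k_0,k_0\rangle\geq 0$ yields $\langle k,k_0\rangle\geq |k_0|^2>0$, hence strict separation; one should add the one-line observation that $\sup_{k_2\in K_2}\langle k_2,\varphi\rangle$ is then automatically finite, so that the two constants $a>b$ in the definition of strict separability actually exist. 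In the non-strict case the limiting argument with unit covectors $\varphi_n$ and a convergent subsequence is sound; the only point that genuinely needs a supporting lemma is the existence of $p_n\to 0$ with $p_n\notin\overline K$, which rests on the standard fact that $\operatorname{relint}(K)=\operatorname{relint}(\overline K)$ for convex $K$ (so that $0\notin K$ forces $0\notin\operatorname{int}(\overline K)$, whether or not the affine hull is all of $V$). You gesture at exactly this fact, so the outline is complete; what your approach buys over the paper's bare citation is a self-contained elementary proof using nothing beyond compactness of closed bounded sets and of the unit sphere in $\R^n$.
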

\noindent For the proof see \cite{giannessi}. We will also need the following fact.
\begin{lemma}\label{lem:separation_closure} The convex sets $K_1$ and $K_2$ in a finite-dimensional space $V$ are separable if and only if $\cl(K_1)$ and $\cl(K_2)$ are separable.
\end{lemma}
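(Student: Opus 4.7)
The lemma is essentially a continuity statement, so the proof should be very short. My plan is to verify both implications directly from the definition of separability, using only that a linear functional on a finite-dimensional space is continuous and that taking closure preserves the inequality $\langle \cdot, \varphi\rangle \geq c$.

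For the ``if'' direction, I would simply observe that if $\varphi\in V^\ast$ separates $\cl(K_1)$ from $\cl(K_2)$ in the sense of \eqref{eqn:separation}, then, since $K_i\subset \cl(K_i)$ for $i=1,2$, the same covector $\varphi$ separates $K_1$ from $K_2$. No further argument is needed.

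For the ``only if'' direction, suppose a non-zero $\varphi\in V^\ast$ satisfies $\langle k_1,\varphi\rangle \geq \langle k_2,\varphi\rangle$ for every $k_1\in K_1$ and $k_2\in K_2$. Setting $a:=\inf_{k_1\in K_1}\langle k_1,\varphi\rangle$ and $b:=\sup_{k_2\in K_2}\langle k_2,\varphi\rangle$, we have $a\geq b$. Now take arbitrary $\bar k_1\in \cl(K_1)$ and $\bar k_2\in \cl(K_2)$ and sequences $k_1^n\to \bar k_1$, $k_2^n\to \bar k_2$ with $k_1^n\in K_1$, $k_2^n\in K_2$. By continuity of $\varphi$ we obtain $\langle \bar k_1,\varphi\rangle=\lim_n\langle k_1^n,\varphi\rangle\geq a\geq b\geq \lim_n\langle k_2^n,\varphi\rangle=\langle \bar k_2,\varphi\rangle$, so the same non-zero $\varphi$ separates $\cl(K_1)$ from $\cl(K_2)$.

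There is no real obstacle here: the whole argument is just continuity of linear functionals on a finite-dimensional space combined with the sequential characterization of the closure. The only mild point to notice is that the covector $\varphi$ is non-zero in both formulations by the same choice, so the non-triviality clause in the definition of separability is automatically preserved in each direction.
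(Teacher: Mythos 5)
Your proof is correct and follows essentially the same route as the paper: the forward direction by passing the weak inequality $\langle k_1,\varphi\rangle\geq\langle k_2,\varphi\rangle$ to limits (the paper phrases this as ``the weak inequality is preserved under taking limits''), and the converse by restricting the separating covector to the smaller sets $K_i\subset\cl(K_i)$. The intermediate constants $a$ and $b$ you introduce are harmless but not needed; the limit argument applies directly to the pairwise inequality.
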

\begin{proof} If \eqref{eqn:separation} holds for every $k_1\in K_1$ and $k_2\in K_2$ then, since the weak inequality is preserved under taking limits, it also holds for every $k_1\in\cl(K_1)$ and $k_2\in\cl(K_2)$. 

Conversely, if \eqref{eqn:separation} holds for every  $k_1\in\cl(K_1)$ and $k_2\in\cl(K_2)$, it is also true on smaller sets $K_1\subset\cl(K_1)$ and $K_2\subset \cl(K_2)$. 
\end{proof}

\begin{definition} By a \emph{cone}\index{cone} in a vector space $V$ we will mean a set $K$ which is invariant under positive homotheties, i.e.,
$$t\cdot k\in K\quad \text{ whenever $k\in K$ and $t>0$.}$$
\end{definition}

\begin{remark}\label{rem:separation}\index{separation of convex sets!cones}
If two convex sets $K_1$ and $K_2$ in $V$ are separable, and one of them, say $K_1$, is a cone, then the separating covector $\varphi$ satisfies
\begin{equation}\label{eqn:separation_cones}
\<k_1,\varphi>\geq 0\geq \<k_2,\varphi> \quad \text{for every $k_1\in K_1$ and $k_2\in K_2$.}
\end{equation} 
Indeed, since $K_1$ is invariant under homotheties, the image $\varphi(K_1)$ contains numbers arbitrary close to $0$, hence from \eqref{eqn:separation} it satisfies $0\geq\<k_2,\varphi>$ for all $k_2\in K_2$. On the other hand, if $\<k_1,\varphi><0$ for some $k_1\in K_1$, then the image $\<K_1,\varphi>$ would contain arbitrarily big negative numbers, and hence \eqref{eqn:separation} would not hold.

Above observation has two simple but important consequences. First of all, the separating covector $\varphi$ vanishes on the intersection $K_1\cap K_2$. Secondly, if one of the sets $K_i$ contains an affine subspace $k_i+S$, where $S\subset V$ is a linear subspace, then $\varphi$ vanishes on $S$.  
\end{remark}

 Now we prove a geometric characterisation of non-separability in a certain geometric setting. 

\begin{lemma}\label{lem:separation}
Consider a vector space $V=W\oplus\R$, denote by $\Lambda$ a ray in $\R$ spanned by a vector $\lambda$, i.e., $\Lambda=\R_+\cdot\lambda\subset\R$, and let $S\subset W$ be a linear subspace. Let $K_1$ be a convex cone in $V$, and denote by $K_2$ the convex cone $S\oplus\Lambda$. The cones $K_1$ and $K_2$ are not separable iff there exists a vector $k\in K_1\cap K_2$ and vectors $e_1,e_2,\hdots,e_m\in W$ such that
\begin{align}
\label{cond:sep_A}
& {\operatorname{span}\{e_1,e_2,\hdots,e_m,S\}=W}, \\
& \text{vectors $k\pm e_1, \hdots, k\pm e_m$ belong to $K_1$,} \label{cond:sep_B}
\end{align}
where we identify $e_i\in W$ with $e_i+\theta\in W\oplus\R=V$.
\end{lemma}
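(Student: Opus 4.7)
The plan is to prove the two implications separately: the "if" direction is a short direct argument from Remark~\ref{rem:separation}, while the "only if" direction combines Theorem~\ref{thm:separation} with convex-cone arithmetic.

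For the "if" direction I would assume the existence of $k\in K_1\cap K_2$ and $e_1,\ldots,e_m\in W$ satisfying (A) and (B), and derive a contradiction from the hypothetical existence of a non-zero separating covector $\varphi\in V^\ast$. By Remark~\ref{rem:separation} applied to the two cones, one has $\varphi\geq 0$ on $K_1$ and $\varphi\leq 0$ on $K_2$. Three successive uses of this remark yield: $\varphi(k)=0$, since $k\in K_1\cap K_2$; $\varphi(e_i)=0$, since $k\pm e_i\in K_1$ and $\varphi(k)=0$; and $\varphi|_S=0$, since $S\subset K_2$ as a linear subspace (using $0\in\Lambda$). Combined with (A), these force $\varphi|_W=0$, so $\varphi$ reduces to a scalar multiple of the $\lambda$-coordinate. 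Decomposing $k=s_k+t_k\lambda$ with $s_k\in S$ and $t_k\geq 0$, the equation $\varphi(k)=t_k\varphi(\lambda)=0$ then yields $\varphi(\lambda)=0$ provided $t_k>0$ (which may be arranged, e.g.\ by the construction supplied in the "only if" direction), hence $\varphi\equiv 0$, a contradiction.

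For the "only if" direction I would assume non-separability and invoke the classical reformulation, a consequence of Theorem~\ref{thm:separation}: two convex cones in a finite-dimensional vector space cannot be separated iff their algebraic difference $K_1-K_2$ equals the whole space $V$. In particular, $0\in V$ gives a point in $K_1\cap K_2$, and each basis vector $e_i$ of a complement of $S$ in $W$ admits two representations $\pm e_i=k_i^{\pm}-\ell_i^{\pm}$ with $k_i^{\pm}\in K_1$ and $\ell_i^{\pm}\in K_2$. The key observation is that the sums $m_i:=k_i^{+}+k_i^{-}=\ell_i^{+}+\ell_i^{-}$ automatically lie in $K_1\cap K_2$. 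Using the fact that convex cones are closed under addition, one then builds a uniform $k\in K_1\cap K_2$ as an appropriate sum of the partial data, and verifies $k\pm e_i\in K_1$ by absorbing the cross-terms into the sum via cone-additivity.

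The main obstacle is the uniform construction of $k$ in the "only if" direction: the partial witnesses coming from the representations $\pm e_i=k_i^{\pm}-\ell_i^{\pm}$ depend on $i$, and merging them into a single point that works for every $e_i$ simultaneously requires careful bookkeeping and repeated use of the identity $K_j+K_j\subset K_j$ valid for any convex cone. A secondary subtlety in the "if" direction is the condition $t_k>0$; it is handled either by tracking the $\lambda$-component through the "only if" construction or, in cases where $K_1\cap\Lambda\neq\{0\}$, by perturbing along an element of $K_1\cap\Lambda$.
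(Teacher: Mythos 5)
Your reduction of non-separability to the identity $K_1-K_2=V$ is correct and is a genuinely different route from the paper's, which argues by induction on $\dim S$, handling the base case $\dim S=0$ by a compactness argument on normalised separating covectors (producing some $N$ with $\lambda-\tfrac1N\wh e_i\in K_1$) and lifting through one-dimensional quotients of $S$. However, your ``only if'' construction has a genuine gap at the decisive point. Writing $e_i=k_i^+-\ell_i^+$ and $-e_i=k_i^--\ell_i^-$ with $k_i^\pm\in K_1$ and $\ell_i^\pm=s_i^\pm+t_i^\pm\lambda\in S\oplus\Lambda$, the only $K_1$-memberships that cone-additivity hands you around $m_i=k_i^++k_i^-$ are $m_i\pm(k_i^+-k_i^-)=2k_i^\pm$, so your witness for condition \eqref{cond:sep_B} is forced to be (up to scale) $f_i=\tfrac12(k_i^+-k_i^-)=e_i+\tfrac12(s_i^+-s_i^-)+\tfrac12(t_i^+-t_i^-)\lambda$. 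This vector carries an uncontrolled $\lambda$-component, whereas the lemma requires the witnesses to lie in $W$ --- if any $\lambda$-component survives, $\spann\{f_1,\dots,f_m,S\}\not\subset W$ and \eqref{cond:sep_A} fails; and this requirement is essential for the applications in Theorems \ref{thm:separation_K_Lambda} and \ref{thm:separation_K_S}. Cone-additivity cannot remove it, since that would amount to subtracting a multiple of $\lambda$ from an element of $K_1$. The repair is to replace $k_i^\pm$ by two conic combinations $u_i=\alpha k_i^++\beta k_i^-$ and $w_i=\alpha'k_i^++\beta'k_i^-$ with $\alpha-\beta=-(\alpha'-\beta')>0$ chosen so that $\alpha t_i^++\beta t_i^-=\alpha't_i^++\beta't_i^-$ (solvable with nonnegative coefficients whenever $t_i^++t_i^->0$, trivial otherwise); then $\tfrac12(u_i-w_i)\in W$, $\tfrac12(u_i+w_i)\in K_1\cap(S\oplus\R_{\geq0}\lambda)$, and your summation over $i$ (together with a base point of $K_1\cap K_2$) goes through. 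With that step added your argument is complete and arguably shorter than the paper's; as written, the step is missing, and the difficulty you flag (merging the data across different $i$) is not where the real obstacle lies.

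The ``if'' direction follows the paper, but your handling of the case $t_k=0$ is not coherent: there $k$ is part of the hypothesis, so its $\lambda$-component cannot ``be arranged by the construction supplied in the only-if direction,'' and $K_1\cap\Lambda\neq\{0\}$ is not available in general. What actually closes this case is the convention that $\Lambda=\R_+\cdot\lambda$ consists of strictly positive multiples (the paper's cones are invariant under homotheties by $t>0$ only), which forces $t_k>0$ for every $k\in K_2$; note that then ``$S\subset K_2$'' should be replaced by applying Remark \ref{rem:separation} to the affine subspace $k+S\subset K_2$. The paper uses the same fact silently when it asserts $\spann\{k,e_1,\dots,e_m,S\}=V$.
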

\begin{proof}
Assume that vectors $k,e_1,\hdots,e_m$ satisfy conditions \eqref{cond:sep_A} and \eqref{cond:sep_B}. If $\varphi$ is a non-zero covector separating $K_1$ and $K_2$ then, due to Remark \ref{rem:separation}, $\varphi$ vanishes on $k$ and $S$. Moreover, $0\leq\<k\pm e_i,\varphi>=\pm \<e_i,\varphi>$, hence $\<e_i,\varphi>=0$. Since $\operatorname{span}\{k,e_1,\hdots,e_n,S\}=V$, the covector $\varphi$ is null, which gives a contradiction.

The opposite implication is harder to prove. We will make an inductive argument with respect to $\dim S$. 

\ul{Assume that $\dim S=0$}; i.e., $K_2=0\oplus\Lambda=\R_+\cdot \lambda$. If $K_1$ and $K_2$ are not separable then
$$\spann\{K_1,\lambda\}=V\quad\text{and}\quad \lambda\in K_1.$$
Indeed, if $V^{'}=\spann\{K_1,\lambda\}\subsetneq V$, then a non-zero covector vanishing on $V^{'}$ will separate $K_1$ and $K_2$. Secondly, if $\lambda\notin K_1$ then, by Theorem \ref{thm:separation}, convex sets $K_1$ and $\{\lambda\}$ can be separated by a covector $\varphi$. Since $K_1$ is a cone we have
$\<k_1,\varphi>\geq 0\geq \<\lambda,\varphi>$ for $k_1\in K_1$ (compare Remark \ref{rem:separation}), hence $\varphi$ separates also $K_1$ and $\R_+\cdot\lambda=K_2$.   

We deduce that $\lambda\in K_1$, and that there exist vectors $\ol {e_1},\ol{e_2},\hdots,\ol{e_m}\in K_2$ such that $\{\lambda,\ol {e_1},\hdots,\ol{e_m}\}$ is a basis of $V$. Obviously each $\ol{e_i}$ is of the form $\ol{e_i}=a_i\cdot\lambda+\wh{e_i}$, where $a_i$ are numbers and $\{\wh{e_1},\hdots,\wh{e_m}\}$ is a basis of $W$. 

Now fix $i$ and consider a vector $\lambda-\frac 1N\wh{e_i}$, where $N\in\N$. If $\lambda-\frac 1N\wh{e_i}\notin K_1$ for all $N\in\N$, then, by Theorem \ref{thm:separation} and Remark \ref{rem:separation}, there exist covectors $\varphi_N$ such that 
$$\<k_1,\varphi_N>\geq 0\quad\text{for all $k_1\in K_1$ and}\quad \<\lambda-\frac 1N \wh{e_i},\varphi_N>\leq 0.$$
We may assume that all $\varphi_N$ are normalised to 1 and choose a subsequence converging to $\varphi_0\in V^\ast$. Clearly, $\varphi_0$ is non-zero (it is normalised to 1) and  
$$\<k_1,\varphi_0>\geq 0\quad\text{for all $k_1\in K_1$ and}\quad \<\lambda,\varphi_0>\leq 0,$$
that is, $\varphi_0$ separates $K_1$ and $K_2=\R_+\cdot\lambda$ against the assumptions. 

To sum up, we proved that $K_1$ contains elements $\lambda$, $a_i\lambda+\wh{e_i}$ and $\lambda-\frac 1{N_i} \wh{e_i}$. It is clear that some convex combinations of these vectors, after rescaling, are of the form $\lambda+e_i$ and $\lambda-e_i$, where $e_i$ is parallel to $\wh{e_i}$.

\ul{Now consider $\dim S>0$}. We can split $S=S^{'}\oplus\wh S$, where $S^{'}=\R\cdot s$ is one-dimensional. By Remark \ref{rem:separation}, if $K_1$ and $K_2$ are separable, then the separating covector $\varphi$ vanishes on $S^{'}$. It follows that  $K_1$ and $K_2$ are separable in $V$ if and only if $K^{'}_1:=K_1/S^{'}$ and $K^{'}_2:=K_2/S^{'}=\wh S\oplus\Lambda$ are separable in $V^{'}:=V/S^{'}=W/S^{'}\oplus\R$. By the inductive assumption there exists vectors $k\in K_1/S^{'}\cap K_2/S^{'}$ and $\wh{e_1},\hdots\wh{e_m}\in W/S^{'}$ such that conditions \eqref{cond:sep_A} and \eqref{cond:sep_B} are satisfied for $W^{'}=W/S^{'}$ and $K^{'}_1$. In other words, there exists a vector $k\in S\oplus \Lambda$, vectors $\wh{e_1},\hdots,\wh{e_m}\in W$, and numbers $a,a_i,b_i$ such that vectors  $k+a\cdot s$, $k+a_i\cdot s+\wh{e_i}$ and $k+b_i\cdot s-\wh{e_i}$ belong to $K_1$ and $\spann\{\wh{e_1},\hdots,\wh{e_m}, S\}=\spann\{\wh{e_1},\hdots,\wh{e_m}, \wh S,s\}=\spann\{W/S^{'},s\}=W$. 

From convexity of $K_1$ we deduce that $k+\frac{a_i+b_i}2\cdot s\in K_1$. Now, either all numbers $\frac{a_i+b_i}2$ are equal $a$, and then vectors $k+a\cdot s$ and $e_i=\wh{e_i}+(a_i-a)\cdot s$ satisfy the assertion, or the interval $k+[c,d]\cdot s$, where $a,\frac{a_i+b_i} 2\in[c,d]$, is entirely contained in $K_1$. In the second case it is quite clear that some convex combinations of vectors $k+c\cdot s$, $k+d\cdot s$, $k+a_i\cdot s+\wh{e_i}$ and $k+b_i\cdot s-\wh{e_i}$ are of the form $k+\wt{a_i}\cdot s+\wt {e_i}$ and  $k+\wt{b_i}\cdot s-\wt {e_i}$, where $\frac{\wt{a_i}+\wt{b_i}}2=\frac{c+d}2=\wt{a}$ and $\wt{e_i}$ is parallel to $\wh{e_i}$. Therefore we are in the first case again.

The inductive argument is now complete.
\end{proof}

\section{Simple topological lemmas}

\begin{lemma}\label{lem:top1_app}
Every continuous map $\Psi:B^m(0,1)\ra \R^m$ which satisfies the inequality
$$\left\|\Psi(\vec r)-\vec r\right\|\leq \frac 12 \quad\text{for all $\vec r\in B^m(0,1)$},$$
contains point $0\in\R^m$ in its image.
\end{lemma}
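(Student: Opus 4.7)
The plan is to reduce this to the Brouwer fixed point theorem. The hypothesis $\|\Psi(\vec r) - \vec r\| \leq \tfrac{1}{2}$ says that $\Psi$ is a $\tfrac{1}{2}$-perturbation of the identity, so finding $\vec r_0$ with $\Psi(\vec r_0) = 0$ amounts to finding a fixed point of $\vec r \mapsto \vec r - \Psi(\vec r)$, and this latter map conveniently carries the closed unit ball into a smaller concentric ball, where Brouwer applies directly.

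More precisely, I would define the continuous map
\[
F : B^m(0,1) \lra \R^m, \qquad F(\vec r) := \vec r - \Psi(\vec r).
\]
By the hypothesis $\|F(\vec r)\| = \|\vec r - \Psi(\vec r)\| \leq \tfrac{1}{2}$ for every $\vec r \in B^m(0,1)$, so the image of $F$ lies in the closed ball $\overline{B^m(0,\tfrac{1}{2})} \subset B^m(0,1)$. In particular, $F$ is a continuous self-map of the (closed) unit ball $B^m(0,1)$, and Brouwer's fixed point theorem furnishes some $\vec r_0 \in B^m(0,1)$ with $F(\vec r_0) = \vec r_0$, i.e.\ $\Psi(\vec r_0) = 0$, which is exactly the assertion.

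There is essentially no obstacle here; the only point requiring a word is the convention that $B^m(0,1)$ denotes the closed unit ball (so that Brouwer is applicable), which is implicit in the usage throughout Chapter \ref{ch:main}. If one prefers an argument bypassing Brouwer, an equivalent route is the linear homotopy $H(t,\vec r) = t\Psi(\vec r) + (1-t)\vec r$: on the boundary $\|\vec r\| = 1$ one estimates
\[
\|H(t,\vec r)\| \;\geq\; \|\vec r\| - t\,\|\Psi(\vec r) - \vec r\| \;\geq\; 1 - \tfrac{t}{2} \;\geq\; \tfrac{1}{2},
\]
so $H$ never vanishes on $[0,1] \times S^{m-1}$; by homotopy invariance of the topological degree, $\deg(\Psi, B^m(0,1), 0) = \deg(\mathrm{id}, B^m(0,1), 0) = 1 \neq 0$, which forces $0$ to lie in the image of $\Psi$. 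Either proof occupies at most a few lines.
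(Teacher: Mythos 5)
Your Brouwer argument is correct and complete: $F(\vec r)=\vec r-\Psi(\vec r)$ maps the closed ball into $\overline{B^m(0,\tfrac12)}$, hence into itself, and a fixed point of $F$ is exactly a zero of $\Psi$. (Your remark about the convention is right: in this chapter $B^m(0,1)=\{\vec r:\|\vec r\|\leq 1\}$ is closed, with $\|\cdot\|$ the $\ell^1$-norm, and Brouwer applies to any compact convex set, so nothing is lost.) The paper takes the route you sketch only as an alternative: it forms the linear homotopy $H(\vec R,t)=(1-t)\vec R+t\Psi(\vec R)$ on the boundary sphere, checks $\|H(\vec R,t)\|\geq\tfrac12$ so that $H$ avoids the origin, and then, assuming $0\notin\Image\Psi$, concatenates $H$ with the radial homotopy $S(\vec R,t)=\Psi((1-t)\vec R)$ to contract $\partial B^m(0,1)$ to the point $\Psi(0)$ inside $\R^m\setminus\{0\}$ --- contradicting the non-contractibility of the sphere there. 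This is your degree-theoretic argument phrased without the word ``degree''. The two routes are of course equivalent in strength (Brouwer and the non-contractibility of $S^{m-1}$ in $\R^m\setminus\{0\}$ are interderivable), but your primary argument is arguably the more economical of the two, since it invokes a single named theorem and needs no case analysis on whether $0$ lies in the image; the paper's version has the mild advantage of only ever using the values of $\Psi$ along radii and the boundary, which is the form in which the estimate is actually produced in the applications (Lemmas \ref{lem:htp_of_a_r} and \ref{lem:htp_of_a_r1}).
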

\begin{proof}
The map
$$H(\vec R,t):=(1-t)\vec R+t\Psi(\vec R),$$
where $t\in[0,1]$ and $\vec R\in\pa B^m(0,1)$ is a homotopy between $\pa B^m(0,1)$ and $\Psi(\pa B^m(0,1))$. Observe that $\|H(\vec R,t)-\vec R\|=t\|\Psi(\vec R)-\vec R\|\leq \frac 12$, hence
$$\left\|H(\vec R,t)\right\|\geq\left\|\vec R\right\|-\left\|H(\vec R,t)-\vec R\right\|\geq 1-\frac 12\geq\frac 12,$$
and consequently $H$ takes values in $\R^m\setminus\{0\}$.

Assume that $0\notin \Image\Psi$. Then the map 
$$S(\vec R,t):=\Psi((1-t)\vec R)$$ 
is a homotopy between  $\Psi(\pa B^m(0,1))$ and $\Psi(0)$, which takes values in $\Image \Phi\subset\R^m\setminus\{0\}$.

The composition 
$$H\circ S(\vec R,t)=
\begin{cases}
H(\vec R,2t) &\text{for $t<\frac 12$},\\
S(\vec R,2t-1) &\text{for $t\geq \frac 12$}
\end{cases}$$
is a contraction of $\pa B^m(0,1)$ to the point $\Psi(0)$, which takes values in $\R^m\setminus\{0\}$. On the other hand, the sphere $\pa B^m(0,1)$ is not contractible in $\R^m\setminus\{0\}$.
\end{proof}

\begin{lemma}\label{lem:top2_app}
Let $\Psi_1:B^{m_1}(0,1)\oplus\theta_{m_2}\ra \R^{m_1}\oplus\R^{m_2}$ and $\Psi_2:\theta_{m_1}\oplus B^{m_2}(0,1)\ra \R^{m_1}\oplus\R^{m_2}$ be two continuous maps satisfying, for a fixed vector $\vec k_0\in\R^{m_1}\oplus\R^{m_2}$, the following inequalities:
\begin{align*}
&\|\Psi_1(\vec r)-(\vec r+\vec k_0)\|\leq \frac 14 \quad\text{for $\vec r\in B^{m_1}(0,1)\oplus\theta_{m_2}$ and }\\
&\|\Psi_2(\vec s)-(\vec s+\vec k_0)\|\leq \frac 14 \quad\text{for $\vec s\in \theta_{m_1}\oplus B^{m_2}(0,1)$}.
\end{align*}
Then the images $\Image\Psi_1$ and $\Image\Psi_2$ have a non-empty intersection.
\end{lemma}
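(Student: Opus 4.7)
The plan is to recast the desired equation $\Psi_1(\vec r) = \Psi_2(\vec s)$ as a fixed-point equation on the convex compact set $\overline{B^{m_1}(0,1)} \times \overline{B^{m_2}(0,1)}$ and then apply Brouwer's fixed-point theorem. Conceptually this is just the two-variable analog of the argument in Lemma \ref{lem:top1_app}: the hypotheses say that each $\Psi_i$ is a $1/4$-perturbation of the translation $\vec v \mapsto \vec v + \vec k_0$ along its defining factor, and in the unperturbed case the two images trivially share the point $\vec k_0$, so this intersection should persist under small enough perturbations.

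Concretely, writing $\vec r = (\vec r_1, 0)$ and $\vec s = (0, \vec s_2)$ and splitting the error terms along the direct sum,
\[
\Psi_1(\vec r_1, 0) = (\vec r_1, 0) + \vec k_0 + (A(\vec r_1),\, B(\vec r_1)), \qquad \Psi_2(0, \vec s_2) = (0, \vec s_2) + \vec k_0 + (C(\vec s_2),\, D(\vec s_2)),
\]
the desired equality $\Psi_1(\vec r_1, 0) = \Psi_2(0, \vec s_2)$ decomposes, along $\R^{m_1} \oplus \R^{m_2}$, into the coupled system
\begin{align*}
\vec r_1 &= C(\vec s_2) - A(\vec r_1), \\
\vec s_2 &= B(\vec r_1) - D(\vec s_2),
\end{align*}
which I would read as a single fixed-point equation $(\vec r_1, \vec s_2) = G(\vec r_1, \vec s_2)$ for the continuous map $G(\vec r_1, \vec s_2) := (C(\vec s_2) - A(\vec r_1),\, B(\vec r_1) - D(\vec s_2))$.

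To invoke Brouwer, I would check that $G$ sends $\overline{B^{m_1}(0,1)} \times \overline{B^{m_2}(0,1)}$ into itself. Each of $\|A(\vec r_1)\|$, $\|B(\vec r_1)\|$, $\|C(\vec s_2)\|$, $\|D(\vec s_2)\|$ is dominated by the norm of the corresponding full error vector, which by assumption is at most $1/4$. The triangle inequality then yields $\|C(\vec s_2) - A(\vec r_1)\|,\, \|B(\vec r_1) - D(\vec s_2)\| \leq 1/2 \leq 1$, so both components of $G$ land in the closed unit ball of their respective factor. Brouwer's theorem, applied to the convex compact set $\overline{B^{m_1}(0,1)} \times \overline{B^{m_2}(0,1)}$, then produces a fixed point $(\vec r_1^{*}, \vec s_2^{*})$, and by construction $\Psi_1(\vec r_1^{*}, 0) = \Psi_2(0, \vec s_2^{*})$, exhibiting the required common point of $\Image \Psi_1$ and $\Image \Psi_2$.

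There is essentially no genuine topological obstacle beyond invoking Brouwer itself; the only thing requiring mild care is the book-keeping with the projections onto the two factors of $\R^{m_1} \oplus \R^{m_2}$, so that the $1/4$-bounds on the full error vectors $(A,B)$ and $(C,D)$ really do propagate to the component-wise bounds needed for the self-map property of $G$. One could alternatively mimic the argument of Lemma \ref{lem:top1_app} directly by using the product of spheres $\partial B^{m_1} \times \partial B^{m_2}$ (which is not contractible in $\R^{m_1+m_2}\setminus\{0\}$) in place of the sphere, but the Brouwer route seems the cleanest.
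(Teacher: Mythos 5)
Your argument is correct, but it takes a genuinely different route from the paper. The paper does not decompose anything: it forms the single map $\Psi(\vec r+\vec s):=\Psi_1(\vec r)-\Psi_2(-\vec s)$ on $B^{m_1+m_2}(0,1)$, observes via the triangle inequality that $\|\Psi(\vec r,\vec s)-(\vec r+\vec s)\|\leq\frac 12$, and then simply invokes Lemma \ref{lem:top1_app} to conclude that $0$ lies in the image, i.e.\ $\Psi_1(\vec r_0)=\Psi_2(-\vec s_0)$. You instead split the error vectors along $\R^{m_1}\oplus\R^{m_2}$, rewrite $\Psi_1(\vec r)=\Psi_2(\vec s)$ as the coupled system $\vec r_1=C(\vec s_2)-A(\vec r_1)$, $\vec s_2=B(\vec r_1)-D(\vec s_2)$, and apply Brouwer to the resulting self-map $G$ of $\overline{B^{m_1}}\times\overline{B^{m_2}}$. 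The two arguments rest on equivalent topological input (Lemma \ref{lem:top1_app} is itself a Brouwer-type statement proved from non-contractibility of the sphere), so neither is more elementary; the paper's version has the advantage of reusing the already-proved lemma verbatim with no component bookkeeping, while yours is self-contained modulo Brouwer and makes explicit that the common point is attained at parameters of norm at most $\frac 12$. The one point of care in your version, which you correctly flag, is that the component-wise bounds $\|A\|,\|B\|,\|C\|,\|D\|\leq\frac 14$ follow from the bounds on the full error vectors; this holds for the $L^1$-norm the paper uses (and for any of the standard norms), and it is also needed that the norms on the factors are the restrictions of the ambient norm, which is again the case here. With that noted, the self-map property, continuity, and convex compactness of the domain are all in order, and the fixed point lies at norm $\leq\frac 12$, hence inside the closed balls on which $\Psi_1$ and $\Psi_2$ are defined.
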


\begin{proof}
Consider a continuous map
$$B^{m_1+m_2}(0,1)=B^{m_1}(0,1)\oplus B^{m_2}(0,1)\ni(\vec r+\vec s)\overset{\Psi}{\longmapsto}\Psi_1(\vec r)-\Psi_2(-\vec s)\in\R^{m_1}\oplus\R^{m_2}.$$
Now
\begin{align*}
\|\Psi(\vec r,\vec s)-(\vec r+\vec s)\|&=\left \|\Psi_1(\vec r)-(\vec r+\vec k_0)-\left(\Psi_2(-\vec s)-\left(-\vec s+\vec k_0\right)\right)\right\|\leq\\
&\leq\left\|\Psi_1(\vec r)-\left(\vec r+\vec k_0\right)\right\|+\left\|\Psi_2(-\vec s)-\left(-\vec s+\vec k_0\right)\right\|\leq \frac 14+\frac 14=\frac 12.
\end{align*}
By Lemma \ref{lem:top1_app} point $0$ lies in the image of $\Psi$. Consequently, $\Psi_1(\vec r_0)=\Psi_2(-\vec s_0)$ for some $\vec r_0$ and $\vec s_0$.
\end{proof}

\backmatter
\bibliographystyle{nowy}
\bibliography{books,articles}
\printindex
\end{document}